\RequirePackage[l2tabu, orthodox]{nag}

\documentclass[
oneside
,PhDB
]{sapthesis}
\IDnumber{1146957}
\course[Mathematics]{Matematica}
\courseorganizer{Scuola Dottorale in Scienze Astronomiche, Chimiche, Fisiche, Matematiche e della Terra ``Vito Volterra''}
\cycle{XXII}
\submitdate{December 2011}
\copyyear{2011}
\advisor{Prof.~Giuseppe~Rosolini}
\authoremail{caminati@mat.uniroma1.it}
\website{\site}
\examdate{\printdate{20.1.2012}}
\examiner{Prof.~Claudio~Bernardi}
\examiner{Prof.~Carlo~Toffalori}
\examiner{Prof.~Lorenzo~Tortora~de~Falco}


\usepackage{amsmath}
\usepackage{amsthm}
\usepackage{amssymb} 
\usepackage[bibstyle=alphabetic,citestyle=alphabetic,firstinits=true]{biblatex}
\usepackage{extarrows}
\usepackage{graphicx}
\usepackage[]{hyperref}
\usepackage{marginnote}
\usepackage{microtype} 
\usepackage{tikz}
\usepackage{turnstile}
\usepackage{xspace}

\usepackage{varwidth}
\usepackage{tabularx}
\usepackage{verbatim} 
\usepackage[british,inputamerican]{isodate}


\DeclareMathOperator{\ari}{\#}
\DeclareMathOperator{\dom}{dom}
\DeclareMathOperator{\ran}{ran}
\DeclareMathOperator{\rng}{\ran}
\DeclareMathOperator{\con}{Con}
\DeclareMathOperator{\inc}{Inc}

\newcommand{\vv}{\overline{v}}
\renewcommand{\models}[1]{\sdtstile{#1}{}}
\newcommand{\ra}[1]{\xLongrightarrow{\text{\tiny{{#1}}}}}
\newcommand{\A}{automation\xspace}

\newcommand{\C}{\subseteq}

\newcommand{\G}{G\"odel}
\renewcommand{\H}{Henkin}
\renewcommand{\L}{L\"owenheim}
\newcommand{\M}{Mizar\xspace}
\newcommand{\MML}{MML\xspace}
\newcommand{\N}{\mathbb{N}}
\newcommand{\Op}{\op \kern -1.9pt \op}

\newcommand{\Title}{A simplified framework for first-order languages and its formalization in \M{}}
\newcommand{\U}{\bigcup}
\newcommand{\Z}{\mathbb{Z}}
\newcommand{\addF}[2]{\mathcal{E}_{#1}^{#2}}
\newcommand{\addW}[2]{\mathcal{W}_{#1}^{#2}}
\newcommand{\Adj}{Attribute}
\newcommand{\adj}{attribute}
\newcommand{\abs}[1]{\left| #1 \right|}
\newcommand{\asslikeS}[1]{strongly assumptive}
\newcommand{\asslikeW}[1]{weakly assumptive}
\newcommand{\card}[1]{\left| #1 \right|}
\newcommand{\cartprod}{\times}
\newcommand{\Classes}[2]{{#1}/{#2}}
\newcommand{\closed}[1]{{#1}-closed}
\newcommand{\comp}[2]{$\left( #1, #2 \right)$-compatible\xspace}
\newcommand{\Comp}{compatible}
\newcommand{\Con}[2]{\con_{#1} \left( #2 \right)}
\newcommand{\conc}{*}
\newcommand{\cover}{cover}
\newcommand{\curry}[2]{{\vphantom{#1}}_{#2}{#1}}

\newcommand{\cutLike}{cut-like}
\newcommand{\Derivables}[1]{\derivables{\infty}{#1}} 
\newcommand{\derivables}[2]{\iter{\onestep{#2}}{#1}}
\newcommand{\depth}[1]{\left| #1 \right|}


\newcommand{\witnessed}{witnessed}
\newcommand{\witnessRel}[1]{W_{#1}}
\newcommand{\Eq}[2]{\equiv #1 #2}
\newcommand{\eq}[1]{\Eq{#1}{#1}}
\newcommand{\eval}[1]{\overline{#1}}
\newcommand{\termeval}[1]{\overline{#1}}

\newcommand{\entails}[1]{\models{#1}}

\newcommand{\emp}{\emptyset}
\newcommand{\emul}[1]{\geq_{#1}}
\newcommand{\fconc}{\conc \kern -1.9pt \conc}
\newcommand{\Finsets}[2]{\powset{#2}_{#1}}
\newcommand{\finsets}[1]{\mathcal{F}\left( #1 \right)}
\newcommand{\free}{free\xspace}
\newcommand{\freeInt}[1]{\Phi_{#1}}
\newcommand{\funccomp}{\circ}
\newcommand{\funcs}[2]{\mapsFromTo{#1}{#2}}
\newcommand{\henk}[2]{\mathcal{H}_{#1,#2}}
\newcommand{\If}{if and only if\xspace}
\newcommand{\id}[1]{\mathcal{I}_{#1}}

\newcommand{\iQuotient}[2]{\frac{#1}{#2}}
\newcommand{\im}[2]{#1 \left[ #2 \right]}
\newcommand{\Inc}[2]{\inc_{#1} \left( #2 \right)}
\newcommand{\indicator}{\mathrm{1}}
\newcommand{\iter}[2]{{#1}^{\left({#2} \right)}}

\newcommand{\leaves}[1]{\Gamma_{#1}}
\newcommand{\mapsFromTo}[2]{{#2}^{#1}}
\newcommand{\me}{Marco~Caminati}
\newcommand{\mincov}{mincover}

\newcommand{\monotone}{monotone}

\newcommand{\nnot}[1]{\xnot{#1}}

\newcommand{\nor}{\downarrow}
\newcommand{\ol}[1]{\overline{#1}}
\newcommand{\onestep}[1]{\overline{#1}}
\newcommand{\op}{\square}
\newcommand{\pair}[2]{\left( {#1}, {#2} \right) }
\newcommand{\paste}{\lhd}
\newcommand{\peel}[1]{\}\{_{#1}}
\newcommand{\placesof}[2]{{#1}^{\left[ #2 \right]} }
\newcommand{\powset}[1]{\ensuremath{2^{#1}}}

\newcommand{\projR}{\rng}
\newcommand{\provables}[1]{#1}
\newcommand{\proves}[1]{\sststile{#1}{}}
\newcommand{\quotient}[3]{\frac{\hphantom{#2}#1\hphantom{#3}}{#2\hphantom{#1}#3}}
\newcommand{\relcomp}{\bullet}

\newcommand{\restrict}[2]{{\left. #1 \right|}_{#2}}
\newcommand{\rAssumption}{R_0}

\newcommand{\rCut}{R_c}
\newcommand{\rEqRefl}{R_{=}}

\newcommand{\rEqSymm}{R_{\overset{\leftrightarrow}{=}}}
\newcommand{\rEqTrans}{R_{\overset{\Rightarrow}{=}}}
\newcommand{\rEx}{R_{\overset{\rightarrow}{\exists}}}

\newcommand{\rIntuitionisticNightmare}{R_{\not \neg}}
\newcommand{\rFunc}{R_{+}}

\newcommand{\rNor}{R_{\nor}}

\newcommand{\rRel}{R_{\mathcal{R}}}

\newcommand{\rThin}{R_{\cup}}

\newcommand{\rVCut}{R_{\vv}}

\newcommand{\rVThinFork}{R_{< \vv}}

\newcommand{\rWitnessA}{R_{\overset{\leftarrow}{\exists}}}

\newcommand{\reassign}[3]{\frac{#2}{#1}#3}
\newcommand{\rem}[1]{}
\newcommand{\Root}[1]{r_{#1}}
\newcommand{\sdiff}{\backslash}
\newcommand{\seqs}[1]{\ensuremath{G \left( #1 \right)}}
\newcommand{\sound}{sound}
\newcommand{\st}{|}
\newcommand{\site}{http://www.mat.uniroma1.it/people/caminati}
\newcommand{\symbof}[1]{\left\lfloor #1 \right\rfloor}
\newcommand{\symbsubst}[3]{ \frac{#2}{#1}#3 }
\newcommand{\subst}[3]{{#3} \left[ {#1} / {#2} \right]}
\newcommand{\substf}[2]{\left[ #1 / #2 \right]}
\newcommand{\substringsf}{\subtermsf}
\newcommand{\subtermsf}{\odot}
\newcommand{\subterms}[1]{\overrightarrow{#1}}
\newcommand{\substrings}{\subterms}
\newcommand{\terms}[1]{T_{#1}}
\newcommand{\TTermeq}{Equability}
\newcommand{\Termeq}{equability}
\newcommand{\termeq}[2]{\ensuremath{\underset{\scriptscriptstyle #2}{\overset{\scriptscriptstyle #1}{\sim}}}}
\newcommand{\toClass}[1]{\pi_{#1}}
\newcommand{\tuple}{tuple}
\newcommand{\tupleToClass}[2]{\eta_{#1, #2}}
\newcommand{\vcontr}[1]{\nor{\veq}{#1}}

\renewcommand{\vec}[1]{\mathbf{#1}}
\newcommand{\veq}{\eq{\vv}}

\newcommand{\xnot}[1]{\nor{#1}{#1}}
\newcommand{\wffs}[1]{F_{#1}}
\newcommand{\xxnot}[1]{\neg #1}
\newcommand{\notf}{\neg}

\newcommand{\query}[1]{}

\newcommand{\lnote}[1]{\reversemarginpar\query{#1}\normalmarginpar}

\theoremstyle{plain}
\newtheorem{Cor}{Corollary}[subsection]
\newtheorem{Lm}[Cor]{Lemma}
\newtheorem{Prop}[Cor]{Proposition}
\newtheorem{Thm}[Cor]{Theorem}
\theoremstyle{definition}
\newtheorem{Def}[Cor]{Definition}
\newtheorem{Rem}[Cor]{Remark}
\newtheorem{Not}[Cor]{Notation}
\newtheorem{Ex}[Cor]{Example}

\urldef{\urlMmlquery}\url{http://mmlquery.mizar.org/mmlquery/fillin.php?filledfilename=registrations.mqt&argument=number+1}
\urldef{\seminar}\url{http://www.naproche.net/inc/seminar/seminar_2011_SS.php}
\urldef{\wwwMbc}\url{\site}
\urldef{\wwwMizar}\url{http://www.mizar.org}
\urldef{\wwwRosolini}\url{http://www.disi.unige.it/person/RosoliniG/}
\urldef{\wwwNaproche}\url{http://naproche.net/inc/webinterface.php}
\urldef{\wwwProofCheck}\url{http://www.proofcheck.org/}
\urldef{\wwwMml}\url{http://mizar.uwb.edu.pl/version/current/mml/}
\urldef{\wwwXboole}\url{http://mizar.uwb.edu.pl/version/current/mml/xboole_1.miz}

\bibliography{mbc}

\hypersetup{%
pdftitle={\Title
}%
, pdfauthor={\me}%
, pdfstartview=FitBH
, pdffitwindow=true
, pdfkeywords={logic, proof theory, formalization, completeness, model theory, Mizar, satisfiability theorem, \L{}-Skolem theorem, automated proof checking}
, pdflang=en-US
, pdfstartview= 100 100 10000
, pdfsubject={}
, pdfwindowui=true
, colorlinks=true
, linkcolor=blue
, urlcolor=blue
, citecolor=red
, breaklinks = true
, menucolor=brown
}

\title%
{\Title
}
\author{\me{}%
}


\makeindex

\begin{document}

\frontmatter
\maketitle 


\begin{acknowledgments}
Support and guidance from my advisor, Prof.~Giuseppe~Rosolini, have been invaluable.
\\
I am grateful to Prof.~Claudio~Bernardi for helpful advice.
\\
I am indebted to Prof.~Peter~Koepke, who encouraged me with his interest in my research and gave me the opportunity to meet other people working in my area through his gracious hospitality.
\\
I had the luck of making the acquaintance of Flavia Mascioli and Enrico Rogora, among the friendliest and most supportive people I met in my department.
\\
My neighborly fellow graduate students Stefano, Fabio, Linda, Paolo and Andrea supplied good company and interesting discussion.
\\
Finally, I thank rms for being the zealot he is, which I think made this thesis, and the world, better.
Through him I wish to thank every individual who ever contributed to free information.
\end{acknowledgments}


\tableofcontents

\chapter{Introduction}
\label{RefSectIntroduction}

The axioms of set theory in first-order logic, together with a choice of a deductive system, form the foundations on which most mathematicians set their research work.
Thus it is quite natural that also logicians study formalizations of first-order logic and of deductive systems in those same foundations. 
It appears rather surprising that formalizations of deductive systems are still missing.

\lnote{Lievemente alterato il periodo `One possible explanation\ldots{}'. Come suona ora?}
One possible explanation for the lack of a mathematically-flavored treatment of a foundational block of such kind is that its fundamental role in  
the mechanization of mathematics makes research efforts focus on it as a computational tool and divert them from 
rather 
viewing it as an object of mathematical study in its own sake.  
The adjective ``mathematical'' in the last sentence is crucial: indeed, deductive systems are subject to intense study by proof-theorists, but mainly from a computational point of view and with methods typical of computer science. 
While this is certainly critical for the mechanization, it yields as a consequence that deductive systems are, for instance, usually expressed in languages far from set theory (or any other language a mathematician may be accustomed to).

For example, consider the sequent calculus.
Its rules are usually displayed through diagrams like
\begin{align*}
\begin{aligned}
\Gamma &&  && \psi
\\
\hline
\Gamma && \varphi && \psi
\end{aligned}.
\end{align*}
Such diagrams serve well the goals of mechanization, because generally
they are readily rendered into concrete computer languages adopted by
many proof assistants; on the other hand, they are far from being a definition of the rule itself according to set theory. 
Therefore there is a gap between the mechanization of mathematics and the formalization in (one of the most standard) foundations of mathematics.%
\footnote{In alternative formal systems there are rigorous definition of deductive systems; see for example \cite{dawson2010generic}, section 3 and \cite{mikhajlova1998proving}, section 2.}


Indeed, considering the way standard expositions of sequent calculus or natural deduction define what a derivation or a proof is (often such notions are merely introduced with examples, as in \cite{0387908951} (section IV.1), \cite{MR2319486} (chapter 2)), it is  invariably found that it pivots on some notion describing what an atomic step in a derivation is, and that this latter notion is not rigorous, from a strictly formal point of view, because it is based on the diagrams just discussed, rather than on a set-theoretical description of each single rule (in the quotations below, we emphasize the words referring to entities lacking a rigorous symbolic definition): 

\begin{quote}
\ldots the labels at the immediate successors of a node $\nu$ are the \textsl{premises} of a rule \textsl{application}, the label at $\nu$ the \textsl{conclusion}.\newline\mbox{}\hfill\cite{MR1409368}, section 1.3.

By a derivation of $Y$ from $X$ in the system is meant a finite sequence of lines [\ldots] such that for each $i < n$, the line $X_{i+1}$ \textsl{is a direct consequence of the preceding line $X_i$ by one of the inference rules}.\newline\mbox{}\hfill\cite{MR1314201}, chapter XVII.

A formal proof in first-order logic is a finite sequence of statements of the form $X \proves{} Y$ each of which \textsl{follows from the previous statements by one of the rules we have listed}\ldots\newline\mbox{}\hfill\cite{MR2091075}, chapter 1.
\end{quote}

A symptom of this issue is that virtually every exposition of such matters tends to be rather wordy. It is very usual in other realms of mathematics to turn to symbols and strictly defined concepts even in textbooks (compare the neat definition of group in section 2.1 of
\cite{MR1375019}).
This suggests a pragmatic criterion for assessing the affinity of a treatment with standard set-theoretical language of mathematics, basing on the number and complexity of actual implementations of it in a computer-checked proof system adopting set-theoretical foundations. 
For first-order languages and deductive calculus only one such implementation already existed, and it is written in \M{} \cite{goedelcp}: we discuss its shortcomings in sections~\ref{RefSectLogicInMml} and \ref{RefSectDefSep}. 
One major drawback of \cite{goedelcp} is that it does not aim to be a general framework in which arbitrary rules can be inserted, rather it deals with provability with a fixed set of rules, with the only goal of getting to \G{}'s completeness theorem. 

The first task accomplished in this thesis is the formulation of 
first-order logic and sequent calculus in the standard mathematical
foundations of set theory
.
This is done in chapter~\ref{RefSectFormulation}.
Given the view, exposed above, that a good formulation should be effectively formalizable, we try to keep definitions set-theoretically simple, that is, invoking low-level entities. 
This is especially important for sequent calculus, as already discussed.  
Very few assumptions are made on the actual rules adopted, not even that of monotonicity. 
This is a departure from the only theory sharing some traits with the present one which the author is aware of, 
brought out by Tarski in \cite{tarski1923some,tarski1956foundations,tarski1923fundamental}; on other accounts, that theory is more general than the present one, being agnostic with respect to the type of calculus (Hilbert, natural deduction, sequent calculus, etc\ldots) adopted. 
The same chapter also tests this formulation against the proofs of cornerstone applications to model theory and proof theory, like satisfiability, \L{}-Skolem and completeness theorems.
\lnote{Ritoccata frase `we should stress'\ldots; spero di non avere stravolto il significato}
We should stress here that, while it is certainly obvious to every reader of a textbook on first-order logic that a deductive system can be formalized in set theory, frequently it is not so clear if the writer has even considered the problem of how to face that task. 
Thus the treatment results often in something quite regardless of the mathematization of the deductive system.

Chapter~\ref{RefSectFormalization} brings the effort a step further, testing all the contents of chapter~\ref{RefSectFormulation} even more concretely: it passes from the formulation there contained to its mechanically verified formalization, honoring the criterion hinted above.
Given our starting goal of supplying a mathematically-oriented, that is, set-theoretical, formalization of the foundations in themselves, it is natural to choose a verifier adopting set theory axioms and first-order logic.
This reduces the candidate verifiers to a handful, of which \M{} is surely the one with the largest library of already verified mathematics: \M{} Mathematical Library (\MML{}). 
Besides presenting the \M{} verified formalization, chapter~\ref{RefSectFormalization} aims to supply 
(notably in sections \ref{RefSectDefense}, \ref{RefSectRuleSep} and \ref{RefSectDefSep})
concrete instances and discussions of the thesis that in formalizing a piece of mathematics there is more than just precisely stating it and certifying its correctness: see \cite{boyer1994qed} and \cite{MR2463991} for general analysis of how much more there is.

Chapter~\ref{RefSectTechnical} discusses related issues in a more concrete context:
it gives \M{} examples of design principles stated in chapter \ref{RefSectFormalization} and showcases \M{} coding techniques of general applicability.
Notably, section \ref{RefSectAutomations} discusses some general  methods for the \M{} system, whose support for custom \A{} is usually regarded as poor (\cite{wiedijk2007qed}, section 4), aiming at bypassing, in limited circumstances, this shortage, and thus of possible interest for other \M{} users.

This work can also be viewed as a study of how the process of mechanically verifying some theory influences back the theory itself.
Although mechanization of mathematics presents some important differences with respect to writing common software, the main one being that producing executable code is no longer the final goal, it can bring some arguably beneficial factors from the realm of computer programming into the matter being mechanized.
First of all, since `controlling complexity is the essence of computer programming{}' (\cite{kernighan1981software}, page~311), one is led to eliminate all that is not strictly needed, and in general to find approaches minimizing the code to write. 
This has the side effect of accurately evaluating the point at which some notion or construct is really needed, and which results need which notion or construct.
Secondly, and relatedly, once one chooses a specific foundational framework, set theory in our case, he is brought to favor the employment of some theoretical toolkit in lieu of another, if the former is more naturally or more simply expressed in the chosen framework than the latter and, consequently, is somehow better supported by the software used. See point \eqref{RefItem01} of list below.

It is natural to wonder whether the consequences of adopting design principles like the ones stated above are of a merely technical nature, or rather influence the mathematics to an extent possibly interesting in its own sake.
Of such consequences, I put forward some I believe are 
of more than merely technical interest 
in the particular case of the present work, and refer the reader to the corresponding points of the text, and to related discussion scattered along chapter \ref{RefSectFormalization}: 
\begin{enumerate}
\item
The introduction of a definition of language with only two special symbols, and no need for constant symbols.
\item
The distinction between free and bound occurrences of a variable is not needed to prove the theorems mentioned above.
Indeed it is never stated in this work.
\item
Monotonicity of single inference rules can often be replaced by  monotonicity of a ruleset, which is a weaker condition.
Compare definitions \ref{RefDefMonotone1} and \ref{RefDefMonotone2}.
\item
\label{RefItem01}
The definition of sequent derivation and of proof can be substituted by those of derivability (\ref{RefDefDerivability}) and of provability (\ref{RefDefProvable}), respectively.
The latter, in turn, can be made without resorting to the notion of tree, which in set-theory is quite a high-level object, and instead basing on the notion of function iteration.
This alternative view is shown to be reconcilable with the standard, tree-based one by proposition \ref{RefThmTreesToDerivability}.
\end{enumerate}


\mainmatter

\chapter{A set-theoretical treatment of first-order logic}
\chaptermark{First-order logic in set theory}
\label{RefSectFormulation}


This chapter %
illustrates a way of expressing the building blocks of first-order logic in a standard set-theoretical background.
We will define the notions of first-order language, of formulas, of interpretation, of derivation rule, of derivability and provability.
We will also define how to evaluate a formula given an interpretation, how to extract subformulas, how to perform substitutions in a formula.
Finally, we will deploy this machinery to obtain satisfiability, completeness and \L{}-Skolem theorems, after having introduced a suitable set of derivation rules following our definitions.
In chapter \ref{RefSectFormalization} the task of concretely pouring this formulation into \M{} code will be faced. 


\section{Preliminaries}

In this section we fix most of the set-theoretic notations we will be using throughout the chapter. 
Most of them is certainly conventional; all the same we prefer to make sure that the reader is aware of the meaning of each involved symbol.

\label{RefNotationZero}
\hfill
\begin{enumerate}
\item
$\card{X}$ is the cardinality of the set $X$.
\item
$X \cartprod Y$ is the cartesian product of the sets $X$ and $Y$:
\begin{align*}
X \cartprod Y = & \left\{ \pair{x}{y}: x \in X, y \in Y \right\}.
\end{align*}
\item
$\N$, $\Z$ are the sets of natural numbers (including $0 = \emptyset$) and of the integers, respectively. We also write $\Z^+$ for $\N \sdiff \left\{ 0 \right\}$.
\item
$\dom P$ and $\ran P$ denote the domain and range of a given relation $P$.
\item
We will use the terms function, map and mapping interchangeably.
\item
\label{RefEq31}
$\mapsFromTo{X}{Y}$ is the set of the maps from $X$ into $Y$.

\item
Given sets $Y$ and $X$, $\indicator_X^Y$ is the \emph{characteristic function} (also known as \emph{indicator function}) of $X$, defined on $Y$:
\begin{align*}
\indicator_X^Y := \left( \left( Y \sdiff X \right) \cartprod \left\{ 0 \right\} \right)
\cup \left( \left( Y \cap X \right) \cartprod \left\{ 1 \right\} \right).
\end{align*}
Often, $X$ is declaredly a subset of $Y$ and one can write just  $\indicator_X$.

\item
Since $\powset{X} = \left\{ \indicator_{X'}^X: X' \C X \right\}$, it is in a one-to-one correspondence with the power set of $X$; hence we will also abusively write  
$\powset{X}$ for the power set of $X$. 
$\Finsets{n}{X}$ is the set of the subsets of $X$ having $n$ elements, and $\finsets{X} := \bigcup_{n \in \N} \Finsets{n}{X} \C \powset{X}$ is the set of the finite subsets of $X$.
\item
$\peel{X}$ is the map: 
\begin{align*}
\Finsets{1}{X} \ni \left\{ x \right\} \mapsto x \in X;
\end{align*}
often, we just indicate it with $\peel{}$.

\item
$\id{X}$ is the identity map on the set $X$: $\id{X} := \bigcup_{x \in X} \left\{ x \right\} \cartprod \left\{ x \right\}$.
\item
Given sets $X, Y, Z$, and $f \in \funcs{X \cartprod Y}{Z}$,
the unique $F \in \mapsFromTo{X}{\left( \mapsFromTo{Y}{Z}  \right)}$ such that $\left( F \left( x \right)  \right) \left( y \right) = f \left( \pair{x}{y} \right) \forall x \in X , y \in Y$ 
is the currying (known also as sch\"onfinkeling) of $f$. We denote as 
$\curry{f}{x} \in \funcs{Y}{Z}$ its value in $x \in X$: 
\begin{align*}
\curry{f}{x} : Y \ni y \mapsto f \left( \pair{x}{y} \right).
\end{align*}
\end{enumerate}

\begin{Not}
\lnote{Resi $\im{P}{X} $ e $ \restrict{P}{X} $ in forma equazionale.}
Consider a relation $P$ and a set $X$.
We write
$ \restrict{P}{X}$ for the restriction of $P$ to $X$:
\begin{align*}
\lnote{() e riconduci}
\restrict{P}{X} :=
\left( X \cartprod \rng P \right) \cap P
,
\end{align*}
and 
$\im{P}{X}$ for the set of those  elements of $\ran{P}$ corresponding through $P$ to some element of $X$:
\begin{align*}
\im{P}{X} := 
\rng \left( 
\restrict{P}{X} 
\right)
.
\end{align*}

\end{Not}

\begin{Not}
\label{RefNotationCompositions}
$\relcomp$ is the infix symbol for the composition of relations:
$ \im{\left( Q \relcomp P \right)}{X} = \im{P}{\im{Q}{X}} $.
\\
$\funccomp$ is the infix symbol for the composition of functions:
$
g \funccomp f: x \mapsto g \left( f \left( x \right) \right)
$
\end{Not}

\begin{Rem}
\M{} provides one single symbol to denote both relation and function compositions, being able to resolve ambiguities thanks to the typing of the arguments it is applied to.
This resolution would require an extra effort to the reader, so we chose to adopt distinct symbols in \ref{RefNotationCompositions}.
\end{Rem}

\begin{Not}
Given a set $\mathcal{P}$ all elements of which are relations, we define
\begin{align*}
\symbof{\mathcal{P}} := \bigcup_{P \in \mathcal{P}} \ran{P}.
\end{align*}
\end{Not}

\begin{Not}
If $P$ is a relation such that $\ran P \C \dom P $, we can refer to the $n$-th iteration of $P$ for any given $n \in \N$.
We write it as
\begin{align*}
\iter{P}{n}.
\end{align*}
\end{Not}

\begin{Not}[`Functional pasting with right-hand precedence{}']
\label{RefDefPaste}
Given relations $Q$, $P$, set
\begin{align*}
Q \paste P := Q \sdiff \left( \dom P \cartprod \left( \rng Q \right) \right) \cup P.
\end{align*}
\end{Not}

\begin{Rem}
Given two functions $f$, $g$: 
\begin{itemize}
\item
$f \paste g$ is a function;
\item
if $f$ and $g$ agree on $\dom f \cap \left( \dom g \right)$, then $ f \paste g$ $= f \cup g $.
\end{itemize}
\end{Rem}

\begin{Def}[Simple substitution]
\label{RefDefSimpleSubst}
Given $y, y'$ and a function $f$, we define
\begin{align*}
\symbsubst{y}{y'}{f} 
:= \left( \id{\rng f} \paste \left\{ \pair{y}{y'} \right\} \right) \funccomp f \in \mapsFromTo{\dom f} {\left( \rng f \sdiff \left\{ y \right\} \cup \left\{ y' \right\} \right)}.
\end{align*}
\end{Def}

\begin{Def}
\label{RefDefTuple}
Given $n \in \N$, a $n$-\emph{\tuple}\index{\tuple@\tuple} (or just \tuple{}) is a function having $\left\{ j \in \N: j < n \right\}= n$ as a domain. 
By notation \eqref{RefEq31} introduced on page \pageref{RefNotationZero}, then, $X^n$ is the set of all $n$-\tuple s valued in $X$. 
We set $X^+ := \bigcup_{n \in \Z^+} X^n$, and $X^* := X^+ \cup  \left\{ \emptyset \right\}  $.
We will also refer to an element of $X^n$ or $X^*$ as a ($n$-)tuple on $X$.
\end{Def}

\begin{Def}
Given two \tuple s $p, q$, we set 
\begin{align*}
p \conc q :=
\begin{cases}
p & q=\emptyset
\\
p \cup \left(  q \funccomp 
\left\{ \pair {\card{p}}{0},  \ldots, \pair{\card{p}+\card{q}-1}{\card{q} -1} \right\}
\right)
& \text{ otherwise, }
\end{cases}
\end{align*}
that is
\begin{align*}
p \conc q := p \cup \left(  q \funccomp  \left( \restrict{ \left( x \mapsto x - \card{p}  \right) }{\left( \card{p}+\card{q} \right) \sdiff \card{p}} \right)  \right).
\end{align*}
\end{Def}

Note that
\begin{enumerate}
\item
$p \conc q$ is still a \tuple: the functions $p$ and $\left( \restrict{ \left( x \mapsto x - \card{p}  \right) }{\left( \card{p}+\card{q} \right) \sdiff \card{p}} \right)$ have as domains respectively $\card{p}$ and $ \left( \card{p} + \card{q} \right) \sdiff \card{p}$: being the latter mutually disjoint, $p \conc q$, as a union of the former functions, is still a function; moreover, its domain is precisely the union of 
$\card{p}$ and $ \left( \card{p} + \card{q} \right) \sdiff \card{p}$.
\item
$\ran \left( p \conc q \right) = \left(  \ran p  \right) \cup \ran q$.
\end{enumerate}
Hence the mapping $ \pair{p}{q} \mapsto p \conc q$ is a binary operation on $X^*$:

\begin{Def}
Given $X$, set 
$\conc_X := X^* \cartprod X^* \ni \pair{p}{q} \mapsto p \conc q.
$
\end{Def}

$\conc$ is associative. That is:
\begin{align*}
\left( p \conc q \right) \conc r = p \conc \left( q \conc r \right)
\end{align*}
for any three \tuple s $p, q, r$. 
This permits to consider $\left( X^*, \conc_X, \emptyset \right) $ as a monoid, also abusively indicated with $X^*$.
Similarly, $X^+$ will be also used to denote the sub-semigroup 
$ \left( X^+, \restrict{\left( \conc_X \right) }{ \left(  X^+  \right)} \right) $ of $X^*$ on $X^+$. 

Thanks to its associativity, $\conc_X$ naturally yields a homomorphism 
$(X^*)^* \to X^*$, which restricts to a homomorphism $(X^+)^+ \to X^+$; 
both are denoted by \label{RefDefMultiCat} $\fconc_X$.

\begin{Not}
\label{RefNotationConc}
When no ambiguity arises, we reserve to employ the following shorthand  notations, writing
\begin{enumerate}
\item
\label{Ref063}
$x$ instead of $\left\{  \pair{0}{x}  \right\} \in X^1 \C X^+$;
\item
\label{Ref064}
$p q$ in place of $p \conc q$;
\item
$p \conc q \conc r$ for $\left( p \conc q  \right) \conc r= p \conc \left( q \conc r \right)$.
\item
\label{Ref067}
$\conc$ instead of $\conc_X$; 
\item
\label{Ref068}
$\fconc$ instead of $\fconc_X$.
\end{enumerate}
\end{Not}

\begin{Rem}
It would be natural to add to the ones in \ref{RefNotationConc} the further shorthand notation identifying the distinct mappings $\conc$ and $\fconc$ under one symbol. We refrain from doing so: those distinct functions will occasionally appear together, so being able to resolve between them arguably adds clarity when this happens.
\end{Rem}

\section{Languages}
\label{RefSectSyntax}
\begin{Def}
\label{RefDefLanguage}
A \emph{language}\index{language} is a triple  $\left( \ari, \equiv, \nor \right)$, where $\ari$ is an integer-valued function  and $\equiv$ is an element of its domain, such that
\begin{enumerate}
\item
\label{RefEq30}
${\ari} \left( \equiv \right) = -2$;
\item
\label{RefEq28}
$ \nor \notin \dom {\ari}$;
\item
\label{RefEq29}
${\ari}^{-1} \left( \left\{ 0 \right\} \right)$ is not finite.
\end{enumerate}
\end{Def}

\begin{Not}
\label{RefNotationLanguage}
\hfill\\
\begin{itemize}
\item
$\ari$ is called the \emph{arity}\index{arity} 
of the language, and $\left\{ \nor \right\} \cup \dom {\ari}$ is called the \emph{symbol set} of the language.
\item
$\equiv$ is called the \emph{equality symbol}\index{equality symbol} of the language, and $\nor$ the \emph{logical connective}\index{logical connective} of the language.
\item
Given a language $S$, we also denote by $S$ its symbol set (so that, e.g. $S^*$ is the free monoid on the latter, and $\conc_S$ the operation of this monoid); 
when needed, we may use a subscript to  refer explicitly to the arity, equality symbol or logical connective of $S$: $S =\left( \ari_S, \equiv_S, \nor_S \right)$.
\item
The elements of $\#_S^{-1} \left( \left\{ 0 \right\} \right)$ are called the \emph{literals} of $S$, those of $\#_S^{-1} \left( \Z \sdiff \left\{ 0 \right\} \right)$ its \emph{compounders}.
\end{itemize}
\end{Not}

\begin{Def}[The set of terms of depth not exceeding $n$]\hfill\\
\label{RefDefTerms}
\lnote{Mancava parentesi}
Given a language $S$, we recursively construct the following countable family of sets of tuples on $S$:
\begin{align*}
\terms{S,0} & := 
\mapsFromTo{1}
{
\left( 
\im{\ari^{-1}} { \left\{ 0 \right\} }
 \right)
}
\\
\terms{S,n+1} & := \terms{S,n} \cup  
\bigcup_{o \in \im{\ari^{-1}}{ \Z^+ }} \im{\conc}
{ \left\{
\left\{ 
\pair{0}{o}
\right\}
\right\} \cartprod \im {\fconc} {\left( \terms{S,n} \right)^{\ari\left( o \right)} } }.
\end{align*}
\end{Def}

\begin{Def}[Terms of a language]
\index{terms}
\begin{align*}
\terms{S} &:= \bigcup_{n \in \N} \terms{S,n}.
\end{align*}
\end{Def}

\begin{Def}[The set of formulas of depth not exceeding $n$]\hfill\\
\label{RefDefWffs}
Given a language $S$, we recursively construct the following countable family of tuples on $S$:
\begin{align*}
\wffs{S,0} &:= \U_{ r \in \im{\ari^{-1}} { \Z^- } } 
\im{\conc} 
{ \left\{ 
\left\{ 
\pair{0}{r} 
\right\}
\right\} \cartprod 
\im{\fconc}
{\left( \terms{S} \right)^{ \left| \ari \left( r \right) \right| }} 
}
\\
\wffs{S,n+1} & := \wffs{S,n} \cup 
\im{\conc}
{\left\{ 
\left\{ 
\pair{0}{\nor} 
\right\}
\right\} \cartprod
\im{\conc}{
\wffs{S,n} \cartprod \wffs{S,n}}
}
\cup 
\im{\conc}{  
\mapsFromTo{1}{\left( \im { \ari ^ {-1} }{\left\{ 0 \right\}}  \right) }
\cartprod \wffs{S,n} }.
\end{align*}
\end{Def}


\begin{Def}[The formulas, or well-formed tuples, or wffs of a language]
\index{formulas}
\begin{align*}
\wffs{S}:= \bigcup_{n \in \N} \wffs{S,n}.
\end{align*}
\end{Def}

\begin{Def}[Depth of a term and of a formula]
The \emph{depth} of a term\index{depth of a term} $t$ of $S$ is written $\depth{t}$, and defined as the least $n \in \N$ such that $t \in \terms{S,n}$.
\\
The depth of a formula\index{depth of a formula} $\psi$ of $S$ is written $\depth{\psi}$, and defined as the least $n \in \N$ such that $\psi \in \wffs{S,n}$.
A formula of depth zero is said to be \emph{atomic}\index{atomic formula}.
\end{Def}

\begin{Def}
\label{RefDefSequent}
Given a language $S$, we consider the set
\begin{align*}
\seqs{S} := \finsets{\wffs{S}} \cartprod \wffs{S}.
\end{align*}
An element $\pair{\Gamma}{\varphi}$ of $\seqs{S}$ is called a \emph{sequent}\index{sequent} of the language $S$;
 $\Gamma$ is styled the \emph{antecedent}\index{antecedent} of the sequent, $\varphi$ its \emph{succedent}\index{succedent}.
\end{Def}

\section{Comments and an example}
\label{RefSectExampleLanguage}
The definition of a first order language presented here, and the subsequent ones, have been devised with an eye to \M{} formalization: as little and as basic as possible objects were pushed into them.
In particular, the following points should be emphasized:

\begin{itemize}
\item
The first design choice is to use polish notation: for example $x>y+z$ becomes $>x+yz$. 
This is a common choice in software and in formalization for its simplicity;  both \cite{QC_LANG1} and \cite{ZF_LANG} adopt it as well.
\item
There is no quantification symbol. 
This does not mean that we cannot quantify, of course:
\emph{existential} quantification is indicated by heading a formula with a literal symbol, and this gives rise to no ambiguity.\\ 
Of course, universal quantification can be rendered via existential and negation constructs, as is customarily done; we shall soon an applied instance of this in the example about group axioms below.
\item
There is no native distinction between free and bound variables. 
What's more, there is not even a distinction between variables and constants symbols. 
There are only symbols of arity zero, which are called literals, and symbols of non zero arity, called compounders. 
To be more precise, the distinction is left to the semantics, in the sense that a constant becomes a variable exactly when it is caught by quantification inside a formula.
\item
Arity yields \emph{signed} natural numbers, with the convention that negative arity symbols are relational (predicate) compounders and positive arity symbols are operational compounders. 
The absolute value of the arity will indicate the actual arity of the compounder.
In many treatments, (even inside \M{}'s library, see \cite{QC_LANG1}) there are no operational symbols, which can always semantically be emulated by relational (predicate) symbols, but this makes the definition of well-formed formulas (wff) and, most importantly, that of \free{} interpretation, trickier. 
\item
There is only one logical connector, that is NOR, here denoted by `Peirce arrow' ($\nor$).
This suffices since NOR is universal (functionally complete), as is its dual NAND ($ \uparrow $ or `Sheffer stroke').
\item
Term substitution, \ref{RefDefTermSubst}, will be defined by leveraging the pre-existing notions of reassignment, of evaluation of an interpretation, and of \free{} interpretation.
Additionally, simple substitution, \ref{RefDefSimpleSubst}, is preferred to it when sufficing, as in definition of
$\witnessRel{}$, \ref{RefDefWitnessed}, and of rule $\rWitnessA$, see \ref{RefDefRules}.
\end{itemize}
Therefore, in definitions regarding syntax and semantics, we can take advantage of dealing with just two special symbols: equality and NOR; notably in treating wff formulas and evaluation (see \ref{RefSemantics}), this will be a life-saving simplification. 

To give one among the simplest illustrations, let us rephrase in this language the group axioms, using $\N$ as a symbol set, $1$ as $\equiv$, $0$ as $\nor$, and an arity $f \colon \Z^+ \to \Z$ given by

\begin{align*}
f(n) := &
\begin{cases}
\textsf{-2} & \text{ if n=1 }
\\
\textsf{2} & \text{ if n=2 }
\\
\textsf{0} & \text{ otherwise }
\end{cases}
\end{align*}

Direct translation might result bewildering, so let us first list axioms in standard human-friendly form (on the left in the table below) and in an intermediate jargon made by combining polish notation with shortcut symbols $\exists, \forall, =, +$ for quantifiers and compounders:
\begin{align*}
\forall a,b,c \ a (bc) = (ab)c && \forall 3 \forall 4 \forall 5 \ =+3+45++345
\\
\forall a \ ea=a && \forall 4 \ =+344
\\
\forall a \exists b \ ba=e && \forall 4 \exists 5 \ =+543.
\end{align*}
Finally, we pass to the real coding first by rendering $\forall x \phi$ as $\lnot \exists x \phi$, $\lnot \phi$ as $\nor \phi \phi$, $\exists x \phi$ as $x \phi$, and subsequently by substituting $=$, $+$ respectively with $1$, $2$, in the end obtaining some nasty strings:

\begin{align}
\label{RefGroupTheory}
\begin{aligned}
\begin{aligned} 
03040512324523425512324523425405123245234255123245234253
\\
04051232452342551232452342540512324523425512324523425
\end{aligned}
\\
0412344412344
\\
045124534512453,
\end{aligned}
\end{align}

where the first, exceedingly long axiom has been split across two lines.

This shows how the absence of auxiliary boolean connectors and quantifiers makes even trivial formulas go wildly verbose.
Note that none of the three axioms uses more than seven literals, so we have been able to unambiguously use decimal representation for $\mathbb{N}$.
Also compare the role of the symbol '$3$' in expressing first and second axioms: in the first case it is quantified and thus used as a variable, while in the second it acts as a constant (the unity of the group) since it is not quantified. 
Not having distinguished between constants and variables permits reusing a literal symbol in both ways, as long as the corresponding constant does not appear in the formula in which the symbol is used as a variable.
Given our goals, we do not care much about readability of the language: all that matters is that any first-order theory is expressible in the language, and that a proof calculus being both sound and \emph{complete} (that is, powerful enough to prove any consequence of a first-order theory) is provided, which we did with completeness theorem \ref{RefThmCompleteness2}.
Under these constraints, we sought for the design maximizing simplicity and neatness of formalization.

\section{Formal definition of derivation rule}

\begin{Def}[Rules and rulesets]
\label{RefDefRule}
A \emph{derivation rule}\index{rule}, or \emph{inference rule} for $S$ is any map 
$\powset{\seqs{S}} \to \powset{\seqs{S}}$.
A \emph{ruleset}\index{ruleset} of $S$ is a set of derivation rules, that is, a subset of $\mapsFromTo{\left(\powset{\seqs{S}}\right)} {\left( \powset{\seqs{S}} \right)}$.
\end{Def}
\begin{Not}[Character reservations; abbreviations for writing sequents]
\hfill\\
\label{RefReserve}
\begin{itemize}
\item
As a rule, we will use the letter $S$ to indicate a language, and $X$ to indicate a generic set. 
\item
We conventionally agree to reserve (unless otherwise specified) some characters according to the type of $S$-related objects we will want to denote:
\begin{itemize}
\item
$s$ for an element of $\dom \ari_{S}$,
\item
$v$ for a literal,
\item
$w$ for a tuple on $S$,
\item
$t$ for a term,
\item 
$\Gamma$ for a \emph{finite} set of formulas,
\item
$\varphi, \psi$ for a formula,
\item
$\Psi$ for a set of formulas,
\item
$\sigma$ for a sequent,
\item
$\Sigma$ for a set of sequents,
\item
$R$ for an inference rule, and
\item
$D$ for a ruleset.
\end{itemize}
Subscripts or superscripts will be added when needed.
\item
A sequent $\pair{\Gamma}{\varphi}$ will be often represented as $\Gamma \quad \vdash \quad \varphi$.
\item
When writing a sequent, the following abbreviations can be adopted:
\begin{align*}
\Gamma_1 \quad \Gamma_2 \quad \vdash \quad \varphi && \text{ in lieu of }&& \Gamma_1 \cup \Gamma_2 \quad \vdash \quad \varphi
\\
\Gamma \quad \psi \quad \vdash \quad \varphi && \text{ in lieu of }&& \Gamma \cup \left\{ \psi \right\} \quad \vdash \quad \varphi.
\end{align*}
\item
The turnstile symbol $\left( \vdash \right)$ parting antecedent from succedent can be omitted when adopting the foregoing abbreviations for writing a sequent.
\end{itemize}
\end{Not}

\begin{Ex}
Consider $\Gamma_1 := \left\{ \psi_1, \psi_2 \right\}, \Gamma_2:= \left\{ \psi_3 \right\}, \Gamma := \Gamma_1 \cup \Gamma_2$.
\\
Here is a list of some of the notations rendering the sequent $ \pair{\Gamma}{\varphi}$, obtainable by combining shorthand notations introduced in  \ref{RefReserve}:
\begin{align*}
\psi_1 \quad \psi_3 \quad \psi_2 \quad  \psi_3 \quad \vdash & \quad \varphi
\\
\Gamma_1 \quad \Gamma_2 \quad  \vdash & \quad \varphi
\\
\left\{ \psi_1, \psi_2 \right\} \quad \psi_3 \quad  \vdash & \quad \varphi
\\
\left\{ \psi_1, \psi_2, \psi_3 \right\} \quad  \vdash & \quad \varphi
\\
\psi_1 \quad \psi_2 \quad \psi_3 & \quad \varphi.
\end{align*}
\end{Ex}

\subsection{An example of ruleset}

\begin{Def}
\label{RefDefRules}
We introduce some particular derivation rules of the language $S$ by specifying the way each acts on a given $\Sigma \C \seqs{S}$:

\noindent\hrulefill
$$\begin{array}{r@{}c@{}l}
\rAssumption \left( \Sigma \right) &:= 
& \left\{ \pair{\Gamma}{\varphi} : \Gamma = \left\{ \varphi \right\} \right\}
\\[1ex]
\rThin \left( \Sigma \right) &:=
& \left\{ \pair{\Gamma}{\varphi} :
\exists \pair{\Gamma'}{\varphi} \in \Sigma \st \Gamma' \C \Gamma
\right\}
\\[1ex]
\rEqRefl \left( \Sigma \right) &:=
& \left\{ \pair{\Gamma}{\varphi} :
\exists t \st \Gamma = \emptyset \text{ and } \varphi=\equiv t t
\right\}
\\[1ex]
\rEqSymm \left( \Sigma \right) &:=
& \left\{ \pair{\Gamma}{\varphi} :
\exists t_1, t_2 \st 
\Gamma=\left\{ \equiv t_1 t_2 \right\} \text{ and } \varphi= \equiv t_2 t_1 
\right\}
\\[1ex]
\rEqTrans \left( \Sigma \right) &:= 
& \left\{ \pair{\Gamma}{\varphi} :
\exists t_1, t_2, t_3 \st \Gamma=\left\{ \equiv t_1 t_2, \equiv t_2 t_3 \right\} \text{ and }
\varphi=\equiv t_1 t_3
\right\}
\\[1ex]
\rFunc \left( \Sigma \right) &:=
&
\{ \pair{\Gamma}{\varphi} :
\exists n \in \Z^{+}, s \in S, 
\vec t, \vec t' \in \left( \terms{S} \right)^n \st
\varphi= \equiv s \fconc \left( \vec t \right)    s  \fconc \left( \vec t' \right)
\text{ and }
\\[1ex]&&
\multicolumn1r
{\Gamma = \left\{ \equiv \vec t \left( j \right) \vec t' \left( j \right),
j \in n
\right\}
\}}
\\[1ex]
\rRel \left( \Sigma \right) &:=
&
\{ \pair{\Gamma}{\varphi} :
\exists n \in \Z^+, s \in S, \vec t, \vec t' \in \left({\terms{S}}\right)^n \st 
\varphi= s \fconc \left( \vec t' \right)
\text{ and } \\[1ex]&&
\multicolumn1r
{n= - \ari \left( s \right) \text{ and }
\Gamma = 
\left\{ \equiv \vec t \left( j \right) \vec t' \left( j \right), 
j \in n
\right\} 
\cup 
\left\{ s \fconc \left( \vec t \right) \right\}
\}}
\\[1ex]
\rNor \left( \Sigma \right) &:=
& \left\{ \pair{\Gamma}{\varphi} :
\exists \varphi_1, \varphi_2, \varphi_3, \varphi_4 \in \wffs{S} \st \Gamma = \left\{  
\nor {\varphi_1} {\varphi_2} , \nor {\varphi_3} {\varphi_4}
\right\} \text{ and } \varphi= \nor {\varphi_2} {\varphi_3}
\right\}
\\[1ex]
\rWitnessA \left( \Sigma \right) &:=
&
\left\{ \pair{\Gamma}{\varphi} :
\exists v, v_1, v_2, \psi, \Gamma' \st \left( \Gamma' \cup \left\{ \symbsubst{v_1}{v_2}{\psi} \right\}, \varphi \right) \in \Sigma
\text{ and } 
\right.
\\[1ex]&&
\multicolumn1r
{
\varphi = \xnot {\equiv v v }\text{ and } 
\Gamma=\Gamma' \sdiff \left\{\symbsubst{v_1} {v_2}{\psi}  \right\} \cup \left\{ v_1 \psi \right\}
\text{ and }}
\\[1ex]&&
\multicolumn1r
{\left. v_2 \notin \symbof{\Gamma' \cup \left\{ \psi \right\}}
\right\}}
\\[1ex]
\rCut \left( \Sigma \right) &:=
&
\{ \pair{\Gamma}{\varphi} :
\exists \psi_1, \psi_2 \st 
\pair{\Gamma \cup \left\{ \psi_1 \right\}}{\psi_2},
\pair{\Gamma \cup \left\{ \psi_1 \right\}}{\nor \psi_2 \psi_2} \in \Sigma
\\[1ex]&&
\multicolumn1r
{\text{ and } \varphi = \xnot{\psi_1}
\}}
\\[1ex]
\rIntuitionisticNightmare \left( \Sigma \right) &:=
& \left\{ \pair{\Gamma}{\varphi} :
\pair {\Gamma}{\xnot {\xnot \varphi}} \in \Sigma
\right\}. 
\end{array}$$

\end{Def}

\begin{Not}
When wanting to express the particular language $S$ relative to which one of the rules defined in \ref{RefDefRules} is to be meant, we adjoin its name $S$ to the rule's subscript, as in 
${\rEqRefl}_{, S}$.
\end{Not}

\section{Formal definitions of derivability and provability}
\label{RefSectProvability}
If we want to formalize results about completeness of first-order languages in a first-order language like \M or set theory, we first have to rigorously define in it what a proof is.
It turns out that it is both sufficient and convenient to establish the notion of provability rather than that of proof.



\begin{Def}
\label{RefDefOneStep}
Given a ruleset $D$ of $S$, we define the following derivation rule of $S$:
\begin{align}
\onestep{D} : \Sigma \mapsto 
\bigcup_{R \in D} R\left( \Sigma \right).
\end{align}
\end{Def}





\begin{Def}
\label{RefDefDerivability}
A sequent belonging to $\derivables {n}{D} \left( \Sigma  \right)$ will be said to be \emph{derivable}\index{derivability} from $\Sigma$ through $D$ in $n$ steps.

The set of all sequents derivable from $\Sigma$ through $D$ will be indicated with $\Derivables{D} \left( \Sigma \right)$:
\begin{align*}
\Derivables{D} \left( \Sigma \right) := \bigcup_{n \in \N} \derivables{n}{D} \left( \Sigma \right).
\end{align*}
\end{Def}

\begin{Def}[Formal definition of provability]
\label{RefDefProvable}
Given $S$, $X$ and $D$, we set 
\begin{align*}
\provables{D} \left( X \right) := \projR 
\left( 
\powset{X} \cartprod \wffs{S} \cap 
\left( \Derivables{D} \left( \emptyset \right)  \right) \right) \C \wffs{S}.
\end{align*}
As well as $\varphi \in \provables{D}\left( X \right)$, one can also write $X \proves{D} \varphi$,
and say that $X$ proves $\varphi$ in $D$, or that $\varphi$ is provable\index{provability} from $X$ in $D$.
\end{Def}

\begin{Rem}
\label{RefDefProvable2}
Equivalently, 
$X \proves{D} \varphi $ \If there is a sequent $ \pair{\Gamma}{\varphi} \in \Derivables{D} \left( \emptyset \right) $ such that $\Gamma \C X$.
\\
Alternatively, since $ \derivables{0}{D} \left( \emptyset \right) = \emptyset$, 
$X \vdash_D \varphi$ 
if and only if there are $n \in \N$, $ \Gamma \in \finsets{\wffs{S}}$ such that $\pair{\Gamma}{\varphi} \in \derivables{n+1}{D} \left( \emptyset  \right).$ 
\end{Rem}


\begin{Rem}
In \ref{RefDefSequent} we defined  sequents of $S$ as having for an antecedent a finite subset of $ \wffs{S} $%
.
Other conventions are to define sequents having either multisets or tuples of formulas as an antecedent. 
The one adopted here, however, involves lower-level objects than the other two, if one works in a set-theoretical  formal framework as we are doing.
Moreover, it allows dispensing with introducing exchange and contraction rules.
\end{Rem}


\begin{Def}
$X$ is said to be \emph{deductively closed}\index{closed}\index{deductively closed} with respect to $D$ (or just $D$-closed) if%
\begin{align*}
\provables{D} \left( X \right) \C X.
\end{align*}
\end{Def}

\section{Justification of diagrams}
\label{RefSectDiagrams}
\begin{Def}
\label{RefDefMonotone1}
A rule $R$ of $S$ is said to be \emph{\monotone{}}\index{\monotone{} rule} if it is monotone with respect to the partial order $\C$ of $\seqs{S}$; that is, for any $ \Sigma_1, \Sigma_2 \C \seqs{S} $ such that $\Sigma_1 \C \Sigma_2$, it is:
\begin{align*}
R \left( \Sigma_1 \right) \C R \left( \Sigma_2 \right). 
\end{align*}
\end{Def}

\begin{Rem}
Any hypothesis requesting some rule to be \monotone{} will always be made explicit.
However, all the concrete examples of rule we will introduce will be \monotone{}.
This will be often exploited without explicit mention.
\end{Rem}

\begin{Def}
\label{RefDefRuleMajor}
Given a derivation rule $R$ of $S$ and $n \in \N$, we write 
\begin{align*}
R \leq n
\end{align*}
to mean that for any $\Sigma_2 \C \seqs{S}, \sigma \in R \left( \Sigma_2 \right)$, there is $ \Sigma_1 \C \Sigma_2$ with $\card{\Sigma_1} = n$ such that 
$\sigma \in R \left( \Sigma_1 \right)$.
In this case we say that $n$ is an \emph{upper bound}\index{upper bound for a rule} for $R$.

If $R \leq 0$ we say $R$ is an \emph{axiom}\index{axiom}.
\end{Def}

All the rules introduced in \ref{RefDefRules} are \monotone\ and have $2$ as an upper bound (some even admit $1$ as an upper bound, with many  being just axioms): roughly speaking, this means that each sequent belonging to the image of a given $\Sigma$ through one of those rules can be derived by applying that rule just to a suitable subset of $\Sigma$ having cardinality either $0$ (for those rules being axioms), $1$ or $2$.

This allows us to introduce schematic diagrams succinctly illustrating how each of our rules work by a graphical arrangement describing its action on a given generic pair of sequents (or either respectively on a single sequent or on the empty set).
This description is done simply by listing above a horizontal line the input sequent(s), if any, and below it the output sequent:\index{diagram representation of a rule} 

\begin{center}
\label{RefDefRuleDiagrams}
\begin{align}
\notag
\\
\notag
\begin{aligned}
\rAssumption
	\begin{aligned}
	\\
	\hline
	\varphi && \vdash &&  \varphi
	\end{aligned}
&&
\rThin
	\begin{aligned}
	\Gamma && \vdash && \varphi
	\\
	\hline
	\Gamma' && \vdash && \varphi
	\end{aligned}
&& \text{where } \Gamma \subseteq \Gamma'
\end{aligned}
\\\notag
\\ 	
\notag
\begin{aligned}
\rEqRefl
	\begin{aligned}
	\\
	\hline
	\hphantom {\eq t} && \vdash && \eq t
	\end{aligned}	
&&
\rEqSymm
	\begin{aligned}
	\\
	\hline
	\equiv t_1 t_2 &&  \vdash && \equiv t_2 t_1
	\end{aligned}
&&
\rEqTrans
	\begin{aligned}
	\\
	\hline
	\equiv t_1 t_2 && 
	\equiv t_2 t_3 &&  \vdash && \equiv t_1 t_3
	\end{aligned}
\end{aligned}
\\\notag
\\
\notag
\begin{aligned}
\rFunc
	\begin{aligned}
	\\
	\hline
	\equiv t_1 t'_1 && \ldots && \equiv t_n t'_n
	&& \vdash && 
	\equiv s t_1 \ldots t_n s t'_1 \ldots t'_n
	\end{aligned}
&& 
\text{where } n= \ari (s) \in \Z^{+}
\\\notag
\\
\rRel
	\begin{aligned}
	\\
	\hline
	s t_1 \ldots t_n && \equiv t_1 t'_1 && \ldots && \equiv t_n t'_n
	&& \vdash && 
	s t'_1 \ldots t'_n
	\end{aligned}
&& 
\text{where } n= - \ari (s) \in \mathbb{Z}^+
\end{aligned}
\\\notag
\\
\notag
\rNor 
\begin{aligned}
\\
\hline
\nor {\varphi_1} {\varphi_2} && \nor {\varphi_3} {\varphi_4} && \vdash && \nor {\varphi_2} {\varphi_3}
\end{aligned}
\\\notag
\\
\notag
\begin{aligned}
\rWitnessA
	\begin{aligned}
	\Gamma && \symbsubst{v_1}{v_2}{\varphi} && \vdash && \xnot {\equiv v v}
	\\
	\hline
	\Gamma && v_1 \varphi && \vdash && \xnot {\equiv v v}
	\end{aligned}
&& \text{where $v_2$ does not occur in $\Gamma, \varphi $}
\end{aligned}
\\\notag
\\
\notag
\begin{aligned}
\rCut
	\begin{aligned}
	\Gamma && \varphi && \vdash && \psi 
	&& && &&
	\Gamma && \varphi && \vdash && \nor \psi \psi
	\\
	\hline
	&& && && \Gamma &&   && \vdash && \nor \varphi \varphi
	\end{aligned}
&&
\rIntuitionisticNightmare
	\begin{aligned}
	\\ 
	\Gamma && \vdash &&  \xnot{\xnot \varphi}
	\\
	\hline
	\Gamma && \vdash && \varphi
	\end{aligned}
\end{aligned}
\end{align}
\end{center}

We lastly observe that such a suggestive representation of rules is effective  because each of the latter works in a syntactically simple manner: hence its action is immediately conveyed by glancing at the variations of the morphological patterns between the sequent schematas above and below the horizontal line.

This is one of the reasons for splitting derivations into several applications of different rules: otherwise we could have helped the trouble of introducing the definitions of a ruleset $D$ and of the derived rule $\onestep{D}$ (see \ref{RefDefOneStep}), and rather state directly \ref{RefDefDerivability} and \ref{RefDefProvable} in terms of a single generic, comprehensive rule taking the place of $\onestep{D}$.

\subsection[Derivation trees. Proofs]{Justification for the introduction of derivation trees. Formal definitions of derivation and proof}
\subsubsection{Motivation}
Although the notions of derivability and provability of \ref{RefSectProvability} will turn out, throughout chapters \ref{RefSectFormulation} and \ref{RefSectFormalization}, to be perfectly sufficient to formalize (see~\cite{fomodel4}) all our results, a human is usually more comfortable in carrying out and conveying reasonings involving those notions if he  adopts some interface to them more resembling a calculation.
To this end, we will obtain a graphical representation of such calculi in form of oriented trees, which matches the diagrams introduced in \ref{RefSectDiagrams}.
We start with a rather elementary notational convention.
%
%
For a generic rule $R$ and sequents $\sigma_1, \sigma_2$, instead of writing $\sigma_2 \in R \left( \left\{ \sigma_1 \right\} \right)$, we just write   
\begin{align*}
\frac{\sigma_1}{\sigma_2} & R.
\end{align*}
Now, the convenience we gain is that such writings can be `piled up{}', resulting in a more natural way of expressing a succession of rule applications.
When dealing with rules not all of which are bounded by $1$, such `piles{}' become \emph{trees}\index{derivation tree}.

\subsubsection{Formal definitions}

The aforementioned trees, which will be referred to as \emph{derivations}, can be rigorously defined in terms of derivability (\ref{RefDefDerivability}) and of a basic subset of the usual gear of graph theory.
First of all we note that we need the assumption that the rules
involved are monotone to proceed. In fact the fitting notion is for rulesets.

\begin{Def}
\label{RefDefMonotone2}
\index{\monotone{} ruleset}
A ruleset $D$ is said to be \emph{\monotone{}}\index{\monotone{} ruleset} \If the rule $ \onestep{D}$ is \monotone{}.
\end{Def}

Now the reader may want to consult some reference on graphs (e.g., \cite{knuth1997art}, section~2.3.4.2, `Oriented trees') for the few standard definitions and results about trees we will need in what follows.

\begin{Not}
Given an oriented tree $T := \left( V, E \right)$, we denote with $\depth{T}$ its depth, with
$\Root{T}$ the root of $T$, that is the only element of $ V \sdiff \ran{E}$, and with 
$\leaves{T} $ the set $V \sdiff \dom {E}$ (that is, the set of the leaves of $T$ %
).
\end{Not}

\begin{Def}[Recursive definition of a derivation tree]
\label{RefDefDerivationTree}
Let $T := \left( V, E \right)$ be an oriented tree with $n+2$ vertices for some $n \in \N$. 
Denote as $ r_1, \ldots, r_l$ the distinct elements of  
$ \im{E}{\left\{ \Root{T} \right\}}$ (that is, the vertices of $T$ having depth $1$),
with $T_j, j=1, \ldots, l$ the unique oriented sub-tree of $T$ having $r_j$ as a root.
\\
Let $f$ be a function with $V \C \dom f$ and $\ran f \C \seqs{S}$. 
We say that $(T,f)$ is a \emph{$D$-derivation}\index{derivation}, where $D$ is a ruleset of the language $S$, if
\begin{itemize}
\item
$ \depth {T} = 1$ and $r \in R \left( \im {f} {\leaves{T}} \right) $ for some $R \in D$.
\item
$\depth{T} = m+2$ for some $m \in \N$,  there is $R \in D$ such that $f(\Root{T}) \in R \left( \im{f} 
{\left\{ r_1, \ldots, r_l \right\}} 
\right)$, and, for each $j \in l+1$%
:
\begin{itemize}
\item
$\depth{T_j}=m+1$, and
\item
$ ( T_j, f )$ is a $D$-derivation.
\end{itemize}
\end{itemize}
\end{Def}


The final step is to state the existence of a $D$-derivation  as sufficient condition for the derivability of its root sequent from the set of its leaves according to the rules of $D$:

\begin{Prop}
\label{RefThmTreesToDerivability}
If $D$ is a \monotone{} ruleset of $S$ and $( T = \left( V, E \right), f ) $ is a $D$-derivation of depth $n+1 \in \Z^+$, then $f (\Root{T}) \in \derivables{n+1}{D} \left( \im{f}{\leaves{T}} \right)$.
\end{Prop}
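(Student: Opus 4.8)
The plan is to proceed by induction on $n \in \N$, matching exactly the recursive structure of Definition~\ref{RefDefDerivationTree}. Throughout I abbreviate $\Sigma := \im{f}{\leaves{T}}$, and recall that $\derivables{n+1}{D} = \iter{\onestep{D}}{n+1}$ is the $(n+1)$-fold composite of the one-step rule $\onestep{D}$ of Definition~\ref{RefDefOneStep}. For the base case $n = 0$ the derivation has depth $1$, so by the first clause of Definition~\ref{RefDefDerivationTree} there is $R \in D$ with $f(\Root{T}) \in R(\Sigma)$. Since $R(\Sigma) \C \bigcup_{R' \in D} R'(\Sigma) = \onestep{D}(\Sigma) = \derivables{1}{D}(\Sigma)$, the base case is immediate.

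For the inductive step I assume the statement for derivations of depth $m+1$ and take $(T,f)$ of depth $m+2$. By the second clause of Definition~\ref{RefDefDerivationTree} there is $R \in D$ with $f(\Root{T}) \in R(\im{f}{\left\{ r_1, \ldots, r_l \right\}})$, and each subtree $(T_j,f)$ is a $D$-derivation of depth $m+1$. First I would apply the induction hypothesis to each $T_j$, obtaining $f(r_j) = f(\Root{T_j}) \in \derivables{m+1}{D}(\im{f}{\leaves{T_j}})$. Next comes the bookkeeping identity $\leaves{T} = \bigcup_j \leaves{T_j}$: the root has children and so is not a leaf, while $r_j$ is the root of $T_j$ and hence also not a leaf, so the leaves of $T$ are precisely the leaves of the subtrees; this yields $\leaves{T_j} \C \leaves{T}$ and thus $\im{f}{\leaves{T_j}} \C \Sigma$. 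Because $\onestep{D}$ is monotone by the hypothesis that $D$ is a monotone ruleset (Definition~\ref{RefDefMonotone2}), and a composite of monotone maps is monotone, the iterate $\derivables{m+1}{D}$ is monotone; therefore $f(r_j) \in \derivables{m+1}{D}(\Sigma)$ for every $j$, that is, $\im{f}{\left\{ r_1, \ldots, r_l \right\}} \C \derivables{m+1}{D}(\Sigma)$.

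It remains to combine the last rule application with this inclusion, and here lies the one genuinely delicate point, exactly the one the surrounding text flags: monotonicity is available for the \emph{ruleset} $\onestep{D}$, not for the individual $R$. Accordingly I would \emph{not} try to push the inclusion through $R$ directly; instead I would first relax $R(\im{f}{\left\{ r_1, \ldots, r_l \right\}}) \C \onestep{D}(\im{f}{\left\{ r_1, \ldots, r_l \right\}})$, which holds because $R \in D$, and only then invoke monotonicity of $\onestep{D}$ together with $\im{f}{\left\{ r_1, \ldots, r_l \right\}} \C \derivables{m+1}{D}(\Sigma)$ to obtain
\[
f(\Root{T}) \in \onestep{D}\!\left( \im{f}{\left\{ r_1, \ldots, r_l \right\}} \right) \C \onestep{D}\!\left( \derivables{m+1}{D}(\Sigma) \right) = \derivables{m+2}{D}(\Sigma),
\]
which is the desired conclusion $f(\Root{T}) \in \derivables{(m+1)+1}{D}(\Sigma)$. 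Everything apart from this routing through $\onestep{D}$ and the leaf-decomposition identity is routine unwinding of Definition~\ref{RefDefOneStep} and of the composition-monotonicity of iterates, so I expect the monotonicity-of-the-ruleset step to be the only part requiring real care.
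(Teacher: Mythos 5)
Your proof is correct and follows essentially the same route as the paper's: induction on the depth, the inductive hypothesis applied to the subtrees $T_j$, the inclusion $\leaves{T_j} \C \leaves{T}$, relaxation of the single rule $R$ to $\onestep{D}$, and monotonicity of both $\onestep{D}$ and its iterate $\derivables{m+1}{D}$. The only cosmetic difference is that you push each $f(r_j)$ into $\derivables{m+1}{D}\left( \im{f}{\leaves{T}} \right)$ before applying the last rule, whereas the paper carries the union $\bigcup_j \derivables{m+1}{D}\left( \im{f}{\leaves{T_j}} \right)$ inside $\onestep{D}$ and enlarges it there; the ingredients are identical.
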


\begin{proof}
By induction on $n$. For $n=0$ the thesis is immediate from \ref{RefDefDerivationTree}.
\\
Assume $n=m+1$ for some $m \in \N$. As done in \ref{RefDefDerivationTree}, denote with $r_1, \ldots, r_l$ the distinct elements of $\im{E}{\left\{ \Root{T} \right\} }$, and with $T_j, j=1, \ldots, l$ the unique oriented subtree of $T$ having $r_j$ as root.
\\
By \ref{RefDefDerivationTree}, each $( T_j, f)$ is a $D$-derivation and has depth $m+1$; thus, by the inductive hypothesis, 
$f(r_j) \in \derivables{m+1}{D} \left( \im{f}{ \leaves{T_j}} \right)$.
\ref{RefDefDerivationTree} also says that $ f(\Root{T}) \in R \left( \im{f}{\left\{ r_1, \ldots, r_l \right\}} \right)$ for some $R \in D$. Hence 
$f( \Root{T} ) \in \onestep{D} \left( \im{f}{\left\{ r_1, \ldots, r_l \right\}} \right)$. 
Since $\onestep{D}$ is \monotone{}, we conclude 
\begin{align}
\label{Ref062}
f ( \Root{T}  ) \in \onestep{D} \left( \bigcup_{j} \derivables{m+1}{D} \left( \im{f}{\leaves{T_j}} \right) \right).
\end{align}
Now, $ \derivables{m+1}{D}$ is \monotone{} as well, and $\im{f}{\leaves{T_j}} \C \im{f}{\leaves{T}}$, yielding 
\begin{align*}
\bigcup_{j} \derivables{m+1}{D} \left( \im{f}{\leaves{T_j}} \right) \C \derivables{m+1}{D} \left( \im{f}{\leaves{T}} \right).
\end{align*}
Using this (again along with the fact that $\onestep{D}$ is \monotone{}) inside \eqref{Ref062}, we get 
$ f( \Root{T} ) \in \derivables{m+2}{D} \left( \im{f}{\leaves{T}} \right)$.
\end{proof}

\begin{Def}
A $D$-\emph{proof}\index{formal definition of proof} is a $D$-derivation $\left( T, f \right)$ such that 
\begin{align*}
\im {f} {\leaves{T}} \C \derivables{1}{D} \left( \emptyset \right).
\end{align*}
\end{Def}

\section{Elementary results concerning derivability and provability}

\begin{Prop}
\label{RefThmMonotonicityIteration}
Given $D_1 \C D_2$ such that at least one among $D_1$ and $D_2$ is \monotone{}, for any $\Sigma_1 \C \Sigma_2$ and any $ n \in \N$ it holds
\begin{align*}
\derivables{n}{D_1} \left( \Sigma_1 \right) \C \derivables{n}{D_2} \left( \Sigma_2 \right).   
\end{align*}
\end{Prop}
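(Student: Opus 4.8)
The plan is a single induction on $n$, preceded by one elementary observation that transfers the containment $D_1 \C D_2$ from rulesets to their associated one-step maps. That observation is: for every $\Sigma \C \seqs{S}$ we have $\onestep{D_1} \left( \Sigma \right) \C \onestep{D_2} \left( \Sigma \right)$. This is immediate from \ref{RefDefOneStep}, since $\onestep{D_1} \left( \Sigma \right) = \bigcup_{R \in D_1} R \left( \Sigma \right)$ is a union over a subfamily of the rules whose union defines $\onestep{D_2} \left( \Sigma \right)$. This is the only point at which the inclusion of rulesets enters, and it holds with no monotonicity assumption whatsoever.

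I would then induct on $n$. For the base case $n = 0$, the maps $\derivables{0}{D_1}$ and $\derivables{0}{D_2}$ are both the identity on $\powset{\seqs{S}}$; the remark \ref{RefDefProvable2} confirms this convention, since $\derivables{0}{D} \left( \emptyset \right) = \emptyset$ (note that applying $\onestep{D}$ even once to $\emptyset$ would in general produce a nonempty set, e.g.\ through $\rAssumption$). Hence the claim reduces to the hypothesis $\Sigma_1 \C \Sigma_2$. For the inductive step, write $A := \derivables{n}{D_1} \left( \Sigma_1 \right)$ and $B := \derivables{n}{D_2} \left( \Sigma_2 \right)$, so that the inductive hypothesis reads $A \C B$; unfolding one further iterate turns the goal into $\onestep{D_1} \left( A \right) \C \onestep{D_2} \left( B \right)$.

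The key (and only mildly delicate) point is how one chains the three available containments $A \C B$, $\onestep{D_1} \le \onestep{D_2}$, and the monotonicity of whichever ruleset is \monotone{} in the sense of \ref{RefDefMonotone2}; here the disjunction in the statement is exactly what is required. If $D_2$ is \monotone{}, I would argue $\onestep{D_1} \left( A \right) \C \onestep{D_2} \left( A \right) \C \onestep{D_2} \left( B \right)$, first by the pointwise observation, then by monotonicity of $\onestep{D_2}$ applied to $A \C B$. If instead $D_1$ is \monotone{}, I would argue symmetrically $\onestep{D_1} \left( A \right) \C \onestep{D_1} \left( B \right) \C \onestep{D_2} \left( B \right)$, now applying monotonicity of $\onestep{D_1}$ first and the pointwise observation second. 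Either branch closes the induction. No genuine obstacle arises: the proof is entirely formal, and the one thing to watch is precisely that the two cases feed the two ingredients in opposite orders, which is why the hypothesis needs one of the rulesets monotone but is indifferent as to which.
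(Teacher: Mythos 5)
Your proof is correct and follows essentially the same route as the paper's: induction on $n$ with the base case reducing to $\Sigma_1 \C \Sigma_2$, the pointwise inclusion $\onestep{D_1} \left( \Sigma \right) \C \onestep{D_2} \left( \Sigma \right)$ from \ref{RefDefOneStep}, and a two-branch inductive step that applies the pointwise inclusion and the monotonicity of whichever ruleset is \monotone{} in the appropriate order. The two branches you describe are exactly the paper's upper ($D_1$ \monotone{}) and lower ($D_2$ \monotone{}) chains of inclusions.
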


\begin{proof}
By induction on $n$. 
For $n=0$, we have trivially
$ \derivables{0}{D_1} \left( \Sigma_1 \right) = \Sigma_1 \C \Sigma_2 = 
\derivables{0}{D_2} \left( \Sigma_2 \right)$.
Now assume $n=m+1$ for some $m \in \N$.
\begin{align*}
\derivables{n}{D_1} \left( \Sigma_1 \right) = 
\onestep{D_1} \left( \derivables{m}{D_1} \left( \Sigma_1 \right) \right) 
\C
\begin{cases}
\overset{\text{\tiny{!}}}{\C} \onestep{D_1} \left( \derivables{m}{D_2} \left( \Sigma_2 \right) \right)
\overset{\text{\tiny{\ref{RefDefOneStep}}}}{\C} \onestep{D_2} \left( \derivables{m}{D_2} \left( \Sigma_2 \right) \right)
\\
\overset{\text{\tiny{\ref{RefDefOneStep}}}}{\C} \onestep{D_2} \left( \derivables{m}{D_1} \left( \Sigma_1 \right) \right)
\overset{\text{\tiny{!}}}{\C} \onestep{D_2} \left( \derivables{m}{D_2} \left( \Sigma_2 \right) \right)
\end{cases}
\\
= \derivables{n}{D_2} \left( \Sigma_2 \right).
\end{align*}
In the reasoning above, upper branch is for the case $D_1$ \monotone{}, lower branch is for the case $D_2$ \monotone{}. 
In both, `!' denotes the passages invoking inductive hypothesis together with (respective) monotonicity hypothesis.
\end{proof}

\begin{Prop}
\label{RefThmMonotonicityIterationEmpty}
If $D$ is \monotone{}, then
\begin{align*}
\derivables{n}{D} \left( \emptyset \right) \C \derivables{n+1}{D} \left( \emptyset \right) 
\end{align*}
for any $n \in \N$.
\end{Prop}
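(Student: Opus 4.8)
The plan is to argue by induction on $n$, with the monotonicity of $\onestep{D}$ — which is exactly what Definition \ref{RefDefMonotone2} grants us from the hypothesis that $D$ is \monotone{} — serving as the sole engine of the proof. The base case $n=0$ is immediate: since the zeroth iterate of a map is the identity, $\derivables{0}{D}\left( \emptyset \right) = \emptyset$, and the empty set is contained in every set, so in particular $\derivables{0}{D}\left( \emptyset \right) = \emptyset \C \derivables{1}{D}\left( \emptyset \right)$.

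For the inductive step I would assume $\derivables{n}{D}\left( \emptyset \right) \C \derivables{n+1}{D}\left( \emptyset \right)$ and aim to deduce the same inclusion with $n$ replaced by $n+1$. Unwinding the definition of iteration (Definition \ref{RefDefDerivability}, together with \ref{RefDefOneStep}) gives $\derivables{n+1}{D}\left( \emptyset \right) = \onestep{D}\left( \derivables{n}{D}\left( \emptyset \right) \right)$ and likewise $\derivables{n+2}{D}\left( \emptyset \right) = \onestep{D}\left( \derivables{n+1}{D}\left( \emptyset \right) \right)$. Applying the \monotone{} map $\onestep{D}$ to both sides of the inductive hypothesis then yields $\onestep{D}\left( \derivables{n}{D}\left( \emptyset \right) \right) \C \onestep{D}\left( \derivables{n+1}{D}\left( \emptyset \right) \right)$, which is precisely $\derivables{n+1}{D}\left( \emptyset \right) \C \derivables{n+2}{D}\left( \emptyset \right)$, closing the induction.

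A slicker, non-inductive route is also available by invoking Proposition \ref{RefThmMonotonicityIteration} directly: taking $D_1 = D_2 = D$ (which is \monotone{}) and the inclusion of starting sets $\emptyset \C \derivables{1}{D}\left( \emptyset \right)$, that proposition gives $\derivables{n}{D}\left( \emptyset \right) \C \derivables{n}{D}\left( \derivables{1}{D}\left( \emptyset \right) \right)$, and the iteration identity $\iter{\onestep{D}}{n} \funccomp \iter{\onestep{D}}{1} = \iter{\onestep{D}}{n+1}$ rewrites the right-hand side as $\derivables{n+1}{D}\left( \emptyset \right)$.

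Honestly, there is no serious obstacle here: the statement is a soft monotonicity fact. The only points demanding a moment of care are getting the base case exactly right (recognizing $\derivables{0}{D}\left( \emptyset \right) = \emptyset$, so that the inclusion holds for the trivial reason rather than by any property of $D$) and, in the inductive step, correctly reading off the recurrence $\derivables{k+1}{D} = \onestep{D} \funccomp \derivables{k}{D}$ so that the hypothesis of monotonicity is applied to the right pair of sets. I would favor the self-contained induction for the write-up, since it does not lean on the iteration-composition identity.
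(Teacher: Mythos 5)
Your induction is exactly the paper's proof: the same trivial base case $\derivables{0}{D}\left( \emptyset \right) = \emptyset \C \derivables{1}{D}\left( \emptyset \right)$, and the same inductive step of applying the \monotone{} map $\onestep{D}$ to both sides of the hypothesis. The alternative route via \ref{RefThmMonotonicityIteration} is a valid observation but not what the paper does; your preferred self-contained induction matches it precisely.
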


\begin{proof}
By induction on $n$:
\begin{align*}
\derivables{0}{D} \left( \emptyset \right) = \emptyset \C \derivables{1}{D} \left( \emptyset \right).
\end{align*}
Assuming $ \derivables{n}{D} \left( \emptyset \right) \C \derivables{n+1}{D} \left( \emptyset \right) $, one has
\begin{align*}
\onestep{D} \left( \derivables{n}{D} \left( \emptyset \right) \right) \C \onestep{D} \left( \derivables{n+1}{D} \left( \emptyset \right) \right)
\end{align*}
by monotonicity.
\end{proof}

\begin{Def}
\label{RefDefEmul}
Ruleset $D_2$ \emph{emulates}\index{emulation} ruleset $D_1$ \emph{from} $\Sigma$ (written $D_2 \emul{\Sigma} D_1$) if
\begin{align*}
\bigcup_{n \in \Z^+} \derivables{n}{D_1} \left( \Sigma \right) \C \bigcup_{n \in \Z^+} \derivables{n}{D_2} \left( \Sigma \right) .
\end{align*}
$D_2$ emulates $D_1$ (written $D_2 \emul{} D_1$) if, for each $\Sigma \C \seqs{S}$: 
\begin{align*}
D_2 \emul{\Sigma} D_1. 
\end{align*}
\end{Def}

\begin{Rem}
\label{RefThmEmulTransitive}
Given $\Sigma \C \seqs{S}$, the relation $\emul{\Sigma}$ is transitive:
\begin{align*}
D_2 \emul{\Sigma} D_1 && \text{ and } &&  D_3 \emul{\Sigma} D_2 && \text{ imply } && D_3 \emul{\Sigma} D_1. 
\end{align*}
\end{Rem}


\begin{Cor}[of \ref{RefThmMonotonicityIteration}]
\label{RefThmInclusionImpliesEmulation}
If $D_1 \C D_2$ and at least one of $D_1$ and $D_2$ is \monotone{}, then 
\begin{align*}
D_2 \emul{} D_1.
\end{align*}
\end{Cor}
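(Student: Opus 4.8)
The plan is to unfold the definition of emulation (\ref{RefDefEmul}) and reduce the claim to a single set-inclusion for each fixed $\Sigma$, which then follows at once from \ref{RefThmMonotonicityIteration}. Since the preceding proposition has already absorbed all the inductive content, this corollary should require no new recursion of its own.

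First I would fix an arbitrary $\Sigma \C \seqs{S}$. By the definition of $D_2 \emul{} D_1$, it suffices to show $D_2 \emul{\Sigma} D_1$ for this $\Sigma$, that is,
\begin{align*}
\bigcup_{n \in \Z^+} \derivables{n}{D_1} \left( \Sigma \right) \C \bigcup_{n \in \Z^+} \derivables{n}{D_2} \left( \Sigma \right).
\end{align*}
Next I would invoke \ref{RefThmMonotonicityIteration} with the degenerate choice $\Sigma_1 = \Sigma_2 = \Sigma$, so that the required hypothesis $\Sigma_1 \C \Sigma_2$ holds trivially, and with the very pair $D_1 \C D_2$ at hand, one of which is \monotone{} by assumption. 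This delivers, for every $n \in \N$ (in particular for every $n \in \Z^+$),
\begin{align*}
\derivables{n}{D_1} \left( \Sigma \right) \C \derivables{n}{D_2} \left( \Sigma \right).
\end{align*}

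Finally, since unions preserve inclusions, passing to the union over $n \in \Z^+$ on both sides yields exactly the displayed inclusion, hence $D_2 \emul{\Sigma} D_1$; as $\Sigma$ was arbitrary, we conclude $D_2 \emul{} D_1$. The only point demanding attention — and it is scarcely an obstacle — is verifying that the hypotheses of \ref{RefThmMonotonicityIteration} are genuinely met: the inclusion $D_1 \C D_2$ and the one-sided monotonicity carry over verbatim, and the substitution $\Sigma_1 = \Sigma_2 = \Sigma$ is what lets the two-argument monotonicity statement collapse to the one-variable inclusion needed for emulation.
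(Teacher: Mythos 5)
Your proof is correct and is exactly the argument the paper intends: the corollary is stated without a written proof precisely because it follows by specializing \ref{RefThmMonotonicityIteration} to $\Sigma_1 = \Sigma_2 = \Sigma$ and taking the union over $n \in \Z^+$, which is what you do. No gaps.
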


\begin{Prop}
\label{RefThmEmulationAndProvability}
If $X \proves{D_1} \varphi $ and $D_2 \emul{\emptyset} D_1$, then $X \cup Y \proves{D_2} \varphi$.
\end{Prop}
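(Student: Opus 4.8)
The plan is to reduce both hypotheses to inclusions between the sets $\Derivables{D_i}(\emptyset)$ and then read off the conclusion from the characterization of provability supplied by Remark~\ref{RefDefProvable2}. First I would reconcile the index ranges. Since $\derivables{0}{D}(\emptyset) = \emptyset$ for every ruleset $D$ (as recorded in Remark~\ref{RefDefProvable2}), the union over $n \in \N$ defining $\Derivables{D}(\emptyset)$ coincides with the union over $n \in \Z^+$; that is, $\Derivables{D}(\emptyset) = \bigcup_{n \in \Z^+} \derivables{n}{D}(\emptyset)$. Consequently the emulation hypothesis $D_2 \emul{\emptyset} D_1$, which by Definition~\ref{RefDefEmul} is exactly the inclusion $\bigcup_{n \in \Z^+} \derivables{n}{D_1}(\emptyset) \C \bigcup_{n \in \Z^+} \derivables{n}{D_2}(\emptyset)$, is literally the statement $\Derivables{D_1}(\emptyset) \C \Derivables{D_2}(\emptyset)$.

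Next I would unpack the first hypothesis. By Remark~\ref{RefDefProvable2}, $X \proves{D_1} \varphi$ yields a sequent $\pair{\Gamma}{\varphi} \in \Derivables{D_1}(\emptyset)$ with $\Gamma \C X$. The inclusion obtained in the previous step places this very sequent in $\Derivables{D_2}(\emptyset)$. Since $\Gamma \C X \C X \cup Y$ for any set $Y$, I have exhibited a sequent $\pair{\Gamma}{\varphi} \in \Derivables{D_2}(\emptyset)$ whose antecedent is contained in $X \cup Y$; applying the converse direction of Remark~\ref{RefDefProvable2} gives $X \cup Y \proves{D_2} \varphi$, which is the claim.

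I expect no serious obstacle: the argument is purely definitional and, in particular, uses no monotonicity assumption (none is in force, and none is needed, because emulation is already an inclusion of derivable sets). The only point demanding a little care is the bookkeeping between the index ranges $\N$ and $\Z^+$ appearing in the definitions of $\Derivables{\cdot}$ and of emulation, which is settled once and for all by the vanishing of the zeroth iterate on $\emptyset$. It is worth noting that $Y$ plays no active role and may be arbitrary, which simply reflects the monotone dependence of provability on the context $X$: enlarging the left-hand set can never invalidate a proof.
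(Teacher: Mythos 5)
Your proof is correct and is exactly the definitional unwinding the paper intends (it states Proposition~\ref{RefThmEmulationAndProvability} without proof): the identity $\Derivables{D}(\emptyset)=\bigcup_{n\in\Z^+}\derivables{n}{D}(\emptyset)$ turns the emulation hypothesis into $\Derivables{D_1}(\emptyset)\C\Derivables{D_2}(\emptyset)$, and Remark~\ref{RefDefProvable2} does the rest. Nothing is missing.
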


\begin{Cor}[of \ref{RefThmInclusionImpliesEmulation} and \ref{RefThmEmulationAndProvability}]
If at least one of $D_1, D_2$ is \monotone{}, then
\begin{align*}
D_1 \C D_2 && \text{ and } && X \proves{D_1} \varphi && \text{ imply } && X \proves{D_2} \varphi. 
\end{align*}
\end{Cor}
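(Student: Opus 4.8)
The plan is to obtain the conclusion by directly chaining the two results cited in the statement; no fresh induction or computation is required, since the substantive work has already been isolated in Propositions~\ref{RefThmInclusionImpliesEmulation} and~\ref{RefThmEmulationAndProvability}, and the corollary merely bolts them together. First I would apply Proposition~\ref{RefThmInclusionImpliesEmulation} to the hypotheses $D_1 \C D_2$ and the assumed monotonicity of one of the two rulesets, obtaining the global emulation $D_2 \emul{} D_1$. By the definition of emulation (Definition~\ref{RefDefEmul}), $D_2 \emul{} D_1$ means precisely that $D_2 \emul{\Sigma} D_1$ holds for \emph{every} $\Sigma \C \seqs{S}$; instantiating this at $\Sigma = \emptyset$ gives $D_2 \emul{\emptyset} D_1$, which is exactly the hypothesis demanded by the second proposition.

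Next I would feed the remaining hypothesis $X \proves{D_1} \varphi$ together with the just-obtained $D_2 \emul{\emptyset} D_1$ into Proposition~\ref{RefThmEmulationAndProvability}. That proposition concludes $X \cup Y \proves{D_2} \varphi$ for an arbitrary set $Y$; specializing the free parameter to $Y = \emptyset$ yields $X \proves{D_2} \varphi$, which is the assertion to be proved.

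I expect no genuine obstacle: the argument is a two-step invocation of earlier results. The only points requiring a moment's care are notational rather than mathematical, namely reading off from Definition~\ref{RefDefEmul} that the unsubscripted relation $\emul{}$ yields $\emul{\emptyset}$ as the special case $\Sigma=\emptyset$, and observing that the universally-quantified auxiliary set $Y$ in the statement of Proposition~\ref{RefThmEmulationAndProvability} can be collapsed to the empty set, so as to recover provability from $X$ alone.
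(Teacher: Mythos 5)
Your argument is exactly the one the paper intends: the corollary's bracketed attribution to Propositions~\ref{RefThmInclusionImpliesEmulation} and~\ref{RefThmEmulationAndProvability} is the whole proof, and your chaining---inclusion plus monotonicity gives $D_2 \emul{} D_1$, hence $D_2 \emul{\emptyset} D_1$, then apply the emulation-and-provability proposition with $Y = \emptyset$---is precisely that. Correct, same approach, nothing to add.
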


\begin{Cor}[of \ref{RefThmEmulationAndProvability}]
\label{RefThmEmulatedByClosedIsClosed}
If $X$ is \closed{$D_2$} and $D_2 \emul{\emp} D_1$, then $X$ is \closed{$D_1$}.
\end{Cor}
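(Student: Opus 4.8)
The plan is to reduce the claim to Proposition \ref{RefThmEmulationAndProvability} by unwinding the definition of deductive closure. By definition, $X$ is \closed{$D_1$} exactly when $\provables{D_1} \left( X \right) \subseteq X$, so it suffices to fix an arbitrary $\varphi \in \provables{D_1} \left( X \right)$ and prove $\varphi \in X$.

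First I would rephrase $\varphi \in \provables{D_1} \left( X \right)$ in turnstile notation as $X \proves{D_1} \varphi$, which is merely the change of notation introduced in \ref{RefDefProvable}. The hypothesis $D_2 \emul{\emp} D_1$ now lets me apply Proposition \ref{RefThmEmulationAndProvability}; taking the auxiliary set $Y$ there to be $\emptyset$, its conclusion $X \cup Y \proves{D_2} \varphi$ collapses to $X \proves{D_2} \varphi$, that is, $\varphi \in \provables{D_2} \left( X \right)$.

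Finally I would use the other hypothesis, namely that $X$ is \closed{$D_2$}, which says $\provables{D_2} \left( X \right) \subseteq X$. Combined with $\varphi \in \provables{D_2} \left( X \right)$, this gives $\varphi \in X$. As $\varphi$ was arbitrary in $\provables{D_1} \left( X \right)$, we obtain $\provables{D_1} \left( X \right) \subseteq X$, i.e.\ $X$ is \closed{$D_1$}.

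There is no substantial obstacle: the result is a direct formal consequence of Proposition \ref{RefThmEmulationAndProvability}, obtained by specializing its free set $Y$ to $\emptyset$ and then chaining the inclusion furnished by emulation with the one furnished by $D_2$-closure. The only point worth a moment's attention is checking that $Y = \emptyset$ is an admissible choice, so that $X \cup Y$ returns to $X$ and the resulting provability statement is genuinely about $X$ itself.
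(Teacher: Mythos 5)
Your proof is correct and is precisely the argument the paper intends: the corollary is tagged as a consequence of Proposition \ref{RefThmEmulationAndProvability}, and the paper omits the proof because it is exactly this specialization to $Y = \emptyset$ followed by the appeal to $D_2$-closure. Nothing further is needed.
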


\section{Semantics}
\label{RefSectSemantics}
It is not difficult to show that 
$ \restrict{\fconc}{ \mapsFromTo{1}{\wffs{S}} \cup \mapsFromTo{2}{\wffs{S}}} $ is one-to-one, and, by recursion on $n$ (see section \ref{RefSectSubTerms}), that $\restrict{\fconc}{ \left( \terms{S} \right) ^ n}$ is one-to-one; this permits defining 
the following three functions.

The first one is in $\mapsFromTo {\terms{S}} {\left( \terms{S} ^* \right)} $:
\begin{Def}[Subterms of a term]
\label{RefDefSubtermsOfTerm}
\begin{align*}
\subtermsf_0 :=  t \mapsto
\begin{cases}
\emptyset & \text{ if } t \in \terms{S,0}
\\
\left( 
\left( \restrict{\fconc}
{ \left(   \terms{S}  \right)^{ \ari \left( t\left( 0 \right) \right) } }
 \right)^{-1} 
\funccomp
{\left( \curry{\conc}{ \left( \restrict{t}{1} \right) } \right)}^{-1} 
 \right)
\left( t \right) 
 & \text{otherwise.} 
\end{cases}
\end{align*}
\end{Def}
The second function is in $ \mapsFromTo {\wffs{S,0}} {\left( \terms{S}^*  \right)}$:

\begin{Def}[Subterms of an atomic formula]
\label{RefDefSubtermsOfAtomic}
\begin{align*}
\subtermsf_1 :=  \psi_0 \mapsto
\left( \restrict{\fconc}
{ \left(   \terms{S}  \right)^{ - \ari \left( \psi_0 \left( 0 \right) \right) } }
 \right)^{-1} \left( 
{\left( \curry{\conc}{ \left( \restrict{\psi_0}{
1} \right) } \right)}^{-1} \left( \psi_0 \right) 
 \right).
\end{align*}
\end{Def}
Finally, the third function is in 
$ \mapsFromTo{\wffs{S} \sdiff \wffs{S,0}} 
{ \left( \left( \wffs{S} \right)^1 \cup \left( \wffs{S} \right)^2  \right) }$:

\begin{Def}
\lnote{Resa piu' compatta}
\label{RefDefSubformulas}
\begin{align*}
\substringsf_2 := \psi \mapsto
\left(
{
\left( 
\restrict{\fconc}{
\left( 
\mapsFromTo{1}{\wffs{S}} \cup \mapsFromTo{2}{\wffs{S}} \right)}
\right)
}^{-1}
\funccomp
{
\left( 
\curry{\conc}{
\restrict{\psi}{1}
}
\right)
}^{-1}
 \right)
\left( \psi \right).
\end{align*}
\end{Def}

In \ref{RefDefSubtermsOfTerm}, \ref{RefDefSubtermsOfAtomic} and \ref{RefDefSubformulas}, we took advantage of the easy fact that $ \curry{\left( \conc_X  \right)}{x} $ is one-to-one for any $X$ and $x \in X^*$.

Since $\subtermsf_0$, $\subtermsf_1$ and $ \subtermsf_2$ have mutually disjoint domains, we can refer to the function resulting from their union, denoting it simply as $\subtermsf$:

\begin{Def}[Sub-tuples of a term or wff]
\begin{align*}
 \subtermsf := \subtermsf_0 \cup \subtermsf_1 \cup \subtermsf_2 \in \funcs{\left( \terms{S} \cup \wffs{S} \right) }{ \left( \left( \terms{S} \right) ^ * \cup \left( \wffs{S} \right)^1 \cup 
\left( \wffs{S} \right)^2 \right)}.
\end{align*}
\end{Def}


\begin{Not}
We will often write $\substrings{w}$ in place of $\substringsf \left( w \right)$.
If $w$ is a non-atomic formula, $\substrings{w}$ are the \emph{subformulas}\index{subformula} of $w$, while if it is an atomic formula or a term, $\substrings{w}$ are the \emph{subterms}\index{subterm} of $w$.
\end{Not}

\begin{Rem}
If $\psi$ is a non-atomic formula, then the number of its subformulas, $ \card{\substrings{\psi}} $, is either $1$ (if $\psi\left( 0 \right)$ is a literal) or $2$ (if $\psi \left( 0 \right) = \nor $).
\end{Rem}

\begin{Def}[Interpretation and universe]
\label{RefDefInterpretation}
Given a language $S$, an \emph{interpretation of $S$}\index{interpretation} is a function $i$ for which there is a non empty set $U$ (called the \emph{universe}\index{universe} of the interpretation) such that 
\begin{align*}
\forall s \in \dom{\ari_S},\ i \left( s \right) \in 
\begin{cases}
U^{\left( U^{\#\left( s \right)} \right)} & \text{if } \#\left( s \right) \geq 0
\\
\left\{ 0, 1 \right\}^{\left( U^{-\#\left( s \right)} \right)} & \text{if } \#\left( s \right) <  0.
\end{cases}
\end{align*}
\end{Def}

\begin{Not}
The symbol $i$, with optional subscripts and superscripts, will be reserved for generic interpretations from now on, unless otherwise specified.
\end{Not}

\begin{Rem}
Every interpretation has exactly one universe.
\end{Rem}

\begin{Rem}
\label{RefRemInterpretationOfLiteral}
According to \ref{RefDefInterpretation}, an interpretation having universe $U$ assigns to each literal a map of the form $\left\{  \pair {\emptyset} {u}  \right\} $, where $u \in U$, rather than assigning to it directly the value $u$.
\end{Rem}

\begin{Ex}[The \free{} interpretation]
\label{RefDefFreeInt}
Given $ X $ and a language $S$, the \emph{\free}\index{\free{} interpretation} interpretation of $S$ given by $X$ is the interpretation of $S$ having $\terms{S}$ as universe and defined thus:
\begin{align*}
\freeInt{X} := \dom{\ari} \ni s \mapsto
\begin{cases}
\left( \curry{\conc}{\left\{ \pair{0}{s}  \right\} }  \right)  \funccomp \left( \restrict{\fconc}{ \left( { \terms{S} }^{\ari \left( s \right)} \right) } \right)
& \ari \left( s \right) \ge 0
\\
\indicator_{X}^{\wffs{S}} \funccomp \left( \curry{\conc}{\left\{ \pair{0}{s}  \right\}}  \right)  \funccomp \left( \restrict{\fconc}
{ \left({ \terms{S} }^{ - \ari \left( s \right)} \right) } \right)
& \ari \left( s \right) < 0.
\end{cases}
\end{align*}
\end{Ex}

\begin{Not}[Reassignment of a literal in an interpretation]
\lnote{Mancava parentesi}
\label{RefNotationReassign}
Given an interpretation $i$, an element $u'$ of its universe, and a literal $v$, we introduce the shorthand notation
\begin{align*}
\reassign{v}{u'}{i} :=
i \paste  \left\{ \pair{v}{
\left\{ 
\pair {\emptyset}{u'}
\right\}
}  \right\}
\end{align*}
designating a new interpretation with the same universe of $i$, called a \emph{reassignment}\index{reassignment} of $v$ in $i$.
\end{Not}

\begin{Def}[Evaluation of terms and atomic formulas]
\label{RefDefEvalAtomic}
Given an interpretation $i$ of universe $U$, we define 
\begin{align*}
\termeval{i} \left( t_0 \right) := \left( i \left( t_0 \left( 0 \right) \right)  \right) \left( \emptyset \right) && \forall t_0 \in \terms{S,0},
\end{align*}
then recursively:
\begin{align*}
\termeval{i} \left( t \right) :=
\left( i \left( t \left( 0 \right) \right)  \right) \left( \eval{i} \funccomp  \subterms{ t }  \right),
&& t \in \terms{S,n+1};
\end{align*}
and finally, given $ \psi_0 \in \wffs{S,0}:$
\begin{align*}
\termeval{i} \left( \psi_0 \right) :=
\begin{cases}
\left( i \left( \psi_0 \left( 0 \right) \right)  \right) \left( \eval{i} \funccomp  \subterms{ \psi_0 }   \right)
& \psi_0 \left( 0 \right) \neq \equiv
\\
1 & \psi_0 \left( 0 \right) = \equiv \text{ and } \eval{i} \left( \subterms{\psi_0} \left( 0 \right) \right) 
= \eval{i} \left( \subterms{\psi_0} \left( 1 \right) \right)
\\
0 & \text{ otherwise. }
\end{cases}
\end{align*}
\end{Def}

\begin{Def}[Evaluation of non-atomic formulas]
\label{RefDefEvalCompound}
Given an interpretation $i$ of universe $U$, we recursively define
\begin{align*}
\eval{i} \left( \psi \right) :=
\begin{cases}
1 &
\begin{aligned}
\text{ if } \exists v \in \im{{\ari}^{-1}}{\left\{ 0 \right\}}, u \in U \st \quad 
\left(  
v=\psi \left( 0 \right) \text{ and } \eval{\reassign{v}{u}{i}} \left( \substrings{\psi} \left( 0 \right) \right) = 1 
\right)
\end{aligned}
\\
1 & \text{ if } \psi \left( 0 \right) = \nor \text{ and } 
\eval{i} \funccomp \substrings{\psi} = 
2 \cartprod \left\{ 0 \right\}
\\
0 & \text{ otherwise}
\end{cases}
\end{align*}
for every $\psi \in \wffs{S} \sdiff \wffs{S,0}$.
\end{Def}

\begin{Def}
Merging \ref{RefDefEvalAtomic} with \ref{RefDefEvalCompound}, we in the end obtain a function
\begin{align*}
\termeval{i} :  \left( \terms{S} \cup \wffs{S} \right)  \to \left( U \cup \left\{ 0, 1 \right\} \right),
\end{align*}
called the \emph{evaluation}\index{evaluation} of the interpretation $i$.
\end{Def}

\begin{Not}[Model, or satisfaction, relation]
\label{RefDefSatisfaction}
Instead of writing $ \im{\restrict{\termeval{i}}{\wffs{S}}}{X} \C \left\{ 1 \right\}$, one often writes $ i \models{S} X$, or simply $ i \models{} X$, and says that $i$ is a \emph{model}\index{model} of $X$, or that $i$ \emph{satisfies}\index{satisfaction relation} $X$.
\end{Not}

\begin{Def}
\label{RefDefSound}
A ruleset $D$ is \emph{\sound{}} if
$X \proves{D} \varphi$ and  $i \models{} X $ imply $\eval{i} \left( \varphi \right) = 1$.
\end{Def}

\begin{Rem}
Any hypothesis requesting some generic ruleset to be \sound{} will always be made explicit.
However, all the concrete examples of ruleset we will introduce will be \sound{}.
\end{Rem}

\begin{Def}[Depth-recursive definition of term substitution in a formula]
\label{RefDefTermSubst}
Given $v$ and $ t$, define the map 
$ \substf {v}{t} : \wffs{S} \to \wffs{S} $ as follows:
\begin{align*}
\substf{v}{t} \left(\varphi_0 \right) := 
\left( \restrict{\varphi_0}{\left\{ 0 \right\}} \right)  \conc \left( \fconc
\left(
 \left( \eval{ \reassign{v}{t}{\freeInt{\emptyset}}  } \right)  \funccomp 
 \subterms{ \varphi_0 } 
\right)
\right)
\end{align*}
for any atomic formula $\varphi_0$; then, given  $\varphi \in \wffs{S,n+1} \sdiff \wffs{S,n}$, recursively on $n$:
\begin{align*}
\substf{v}{t} \left( \varphi \right) :=
\begin{cases}
\left( \restrict{\varphi}{\left\{ 0 \right\}} \right) \conc  \left( \fconc \left( \substf{v}{t} \funccomp \substrings {\varphi} \right) \right) & 
\text{ if } \varphi \left( 0 \right) = \nor
\\
\left\{ \pair{0}{v'}  \right\} \conc \left( \substf{v}{t} \left( 
\symbsubst{\varphi \left( 0 \right)}{v'}{ \left( \substrings { \varphi }  \left( 0 \right) \right) }
\right) 
\right)
& 
\begin{aligned}
\text{otherwise, where }  
\\
v' \notin \left\{ v  \right\} \cup \symbof{\left\{ t,   \substrings{\varphi }  \left( 0 \right)  \right\}}.
\end{aligned}
\end{cases}
\end{align*}
\end{Def}

There is a glitch in \ref{RefDefTermSubst}, in that its outcome actually depends on the choice of the literal $v'$ appearing in its definiens. 
This is immaterial, however, since the different formulas obtained by varying $v'$ are all good candidates to be the substitution result for our purpose: as long as the outcome obeys substitution lemma (see \ref{RefThmSubstLemma2}), it is acceptable.
So we chose not to specify this dependance in \ref{RefDefTermSubst}. 
To make matters rigorous, one could fix a suitable choice function $ \eta : \left( \powset{\im{\ari^{-1}}{\left\{ 0 \right\}}} \right) \sdiff \left\{ \im{\ari^{-1}}{\left\{ 0 \right\}} \right\} \ni X \mapsto x \in 
\left( \im{\ari^{-1}}{\left\{ 0 \right\}} \right) \sdiff X
$ and define ${\substf{v}{t}}_{\eta}$ by substituting $v'$ with $\eta \left( 
\left\{ v  \right\} \cup \symbof{\left\{ t,   \substrings{ \varphi}  \left( 0 \right)  \right\}}
\right)$ inside the definiens of \ref{RefDefTermSubst}, which, however, would probably result a bit too cluttered this way.
In \M{} one utterly bypasses such problems generically related to the dependence on some choice function by using the construct \verb|the|%
, which provides an object of the given type, undefined yet usable as if it was defined.
It should be noted, however, that this device as well is merely a convenient way, offered by \M{}, to invoke the axiom of choice: \cite{MizarChoice}.

\begin{Not}
We will often write $ \subst{v}{l}{\psi}$ instead of $ \substf{v}{l} \left( \psi \right)$.
\end{Not}

We now introduce a further derivation rule we will need.

\begin{Def}
\label{RefDefRuleEx}
\begin{align*}
\rEx \left( \Sigma \right) := 
& \left\{ \pair{\Gamma}{\varphi} :
\exists v, t, \psi \st
\Gamma = \left\{ \subst{v}{t}{\psi} \right\} \text{ and } \varphi= v \psi
\right\}.
\end{align*}
\end{Def}
Since $\rEx \leq 0$, we can depict $\rEx$ via a diagram as those from section \ref{RefSectDiagrams}:
\begin{Not}
\begin{align*}
\rEx \quad
	\begin{aligned}
	\\
	\hline
	 \subst{v}{t}{\psi} &&   \vdash  && v \psi
	\end{aligned}
\end{align*}
\end{Not}


\section{\H{} interpretation}
\label{RefSectHenkin}
\subsection{Quotients}
\label{RefSectQuotients}
\begin{Def}
\label{RefDefCompatible}
Let $P, Q$ be relations, $f$ be a function. We say that $f$ is \comp{P}{Q}
if, given  
$\pair{x}{y} \in \dom f \cartprod \left( \dom f \right) \cap P$, it is  
$\pair{f\left( x \right)}{f \left( y \right)} \in Q$.
\end{Def}

\begin{Rem}
In \M{} code, the keyword \verb|-compatible| being already in use, the attribute \verb|-respecting| is used instead.
\end{Rem}

\begin{Def}
Given a non empty relation $P$, we consider the map
\begin{align*}
\toClass{P} : \dom P \ni x \mapsto \im{P}{\left\{ x \right\}} \in \powset{\rng P}.
\end{align*}
Given a set $X$ and a relation $P$ such that $X = \dom P$, we set
\begin{align*}
\Classes{X}{P} := \rng \left( \toClass {P} \right).
\end{align*}
\end{Def}

\begin{Rem}
If $P$ is an equivalence relation over $X$, $\Classes{X}{P}$ is the set of the equivalence classes of $P$ (hence a partition of $X$), and $\toClass{P}$ maps each element of the domain of $P$ to the unique equivalence class including it.
\end{Rem}

\begin{Def}[Quotient of a relation]
\label{RefDefQuotient}
Let $O, P, Q$ be relations, with $P$ and $Q$ non empty. 
The quotient of $O$ by $\pair{P}{Q}$ is defined as:
\begin{align*}
\quotient{O}{P}{Q} := \left\{
\pair{p}{q} \in \rng \left( \toClass{P} \right) \cartprod \left( \rng \left( \toClass{Q} \right) \right) : 
p \cartprod q \cap O \neq \emp 
\right\}.
\end{align*}
\end{Def}

\begin{Prop}
\label{RefThmCompatibleFunction}
Let  $E, F$ be non empty equivalence relations.
\\
If $f \in$ 
$\mapsFromTo{\dom E}{\left( \dom F \right)}$ 
is \comp{E}{F}, then
\begin{align*}
\quotient{f}{E}{F} \in \mapsFromTo{\rng  \toClass{E} }{ \left( \rng  \toClass{F} \right)}.
\end{align*}
\end{Prop}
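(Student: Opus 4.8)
The plan is to check that $\quotient{f}{E}{F}$ meets the three requirements for membership in $\mapsFromTo{\rng \toClass{E}}{(\rng \toClass{F})}$: it must be a subset of $\rng \toClass{E} \cartprod (\rng \toClass{F})$, its domain must be all of $\rng \toClass{E}$, and it must be single-valued. The first requirement is built into Definition \ref{RefDefQuotient}, whose right-hand side already selects only pairs $\pair{p}{q} \in \rng \toClass{E} \cartprod (\rng \toClass{F})$; in particular the range of $\quotient{f}{E}{F}$ automatically sits inside $\rng \toClass{F}$, so no separate codomain check is needed.

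First I would establish totality. Fix $p \in \rng \toClass{E}$ and choose $x$ with $p = \toClass{E}(x) = \im{E}{\{ x \}}$. Reflexivity of $E$ gives $x \in p$, and since $f \in \mapsFromTo{\dom E}{(\dom F)}$ is a total map with $\dom f = \dom E$ and $\rng f \C \dom F$, the value $f(x) \in \dom F$ exists. Put $q := \toClass{F}(f(x)) \in \rng \toClass{F}$; reflexivity of $F$ gives $f(x) \in q$, so $\pair{x}{f(x)} \in (p \cartprod q) \cap f$ and hence $(p \cartprod q) \cap f \neq \emp$. By Definition \ref{RefDefQuotient} this means $\pair{p}{q} \in \quotient{f}{E}{F}$, so every class in $\rng \toClass{E}$ lies in the domain.

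The crux, and the step where compatibility is indispensable, is single-valuedness. Suppose $\pair{p}{q_1}, \pair{p}{q_2} \in \quotient{f}{E}{F}$. Unwinding Definition \ref{RefDefQuotient} produces $x_1, x_2 \in p$ with $f(x_1) \in q_1$ and $f(x_2) \in q_2$. Because $x_1$ and $x_2$ lie in the same $E$-class $p$ they satisfy $\pair{x_1}{x_2} \in E$, and since both belong to $\dom f = \dom E$, the compatibility hypothesis (Definition \ref{RefDefCompatible}) forces $\pair{f(x_1)}{f(x_2)} \in F$. Thus $f(x_1)$ and $f(x_2)$ determine the same $F$-class; as $q_1$ is the unique class containing $f(x_1)$ and $q_2$ the unique one containing $f(x_2)$, we conclude $q_1 = q_2$.

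The main obstacle is precisely this last paragraph: it rests on the routine but essential facts about the quotient construction of \ref{RefSectQuotients}, namely that any two members of a single equivalence class are related, and that each element lies in exactly one class (distinct classes being disjoint). Once these are invoked the argument closes, and the three properties together certify $\quotient{f}{E}{F} \in \mapsFromTo{\rng \toClass{E}}{(\rng \toClass{F})}$.
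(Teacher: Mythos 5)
Your proof is correct and follows essentially the same route as the paper's: both reduce the claim to left-totality and single-valuedness (the codomain condition being automatic from Definition \ref{RefDefQuotient}), prove totality by picking a representative $x$ of the class and passing to $\toClass{F}(f(x))$, and prove single-valuedness by extracting witnesses from the intersection condition and applying compatibility to get $\pair{f(x_1)}{f(x_2)} \in F$. No substantive differences.
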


\begin{proof}
Set 
$
g := \quotient{f}{E}{F}
$.
Since $g \C \rng \toClass{E} \cartprod \rng \toClass{F}$ by \ref{RefDefQuotient}, it is $\rng g \C \rng \toClass{F}$, hence we are left with two points to prove:
\begin{enumerate}
\item
$ g $ is functional.
\item
$ g $ is left-total, that is, $ \rng \toClass{E} \C \dom g $.
\end{enumerate}
The two corresponding proofs are given.
\begin{enumerate}
\item
Consider sets $X$, $Y_1$, $Y_2$ such that 
$ \left\{ \pair {X} {Y_1}, \pair{X} {Y_2} \right\} \C g$. 
The goal is to show $Y_1 = Y_2$.
By \ref{RefDefQuotient}, consider $x_1, x_2, y_1, y_2$ such that 
$\pair{x_1}{y_1} \in X \cartprod Y_1 \cap f$ and
$\pair{x_2}{y_2} \in X \cartprod Y_2 \cap f$.
Since $X$ is an equivalence class of $E$, this implies $\pair{x_1}{x_2} \in E$ which in turn, by \ref{RefDefCompatible}, gives $ \pair{y_1}{y_2} \in F$.
Hence $y_1$ and $y_2$ must belong to the same equivalence class of $F$, which gives $Y_1 = Y_2$.
\item
Let $X \in \rng \toClass{E}$. 
$X$ being an equivalence class of the non empty equivalence relation $E$, there is $x \in X \C \dom E$.
Set
\begin{align}
\label{RefEq32}
y := & f \left( x \right) \in \dom F
\\
\notag
Y := & \toClass{F} \left( y \right) \in \rng F.
\end{align}
Since $\pair {x} {y} \in f$ by \eqref{RefEq32}, and $y \in Y$, we draw $\pair {X}{Y} \in g$ by \ref{RefDefQuotient}.
\end{enumerate}
\end{proof}


Result \ref{RefThmCompatibleFunction} supplies a canonical construction to pass from a function on sets to a function on  classes relative to equivalence relations respected by the original function.
We want to carry this mechanism over to the case in which the function is $i \left( s \right)$ and the equivalence relation is given on $U$, where $i$ is an interpretation of the language $S$, $s$ is a symbol of it, and $U$ is the universe of $i$.
Since $i \left( s \right)$ is defined on $ U ^ {\left| \ari \left( s \right)  \right|}$, we have to specify how to adapt some of the last definitions to tuples.
First of all, we formally specify the natural way to pass from a relation over sets to a relation over tuples:

\begin{Def}[Tupled relation]
\label{RefDefPlaces}
Let $O$ be a non empty relation, and $n$ a natural number. We set
\begin{align*}
\placesof{O}{n} := \left\{ 
\pair{p}{q} \in \left( \dom O \right)^n \cartprod \left( \left( \rng O \right)^n  \right) : q \C p \relcomp O 
\right\}.
\end{align*}
\end{Def}

Now, we want to combine the quotient defined in \ref{RefDefQuotient} with the construction of \ref{RefDefPlaces} to obtain a quotient operating on interpretations. 
A technical nuisance stands on our way, though: when quotienting by a tupled relation, we are left with a function acting on classes of equivalence of tuples, while an interpretation should act on tuples (of equivalence classes, in this case).
So we have to provide an object translating between these two types:

\begin{Def}
Let $P$ be a relation, $n$ be a natural number. Set
\begin{align*}
\tupleToClass{P}{n} := 
\left( \placesof{ \left( \toClass{P}^{-1}  \right) }{n} \right)
\relcomp
\toClass{\placesof{P}{n}}.
\end{align*}
\end{Def}\mbox{}\\
It can finally be plugged into the following definiens:

\begin{Def}[Quotient interpretation]
\label{RefDefInterpretationQuotient}
Given an interpretation $i$ and a relation $P$, set
\begin{align*}
\iQuotient{i}{P} := 
\dom \ari \ni s \mapsto 
\begin{cases}
\tupleToClass{P}{\abs{\ari \left( s \right)}}
\relcomp
\quotient{i \left( s \right)} {\placesof{P}{\abs{\ari \left( s  \right) }}} {P}
& \ari\left( s \right) \geq 0
\\
\tupleToClass{P}{\abs{\ari \left( s \right)}}
\relcomp
\quotient{i \left( s \right)} {\placesof{P}{\abs{\ari \left( s  \right) }}} {\left\{ \pair{0}{0}, \pair{1}{1} \right\}}
\relcomp
\peel{}
& \ari \left( s \right) < 0.
\end{cases}
\end{align*}
\end{Def}

Now we have to put forward some requests to make the quotient in \ref{RefDefInterpretationQuotient} actually an interpretation:

\begin{Def}
\label{RefDefCompatible2}
Given an interpretation $i$ of the language $S$, having $U$ as universe, we say that $i$ and the relation $P$ are \Comp{} if
\begin{align*}
\forall s \in \dom \ari && 
\begin{cases}
i \left( s \right) \text{ is \comp{\placesof{P}{\ari \left( s \right)}}{P}} & \ari \left( s \right) \geq 0
\\
i \left( s \right) \text{ is \comp{\placesof{P}{- \ari \left( s \right)}}{
\left\{ \pair{0}{0}, \pair{1}{1} \right\}
}} & \ari \left( s \right) < 0
\end{cases}
\end{align*}
\end{Def}

\begin{Prop}
\label{RefThmCompatibleInterpretation}
Given an interpretation $i$ of the language $S$ having universe $U$, and an equivalence relation $E$ on $U$ such that $i$ and $E$ are \Comp{}, $\iQuotient{i}{E}$ is an interpretation of $S$ having $\rng \left( \toClass{E} \right)$ as universe.
\end{Prop}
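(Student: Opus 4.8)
The plan is to read off, directly from Definition~\ref{RefDefInterpretationQuotient}, that $\iQuotient{i}{E}$ is a function with domain $\dom\ari$, and then to verify the only substantive requirement of Definition~\ref{RefDefInterpretation}: setting $V := \rng\left( \toClass{E} \right)$, for each $s \in \dom\ari$ the value $\left( \iQuotient{i}{E} \right)\left( s \right)$ lies in $\mapsFromTo{\left( V^{\ari\left( s \right)} \right)}{V}$ when $\ari\left( s \right) \geq 0$ and in $\mapsFromTo{\left( V^{-\ari\left( s \right)} \right)}{\left\{ 0,1 \right\}}$ when $\ari\left( s \right) < 0$. The set $V$ is non-empty because $U \neq \emp$ forces the equivalence relation $E$ to have at least one class. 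The strategy for the type check is to display $\left( \iQuotient{i}{E} \right)\left( s \right)$ as a composition of maps, each of which is on its own a function of known source and target, so that the composite inherits the required form.

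First I would establish the two auxiliary facts on which everything rests, writing $n := \abs{\ari\left( s \right)}$. The tupled relation $\placesof{E}{n}$ of Definition~\ref{RefDefPlaces} is a non-empty equivalence relation with $\dom \placesof{E}{n} = U^n$: reflexivity, symmetry and transitivity pass componentwise from $E$, and non-emptiness follows from that of $E$. Next, the translator $\tupleToClass{E}{n}$ is in fact a function from $V^n$ onto $\rng\left( \toClass{\placesof{E}{n}} \right)$. The point is that $\placesof{\left( \toClass{E}^{-1} \right)}{n}$ sends a tuple of $E$-classes to its various tuples of representatives, and any two such representative tuples are componentwise $E$-related, hence $\placesof{E}{n}$-equivalent; therefore $\toClass{\placesof{E}{n}}$ collapses them to one and the same $\placesof{E}{n}$-class, making the composite single-valued, while totality on $V^n$ holds because every $E$-class is non-empty.

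With these in hand I would invoke Proposition~\ref{RefThmCompatibleFunction}. In the case $\ari\left( s \right) \geq 0$ we have $n = \ari\left( s \right)$ and $i\left( s \right) \in \mapsFromTo{\left( U^n \right)}{U}$; the hypothesis that $i$ and $E$ are compatible (Definition~\ref{RefDefCompatible2}) says precisely that $i\left( s \right)$ is $\placesof{E}{n}$-to-$E$ compatible, so Proposition~\ref{RefThmCompatibleFunction} yields $\quotient{i\left( s \right)}{\placesof{E}{n}}{E} \in \mapsFromTo{\rng\left( \toClass{\placesof{E}{n}} \right)}{\left( \rng\left( \toClass{E} \right) \right)}$. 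Composing this with $\tupleToClass{E}{n}$ (recalling that $\relcomp$ applies its left factor first) shows $\left( \iQuotient{i}{E} \right)\left( s \right)$ is a function $V^n \to V$, as wanted. For $\ari\left( s \right) < 0$ one argues identically with target equivalence relation $\left\{ \pair{0}{0}, \pair{1}{1} \right\}$ on $\left\{ 0,1 \right\}$ and $i\left( s \right) \in \mapsFromTo{\left( U^n \right)}{\left\{ 0,1 \right\}}$; Proposition~\ref{RefThmCompatibleFunction} lands the quotient in maps into $\left\{ \left\{ 0 \right\}, \left\{ 1 \right\} \right\}$, and the final factor $\peel{}$ sends each of these singletons back to its element, so the composite is a function $V^n \to \left\{ 0,1 \right\}$.

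The main obstacle I anticipate is the middle paragraph rather than the appeal to Proposition~\ref{RefThmCompatibleFunction}: one must unwind the nested definitions of $\placesof{\cdot}{n}$, $\toClass{\cdot}$ and $\relcomp$ carefully enough to be sure that $\tupleToClass{E}{n}$ is genuinely functional and total on $V^n$, and that $\placesof{E}{n}$ really is an equivalence relation with domain $U^n$ (so that Proposition~\ref{RefThmCompatibleFunction} applies with $i\left( s \right)$ as the compatible function). Once the bookkeeping of classes, tuples-of-classes and classes-of-tuples is pinned down, the two type computations are routine.
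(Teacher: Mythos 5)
Your proposal is correct and follows essentially the same route as the paper: fix $s$, set $n=\abs{\ari(s)}$, note that $\placesof{E}{n}$ is an equivalence relation on $U^n$ and that $\tupleToClass{E}{n}$ maps $\left(\rng\toClass{E}\right)^n$ into $\rng\toClass{\placesof{E}{n}}$, then apply Proposition~\ref{RefThmCompatibleFunction} in each arity case and compose (with $\peel{}$ appended when $\ari(s)<0$). The only difference is that you sketch the verification of the two auxiliary facts, which the paper declares "easily realized" and delegates to the Mizar source.
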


\begin{proof}
Set $I := \iQuotient{i}{E}$. 
Let $s \in \dom {\ari_S}$; set $n := \abs {\ari \left( s \right)} \in \N$, $f := i\left( s \right)$, $\ol E := \placesof {E} {n}$ and $ \eta := \tupleToClass{E}{n}$.
One easily realizes (or may refer to the \M{} article \verb|FOMODEL3.MIZ| to find the proofs) that $\ol E$ is an equivalence relation on $ \mapsFromTo{n}{U}$ and that 
\begin{align}
\label{RefEq33}
\eta : \mapsFromTo{n}{ \left( \rng \toClass E  \right)} \to \rng {\toClass {\ol E}}.
\end{align}
We show that $I$, $s$ and $\ran {\toClass{E}}$ satisfy \ref{RefDefInterpretation}.
By cases
\begin{description}
\item
[$ \ari \left( s \right) \geq 0$] 
Then $I \left( s \right) = \tupleToClass{E}{n} \relcomp \quotient{f} {\ol E} {E}$ and 
$ f : \mapsFromTo {n} {U} \to U $.
The goal is to prove that $I\left( s \right) : \mapsFromTo {n} {\left( \rng \toClass{E} \right) } \to \rng {\toClass{E}}$.
By \ref{RefDefCompatible2}, $ f $ is \comp{\ol E}{E}, so that 
$ \quotient {f} {\ol E} {E} : \ran {\toClass {\ol E}} \to \ran {\toClass E}$ by \ref{RefThmCompatibleFunction}.
This yields thesis by \eqref{RefEq33}.
\item
[$ \ari \left( s \right) < 0$]
Then $ I \left( s \right) = \eta \relcomp \quotient{f} {\ol E} {\id{2}}$ and $ f: \mapsFromTo{n} {U} \to 2$.
The goal is to prove that $I \left( s \right) : \mapsFromTo {n} {\left( \rng \toClass{E} \right)} \to 2$.
By \ref{RefDefCompatible2}, $ f $ is \comp{\ol E}{\id{2}}, so that
$ \quotient {f} {\ol E} {\id{2}} : \ran {\toClass {\ol E}} \to \ran {\toClass{\id{2}}}$ by \ref{RefThmCompatibleFunction}.
This yields thesis by \eqref{RefEq33}, being $\peel{} \quad : \rng {\toClass{\id 2}} = \left\{ \left\{ 0 \right\}, \left\{ 1 \right\} \right\} \to 2$.
\end{description}
\end{proof}

Result \ref{RefThmCompatibleInterpretation} ends this section. 
Wanting to apply it to the \free{} interpretation, in the next section we introduce a relation on terms, and investigate the conditions to make it an equivalence relation, as required by \ref{RefThmCompatibleInterpretation}.
In the subsequent section, we finally face the issue of compatibility.

\subsection[\TTermeq{} relation and \H{} interpretation]{The \Termeq{} relation on terms and the \H{} interpretation}
\label{RefSectEqRel}
\begin{Def}
\label{RefDefTermeq}
Given a ruleset $D$ and a set $X$, we define
\begin{align*}
\termeq{D}{X} := 
\im{
\left( \restrict{\conc}{\terms{S} \cartprod \terms{S}}  \right) ^ {-1}
}
{
\im{
\left( \curry{\conc}{\left\{ \pair{0}{\equiv} \right\}}  \right) ^ {-1}
}
{\provables{D} \left( X \right)}}.
\end{align*}
\end{Def}

\begin{Rem}
Since 
\begin{align}
\label{RefEqTermeqDef2}
\termeq{D}{X} = \left\{ 
\pair{t_1}{t_2} \in \terms{S} \cartprod \terms{S} : X \proves{D} \Eq{t_1}{t_2}
\right\},
\end{align}
$\termeq{D}{X}$ is a relation on $\terms{S}$.
\end{Rem}

\begin{Def}[The \H{} `interpretation']
$\henk{D}{X} := \iQuotient{\freeInt{X}}{\termeq{D}{X}}.$
\end{Def}

\begin{Prop}
\label{RefThmTermeqRefl}
If $D \emul{\emptyset} \left\{ \rEqRefl \right\}$, then $\dom {\termeq{D}{X}} = \terms{S}$  and $\termeq{D}{X}$ is reflexive.
\end{Prop}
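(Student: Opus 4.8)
The plan is to unwind the emulation hypothesis into the single concrete fact that every reflexive equality sequent $\pair{\emptyset}{\Eq{t}{t}}$ is derivable in $D$ from $\emptyset$, and then to read off both assertions directly from the characterization \eqref{RefEqTermeqDef2} of $\termeq{D}{X}$.

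First I would compute the left-hand union $\bigcup_{n \in \Z^+} \derivables{n}{\left\{ \rEqRefl \right\}}\left( \emptyset \right)$. By \ref{RefDefRules} the rule $\rEqRefl$ ignores its argument, so that $\rEqRefl\left( \Sigma \right) = \left\{ \pair{\emptyset}{\Eq{t}{t}} : t \in \terms{S} \right\}$ for every $\Sigma$. An immediate induction on $n$ then gives $\derivables{n}{\left\{ \rEqRefl \right\}}\left( \emptyset \right) = \left\{ \pair{\emptyset}{\Eq{t}{t}} : t \in \terms{S} \right\}$ for each $n \in \Z^+$, since $\derivables{1}{\left\{ \rEqRefl \right\}}\left( \emptyset \right) = \rEqRefl\left( \emptyset \right)$ and every further iteration reproduces the same set. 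Hence the union equals $\left\{ \pair{\emptyset}{\Eq{t}{t}} : t \in \terms{S} \right\}$.

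Next I would feed this into the hypothesis $D \emul{\emptyset} \left\{ \rEqRefl \right\}$, which by \ref{RefDefEmul} states that this set is contained in $\bigcup_{n \in \Z^+} \derivables{n}{D}\left( \emptyset \right) = \Derivables{D}\left( \emptyset \right)$ (the two coincide because $\derivables{0}{D}\left( \emptyset \right) = \emptyset$). Therefore $\pair{\emptyset}{\Eq{t}{t}} \in \Derivables{D}\left( \emptyset \right)$ for every term $t$; and since $\emptyset \C X$, the provability criterion of \ref{RefDefProvable2} yields $X \proves{D} \Eq{t}{t}$ for every $t \in \terms{S}$.

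The two conclusions are now immediate from \eqref{RefEqTermeqDef2}. The relation $X \proves{D} \Eq{t}{t}$ says exactly that $\pair{t}{t} \in \termeq{D}{X}$, which is reflexivity. For the domain, reflexivity gives $\terms{S} \C \dom {\termeq{D}{X}}$, whereas the inclusion $\termeq{D}{X} \C \terms{S} \cartprod \terms{S}$ recorded in \eqref{RefEqTermeqDef2} gives $\dom {\termeq{D}{X}} \C \terms{S}$; combining the two yields $\dom {\termeq{D}{X}} = \terms{S}$. No step is genuinely difficult here; the only point demanding attention is the bookkeeping of the iteration $\derivables{n}{\left\{ \rEqRefl \right\}}$, where one must observe that $\rEqRefl$ is constant in its argument so that the union already stabilizes at $n = 1$.
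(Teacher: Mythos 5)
Your proof is correct and follows essentially the same route as the paper's: both reduce the claim to showing $\pair{\emptyset}{\eq{t}} \in \Derivables{D}\left( \emptyset \right)$ via the emulation hypothesis, then apply \ref{RefDefProvable2} and \eqref{RefEqTermeqDef2}. The only difference is cosmetic: you compute the full union $\bigcup_{n} \derivables{n}{\left\{ \rEqRefl \right\}}\left( \emptyset \right)$ and note it stabilizes at $n=1$, whereas the paper simply chains the inclusions $\rEqRefl\left( \emptyset \right) \C \onestep{D_0}\left( \emptyset \right) \C \Derivables{D_0}\left( \emptyset \right) \C \Derivables{D}\left( \emptyset \right)$.
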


\begin{proof}
Set $D_0 := \left\{ \rEqRefl \right\}$, $ P:=\termeq{D}{X}$. Let $t$ be a term. 
We have to show that $ \pair{t}{t} \in P$. Now 
\begin{align*}
\pair{\emptyset}{\eq{t}} \in \rEqRefl \left( \emptyset \right) \C \onestep{D_0} \left( \emptyset \right) 
\C \Derivables{D_0} \left( \emptyset \right) \C \Derivables{D} \left( \emptyset \right), 
\end{align*}
which shows that $ X \proves{D} \eq{t}$ by \ref{RefDefProvable2}, and hence thesis by virtue of \eqref{RefEqTermeqDef2}.
\end{proof}

\begin{Prop}
\label{RefThmTermeqSymm}
If $D \emul{\emptyset} \left\{ \rEqSymm \right\}$ and $X$ is 
\closed{$D$}, then 
$\termeq{D}{X} $ is symmetric.
\end{Prop}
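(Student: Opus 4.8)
The plan is to recast symmetry of $\termeq{D}{X}$ as a statement about provability and then feed the two hypotheses into it one after the other. By the characterization \eqref{RefEqTermeqDef2}, proving $\termeq{D}{X}$ symmetric is the same as proving that, for arbitrary terms $t_1, t_2$, the condition $X \proves{D} \Eq{t_1}{t_2}$ implies $X \proves{D} \Eq{t_2}{t_1}$. So I would fix $t_1, t_2$, assume $X \proves{D} \Eq{t_1}{t_2}$, and aim at $X \proves{D} \Eq{t_2}{t_1}$.

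First I would invoke closedness. The assumption $X \proves{D} \Eq{t_1}{t_2}$ says $\Eq{t_1}{t_2} \in \provables{D}\left( X \right)$, and since $X$ is \closed{$D$} we have $\provables{D}\left( X \right) \C X$, hence $\Eq{t_1}{t_2} \in X$. This is where I expect the only real subtlety to sit: provability from $X$ does not by itself license chaining derivations, because the framework of \ref{RefSectProvability} carries no built-in cut that would splice a proof of $\Eq{t_1}{t_2}$ from $X$ together with a one-step symmetry inference into a proof of $\Eq{t_2}{t_1}$ from $X$. Closedness is exactly what removes this difficulty: it deposits $\Eq{t_1}{t_2}$ directly into $X$, so that the antecedent demanded by the symmetry rule is already covered by $X$.

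Next I would use emulation. Reading off $\rEqSymm$ from \ref{RefDefRules}, this rule is an axiom that outputs the sequent $\pair{\left\{ \Eq{t_1}{t_2} \right\}}{\Eq{t_2}{t_1}}$ independently of its input; therefore this sequent lies in $\rEqSymm\left( \emptyset \right) = \derivables{1}{\left\{ \rEqSymm \right\}}\left( \emptyset \right)$, and hence in $\bigcup_{n \in \Z^+} \derivables{n}{\left\{ \rEqSymm \right\}}\left( \emptyset \right)$. By $D \emul{\emptyset} \left\{ \rEqSymm \right\}$ and \ref{RefDefEmul}, it then also belongs to $\bigcup_{n \in \Z^+} \derivables{n}{D}\left( \emptyset \right) \C \Derivables{D}\left( \emptyset \right)$.

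Finally I would combine the two threads through the alternative description of provability in \ref{RefDefProvable2}: the sequent $\pair{\left\{ \Eq{t_1}{t_2} \right\}}{\Eq{t_2}{t_1}}$ sits in $\Derivables{D}\left( \emptyset \right)$ and its antecedent $\left\{ \Eq{t_1}{t_2} \right\}$ is contained in $X$ by the first step, so $X \proves{D} \Eq{t_2}{t_1}$. Translating back via \eqref{RefEqTermeqDef2}, this reads $\pair{t_2}{t_1} \in \termeq{D}{X}$, which is the desired symmetry.
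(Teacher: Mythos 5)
Your proof is correct and follows essentially the same route as the paper's: use $D$-closedness to place $\Eq{t_1}{t_2}$ into $X$, observe that the sequent $\pair{\left\{ \Eq{t_1}{t_2} \right\}}{\Eq{t_2}{t_1}}$ lies in $\rEqSymm\left( \emptyset \right) \C \Derivables{D}\left( \emptyset \right)$ via the emulation hypothesis, and conclude by \ref{RefDefProvable2}. Your added remark on why closedness (rather than a cut rule) is the mechanism that lets the two derivations be chained is accurate and consistent with the paper's framework.
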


\begin{proof}
Set $D_0 := \left\{ \rEqRefl \right\}$. 
Assume $ X \proves{D} \Eq{t_1}{t_2} $. We have to show $ X \proves{D} \Eq{t_2}{t_1}$.
\begin{align*}
\pair{\left\{ \Eq{t_1}{t_2} \right\}}{\Eq{t_2}{t_1}} \in \rEqSymm \left( \emptyset \right) = \onestep{D_0} \left( \emptyset \right) 
\C \Derivables{D_0} \left( \emptyset \right) \C \Derivables{D} \left( \emptyset \right), 
\end{align*}
and closure yields $ \Eq{t_1}{t_2} \in X$. Hence $X \proves{D} \Eq{t_2}{t_1}$ by \ref{RefDefProvable2}.
\end{proof}

\begin{Prop}
\label{RefThmTermeqTrans}
If $D \emul{\emptyset} \left\{ \rEqTrans \right\}$ and $X$ is 
\closed{$D$}, then
$\termeq{D}{X}$ is transitive.
\end{Prop}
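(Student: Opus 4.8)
The plan is to mirror exactly the proofs of \ref{RefThmTermeqRefl} and \ref{RefThmTermeqSymm}, now exploiting the two-premise shape of $\rEqTrans$. By the characterization \eqref{RefEqTermeqDef2}, asserting transitivity of $\termeq{D}{X}$ amounts to showing that, whenever $X \proves{D} \Eq{t_1}{t_2}$ and $X \proves{D} \Eq{t_2}{t_3}$ hold for terms $t_1, t_2, t_3$, one also gets $X \proves{D} \Eq{t_1}{t_3}$. So I would fix such $t_1, t_2, t_3$ and take the two provabilities as the standing hypotheses.

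First I would set $D_0 := \left\{ \rEqTrans \right\}$ and read off directly from \ref{RefDefRules} that the sequent $\pair{\left\{ \Eq{t_1}{t_2}, \Eq{t_2}{t_3} \right\}}{\Eq{t_1}{t_3}}$ lies in $\rEqTrans \left( \emptyset \right) = \onestep{D_0} \left( \emptyset \right) \C \Derivables{D_0} \left( \emptyset \right)$, exactly as the one-premise sequent was produced in a single step in the symmetric case. The emulation hypothesis $D \emul{\emptyset} D_0$ (together with the fact that $\Derivables{D} \left( \emptyset \right) = \bigcup_{n \in \Z^+} \derivables{n}{D} \left( \emptyset \right)$, which holds because $\derivables{0}{D} \left( \emptyset \right) = \emptyset$) then carries this sequent into $\Derivables{D} \left( \emptyset \right)$.

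Finally I would invoke $D$-closedness of $X$: the two hypotheses give $\Eq{t_1}{t_2}, \Eq{t_2}{t_3} \in \provables{D} \left( X \right) \C X$, so the antecedent $\left\{ \Eq{t_1}{t_2}, \Eq{t_2}{t_3} \right\}$ is contained in $X$. Possessing a sequent of $\Derivables{D} \left( \emptyset \right)$ whose succedent is $\Eq{t_1}{t_3}$ and whose antecedent sits inside $X$, I conclude $X \proves{D} \Eq{t_1}{t_3}$ by \ref{RefDefProvable2}, that is $\pair{t_1}{t_3} \in \termeq{D}{X}$ by \eqref{RefEqTermeqDef2}. The only point needing attention — and the sole genuine difference from the reflexive and symmetric cases — is that $\rEqTrans$ carries two premises, so closedness must be applied to both equality formulas before \ref{RefDefProvable2} can be used; this is immediate, but it is the one load-bearing appeal to the closure assumption.
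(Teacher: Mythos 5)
Your argument is correct and coincides with the paper's own proof: both produce the sequent $\pair{\left\{ \Eq{t_1}{t_2}, \Eq{t_2}{t_3} \right\}}{\Eq{t_1}{t_3}}$ in one step of $\rEqTrans$, transfer it to $\Derivables{D}\left( \emptyset \right)$ via the emulation hypothesis, and use $D$-closedness to place both premises in $X$ before invoking \ref{RefDefProvable2}. Nothing is missing.
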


\begin{proof}
Set $D_0 := \left\{ \rEqTrans \right\}$.
Assume $ X \proves{D} \Eq{t_1}{t_2} $ and $ X \proves{D} \Eq{t_2}{t_3} $. We have to show $ X \proves{D} \Eq{t_1}{t_3}$.

\begin{align*}
\pair{\left\{ \Eq{t_1}{t_2}, \Eq{t_2}{t_3} \right\}}{\Eq{t_1}{t_3}} \in \rEqTrans \left( \emptyset \right) = \onestep{D_0} \left( \emptyset \right) 
\C \Derivables{D_0} \left( \emptyset \right) \C \Derivables{D} \left( \emptyset \right), 
\end{align*}
and closure yields $ \left\{ \Eq{t_1}{t_2}, \Eq{t_2}{t_3}  \right\} \C X$. Hence $X \proves{D} \Eq{t_1}{t_3}$ by \ref{RefDefProvable2}.
\end{proof}

\begin{Lm}
\label{RefThmTermeqEqrel}
If $D \emul{\emp} \left\{ \rEqRefl \right\}$, $D \emul{\emp} \left\{ \rEqSymm \right\}$, $D \emul{\emp} \left\{ \rEqTrans \right\}$ and $X$ is 
\closed{$ D $}, then $ \termeq{D}{X}$ is an equivalence relation on $\terms{S}$.
\end{Lm}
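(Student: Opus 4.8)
The plan is to observe that this lemma is simply the conjunction of the three propositions immediately preceding it, so that essentially no new work is required beyond checking that the hypotheses line up. Recall that, by the remark following Definition~\ref{RefDefTermeq} (specifically equation~\eqref{RefEqTermeqDef2}), $\termeq{D}{X}$ is already known to be a relation on $\terms{S}$. Hence to conclude that it is an \emph{equivalence} relation on $\terms{S}$ it suffices to verify the three defining properties: reflexivity (over all of $\terms{S}$), symmetry, and transitivity.

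First I would invoke Proposition~\ref{RefThmTermeqRefl}: its single hypothesis $D \emul{\emp} \left\{ \rEqRefl \right\}$ is among the hypotheses of the present lemma, and it yields both $\dom{\termeq{D}{X}} = \terms{S}$ and reflexivity of $\termeq{D}{X}$. The domain equality is what guarantees reflexivity \emph{on all of} $\terms{S}$, rather than merely on a subset, so this is the point at which one secures that the relation is genuinely an equivalence relation on the whole term set. Next I would apply Proposition~\ref{RefThmTermeqSymm}, whose hypotheses $D \emul{\emp} \left\{ \rEqSymm \right\}$ and $X$ being \closed{$D$} are again both assumed here, to obtain symmetry. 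Finally Proposition~\ref{RefThmTermeqTrans}, with its hypotheses $D \emul{\emp} \left\{ \rEqTrans \right\}$ and $D$-closedness of $X$ likewise in force, delivers transitivity.

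Collecting these three facts completes the proof. The work here is purely organizational: the real content lives in the three propositions, which in turn reduce (via Definition~\ref{RefDefProvable2} and the emulation relation of Definition~\ref{RefDefEmul}) to exhibiting the appropriate sequents in $\Derivables{D}{\emptyset}$ and invoking deductive closure. Consequently there is no substantive obstacle to overcome; the only care needed is to confirm that each of the four hypotheses of the lemma is precisely what the corresponding proposition demands, and that the combination of ``relation on $\terms{S}$'' together with reflexivity, symmetry, and transitivity is exactly the definition of an equivalence relation on $\terms{S}$.
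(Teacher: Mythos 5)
Your proposal is correct and matches the paper's proof, which simply states that the result is immediate from Propositions \ref{RefThmTermeqRefl}, \ref{RefThmTermeqSymm} and \ref{RefThmTermeqTrans}. Your additional care in checking that reflexivity holds on all of $\terms{S}$ via the domain equality is a sensible elaboration of the same argument.
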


\begin{proof}
Immediate from \ref{RefThmTermeqRefl}, \ref{RefThmTermeqSymm}, \ref{RefThmTermeqTrans}.
\end{proof}

\subsection{Compatibility}
\label{RefSectCompatibility}
\begin{Lm}
\label{RefThmTermeqHenkinCompatible1}
If $ D \emul{\emp} \left\{ \rEqRefl \right\}$, $X$ is \closed{$D$}, $ D \emul{\emp} \left\{ \rFunc \right\}$, $X$ is \closed{$\left\{ \rRel \right\} $}, $X$ is \closed{$ \left\{ \rEqSymm \right\}$}, then 
$\freeInt{X}$ and $\termeq{D}{X}$ are \Comp{}.
\end{Lm}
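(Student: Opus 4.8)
The plan is to verify directly the two-case compatibility condition of Definition~\ref{RefDefCompatible2}, taking the interpretation to be $\freeInt{X}$, the relation to be $P := \termeq{D}{X}$, and the universe to be $\terms{S}$. Throughout I rely on the reformulation \eqref{RefEqTermeqDef2}, by which $\pair{t_1}{t_2} \in P$ precisely when $X \proves{D} \Eq{t_1}{t_2}$. Two preliminaries are recorded first. The hypothesis $D \emul{\emp} \left\{ \rEqRefl \right\}$ guarantees, through Proposition~\ref{RefThmTermeqRefl}, that $\dom P = \terms{S}$ and that $P$ is reflexive, hence non-empty; this makes the tupled relation $\placesof{P}{n}$ of Definition~\ref{RefDefPlaces} well defined and, moreover, ensures $\placesof{P}{n} \subseteq \terms{S}^{n} \cartprod \terms{S}^{n}$ so that its domain matches that of $\freeInt{X}(s)$. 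Unwinding Definition~\ref{RefDefPlaces} with the composition convention of~\ref{RefNotationCompositions}, I note that $\pair{\vec t}{\vec t'} \in \placesof{P}{n}$ holds exactly when $X \proves{D} \Eq{\vec t(j)}{\vec t'(j)}$ for every $j \in n$; this is the hypothesis I get to exploit in each case.

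Next I fix $s \in \dom \ari$ and split on the sign of $\ari(s)$. Suppose first $\ari(s) \geq 0$ and take $\pair{\vec t}{\vec t'} \in \placesof{P}{\ari(s)}$. Reading off Example~\ref{RefDefFreeInt} gives $\freeInt{X}(s)(\vec t) = s \fconc \left( \vec t \right)$ and likewise for $\vec t'$, so I must establish $X \proves{D} \Eq{s \fconc \left( \vec t \right)}{s \fconc \left( \vec t' \right)}$. The key observation is that, by the very shape of the rule $\rFunc$, the sequent $\pair{\left\{ \Eq{\vec t(j)}{\vec t'(j)} : j \in \ari(s) \right\}}{\Eq{s \fconc \left( \vec t \right)}{s \fconc \left( \vec t' \right)}}$ lies in $\rFunc(\emptyset) = \derivables{1}{\left\{ \rFunc \right\}}(\emptyset)$; emulation $D \emul{\emp} \left\{ \rFunc \right\}$ then places it in $\Derivables{D}(\emptyset)$. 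Since each premise $\Eq{\vec t(j)}{\vec t'(j)}$ is provable from $X$ in $D$ and $X$ is \closed{$D$}, every premise belongs to $X$, so Remark~\ref{RefDefProvable2} delivers $X \proves{D} \Eq{s \fconc \left( \vec t \right)}{s \fconc \left( \vec t' \right)}$, which is the required membership in $P$.

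For $\ari(s) < 0$, set $n := -\ari(s)$ and take $\pair{\vec t}{\vec t'} \in \placesof{P}{n}$. Now $\freeInt{X}(s)(\vec t) = \indicator_X^{\wffs{S}}(s \fconc \left( \vec t \right))$, so compatibility with $\id{2}$ amounts to the equivalence $s \fconc \left( \vec t \right) \in X \Leftrightarrow s \fconc \left( \vec t' \right) \in X$. For the forward implication I invoke the rule $\rRel$: from $s \fconc \left( \vec t \right) \in X$ together with $\Eq{\vec t(j)}{\vec t'(j)} \in X$ (the latter obtained, as above, from $X \proves{D} \Eq{\vec t(j)}{\vec t'(j)}$ and the fact that $X$ is \closed{$D$}) the sequent with succedent $s \fconc \left( \vec t' \right)$ lies in $\rRel(\emptyset)$ with antecedent contained in $X$, whence $s \fconc \left( \vec t' \right) \in X$ because $X$ is \closed{$\left\{ \rRel \right\}$}. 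The reverse implication is the one subtle point: it requires running $\rRel$ with the two tuples exchanged, for which I first need $\Eq{\vec t'(j)}{\vec t(j)} \in X$. This is exactly where the hypothesis that $X$ is \closed{$\left\{ \rEqSymm \right\}$} enters, turning each $\Eq{\vec t(j)}{\vec t'(j)} \in X$ into $\Eq{\vec t'(j)}{\vec t(j)} \in X$ via $\rEqSymm$; with symmetry in hand the reverse implication mirrors the forward one, completing the equivalence and hence the case.

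The routine bookkeeping, namely identifying the values of $\freeInt{X}(s)$ and checking that $\placesof{P}{n}$ encodes componentwise $P$-relatedness, is mechanical. The only genuinely delicate step is the $\ari(s) < 0$ case, since compatibility there is a two-sided membership statement in $X$ that cannot be obtained from $\rRel$ alone; this is precisely the reason the symmetry hypothesis $X$ \closed{$\left\{ \rEqSymm \right\}$} appears in the lemma, while the $\ari(s) \geq 0$ case needs only $\rFunc$ together with emulation and $D$-closure.
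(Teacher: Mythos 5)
Your proposal follows essentially the same route as the paper's: the case split on the sign of $\ari(s)$, the use of $\rFunc$ together with $D$-closure for positive arity, and the use of $\left\{\rRel\right\}$-closure in both directions (with $\left\{\rEqSymm\right\}$-closure supplying the reversed equalities for the converse) for negative arity all match the paper's argument, and your negative-arity case is complete and correct.

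There is, however, one step that fails as written. You treat $\ari(s) \geq 0$ as a single case and justify it by placing the sequent with antecedent $\left\{ \Eq{\vec t(j)}{\vec t'(j)} : j \in \ari(s) \right\}$ and succedent $\Eq{s \fconc \left( \vec t \right)}{s \fconc \left( \vec t' \right)}$ inside $\rFunc(\emptyset)$. But the definiens of $\rFunc$ in \ref{RefDefRules} quantifies $n \in \Z^+$, so when $\ari(s) = 0$ the resulting sequent $\pair{\emptyset}{\eq{s}}$ is \emph{not} an element of $\rFunc(\emptyset)$, and the hypothesis $D \emul{\emp} \left\{ \rFunc \right\}$ gives you nothing for literals. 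The paper isolates $\ari(s) = 0$ as a separate case: since $\placesof{P}{0} = \left\{ \pair{\emp}{\emp} \right\}$, compatibility there reduces to $\pair{f(\emp)}{f(\emp)} \in P$, i.e.\ to reflexivity of $P = \termeq{D}{X}$ at the term $f(\emp) \in \terms{S}$, which is \ref{RefThmTermeqRefl} applied via $D \emul{\emp} \left\{ \rEqRefl \right\}$ --- a fact you record in your preliminaries but never deploy for this purpose. The repair is immediate, but as written the literal case is unjustified.
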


\begin{proof}
Take $s \in \dom \ari$. Set $P := \termeq{D}{X}$ and $ f:= \freeInt{X} \left( s \right)$. By cases.

\textbf{1) $ \ari \left( s \right) = 0 $}\\
By \ref{RefDefCompatible2}, we have to show that $f$ is 
$ \left( \placesof{P}{0},P  \right) $-compatible. 
Since $\placesof{P}{0} = \left\{ \pair{\emp}{\emp} \right\}$, it suffices to show that $ \pair {f \left( \emp \right)} {f \left( \emp \right)} \in P$. $ f \left( \emp \right) $ is in the universe $\terms{S}$ of $\freeInt{X}$ (see \ref{RefRemInterpretationOfLiteral}); hence, since $\dom P = \terms{S}$ and $P$ is reflexive by \ref{RefThmTermeqRefl} and the hypothesis $D \emul{\emp} \left\{ \rEqRefl \right\}$, we have thesis.

\textbf{2) $ \ari \left( s \right) > 0 $}\\
By \ref{RefDefCompatible2}, we have to show that $f$ is 
$ \left( \placesof{P}{n},P  \right) $-compatible, where we set $n := \ari \left( s \right) \in \Z^+$.
As from \ref{RefDefCompatible}, let $ \vec{t}, \vec{t}' \in \left( \terms{S}  \right)^n $, and assume $ \pair {\vec{t}}{\vec t '} \in \placesof{P}{n}$.
The goal is to prove $\pair{f\left( \vec t \right)}{f\left( \vec t' \right)} \in P$.
Set $ \Gamma := \left\{ \Eq{\vec t \left( j \right)} {\vec' \left( j \right)}: j \in n \right\} \in \Finsets{n}{\wffs{S}}$ and $ \varphi := \Eq{s \fconc \left( \vec t \right)}{ s \fconc \left( \vec t' \right)} = \Eq{f \left( \vec t \right)}{f \left( \vec t' \right) }$.
From
\begin{align*}
\pair{\Gamma}{\varphi} \in \rFunc \left( \emptyset \right) = \onestep{\left\{ \rFunc \right\}} \left( \emp \right) \C \Derivables{\left\{ \rFunc \right\}}\left( \emp \right) \C \Derivables{D} \left( \emp \right),
\end{align*}
which takes advantage of the hypothesis $ D \emul{\emp} \left\{ \rFunc \right\}$, and
\begin{align*}
\notag
\pair {\vec t}{\vec t'} \in \placesof{P}{n} & \Leftrightarrow & \forall j \in n  \left( tt \left( j \right), tt'\left( j \right)  \right) \in P 
& \Leftrightarrow  
\\ 
\forall j \in n  X \proves{D} \Eq{tt\left( j \right)}{tt' \left( j \right)} 
&  \Rightarrow & 
\Gamma \C X,
\end{align*}
where last deduction employed $D$-closure, we draw $ X \proves{D} \varphi$ thanks to \ref{RefDefProvable2}.

\textbf{3) $ \ari \left( s \right) < 0 $}\\
By \ref{RefDefCompatible2}, we have to show that $f$ is 
$ \left( \placesof{P}{n}, \id{2}  \right) $-compatible, where we set $n := - \ari \left( s \right) \in \Z^+$ and $\id{2} := \left\{ \pair{0}{0}, \pair{1}{1} \right\}$.
As from \ref{RefDefCompatible}, let $ \vec{t}, \vec{t}' \in \left( \terms{S}  \right)^n $, and assume $ \pair {\vec{t}}{\vec t '} \in \placesof{P}{n}$.
The goal is to prove $\pair{f\left( \vec t \right)}{f\left( \vec t' \right)} \in \id{2}$.
Set 
$ \Gamma := \left\{ \Eq{\vec t \left( j \right)} {\vec' \left( j \right)}: j \in n \right\} \in \Finsets{n}{\wffs{S}}$, 
and preliminarily deduce
\begin{align}
\notag
\pair {\vec t}{\vec t'} \in \placesof{P}{n} & \Leftrightarrow & \forall j \in n  \left( tt \left( j \right), tt'\left( j \right)  \right) \in P 
& \Leftrightarrow  
\\ 
\label{RefEq06}
\forall j \in n  X \proves{D} \Eq{tt\left( j \right)}{tt' \left( j \right)} 
&  \Rightarrow & 
\Gamma \C X
\end{align}
thanks to $D$-closure. 
Now proceed by subcases.

\begin{description}
\item[a) $ f\left( \vec t \right) = 1 $] The thesis reduces to showing 
$ f\left( \vec t'  \right) = 1 $, which, by \ref{RefDefFreeInt}, means 
$ \varphi' := s \fconc \left( \vec t' \right)  \in X$. 
Let 
$ \varphi := s \fconc \left( \vec t \right)  \in X$. 
The subcase assumption gives $ \varphi \in X$ by \ref{RefDefFreeInt}, hence 
$ \Gamma \cup \left\{ \varphi \right\} \C X$ by \eqref{RefEq06}. 
Furthermore,
\begin{align*}
\pair{\Gamma \cup \left\{ \varphi \right\}}{\varphi'} \in \rRel \left( \emptyset \right) = 
\onestep{\rRel} \left( \emptyset \right) \C \Derivables{\left\{ \rRel \right\}} \left( \emptyset \right).
\end{align*}
Thus $ X \proves{\left\{ \rRel \right\}} \varphi'$. By $ \left\{ \rRel \right\}$-closure, we are finished.

\item[b) $ f\left( \vec t \right) = 0 $] Thesis reduces to showing $ f\left( \vec t'  \right) = 0 $, which, by \ref{RefDefFreeInt}, means $ \varphi' := s \fconc \left( \vec t' \right)  \notin X$. 
By contradiction, assume 
\begin{align}
\label{RefEq07}
\varphi' \in X.
\end{align}
Set $\Gamma' := \left\{ \Eq{\vec t' \left( j \right)}{ \vec t \left( j \right)} : j \in n \right\} $.
Given $j \in n$, it is easily seen that $ X \proves{\left\{ \rEqSymm \right\}} \Eq{\vec t' \left( j \right)} {\vec t \left( j \right)}$, since 
$  \left\{  \Eq{\vec t \left( j \right)}{ \vec t' \left( j \right)}  \right\} \C X$ by \eqref{RefEq06}, and
\begin{align*}
\pair{\left\{ \Eq{ \vec t \left( j \right)}{ \vec t' \left( j \right)} \right\}}{
 \Eq{ \vec t' \left( j \right)}{ \vec t \left( j \right)}
} \in \rEqSymm \left( \emptyset \right) = \onestep{\left\{ \rEqSymm \right\}} \left( \emp \right) \C 
\Derivables{\left\{ \rEqSymm \right\}} \left( \emp \right).
\end{align*}
By $\left\{ \rEqSymm \right\}$-closure, we conclude that $\Gamma' \C X$, and hence that $\Gamma' \cup \left\{ \varphi' \right\} \C X$ by \eqref{RefEq07}.
Moreover,
\begin{align*}
\pair{\Gamma' \cup \left\{ \varphi' \right\}}{\varphi} \in \rRel \left( \emptyset \right) = 
\onestep{\rRel} \left( \emptyset \right) \C \Derivables{\left\{ \rRel \right\}} \left( \emptyset \right),
\end{align*}
yielding $ X \proves{\left\{ \rRel \right\}} \varphi $, and hence $ \varphi \in X$ by $\left\{ \rRel \right\}$-closure, contradicting $ f\left( \vec t \right) = 0$.
\end{description}
\end{proof}

\begin{Cor}
\label{RefThmTermeqHenkinCompatible2}
If $D \emul{\emp} \left\{ \rEqRefl, \rEqSymm, \rEqTrans, \rFunc, \rRel \right\}$ and $ \provables{D} \left( X \right) \C X$, then $\termeq{D}{X}$ and $\freeInt{X}$ are \Comp{}.
\end{Cor}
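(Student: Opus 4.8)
The plan is to reduce the hypotheses of this corollary to the five hypotheses of Lemma \ref{RefThmTermeqHenkinCompatible1} and then invoke that lemma verbatim. Throughout, write $D^{\ast} := \left\{ \rEqRefl, \rEqSymm, \rEqTrans, \rFunc, \rRel \right\}$ for the reference ruleset appearing in the emulation hypothesis.

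First I would record that every rule in $D^{\ast}$ is \monotone{} (as noted after Definition \ref{RefDefRuleMajor}), whence each singleton ruleset $\left\{ R \right\}$ with $R \in D^{\ast}$, as well as $D^{\ast}$ itself, is \monotone{}. Fixing any $R \in D^{\ast}$, the inclusion $\left\{ R \right\} \C D^{\ast}$ together with Corollary \ref{RefThmInclusionImpliesEmulation} then yields $D^{\ast} \emul{} \left\{ R \right\}$, and in particular $D^{\ast} \emul{\emp} \left\{ R \right\}$. Combining this with the standing hypothesis $D \emul{\emp} D^{\ast}$ and the transitivity of $\emul{\emp}$ (Remark \ref{RefThmEmulTransitive}), I obtain $D \emul{\emp} \left\{ R \right\}$ for each individual $R \in D^{\ast}$.

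Instantiating $R$, the emulations $D \emul{\emp} \left\{ \rEqRefl \right\}$ and $D \emul{\emp} \left\{ \rFunc \right\}$ are precisely the first and third hypotheses of Lemma \ref{RefThmTermeqHenkinCompatible1}, while the second hypothesis, that $X$ be \closed{$D$}, is exactly the corollary's assumption $\provables{D} \left( X \right) \C X$. For the two remaining closure hypotheses I would feed the emulations $D \emul{\emp} \left\{ \rRel \right\}$ and $D \emul{\emp} \left\{ \rEqSymm \right\}$, together with the $D$-closedness of $X$, into Corollary \ref{RefThmEmulatedByClosedIsClosed}; this delivers that $X$ is \closed{$\left\{ \rRel \right\}$} and \closed{$\left\{ \rEqSymm \right\}$}.

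With all five hypotheses of Lemma \ref{RefThmTermeqHenkinCompatible1} now in force, applying that lemma concludes that $\freeInt{X}$ and $\termeq{D}{X}$ are \Comp{}. The argument is pure bookkeeping and presents no genuine obstacle; the only point demanding slight attention is the monotonicity hypothesis of Corollary \ref{RefThmInclusionImpliesEmulation}, which is supplied uniformly by the fact that every concrete rule introduced in Definition \ref{RefDefRules} is \monotone{}.
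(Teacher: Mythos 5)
Your argument is correct and is exactly the reduction the paper intends: the statement is presented as an immediate corollary of Lemma \ref{RefThmTermeqHenkinCompatible1} with no written proof, and your bookkeeping via \ref{RefThmInclusionImpliesEmulation}, \ref{RefThmEmulTransitive}, and \ref{RefThmEmulatedByClosedIsClosed} (together with the monotonicity of the rules from Definition \ref{RefDefRules}) supplies precisely the omitted hypothesis-matching. Nothing to object to.
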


\begin{Cor}[of \ref{RefThmTermeqHenkinCompatible2} and \ref{RefThmCompatibleInterpretation}]
\label{RefThmHenkinIsInterpretation}
If $D \emul{\emp} \left\{ \rEqRefl, \rEqSymm, \rEqTrans, \rFunc, \rRel \right\}$ and $ \provables{D} \left( X \right) \C X$, then 
$ \henk{D}{X} $ is an interpretation having $\Classes{\terms{S}}{\termeq{D}{X}}$ as universe.
\end{Cor}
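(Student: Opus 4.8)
The plan is to read this off as an essentially mechanical combination of the compatibility lemma \ref{RefThmTermeqHenkinCompatible2} and the quotient-interpretation proposition \ref{RefThmCompatibleInterpretation}, together with the identification of the resulting universe. Recall that $\henk{D}{X}$ is by definition $\iQuotient{\freeInt{X}}{\termeq{D}{X}}$, that $\freeInt{X}$ is an interpretation with universe $\terms{S}$ (Example \ref{RefDefFreeInt}), and that \ref{RefThmCompatibleInterpretation} turns a compatible equivalence relation on the universe of an interpretation into a quotient interpretation. So the only things I must supply before invoking \ref{RefThmCompatibleInterpretation} are: (i) that $\termeq{D}{X}$ is an equivalence relation on $\terms{S}$, and (ii) that $\freeInt{X}$ and $\termeq{D}{X}$ are \Comp{}.

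For (ii) there is nothing to do beyond citing \ref{RefThmTermeqHenkinCompatible2}, whose hypotheses are literally those of the present corollary. For (i) I would invoke Lemma \ref{RefThmTermeqEqrel}, which however asks for the three \emph{singleton} emulations $D \emul{\emp} \left\{ \rEqRefl \right\}$, $D \emul{\emp} \left\{ \rEqSymm \right\}$, $D \emul{\emp} \left\{ \rEqTrans \right\}$ and for $X$ to be \closed{D}. The closedness is exactly the hypothesis $\provables{D} \left( X \right) \C X$. To extract each singleton emulation from the joint hypothesis $D \emul{\emp} \left\{ \rEqRefl, \rEqSymm, \rEqTrans, \rFunc, \rRel \right\}$, I would note that the five concrete rules of \ref{RefDefRules} are all \monotone{}, hence the ruleset $\left\{ \rEqRefl, \rEqSymm, \rEqTrans, \rFunc, \rRel \right\}$ is \monotone{} in the sense of \ref{RefDefMonotone2}; since each singleton $\left\{ R \right\}$ is a subset of it, Corollary \ref{RefThmInclusionImpliesEmulation} gives $\left\{ \rEqRefl, \rEqSymm, \rEqTrans, \rFunc, \rRel \right\} \emul{} \left\{ R \right\}$, and transitivity of $\emul{\emp}$ (Remark \ref{RefThmEmulTransitive}) composed with the joint hypothesis yields $D \emul{\emp} \left\{ R \right\}$ for each of the three equality rules. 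This furnishes all hypotheses of \ref{RefThmTermeqEqrel}, so $\termeq{D}{X}$ is an equivalence relation on $\terms{S}$.

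Having (i) and (ii), I would apply \ref{RefThmCompatibleInterpretation} with $i := \freeInt{X}$, universe $U := \terms{S}$, and equivalence relation $E := \termeq{D}{X}$, concluding that $\henk{D}{X} = \iQuotient{\freeInt{X}}{\termeq{D}{X}}$ is an interpretation of $S$ with universe $\rng \left( \toClass{\termeq{D}{X}} \right)$. Finally I would observe that $\rng \left( \toClass{\termeq{D}{X}} \right)$ is precisely $\Classes{\terms{S}}{\termeq{D}{X}}$, since $\dom \termeq{D}{X} = \terms{S}$ (from reflexivity established in (i)) and the class set is defined as the range of the projection $\toClass{}$. I do not expect any genuine obstacle here: the argument is bookkeeping. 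The only step that is not purely a citation is the reduction from the joint emulation hypothesis to the three singleton emulations needed by \ref{RefThmTermeqEqrel}, and the observation that the equivalence-relation property must be secured separately because the compatibility result \ref{RefThmTermeqHenkinCompatible2} alone does not provide it.
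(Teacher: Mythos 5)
Your proposal is correct and follows exactly the route the paper intends: the corollary is stated without a written proof, being justified by combining \ref{RefThmTermeqHenkinCompatible2} (compatibility) with \ref{RefThmCompatibleInterpretation} (quotient of a compatible interpretation), and your filling-in of the remaining bookkeeping — deriving the three singleton emulations via \ref{RefThmInclusionImpliesEmulation} and \ref{RefThmEmulTransitive} so that \ref{RefThmTermeqEqrel} yields the equivalence-relation hypothesis, then identifying $\rng\left(\toClass{\termeq{D}{X}}\right)$ with $\Classes{\terms{S}}{\termeq{D}{X}}$ — is accurate. Your observation that \ref{RefThmTermeqEqrel} is an unstated but necessary third ingredient is a fair and correct refinement of the paper's attribution.
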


\subsection{The \H{} model}
\label{RefSectHenkinModel}
Here, the conditions making $\henk{D}{X}$ a model of $X$ are studied.
We first work out two preparatory results.

\begin{Lm}
\label{RefThmQuotientEval}
Let $i$ be an interpretation of $S$, and $P$ an equivalence relation over its universe $U$ such that $i$ and $P$ are \Comp{}. Then 
\begin{align*}
\restrict{\eval{\left(  \iQuotient{i}{P} \right)  }}{\terms{S}} = & 
\restrict{\toClass{P} \funccomp \eval{i}}{\terms{S}}
&& \text { and }
\\
\eval{ \left( \iQuotient{i}{P}  \right) } \left( \varphi_0 \right) = & \eval{i} \left( \varphi_0 \right) && \text{ if } \varphi_0 \left( 0 \right) \in \im{\ari ^ {-1}}{\Z^-} \sdiff \left\{ \equiv \right\}.
\end{align*}
\end{Lm}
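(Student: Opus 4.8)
The plan is to reduce both equalities to a single ``universal property'' of the quotient interpretation and then induct on depth. Throughout, write $\pi := \toClass{P}$ for the projection onto $\Classes{U}{P}$; note first that, $P$ being an equivalence relation compatible with $i$, Proposition \ref{RefThmCompatibleInterpretation} guarantees that $\iQuotient{i}{P}$ is an interpretation with universe $\Classes{U}{P}$, so $\eval{\left( \iQuotient{i}{P} \right)}$ is well defined. The crucial preliminary step is to establish, for each $s \in \dom \ari$ and each $\vec u \in \mapsFromTo{\abs{\ari \left( s \right)}}{U}$,
\begin{align*}
\left( \iQuotient{i}{P} \right) \left( s \right) \left( \pi \funccomp \vec u \right) =
\begin{cases}
\pi \left( i \left( s \right) \left( \vec u \right) \right) & \text{if } \ari \left( s \right) \geq 0,
\\
i \left( s \right) \left( \vec u \right) & \text{if } \ari \left( s \right) < 0.
\end{cases}
\end{align*}
This is obtained by unwinding \ref{RefDefInterpretationQuotient} with $n := \abs{\ari \left( s \right)}$. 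From the definition of $\tupleToClass{P}{n}$ one reads off $\tupleToClass{P}{n} \left( \pi \funccomp \vec u \right) = \toClass{\placesof{P}{n}} \left( \vec u \right)$: the tupled relation $\placesof{\left( \pi^{-1} \right)}{n}$ sends a tuple of classes to an arbitrary tuple of representatives, and $\toClass{\placesof{P}{n}}$ then takes the $\placesof{P}{n}$-class of that tuple, which is independent of the choice of representatives because $P$ is an equivalence relation (this is the identity whose typing is recorded in \eqref{RefEq33}). Next, since $i$ and $P$ are \Comp{}, \ref{RefThmCompatibleFunction} together with \ref{RefDefQuotient} identifies $\quotient{i \left( s \right)}{\placesof{P}{n}}{Q}$ as the function sending $\toClass{\placesof{P}{n}} \left( \vec u \right)$ to $\toClass{Q} \left( i \left( s \right) \left( \vec u \right) \right)$. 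For $\ari \left( s \right) \geq 0$ we have $Q = P$, giving $\pi \left( i \left( s \right) \left( \vec u \right) \right)$; for $\ari \left( s \right) < 0$ we have $Q = \left\{ \pair{0}{0}, \pair{1}{1} \right\}$, whose classes are $\left\{ 0 \right\}, \left\{ 1 \right\}$, and the trailing $\peel{}$ strips the singleton $\left\{ i \left( s \right) \left( \vec u \right) \right\}$ back to $i \left( s \right) \left( \vec u \right)$.

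With this in hand, the first equality $\restrict{\eval{\left( \iQuotient{i}{P} \right)}}{\terms{S}} = \restrict{\pi \funccomp \eval{i}}{\terms{S}}$ follows by induction on the depth of the term $t$, using \ref{RefDefEvalAtomic}. For $t \in \terms{S,0}$ the symbol $t \left( 0 \right)$ is a literal of arity $0$, so applying the universal property to the empty tuple gives $\eval{\left( \iQuotient{i}{P} \right)} \left( t \right) = \left( \left( \iQuotient{i}{P} \right) \left( t \left( 0 \right) \right) \right) \left( \emptyset \right) = \pi \left( \left( i \left( t \left( 0 \right) \right) \right) \left( \emptyset \right) \right) = \pi \left( \eval{i} \left( t \right) \right)$. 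For $t$ of depth $n+1$ the leading symbol has positive arity and the entries of $\subterms{t}$ have smaller depth; the inductive hypothesis yields $\eval{\left( \iQuotient{i}{P} \right)} \funccomp \subterms{t} = \pi \funccomp \eval{i} \funccomp \subterms{t}$, and feeding this tuple of classes into the universal property produces exactly $\pi \left( \eval{i} \left( t \right) \right)$.

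The second equality is then a direct computation. Let $\varphi_0 \in \wffs{S,0}$ with $r := \varphi_0 \left( 0 \right) \in \im{\ari^{-1}}{\Z^-} \sdiff \left\{ \equiv \right\}$. Because $r \neq \equiv$, the evaluation of $\varphi_0$ falls in the first case of \ref{RefDefEvalAtomic} for both interpretations, so $\eval{\left( \iQuotient{i}{P} \right)} \left( \varphi_0 \right) = \left( \left( \iQuotient{i}{P} \right) \left( r \right) \right) \left( \eval{\left( \iQuotient{i}{P} \right)} \funccomp \subterms{\varphi_0} \right)$. The entries of $\subterms{\varphi_0}$ are terms, so by the first equality $\eval{\left( \iQuotient{i}{P} \right)} \funccomp \subterms{\varphi_0} = \pi \funccomp \left( \eval{i} \funccomp \subterms{\varphi_0} \right)$; since $\ari \left( r \right) < 0$, the universal property now returns $\left( i \left( r \right) \right) \left( \eval{i} \funccomp \subterms{\varphi_0} \right)$, which equals $\eval{i} \left( \varphi_0 \right)$ by the same first case of \ref{RefDefEvalAtomic}.

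The main obstacle is the preliminary universal-property step: it requires tracing the relation compositions in \ref{RefDefInterpretationQuotient} through the mismatch between tuples of equivalence classes and classes of tuples, keeping the image-order convention for $\relcomp$ straight and invoking compatibility (via \ref{RefThmCompatibleFunction}) at exactly the right place. Once that identity is isolated, the two remaining arguments are routine inductions and a one-line computation.
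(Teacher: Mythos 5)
Your proof is correct and follows essentially the same route as the paper's: induction on term depth for the first identity, with compatibility entering through \ref{RefThmCompatibleFunction} exactly where the paper marks it, and the second identity obtained by the same one-step computation on a relational atomic formula. The only difference is organizational --- you factor the unwinding of \ref{RefDefInterpretationQuotient} into a single preliminary ``universal property'' covering both signs of the arity, where the paper repeats that unwinding inline in the base case, the inductive step, and the relational case; the mathematical content is identical.
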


\begin{proof}
Set $I := \iQuotient{i}{P}$. 
Let us show that 
\begin{align}
\label{RefEq34}
\restrict {\eval{I}} {{\terms{S}}_{, n}} = \restrict {\toClass{P} \funccomp \eval{i}} {{\terms{S}}_{, n}}
\end{align}
for every $n \in \N$ by complete induction on $n$.
For the case $n=0$, consider $t_0 \in {\terms{S}}_{,0}$; the goal equation is 
$ \eval{I} \left( t_0 \right) = \toClass{P} \left( \eval{i} \left( t_0 \right)  \right) $.
Set $v:= t_0 \left( 0 \right)$, $f:= i\left( v \right)$ and reason as follows:
\begin{align*}
\eval {I } \left( t_0 \right) \overset {\text{\tiny{\ref{RefDefEvalAtomic}}}}{=}
\left( I \left( v \right)  \right) \left( 0 \right)
\overset{\text{\tiny{\ref{RefDefInterpretationQuotient}}}}{=}
\left(  \quotient{f}{\placesof{P}{0}}{P} \funccomp \tupleToClass{P}{0} \right) \left( 0 \right)
= 
\left( \quotient{f} {\id{1}} {P} \funccomp \left\{ \pair {0} {\left\{ 0 \right\}} \right\} \right) \left( 0 \right)
\\
=
\quotient{f} {\id{1}} {P} \left(  \left\{ \pair {0} {\left\{ 0 \right\}} \right\} \left( 0 \right)\right)
=
\quotient{f} {\id{1}} {P} \left( \left\{ 0 \right\} \right) 
=
\toClass{P} \left( f \left( 0 \right) \right).
\end{align*}
Now assume \eqref{RefEq34} holds for every $n \leq m$. 
Let us prove that it holds for $n=m+1$.
Considered arbitrary $t \in {\terms{S}}_{, m+1}$, it suffices to show 
$\eval {I} \left( t \right) = \toClass{P} \left( \eval{i} \left( t \right) \right)$.
Set $s := t \left( 0 \right)$, $k := \ari \left( s \right)$, $ f := i \left( s \right)$.
We can assume $k > 0$; then
\begin{align*}
\eval{I} \left( t \right) 
\overset{\text{\tiny{\ref{RefDefEvalAtomic}}}}{=} I \left( s \right) \left( \eval{I} \funccomp \subterms{t} \right) 
\overset{!}{=}
I \left( s \right) \left( \toClass{P} \funccomp \eval i \funccomp \subterms{t} \right)
\overset{\text{\tiny{\ref{RefDefInterpretationQuotient}}}}{=}
\left( \quotient{f} {\placesof{P}{k}} {P} \funccomp \tupleToClass{P}{k} \right) 
\left( \toClass{P} \funccomp \eval i \funccomp \subterms{t} \right)
\\
= \left( \quotient{f} {\placesof{P}{k}} {P} \funccomp \tupleToClass{P}{k} \right) 
\left( \placesof{\left( \toClass{P}  \right) }{k} \left(  \eval i \funccomp \subterms{t} \right)  \right) 
= 
\left( \quotient{f} {\placesof{P}{k}} {P} \funccomp \tupleToClass{P}{k}  
\funccomp \placesof{\left( \toClass{P}  \right) }{k} \right)
\left(  \eval i \funccomp \subterms{t} \right)
\\
=
\left( \quotient{f} {\placesof{P}{k}} {P} \funccomp
\toClass{\placesof {P} {k}}
\right)
\left(  \eval i \funccomp \subterms{t} \right)
\overset{!!}{=}
\left( \toClass{P} \funccomp f \right) \left(  \eval i \funccomp \subterms{t} \right) = 
\toClass{P} \left( f \left( \eval i \funccomp \subterms t \right) \right)  
\overset{\text{\tiny{\ref{RefDefInterpretationQuotient}}}}{=} \toClass{P} \left( \eval i \left( t \right) \right).
\end{align*}
$!$ denotes the step employing inductive hypothesis. 
$!!$ denotes the spot where compatibility has been used.
This secures the first thesis.

Finally, set $r:= \varphi_0 \left( 0 \right)$, $l := -\ari \left( r \right) \in \Z^+$ and 
$g := i \left( r \right)$:
\begin{align*}
\eval {I} \left( \varphi_0 \right) 
= 
\left( I \left( r \right) \right) \left( \eval{I} \funccomp \subterms {\varphi_0} \right) 
\overset{!}{=}
\left( I \left( r \right) \right) \left( \toClass{P} \funccomp \eval {i} \funccomp 
\subterms {\varphi_0} \right)
=
\left( I \left( r \right) \right) \left( \placesof{ \left(  \toClass{P} \right) }{l} \left(  \eval {i} \funccomp 
\subterms {\varphi_0}  \right) \right)
\\
\overset{\text{\tiny{\ref{RefDefInterpretationQuotient}}}}{=}
\left( \peel{} \funccomp \quotient {g} {\placesof{P}{l}} {\id 2} \funccomp 
\tupleToClass {P}{l} \funccomp \placesof {\left( \toClass{P} \right)} {l}
\right) 
\left( \eval{i} \funccomp \subterms {\varphi_0}  \right)
=
\left( \left( \peel{2}  \right) \funccomp \quotient {g} {\placesof{P}{l}} {\id 2} \funccomp
\toClass{ \placesof{P}{l} }
\right) 
\left( \eval{i} \funccomp \subterms {\varphi_0}  \right)
\\
\overset{!!}{=}
\left( \left( \peel{2} \right) \funccomp 
\toClass{\id 2} \funccomp g
\right) 
\left( \eval{i} \funccomp \subterms {\varphi_0}  \right)
=
g \left( \eval{i} \funccomp \subterms {\varphi_0}  \right).
\end{align*}
Last equality is due to $ \peel{2} = \toClass{\id 2}^{-1}$.
In the passage marked by `!', the freshly proved first thesis were employed.
`!!' denotes the step employing compatibility.
\end{proof}

\begin{Lm}
\label{RefThmFreeIntTermEval}
$\restrict{\eval{\freeInt{X}}}{\terms{S}} = \id{\terms{S}}$.
\end{Lm}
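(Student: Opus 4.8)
The plan is to prove $\eval{\freeInt{X}}(t) = t$ for every $t \in \terms{S}$ by induction on depth, i.e.\ to establish $\restrict{\eval{\freeInt{X}}}{\terms{S,n}} = \id{\terms{S,n}}$ for all $n \in \N$; since $\terms{S} = \bigcup_n \terms{S,n}$, this yields the asserted identity. Note that $\freeInt{X}$ has universe $\terms{S}$, so $\eval{\freeInt{X}}$ sends $\terms{S}$ into $\terms{S}$ and the equation $\eval{\freeInt{X}}(t) = t$ is meaningful.

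First I would settle the base case $n = 0$. An element $t_0 \in \terms{S,0} = \mapsFromTo{1}{(\im{\ari^{-1}}{\{0\}})}$ is a one-tuple $t_0 = \{\pair{0}{v}\}$ with $v := t_0(0)$ a literal. By \ref{RefDefEvalAtomic}, $\eval{\freeInt{X}}(t_0) = (\freeInt{X}(v))(\emptyset)$. As $\ari(v) = 0 \geq 0$, \ref{RefDefFreeInt} gives $\freeInt{X}(v) = (\curry{\conc}{\{\pair{0}{v}\}}) \funccomp (\restrict{\fconc}{((\terms{S})^{0})})$. Now $(\terms{S})^{0} = \{\emptyset\}$ and $\fconc(\emptyset) = \emptyset$, the neutral element of concatenation, so $(\restrict{\fconc}{((\terms{S})^{0})})(\emptyset) = \emptyset$; and $\{\pair{0}{v}\} \conc \emptyset = \{\pair{0}{v}\}$ by the first clause in the definition of $\conc$. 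Hence $\eval{\freeInt{X}}(t_0) = \{\pair{0}{v}\} = t_0$.

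For the inductive step, assume the identity holds on $\terms{S,n}$ and take $t \in \terms{S,n+1}$. If $t \in \terms{S,n}$ the inductive hypothesis applies directly, so assume $t$ lies in the second part of \ref{RefDefTerms}: there are a compounder $o$ with $k := \ari(o) \in \Z^+$ and a tuple $\vec{t} \in (\terms{S,n})^{k}$ with $t = \{\pair{0}{o}\} \conc \fconc(\vec{t})$. The key preliminary observation is that the subterm map of \ref{RefDefSubtermsOfTerm} recovers exactly this tuple, $\subterms{t} = \vec{t}$: applying $(\curry{\conc}{(\restrict{t}{1})})^{-1}$ strips the length-one prefix $\restrict{t}{1} = \{\pair{0}{o}\}$ and returns $\fconc(\vec{t})$, after which $(\restrict{\fconc}{((\terms{S})^{k})})^{-1}$ returns $\vec{t}$. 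With this in hand, \ref{RefDefEvalAtomic} gives $\eval{\freeInt{X}}(t) = (\freeInt{X}(o))(\eval{\freeInt{X}} \funccomp \subterms{t}) = (\freeInt{X}(o))(\eval{\freeInt{X}} \funccomp \vec{t})$. Since every component $\vec{t}(j)$ lies in $\terms{S,n}$, the inductive hypothesis gives $\eval{\freeInt{X}} \funccomp \vec{t} = \vec{t}$, and then, as $\ari(o) = k \geq 0$, \ref{RefDefFreeInt} yields $\freeInt{X}(o)(\vec{t}) = (\curry{\conc}{\{\pair{0}{o}\}})(\fconc(\vec{t})) = \{\pair{0}{o}\} \conc \fconc(\vec{t}) = t$, closing the induction.

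I expect the only delicate point to be the combinatorial bookkeeping with $\conc$, $\fconc$, currying and their inverses — concretely, the verification that $\subterms{t} = \vec{t}$ and the handling of the empty tuple in the base case. Both rest on the injectivity statements for the restrictions of $\fconc$ and for $\curry{(\conc_X)}{x}$ collected at the opening of section \ref{RefSectSemantics}, together with the defining clauses of $\conc$; once these are invoked, the two inductive computations are routine.
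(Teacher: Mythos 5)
Your proposal is correct and follows essentially the same route as the paper's proof: induction on the depth $n$, with the base case computed from \ref{RefDefEvalAtomic} and \ref{RefDefFreeInt} via $\{\pair{0}{v}\}\conc\emp = t_0$, and the inductive step reducing $\eval{\freeInt{X}}(t)$ to $\{\pair{0}{s}\}\conc(\fconc(\subterms{t})) = t$ after applying the hypothesis to the subterms. The only difference is that you make explicit the decomposition $t = \{\pair{0}{o}\}\conc\fconc(\vec{t})$ and the identity $\subterms{t}=\vec{t}$, which the paper leaves implicit.
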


\begin{proof}
Let us show
\begin{align}
\label{RefEq35}
\restrict{\eval{\freeInt{X}}}{ {\terms{S}}_{, n} } = \id{{\terms{S}}_{, n}  } && \forall n \in \N
\end{align}
by complete induction on $n$.
For the case $n=0$, consider $t_0 \in {\terms{S}}_{, 0}$, and set $v := t_0 \left( 0 \right)$.
\begin{align*}
\eval{\freeInt{X}} \left( t_0 \right) 
\overset {\text{\tiny{\ref{RefDefEvalAtomic}}}} {=}
\freeInt{X} \left( v \right) \left( 0 \right)
\overset {\text{\tiny{\ref{RefDefFreeInt}}}} {=}
\left( \curry{\conc}{\left\{ \pair{0}{v}  \right\} }  \right)  \funccomp \left( \restrict{\fconc}{ \left( { \terms{S} }^{0} \right) } \right)
\left( 0 \right)
\\
=
\left( \curry{\conc}{\left\{ \pair{0}{v}  \right\} }  \right)  
\left( \left( \restrict{\fconc}{ \left\{ 0 \right\} } \right)
\left( 0 \right)
 \right)
= \left\{ \pair {0} {v} \right\} \conc \emp = t_0.
\end{align*}
Now, assume \eqref{RefEq35} is verified for every $n \leq m+1$, and consider $t \in { \terms{S}}_{, m+1}$. 
Set $s:= t\left( 0 \right)$, $k:= \ari \left( t \right) \in \N$.
We can assume $ k > 0 $, and have to show that $\eval{\freeInt{X}} \left( t \right) = t$:
\begin{align*}
\eval{\freeInt{X}} \left( t \right)
\overset {\text{\tiny{\ref{RefDefEvalAtomic}}}} {=}
\left( \freeInt{X} \left( s \right)  \right) \left( \eval{\freeInt{X}} \funccomp \subterms{t} \right)
\overset{!}{=}
\left( \freeInt{X} \left( s \right)  \right) \left(  \subterms{t} \right)
\\
\overset {\text{\tiny{\ref{RefDefFreeInt}}}} {=}
\left( \curry{\conc}{\left\{ \pair{0}{s}  \right\} }  \right)
\left( 
\left( \restrict{\fconc}{ \left( { \terms{S} }^{k} \right) } \right) 
\left(  \subterms{t} \right)
 \right)
= \left\{ \pair {0} {s} \right\} \conc \left( \fconc \left( \subterms t \right) \right) 
= t.
\end{align*}
`!' denotes the induction step.
\end{proof}

Now we see that, when restricting to atomic formulas, one actually needs to impose 
very little additional requests 
for $\henk{D}{X}$ to be a model, besides those from \ref{RefThmHenkinIsInterpretation} making it an interpretation:

\begin{Thm}
\label{RefThmHenkin1}
If $D \emul{\emp} \left\{ \rAssumption, \rEqRefl, \rEqSymm, \rEqTrans, \rFunc, \rRel \right\}$ and $ \provables{D} \left( X \right) \C X $, then 
\begin{align*}
\restrict{\eval{\henk{D}{X}}}{\wffs{S,0}} = \indicator_X^{\wffs{S,0}}.
\end{align*}
\end{Thm}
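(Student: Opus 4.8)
The plan is to reduce the statement to the two evaluation lemmas \ref{RefThmQuotientEval} and \ref{RefThmFreeIntTermEval}, after checking that the objects involved are well-defined. Abbreviate $P := \termeq{D}{X}$ and recall $\henk{D}{X} = \iQuotient{\freeInt{X}}{P}$. Since every rule listed is \monotone, Corollary \ref{RefThmInclusionImpliesEmulation} gives $\{ \rAssumption, \rEqRefl, \rEqSymm, \rEqTrans, \rFunc, \rRel \} \emul{} D'$ for each sub-ruleset $D'$; composing with the hypothesis $D \emul{\emp} \{ \rAssumption, \rEqRefl, \rEqSymm, \rEqTrans, \rFunc, \rRel \}$ via the transitivity of $\emul{\emp}$ (Remark \ref{RefThmEmulTransitive}) yields $D \emul{\emp} D'$ for each such $D'$, in particular $D \emul{\emp} \{ \rAssumption \}$ and $D \emul{\emp} \{ \rEqRefl, \rEqSymm, \rEqTrans, \rFunc, \rRel \}$. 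Together with the closure hypothesis $\provables{D}(X) \C X$, this licenses Lemma \ref{RefThmTermeqEqrel} (so $P$ is an equivalence relation on $\terms{S}$), Corollary \ref{RefThmTermeqHenkinCompatible2} (so $\freeInt{X}$ and $P$ are \Comp), and Corollary \ref{RefThmHenkinIsInterpretation} (so $\henk{D}{X}$ is an interpretation with universe $\Classes{\terms{S}}{P}$, whose evaluation on atomic formulas is governed by \ref{RefDefEvalAtomic}).

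The pivotal preliminary fact is that $\henk{D}{X}$ maps each term to its $P$-class: the first identity of Lemma \ref{RefThmQuotientEval} (applicable since $\freeInt{X}$ and $P$ are compatible) combined with Lemma \ref{RefThmFreeIntTermEval} gives $\eval{\henk{D}{X}}(t) = \toClass{P}(\eval{\freeInt{X}}(t)) = \toClass{P}(t)$ for every $t \in \terms{S}$. I would then fix an arbitrary $\varphi_0 \in \wffs{S,0}$, whose head $\varphi_0(0)$ lies in $\im{\ari^{-1}}{\Z^-}$ by \ref{RefDefWffs}, and split on whether $\varphi_0(0) = \equiv$.

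If $\varphi_0(0) \neq \equiv$, the second identity of Lemma \ref{RefThmQuotientEval} collapses $\eval{\henk{D}{X}}(\varphi_0)$ to $\eval{\freeInt{X}}(\varphi_0)$. Unwinding \ref{RefDefEvalAtomic}, replacing $\eval{\freeInt{X}} \funccomp \subterms{\varphi_0}$ by $\subterms{\varphi_0}$ through Lemma \ref{RefThmFreeIntTermEval}, and expanding $\freeInt{X}(\varphi_0(0))$ from \ref{RefDefFreeInt}, I reach $\indicator_X^{\wffs{S}}\left( \{ \pair{0}{\varphi_0(0)} \} \conc \fconc(\subterms{\varphi_0}) \right)$; the reconstruction identity $\varphi_0 = \{ \pair{0}{\varphi_0(0)} \} \conc \fconc(\subterms{\varphi_0})$ implicit in \ref{RefDefSubtermsOfAtomic} turns this into $\indicator_X^{\wffs{S}}(\varphi_0)$, which coincides with $\indicator_X^{\wffs{S,0}}(\varphi_0)$ on $\wffs{S,0}$. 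If instead $\varphi_0 = \Eq{t_1}{t_2}$, the equality clause of \ref{RefDefEvalAtomic} together with the term computation above makes $\eval{\henk{D}{X}}(\varphi_0) = 1$ hold exactly when $\toClass{P}(t_1) = \toClass{P}(t_2)$, i.e. (since $P$ is an equivalence relation on $\terms{S}$) when $\pair{t_1}{t_2} \in P$, i.e. by \eqref{RefEqTermeqDef2} when $X \proves{D} \Eq{t_1}{t_2}$.

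The remaining, and I expect central, step is to identify $X \proves{D} \Eq{t_1}{t_2}$ with $\Eq{t_1}{t_2} \in X$, which is exactly where the hypothesis involving $\rAssumption$ is consumed. The forward implication is immediate from closure, as $X \proves{D} \Eq{t_1}{t_2}$ means $\Eq{t_1}{t_2} \in \provables{D}(X) \C X$. For the converse, from $\Eq{t_1}{t_2} \in X$ I would note that $\pair{\{ \Eq{t_1}{t_2} \}}{\Eq{t_1}{t_2}} \in \rAssumption(\emp) \C \Derivables{D}(\emp)$ by the emulation $D \emul{\emp} \{ \rAssumption \}$, whence $X \proves{D} \Eq{t_1}{t_2}$ via \ref{RefDefProvable2}. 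Chaining these equivalences settles the equality case, and together with the non-equality case it gives $\restrict{\eval{\henk{D}{X}}}{\wffs{S,0}} = \indicator_X^{\wffs{S,0}}$. The difficulty here is bookkeeping rather than conceptual: the delicate part is carefully unwinding the compositions defining $\freeInt{X}(\varphi_0(0))$ and the quotient evaluation so that the reconstruction identity and the characterization \eqref{RefEqTermeqDef2} slot in cleanly.
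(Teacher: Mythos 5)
Your proposal is correct and follows essentially the same route as the paper's proof: the same case split on whether $\varphi_0(0)=\equiv$, the same reduction of the non-equality case to \ref{RefThmQuotientEval}, \ref{RefThmFreeIntTermEval} and the unwinding of \ref{RefDefFreeInt}, and the same chain of equivalences in the equality case ending with $D$-closure for one direction and $D \emul{\emp} \left\{ \rAssumption \right\}$ for the other. The only difference is that you spell out the preliminary well-definedness checks (via \ref{RefThmEmulTransitive}, \ref{RefThmTermeqEqrel}, \ref{RefThmTermeqHenkinCompatible2}, \ref{RefThmHenkinIsInterpretation}) that the paper leaves implicit.
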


\begin{proof}
We set $i := \freeInt{X}$, $P := \termeq{D}{X}$, $I:= \henk{D}{X} = \iQuotient{i}{P}$.
Let $\varphi_0 \in \wffs{S,0}$, and set $r := \varphi_0 \left( 0 \right)$, $n:= - \ari \left( r \right) \in \Z^+$. 
By cases.
\\
\textbf{Case $ r \neq \equiv $:}\\
\begin{align*}
\eval{I} \left( \varphi_0  \right) \overset{
\text{\tiny{\ref{RefThmTermeqHenkinCompatible2}, \ref{RefThmQuotientEval}}}
}{=}   \eval{i} \left( \varphi_0 \right) 
\overset{\text{\tiny{\ref{RefDefEvalAtomic}}}}{=} 
\left(  i \left( r \right) \right) \left( \eval{i} \funccomp \substrings{\varphi_0} \right) 
\overset{\text{\tiny{\ref{RefThmFreeIntTermEval}}}}{=}  
\left( i\left( r \right) \right) \left( \substrings{\varphi_0} \right) 
\overset{\text{\tiny{\ref{RefDefFreeInt}}}}{=} 
\\ 
\indicator_X^{\wffs{S,0}} \funccomp \left( \curry{\conc}{\left\{ \pair{0}{r} \right\}}  \right) \funccomp \left( \restrict{\fconc}{\terms{S}^n}  \right) \left( \substrings{\varphi_0} \right) = 
\indicator_X^{\wffs{S,0}} \funccomp \left( \left( \curry{\conc}{\left\{ \pair{0}{r} \right\}}  \right) \funccomp \fconc  \right) \left( \substrings{\varphi_0} \right) = \\
\indicator_X^{\wffs{S,0}} \left(  
\left( \curry{\conc}{\left\{ \pair{0}{r} \right\}}  \right) 
\left( \fconc \left( \substrings{\varphi_0} \right)  \right) 
\right) 
=
\indicator_X^{\wffs{S,0}} \left(  
\conc  
\pair{\left\{ \pair{0}{r} \right\}}  
{ \fconc \left( \substrings{\varphi_0} \right)}   
\right)
= 
\indicator_X^{\wffs{S,0}} \left( \varphi_0 \right).
\end{align*}
\hfill\\
\textbf{Case $ r = \equiv $:}\\
Set $t_1 := \subterms{\varphi_0} \left( 0 \right)$, $ t_2 := \subterms{\varphi_0} \left( 1 \right)$.
\begin{align*}
\eval{I} \left( \varphi_0 \right) = 1 
\overset{\text{\tiny{\ref{RefDefEvalAtomic}}}}
{\Leftrightarrow} 
\eval{I} \left( t_1 \right) = \eval{I} \left( t_2 \right) 
\Leftrightarrow 
\toClass{P} \left( \eval{i} \left( t_1 \right) \right) = 
\toClass{P} \left( \eval{i} \left( t_2 \right) \right)
\overset{\text{\tiny{\ref{RefThmFreeIntTermEval}}}}
{\Leftrightarrow}
\\
\toClass{P} \left( t_1  \right) = 
\toClass{P} \left(  t_2 \right)
\overset{\text{\tiny{\ref{RefDefTermeq}}}}
{\Leftrightarrow} X \proves{D} \Eq{t_1}{t_2}
\Leftrightarrow \Eq{t_1}{t_2} = \varphi_0 \in X.
\end{align*}
Last equivalence is due to $D$-closure ($ \Rightarrow $) and to $D \emul{\emp} \left\{ \rAssumption \right\}$ ($ \Leftarrow $).
\end{proof}

The ultimate goal of this section is the extension of \ref{RefThmHenkin1} to the whole $ \wffs{S}$. 
To this end, we will need to employ a couple of auxiliary results significant in their own right, as relating the syntactical constructions of simple substitution and term substitution (defined in \ref{RefDefSimpleSubst} and \ref{RefDefTermSubst}) to the semantical one of reassignment (defined in \ref{RefNotationReassign}):

\begin{Lm}
\label{RefThmSubstLemma1}
\begin{align*}
\eval{\reassign{v_1}{u}{i}} \left( \psi \right) = 
\eval{\reassign{v_2}{u}{i}} \left( \symbsubst{v_1}{v_2}{\psi} \right),
\end{align*}
where $u$ is an element of the universe of the interpretation $i$ and $v_2 \notin \rng \psi$.
\end{Lm}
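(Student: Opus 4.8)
The plan is to argue by complete induction on depth, treating first all terms and then all formulas, since the evaluation $\eval{i}$ is defined by recursion on depth (\ref{RefDefEvalAtomic}, \ref{RefDefEvalCompound}) and the formula recursion bottoms out, through the atomic case, on terms. We may assume $v_1 \neq v_2$, since otherwise $\symbsubst{v_1}{v_2}{\psi} = \psi$ and the two reassignments coincide. Throughout I abbreviate $i' := \reassign{v_1}{u}{i}$ and $i'' := \reassign{v_2}{u}{i}$; both share the universe $U$ of $i$ by \ref{RefNotationReassign}. Two facts about reassignment are purely set-theoretic and follow from \ref{RefNotationReassign} and the definition of $\paste$: first, $i'$ and $i''$ agree with $i$ on every symbol other than $v_1$, resp.\ $v_2$, so that $i'(s) = i(s) = i''(s)$ whenever $s \notin \left\{ v_1, v_2 \right\}$; secondly, the crucial coincidence $i'(v_1) = \left\{ \pair{\emp}{u} \right\} = i''(v_2)$. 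Because $v_2 \notin \rng \psi$ and the simple substitution $\symbsubst{v_1}{v_2}{\cdot}$ (\ref{RefDefSimpleSubst}) replaces the literal $v_1$ by the literal $v_2$, these identities say precisely that $i'$ acting on a symbol of $\psi$ and $i''$ acting on the corresponding symbol of $\symbsubst{v_1}{v_2}{\psi}$ always yield the same value.

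The structural fact underpinning the whole argument is that $\symbsubst{v_1}{v_2}{\cdot}$, being post-composition with a map sending one literal to another, is an endomorphism of the free monoid preserving literalhood; hence it commutes with $\conc$ and $\fconc$, sends terms to terms and formulas to formulas of the same depth, fixes every head symbol that is not $v_1$ (in particular every compounder and the symbols $\equiv$ and $\nor$, none of which is a literal), and therefore commutes with the decomposition $\subtermsf$: $\subterms{\symbsubst{v_1}{v_2}{w}} = \symbsubst{v_1}{v_2}{\cdot} \funccomp \subterms{w}$. Since the range of any subterm or subformula of $\psi$ is contained in $\rng \psi$, freshness of $v_2$ is inherited by all subobjects, so the inductive hypothesis is always applicable to them.

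With these in hand the term and atomic cases are a direct unwinding of \ref{RefDefEvalAtomic}. For a depth-zero term $t_0 = \left\{ \pair{0}{v} \right\}$ one splits on whether $v = v_1$: if so, both sides evaluate to $\left\{ \pair{\emp}{u} \right\}(\emp) = u$ by the coincidence $i'(v_1) = i''(v_2)$; if not, $\symbsubst{v_1}{v_2}{t_0} = t_0$ and both sides equal $(i(v))(\emp)$. For a compound term or an atomic formula the head is a compounder (or $\equiv$), hence fixed by the substitution and interpreted identically by $i'$ and $i''$, while the subterm-evaluations agree by the inductive hypothesis via the commutation with $\subtermsf$; in the $\equiv$-subcase the defining comparison of the two subterm-values is thereby preserved, so the truth value is unchanged.

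The only genuinely delicate case is a non-atomic formula. When $\psi(0) = \nor$ the head is fixed and \ref{RefDefEvalCompound} reduces the claim to the inductive hypothesis applied to the two subformulas. When instead $\psi(0)$ is a literal $w$ (the existential clause), I distinguish $w \neq v_1$ from $w = v_1$. If $w \neq v_1$ (whence $w \neq v_2$ by freshness), the substitution leaves the head $w$ untouched, and for each witness $u' \in U$ the reassignments commute, $\reassign{w}{u'}{i'} = \reassign{v_1}{u}{(\reassign{w}{u'}{i})}$ and $\reassign{w}{u'}{i''} = \reassign{v_2}{u}{(\reassign{w}{u'}{i})}$, so the inductive hypothesis for $\substrings{\psi}(0)$ relative to the base interpretation $\reassign{w}{u'}{i}$ matches the two existential conditions witness by witness. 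If $w = v_1$, the substituted head is $v_2$, and the override law $\reassign{v_1}{u'}{i'} = \reassign{v_1}{u'}{i}$, resp.\ $\reassign{v_2}{u'}{i''} = \reassign{v_2}{u'}{i}$, collapses the nested reassignments; the two existential conditions then assert, for some $u' \in U$, $\eval{\reassign{v_1}{u'}{i}}(\substrings{\psi}(0)) = 1$ and $\eval{\reassign{v_2}{u'}{i}}(\symbsubst{v_1}{v_2}{\substrings{\psi}(0)}) = 1$, which are equivalent by the inductive hypothesis applied to $\substrings{\psi}(0)$ with the arbitrary value $u'$ in place of $u$. This last subcase, where the bound head literal coincides with the substituted variable, is the \emph{main obstacle}: it is precisely the generality of the statement over all $u$ that lets the induction absorb it.
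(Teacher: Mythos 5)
Your proof is correct and follows essentially the same route as the paper's: complete induction on depth through terms, atomic formulas, and then all formulas, with the existential case split on whether the head literal equals $v_1$, resolved by commuting/collapsing reassignments and applying the inductive hypothesis with the witness $u'$ in place of $u$ (equivalently, with a shifted base interpretation). The only difference is presentational — you make explicit the commutation of $\symbsubst{v_1}{v_2}{\cdot}$ with $\conc$, $\fconc$ and $\subtermsf$, which the paper uses tacitly.
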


\begin{proof}
Denote with $S$ the language we are working in, with $A$ the symbol set of $S$, and with $U$ the universe of $i$.
Set 
$i_1 := \reassign{v_1}{u}{i}$, $ i_2 := \reassign{v_2}{u}{i}$, 
$f_1 := \eval{i_1}$, $f_2 := \eval{i_2}$, 
$g:= \id A \paste \left\{ \pair{v_1}{v_2} \right\}$, 
$B:=A \sdiff \left\{ v_2 \right\}$.
We start with showing that
\begin{align}
\label{RefEq36}
f_1 \left( t' \right) = f_2 \left( g \funccomp t' \right) && \forall
t' \in {\terms{S}}_{,n} \cap B^*
\end{align}
by complete induction on $n$.
The case $n=0$ is trivial, and anyway is treated in \MML{} article \verb|FOMODEL3|, at the label \verb|Lm44|.
Now suppose \eqref{RefEq36} holds for every $n \leq m$, and consider $t \in T$ such that $\depth t \leq m+1$. 
Set $s := t\left( 0 \right)$.
We can assume
\begin{align}
\label{RefEq37}
\ari \left( s \right) > 0
\end{align}
and complete the proof of \eqref{RefEq36} as from the following iterative equation
\begin{align*}
f_2 \left( g \funccomp t \right) 
\overset {\text{\tiny{\ref{RefDefEvalAtomic}}}} {=}
\left( i_2 \left( s \right)\right) \left( f_2 \funccomp \subterms{g \funccomp t} \right)
=
\left( i_1 \left( s \right) \right) \left( f_2 \funccomp \subterms{g \funccomp t} \right)
=
\left( i_1 \left( s \right) \right) \left( f_1 \funccomp \subterms{t} \right),
\end{align*}
whose last step rests on inductive hypothesis applied to \eqref{RefEq36}.
The immediately preceding step is due to the fact that \eqref{RefEq37} implies $s \notin \left\{ v_1, v_2 \right\}$.
Similarly, one can show that
\begin{align}
\label{RefEq38}
f_1 \left( \psi_0 \right) = f_2 \left( g \funccomp \psi_0 \right) && \forall \psi_0 \in 
{\wffs{S}}_{,0} \cap B^*.
\end{align}
To avoid repetitions, we refer the interested reader to \verb|FOMODEL3:Lm45| for the proof of 
\eqref{RefEq38}.
At last, we show
\begin{align}
\label{RefEq39}
f_1 \left( \psi' \right) = f_2 \left( g \funccomp \psi' \right) 
&&
\forall \psi' \in B^* \cap { \wffs{S}}_{,n} 
\end{align}
by complete induction on $n$.
The case $n=0$ is given by \eqref{RefEq38}.
Let us then assume \eqref{RefEq39} for every $n \leq m$, and consider 
$\psi \in B^* \cap { \wffs{S}}_{,m+1}$.
We can assume as well $ \depth{\psi} > 0$ and set 
$ s := \psi \left( 0 \right) \in \im {\ari ^ {-1}} {\left\{ 0 \right\}} \sdiff \left\{ v_2 \right\} \cup \left\{ \nor \right\}$.
By cases.
\\
\\
\textbf{Case 1):} $ s = \nor$\\
Then set 
$\psi_1 := \substrings {\psi} \left( 0 \right)$,
$\psi_2 := \substrings {\psi} \left( 1 \right)$ and $N := \indicator_{\left\{ \pair{0}{0} \right\}}^{2 \cartprod 2}$.
We employ \eqref{RefEq39} via induction on the unmarked step of the following chain: 
\begin{align*}
f_2 \left( g \funccomp \psi \right) 
\overset{\text{\tiny{\ref{RefDefEvalCompound}}}}{=} 
N \left( \pair {f_2 \left( g \funccomp \psi_1  \right) } {f_2 \left( g \funccomp \psi_2  \right) } \right)
\\
=
N \left( \pair {f_1 \left( \psi_1 \right)} {f_1 \left( \psi_2 \right)} \right)
\overset{\text{\tiny{\ref{RefDefEvalCompound}}}}{=}
f_1 \left( \psi \right).
\end{align*}
\\
\textbf{Case 2):} $s \in \im {\ari ^ {-1}} {\left\{ 0 \right\}} \sdiff \left\{ v_2 \right\}$\\
Then consider $\varphi \in B^* \cap {\wffs{S}}_{, m}$ such that $\psi = s \varphi$.
By subcases.
\begin{description}
\item
[Subcase $s=v_1$:]\hfill\\
Then 
$ g \funccomp \psi = v_2 \conc \left( g \funccomp \varphi \right)$.
Assume $f_2 \left( g \funccomp \psi \right) = 1$.
Then, by \ref{RefDefEvalCompound}, consider $u' \in U$ such that
\begin{align*}
1 = \eval{\reassign {v_2}{u'} { \reassign {v_2}{u} {i} }} 
\left(  g \funccomp \varphi \right) 
=
\eval{\reassign {v_2}{u'} i}  \left(  g \funccomp \varphi \right)
\overset{\text{\tiny{\eqref{RefEq39}}}}{=}
\eval{\reassign {v_1}{u'} i}  \left(  \varphi \right)
=
\eval{\reassign {v_1}{u'} { \reassign {v_1}{u} {i} }} \left( \varphi \right).
\end{align*}
Hence, again by \ref{RefDefEvalCompound}, 
$ 
\eval{\reassign {v_1}{u} {i}} \left( v_1 \varphi \right)=1
$.
Analogously one shows
$ 
\eval{\reassign {v_1}{u} {i}} \left( v_1 \varphi \right)=1 \ra{}
\eval{\reassign {v_2}{u} {i}} \left( g \funccomp \psi \right)=1
$.
\item
[Subcase $s \neq v_1$:]\hfill\\
Assume $ f_2 \left( g \funccomp \psi \right) = 1$. 
Then, by \ref{RefDefEvalCompound}, consider $u' \in U$ such that
\begin{align*}
1 = \eval{\reassign {s}{u'} {i_2} } \left( g \funccomp \varphi \right) 
=
\eval{\reassign {s}{u'} { \reassign {v_2}{u} {i} }} \left( g \funccomp \varphi \right)
=
\eval{\reassign {v_2}{u} { \reassign {s}{u'}{i} } } \left( g \funccomp \varphi \right)
\\
=
\eval{\reassign {v_1}{u} { \reassign {s}{u'}{i} } } \left( \varphi \right) 
= 
\eval{\reassign {s}{u'} { \reassign {v_1}{u} {i} }} \left( \varphi \right).
\end{align*}
Hence $ \eval{\reassign {v_1}{u}{i}} = 1 $ by \ref{RefDefEvalCompound}.
In a similar way, one shows $1 = f_1 \left( \psi \right) \ra{} f_2 \left( g \funccomp \psi \right)=1$.
\end{description}
\end{proof}

\begin{Lm}[Substitution lemma]
\label{RefThmSubstLemma2}
Given $v$, $t$, $\varphi$:
\begin{enumerate}
\item
$ \depth {\subst{v}{t}{\varphi}} = \depth \varphi$;
\item
$ \eval{i} \left( \subst{v}{t}{\varphi} \right) = 
\eval{\reassign{v}{\eval{i} \left( t \right)}{i}} \left( \varphi \right)$, for any interpretation $i$.
\end{enumerate}
\end{Lm}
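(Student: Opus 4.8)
The plan is to prove both claims by complete induction on $\depth{\varphi}$, running the induction in parallel with the recursion that defines $\substf{v}{t}$ in \ref{RefDefTermSubst}. Claim (1) is purely syntactic and I would settle it first, because claim (2) invokes it to certify that the inductive hypothesis applies to the renamed subformula produced in the quantifier clause. Throughout I fix $v$ and $t$ and abbreviate by $\theta := \eval{\reassign{v}{t}{\freeInt{\emptyset}}}$ the map that term-substitutes $v$ by $t$. A preliminary remark is that the simple substitution $\symbsubst{v''}{v'}{\chi}$ of \ref{RefDefSimpleSubst} merely renames one literal by another and so preserves depth, and that prepending a literal (or $\nor$-combining two formulas) raises depth exactly as the clauses of \ref{RefDefWffs} prescribe; these unique-readability facts are what make both inductions well founded.

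For claim (1) the base case is atomic: $\substf{v}{t}$ keeps the head $\varphi_0\left( 0 \right)$ and replaces the subterms by $\theta \funccomp \subterms{\varphi_0}$, so the output is again atomic and again has depth zero. In the $\nor$-step one uses that $\depth{\varphi}$ is one more than the maximum of the depths of $\substrings{\varphi}$, together with the inductive hypothesis; in the quantifier step, renaming preserves depth, the inductive hypothesis gives $\depth{\substf{v}{t}(\symbsubst{\varphi\left( 0 \right)}{v'}{\substrings{\varphi}\left( 0 \right)})} = \depth{\substrings{\varphi}\left( 0 \right)}$, and prepending the fresh literal $v'$ restores the missing unit.

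Before claim (2) I would dispatch a term-level substitution lemma: for every term $s'$ and every interpretation $i$, $\eval{i}(\theta(s')) = \eval{\reassign{v}{\eval{i}(t)}{i}}(s')$. This goes by induction on the depth of $s'$, using \ref{RefThmFreeIntTermEval} (which makes $\theta$ act on an atomic term with head $w$ as the identity when $w \neq v$ and as $t$ when $w = v$) and \ref{RefDefEvalAtomic} for the compound step, where $\theta$ commutes with term formation and $\eval{i} \funccomp \theta = \eval{\reassign{v}{\eval{i}(t)}{i}}$ propagates through the subterms. The atomic base case of claim (2) then follows at once: for a relational head $r \neq \equiv$ one pushes $\eval{i}$ through $\theta$ on the subterms via this term lemma, and for $r = \equiv$ one compares the two substituted subterms, both reductions landing on $\eval{\reassign{v}{\eval{i}(t)}{i}}(\varphi_0)$ by \ref{RefDefEvalAtomic}. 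The $\nor$-step of claim (2) is then immediate from \ref{RefDefEvalCompound} and the inductive hypothesis.

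The heart of the argument, and the step I expect to be hardest, is the quantifier case of claim (2), precisely where capture must be avoided. Writing $\varphi = v'' \chi$ with $\chi = \substrings{\varphi}\left( 0 \right)$ and $v'$ the fresh literal of \ref{RefDefTermSubst}, the left-hand side $\eval{i}(\substf{v}{t}(\varphi))$ equals $1$, by the existential clause of \ref{RefDefEvalCompound}, iff some $u$ in the universe satisfies $\eval{\reassign{v'}{u}{i}}(\substf{v}{t}(\symbsubst{v''}{v'}{\chi})) = 1$. Applying the inductive hypothesis (with the interpretation $\reassign{v'}{u}{i}$, legitimate since $\symbsubst{v''}{v'}{\chi}$ has depth $\depth{\chi} < \depth{\varphi}$) rewrites this as $\eval{\reassign{v}{w}{\reassign{v'}{u}{i}}}(\symbsubst{v''}{v'}{\chi})$, where $w := \eval{\reassign{v'}{u}{i}}(t)$. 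Now the three freshness properties of $v'$ do the work: since $v'$ does not occur in $t$ one may replace $w$ by $\eval{i}(t)$; since $v' \neq v$ the reassignments of $v$ and $v'$ commute; and since $v' \notin \rng \chi$ Lemma \ref{RefThmSubstLemma1} converts $\eval{\reassign{v'}{u}{\cdots}}(\symbsubst{v''}{v'}{\chi})$ back into $\eval{\reassign{v''}{u}{\cdots}}(\chi)$. What remains is exactly the existence of $u$ with $\eval{\reassign{v''}{u}{\reassign{v}{\eval{i}(t)}{i}}}(\chi) = 1$, which by \ref{RefDefEvalCompound} is the condition for $\eval{\reassign{v}{\eval{i}(t)}{i}}(v'' \chi) = 1$, the right-hand side. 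This computation never assumes $v'' \neq v$, so the awkward case of substituting what acts as a bound variable is absorbed uniformly; the delicate bookkeeping of these nested reassignments, rather than any single hard idea, is where care is most needed.
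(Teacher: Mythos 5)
Your proposal is correct and follows essentially the same route as the paper's proof in the appendix: depth preservation first, then a term-level substitution lemma (the paper's \ref{RefThmSubstLemmaTerm}), the atomic case (\ref{RefThmSubstLemma0}), and finally induction on formula depth with the $\nor$ and quantifier cases, the latter handled via the freshness of $v'$ and Lemma \ref{RefThmSubstLemma1}, with the inductive statement quantified over all interpretations of a fixed universe so that it can be applied to reassigned interpretations. No gaps to report.
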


\begin{proof}
See appendix \ref{RefSectSubstLemma}.
\end{proof}

\begin{Def}[Witness]
\label{RefDefWitnessed}
Given a language $S$, consider the following relation on $\wffs{S}$:
\begin{align*}
\witnessRel{S} := 
\left\{ 
\pair{\left\{ \pair{0}{v_1}  \right\} \conc \varphi}{\symbsubst{v_1}{v_2}{\varphi}} : 
v_1, v_2 \in \im{\ari^{-1}} { \left\{ 0 \right\}}, \varphi \in \wffs{S} \st \quad v_2 \notin \rng \varphi 
\right\}.
\end{align*}
Often the context will allow to drop the subscript and write just $\witnessRel{}$.

If $\varphi \in \im{\witnessRel{S}} {\left\{ \psi \right\}}$, we say that $\varphi$ is a \emph{witness}\index{witness} for $\psi$.

A set $X$ will be said to be \emph{$S$-\witnessed{}} (simply \emph{\witnessed{}}\index{\witnessed{}} when the context is safe) if 
\begin{align*}
X \cap \dom {\witnessRel{S}} \C \im {\witnessRel{S} ^ {-1}}{X}.
\end{align*}
\end{Def}

\begin{Def}
$X$ is a minimal cover of the language $S$ (or an $S$-\mincov{}, or even just a \mincov{}) if
\begin{align*}
\forall \varphi \in \wffs{S} \left( \varphi \in X \text{ if and only if } \xnot{\varphi} \notin X  \right).
\end{align*}
\end{Def}

\begin{Thm}[\H{}'s theorem]
\label{RefThmHenkin2}
Suppose 
\begin{itemize}
\item
$D \emul{\emp} \left\{ \rAssumption, \rEqRefl, \rEqSymm, \rEqTrans, \rFunc, \rRel, 
\rNor, \rEx \right\}$,
\item
$X$ is a \mincov{}, 
\item
$ \provables{D} \left( X \right) \C X $, and 
\item
$X$ is \witnessed{}. 
\end{itemize}
Then
\begin{align*}
\restrict{\eval{\henk{D}{X}}}{\wffs{}} = \indicator_X^{\wffs{}}.
\end{align*}
\end{Thm}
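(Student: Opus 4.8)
Write $I := \henk{D}{X} = \iQuotient{\freeInt{X}}{P}$ with $P := \termeq{D}{X}$. Our hypotheses imply those of Corollaries \ref{RefThmHenkinIsInterpretation} and \ref{RefThmTermeqHenkinCompatible2}, so $I$ is an interpretation with universe $U := \Classes{\terms{S}}{P}$ and $\freeInt{X}, P$ are \Comp{}; moreover each sub-emulation $D \emul{\emp} \left\{ R \right\}$ used below follows from the global emulation hypothesis by Corollary \ref{RefThmInclusionImpliesEmulation} and the transitivity of $\emul{\emp}$ (Remark \ref{RefThmEmulTransitive}). The plan is to show $\eval{I} \left( \psi \right) = \indicator_X^{\wffs{S}} \left( \psi \right)$ for every $\psi \in \wffs{S}$ by complete induction on $\depth{\psi}$. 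The base case $\psi \in \wffs{S,0}$ is exactly Theorem \ref{RefThmHenkin1}. For the step take $\psi \in \wffs{S,n+1} \sdiff \wffs{S,n}$; by Definition \ref{RefDefWffs} either $\psi = \nor{\varphi_1}{\varphi_2}$ with $\varphi_1, \varphi_2 \in \wffs{S,n}$, or $\psi = v\varphi$ with $v$ a literal and $\varphi \in \wffs{S,n}$, and in both subcases the relevant subformulas have depth $\leq n$, so the inductive hypothesis applies to them.

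\emph{Connective case.} By Definition \ref{RefDefEvalCompound}, since $\nor \notin \dom{\ari}$ the literal clause cannot fire, so $\eval{I} \left( \nor{\varphi_1}{\varphi_2} \right) = 1$ exactly when $\eval{I} \left( \varphi_1 \right) = \eval{I} \left( \varphi_2 \right) = 0$, which by the inductive hypothesis means $\varphi_1 \notin X$ and $\varphi_2 \notin X$. It thus suffices to prove $\nor{\varphi_1}{\varphi_2} \in X$ iff $\varphi_1 \notin X$ and $\varphi_2 \notin X$. For the backward direction, the \mincov{} hypothesis turns $\varphi_i \notin X$ into $\xnot{\varphi_i} = \nor{\varphi_i}{\varphi_i} \in X$; the instance $\pair{\left\{ \nor{\varphi_1}{\varphi_1}, \nor{\varphi_2}{\varphi_2} \right\}}{\nor{\varphi_1}{\varphi_2}}$ of $\rNor$ lies in $\rNor \left( \emp \right) \C \Derivables{D} \left( \emp \right)$, so $X \proves{D} \nor{\varphi_1}{\varphi_2}$ (Remark \ref{RefDefProvable2}) and $D$-closure gives $\nor{\varphi_1}{\varphi_2} \in X$. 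For the forward direction, from $\nor{\varphi_1}{\varphi_2} \in X$ one $\rNor$-instance produces the commuted $\nor{\varphi_2}{\varphi_1}$, and a second $\rNor$-instance on $\left\{ \nor{\varphi_2}{\varphi_1}, \nor{\varphi_1}{\varphi_2} \right\}$ produces $\nor{\varphi_1}{\varphi_1} = \xnot{\varphi_1}$; $D$-closure yields $\xnot{\varphi_1} \in X$, whence $\varphi_1 \notin X$ by the \mincov{} property, and symmetrically $\varphi_2 \notin X$.

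\emph{Existential case.} By Definition \ref{RefDefEvalCompound}, $\eval{I} \left( v\varphi \right) = 1$ iff $\eval{\reassign{v}{u}{I}} \left( \varphi \right) = 1$ for some $u \in U$. For the forward direction, write such $u$ as $\toClass{P} \left( t \right)$ for a term $t$ (possible since $\dom{P} = \terms{S}$ by Proposition \ref{RefThmTermeqRefl}); combining Lemmas \ref{RefThmQuotientEval} and \ref{RefThmFreeIntTermEval} gives $\eval{I} \left( t \right) = \toClass{P} \left( t \right) = u$, so the substitution Lemma \ref{RefThmSubstLemma2} yields $\eval{I} \left( \subst{v}{t}{\varphi} \right) = \eval{\reassign{v}{u}{I}} \left( \varphi \right) = 1$. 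As $\depth{\subst{v}{t}{\varphi}} = \depth{\varphi} \leq n$, the inductive hypothesis gives $\subst{v}{t}{\varphi} \in X$, and the $\rEx$-instance $\pair{\left\{ \subst{v}{t}{\varphi} \right\}}{v\varphi} \in \rEx \left( \emp \right) \C \Derivables{D} \left( \emp \right)$ together with $D$-closure forces $v\varphi \in X$. For the backward direction, assume $v\varphi \in X$; since $v\varphi \in \dom{\witnessRel{S}}$ (a fresh literal exists as the literals are infinite), witnessedness supplies a literal $v_2 \notin \rng{\varphi}$ with $\chi := \symbsubst{v}{v_2}{\varphi} \in X$. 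As $\depth{\chi} = \depth{\varphi} \leq n$, the inductive hypothesis gives $\eval{I} \left( \chi \right) = 1$; putting $w := I \left( v_2 \right) \left( \emptyset \right) \in U$ (so that reassigning $v_2$ to its own value leaves $I$ unchanged, by Remark \ref{RefRemInterpretationOfLiteral}) and invoking Lemma \ref{RefThmSubstLemma1} we get $\eval{\reassign{v}{w}{I}} \left( \varphi \right) = \eval{\reassign{v_2}{w}{I}} \left( \chi \right) = \eval{I} \left( \chi \right) = 1$, so $\eval{I} \left( v\varphi \right) = 1$.

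The main obstacle is the existential case: the semantic quantifier ranges over the term-model universe $U$, whereas the syntactic witnesses granted by the hypothesis are produced by \emph{simple} substitution, so reconciling them forces one to route between reassignment, term substitution (Lemma \ref{RefThmSubstLemma2}) and simple substitution (Lemma \ref{RefThmSubstLemma1}), using that every element of $U$ is the evaluation of a term. The connective case is comparatively routine, amounting to extracting the propositional behaviour of $\nor$ from the \mincov{} and closure hypotheses through suitable instances of the single axiom $\rNor$.
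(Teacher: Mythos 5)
Your proof is correct and follows essentially the same route as the paper's: complete induction on depth with Theorem \ref{RefThmHenkin1} as base case, the $\nor$-case handled by the \mincov{} property, $D$-closure and the two chained $\rNor$-instances (first commuting to $\nor{\varphi_2}{\varphi_1}$, then extracting $\xnot{\varphi_i}$), and the existential case reconciled through the same chain of Lemmas \ref{RefThmQuotientEval}, \ref{RefThmFreeIntTermEval}, \ref{RefThmSubstLemma2} in one direction and witnessedness plus Lemma \ref{RefThmSubstLemma1} in the other. The only cosmetic difference is that the paper packages the existential semantic computation as the standing equivalence \eqref{RefEq09} before splitting into the two implications, whereas you perform it inline; the substance is identical.
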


\begin{proof}
Set $i := \freeInt{X}$, $P := \termeq{D}{X}$, $I:= \henk{D}{X} = \iQuotient{i}{P}$.
We will prove 
\begin{align}
\label{RefEq08}
\restrict{\eval{I}}{\wffs{S,m}} = \indicator_X^{\wffs{S,m}}
\end{align}
by complete induction on $m$.
For $m=0$, thesis is given by \ref{RefThmHenkin1}.
Assume the inductive hypothesis: \eqref{RefEq08} holds for all $m \leq n$. 
Let $\psi \in \wffs{S,n+1}$.
We have to show that 
\begin{align*}
\eval{I} \left( \psi \right) = 1 \Leftrightarrow \psi \in X.
\end{align*}
We can suppose $\psi \notin \wffs{S,0}$, and proceed by cases.
\begin{description}
\item[Case $ \psi \left( 0 \right) \neq \nor $:] Then consider $v_1$, $\varphi$ such that $\psi = v_1 \varphi$. 
\begin{align}
\notag
\eval{I} \left( \psi \right) = 1 \overset{\text{\tiny{\ref{RefDefEvalCompound}}}}
{\Leftrightarrow} \exists t \in \terms{} \st \quad 1 = 
\eval{\reassign{v_1}{\toClass{P} \left( t \right)}{I}} \left( \varphi \right) 
\overset{\text{\tiny{\ref{RefThmFreeIntTermEval}}}}{=}
\eval{\reassign{v_1}{\toClass{P} \left( \eval{i} \left( t  \right) \right)}{I}} \left( \varphi \right) 
\\
\label{RefEq09}
\overset{\text{\tiny{\ref{RefThmQuotientEval}, \ref{RefThmTermeqHenkinCompatible2}}}}{=}
\eval{\reassign{v_1}
{ \eval{I} \left( t  \right) }
{I}} \left( \varphi \right)
\overset{\text{\tiny{\ref{RefThmSubstLemma2}}}}{=} 
\eval{I} \left(  {\subst{v_1}{t}{\varphi}}  \right).
\end{align}

\begin{description}
\item[$\Leftarrow$]\hfill\\
Assume $\psi \in X$. 
Then consider $v_2 \in \im{\ari^{-1}}{\left\{ 0 \right\}} \sdiff \ran \varphi$ such that 
$ \symbsubst{v_1}{v_2}{\varphi} \in X$ by \ref{RefDefWitnessed}. 
Since $\depth{\symbsubst{v_1}{v_2}{\varphi}} = \depth{\varphi} < \depth {\psi}$, we can trigger induction:
\begin{align*}
1 = \eval{I} \left( \symbsubst{v_1}{v_2}{\varphi}  \right) = 
\eval{\reassign{v_2}{ \left( I \left( v_2 \right) \right) \left( \emp \right) }{I}} 
\left( \symbsubst{v_1}{v_2}{\varphi}  \right) 
\overset{\text{\tiny{\ref{RefThmSubstLemma1}}}}{=} 
\eval{\reassign{v_1}{ \left( I \left( v_2 \right) \right) \left( \emp \right) }{I}} 
\left( \varphi \right).
\end{align*}
Thesis follows from \ref{RefDefEvalCompound}.
\item[$\Rightarrow$]\mbox{}\\ 
Assume $\eval{I} \left( \psi \right) = 1$ and, by \eqref{RefEq09}, consider $\ol t \st \quad \subst{v_1}{\ol t}{\varphi} \in X$.
\begin{align*}
\pair{\left\{ \subst{v_1}{\ol t}{\varphi} \right\}}{v_1 \varphi} \in \rEx \left(  \emp  \right) 
\C \Derivables{\left\{ \rEx \right\}} \left( \emp \right) \C \Derivables{D} \left( \emp \right).
\end{align*}
By $D$-closure, we draw $ \psi \in X$.
\end{description}

\item[Case $\psi \left( 0 \right) = \nor$:]
Set $\varphi_1:= \substrings{\psi}\left( 0 \right)$, $ \varphi_2:= \substrings{\psi}\left( 1 \right) $.
\begin{align*}
\eval{I} \left( \psi \right) = 1 
\overset{\text{\tiny{\ref{RefDefEvalCompound}}}}
{\Leftrightarrow} 
\eval{I} \left( \varphi_1 \right) = 0 = \eval{I} \left( \varphi_2 \right) \Leftrightarrow
\\
\left\{ \varphi_1, \varphi_2 \right\} \cap X = \emp \Leftrightarrow 
\left\{ \xnot {\varphi_1}, \xnot {\varphi_2} \right\} \C X,
\end{align*}
where last equivalence is due to \mincov{} hypothesis, and previous one to inductive hypothesis.
Hence we have reduced our task to showing that
\begin{align*}
\psi \in X 
\Leftrightarrow 
\left\{ \xnot {\varphi_1}, \xnot {\varphi_2} \right\} \C X.
\end{align*}
\begin{description}
\item[$\Leftarrow$]\mbox{}\\
Assume $ \left\{ \xnot {\varphi_1}, \xnot {\varphi_2} \right\} \C X $. 
Since 
\begin{align*}
\pair{\left\{ \xnot {\varphi_1}, \xnot {\varphi_2} \right\} }{ \psi } \in \rNor \left( \emp \right)
\C \Derivables{\left\{ \rNor \right\}} \left( \emp \right) \C \Derivables{D} \left( \emp \right),
\end{align*}
thesis follows immediately from \ref{RefDefProvable2} and $D$-closure hypothesis.
\item[$\Rightarrow$]\mbox{}\\
Assume $\psi \in X$. Set $\psi' := \nor{\varphi_2}{\varphi_1}$. 
Now
\begin{align*}
\left\{ \psi \right\} \proves{\left\{ \rNor \right\}} \psi' 
\ra{} \left\{ \psi \right\} \proves{ D } \psi',
\end{align*}
where the implication is given by $ D \emul{\emp} \left\{ \rNor \right\}$.
By $D$-closure, we conclude $ \left\{ \psi, \psi' \right\} \C X$. 
This, together with 
\begin{align*}
\pair{\left\{ \psi, \psi' \right\}}{\xnot{\varphi_1}}, 
\pair{\left\{ \psi, \psi' \right\}}{\xnot{\varphi_2}} \in
\rNor \left( \emp \right)
\\
\C \Derivables{\left\{ \rNor \right\}} \left( \emp \right) \C \Derivables{D} \left( \emp \right),
\end{align*}
ends the proof by virtue of $D$-closure.
\end{description}
\end{description}
\end{proof}

\begin{Rem}
It is readily checked that in proof of \ref{RefThmHenkin2}, the following slightly weaker flavor of $\rNor$ would suffice:
\begin{align*}
\seqs{S} \supseteq \Sigma \mapsto \left\{ 
\pair{\Gamma}{\varphi} :
\exists \varphi_1, \varphi_2, \varphi_3, \varphi_4 \in \wffs{S} \st \Gamma = \left\{  
\nor {\varphi_1} {\varphi_2} , \nor {\varphi_3} {\varphi_4}
\right\}
\vphantom{
\text{ and } \varphi= \nor {\varphi_2} {\varphi_3}
\text{ and } \card{\left\{ \varphi_1, \varphi_2, \varphi_3, \varphi_4 \right\}} \leq 2
}
\right.
\\
\vphantom{
\pair{\Gamma}{\varphi} :
\exists \varphi_1, \varphi_2, \varphi_3, \varphi_4 \in \wffs{S} \st \Gamma = \left\{  
\nor {\varphi_1} {\varphi_2} , \nor {\varphi_3} {\varphi_4}
\right\}
}
\left.
\text{ and } \varphi= \nor {\varphi_2} {\varphi_3}
\text{ and } \card{\left\{ \varphi_1, \varphi_2, \varphi_3, \varphi_4 \right\}} \leq 2
\right\}.
\end{align*}
Since all the forthcoming results requiring $\rNor$ do so precisely to invoke \ref{RefThmHenkin2}, the same goes for them.
We adopt $\rNor$ mainly because it is more straightly put into a diagram than its variant above.
\end{Rem}


\section{Enlarging sets of formulas 
}
\label{RefSectEnlarge}
In this section we study how to enlarge a given set $X$ of formulas to make it 
\begin{itemize}
\item
closed with respect to a given ruleset $D$ and
\item
\witnessed{},
\end{itemize}
so that the enlargement can be applied \ref{RefThmHenkin2}: in particular, this automatically supplies a model for $X$, which is our ultimate goal.
We shall investigate the conditions $X$ and $D$ must obey to perform this operation.
We will restrict to countable languages to more easily develop constructive methods to build the two distinct enlargements corresponding to the points of the above checklist.
The rub is 
how to combine sequentially the two enlargements avoiding the second cancelling the effect of the first.
The property of the witness subjoining construction expressed by \ref{RefThmAddWFurnished} and deployed in \ref{RefThmEnlarge} will be the key.

\subsection{Preliminaries}

\begin{Def}
\label{RefDefNot}
Consider the following element of $\mapsFromTo{\wffs{S}}{\left( \wffs{S}  \right) }$:
\begin{align*}
\notf_S : \varphi \mapsto \nor{\varphi}{\varphi}.
\end{align*}
\end{Def}

\begin{Not}
Again, we can drop the subscript in $\notf_S$ when it is safe to do so.
In addition, we will usually write $\xxnot \varphi$ instead of $\notf \left( \varphi \right)$:
\begin{align*}
\xxnot \varphi = \xnot \varphi,
\end{align*}
and $ \iter{\notf}{n} \varphi$ instead of $\iter{\notf}{n} \left( \varphi \right)$.
\end{Not}

\begin{Def}[Forms of consistency]
\label{RefDefConsistencies}
$X$ is said to be $S$-consistent (or syntactically consistent when the context is clear) if
\begin{align*}
X \cap \im{\notf_S^{-1}}{X} = \emp.
\end{align*}
$X$ is $S$-inconsistent (syntactically inconsistent) if it is not $S$-consistent.
It is said to be an $S$-cover (or just a cover) if
\begin{align*}
X \cup \im{\notf_S^{-1}}{X} \supseteq \wffs{S}.
\end{align*}
It is termed $D$-consistent (or just consistent when no ambiguity can arise) if $\provables{D} \left( X \right)$ is $S$-consistent, otherwise we say it is $D$-inconsistent (inconsistent): we write 
$\Con{D}{X}$ and $ \Inc{D} { X} $, respectively.
\end{Def}

\begin{Rem}
$X$ is a \mincov{} \If{} $X$ is a syntactically consistent cover.
\end{Rem}

\begin{Def}
\label{RefDefAssLikeWeak}
A ruleset $D$ is said to be \emph{\asslikeW{}}\index{\asslikeW{}} if any $D$-consistent cover
is a \closed{$D$} \mincov{}.
\end{Def}

\begin{Def}
\label{RefDefAssLikeStrong}
A ruleset $D$ is said to be \emph{\asslikeS{}}\index{\asslikeS{}} if for any $D$-consistent cover $X$ it holds $\provables{D}\left( X \right) = X \cap \wffs{S}$.
\end{Def}

\begin{Rem}
Any \asslikeS{} ruleset is \asslikeW{}.
\end{Rem}

\begin{Prop}
\label{RefThmAssLikeExample}
$ \left\{ \rAssumption \right\}$ is \asslikeS{}.
\end{Prop}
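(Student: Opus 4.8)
The plan is to compute $\provables{\left\{ \rAssumption \right\}} \left( X \right)$ outright and check that it coincides with $X \cap \wffs{S}$ for \emph{every} set $X$. The hypothesis that $X$ be a $\left\{ \rAssumption \right\}$-consistent cover then turns out to be unnecessary, and the requirement of \ref{RefDefAssLikeStrong} is met a fortiori.

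First I would observe that $\rAssumption$ disregards its argument. By \ref{RefDefRules}, and recalling that a rule takes values in $\powset{\seqs{S}}$ (so that $\varphi$ is forced to range over $\wffs{S}$), for every $\Sigma \C \seqs{S}$ one has
\begin{align*}
\rAssumption \left( \Sigma \right) = \left\{ \pair{\left\{ \varphi \right\}}{\varphi} : \varphi \in \wffs{S} \right\} =: A.
\end{align*}
Because $\left\{ \rAssumption \right\}$ is a singleton, $\onestep{\left\{ \rAssumption \right\}} = \rAssumption$ by \ref{RefDefOneStep}, so the iterates stabilise at once: $\derivables{0}{\left\{ \rAssumption \right\}} \left( \emptyset \right) = \emptyset$, while for each $n \in \Z^+$
\begin{align*}
\derivables{n}{\left\{ \rAssumption \right\}} \left( \emptyset \right) = \rAssumption \left( \derivables{n-1}{\left\{ \rAssumption \right\}} \left( \emptyset \right) \right) = A.
\end{align*}
Consequently $\Derivables{\left\{ \rAssumption \right\}} \left( \emptyset \right) = \bigcup_{n \in \N} \derivables{n}{\left\{ \rAssumption \right\}} \left( \emptyset \right) = A$.

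Then I would feed this into the provability criterion of \ref{RefDefProvable2}: $\varphi \in \provables{\left\{ \rAssumption \right\}} \left( X \right)$ holds exactly when there is $\Gamma \C X$ with $\pair{\Gamma}{\varphi} \in A$. Since the members of $A$ are precisely the pairs $\pair{\left\{ \varphi \right\}}{\varphi}$ with $\varphi \in \wffs{S}$, the condition $\Gamma \C X$ reduces to $\left\{ \varphi \right\} \C X$, i.e. $\varphi \in X$. Hence
\begin{align*}
\provables{\left\{ \rAssumption \right\}} \left( X \right) = \left\{ \varphi \in \wffs{S} : \varphi \in X \right\} = X \cap \wffs{S}.
\end{align*}
No property of $X$ entered the argument, so this equality holds for a $\left\{ \rAssumption \right\}$-consistent cover in particular, which is exactly what \ref{RefDefAssLikeStrong} demands.

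There is essentially no obstacle here; the only points calling for a little care are the bookkeeping of the base iterate $\derivables{0}{\left\{ \rAssumption \right\}} \left( \emptyset \right) = \emptyset$ (as already remarked in \ref{RefDefProvable2}) and the observation that $\rAssumption$ is constant in its argument, which is precisely what makes the union defining $\Derivables{\left\{ \rAssumption \right\}} \left( \emptyset \right)$ collapse onto $A$ after a single step.
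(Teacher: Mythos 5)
Your proof is correct and follows essentially the same route as the paper's: both arguments rest on the single observation that $\rAssumption \left( \Sigma \right)$ is independent of $\Sigma$ and consists exactly of the pairs $\pair{\left\{ \psi \right\}}{\psi}$ with $\psi \in \wffs{S}$, so that by \ref{RefDefProvable2} a formula is provable from $X$ precisely when it lies in $X$. Your version merely makes explicit what the paper leaves implicit (that $\Derivables{\left\{ \rAssumption \right\}} \left( \emptyset \right)$ collapses to this fixed set and that the cover and consistency hypotheses are never used), whereas the paper phrases the same computation as two inclusions after reducing to $X \C \wffs{S}$.
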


\begin{proof}
Let $X$ be a $\left\{ \rAssumption \right\}$-consistent cover.
We can assume $X \C \wffs{S}$. 
Of course, being $\pair {\left\{ \psi \right\}} {\psi} \in \rAssumption \left( \emp \right)$ for any $\psi \in \wffs{S}$, one has, in particular, that $X \C \provables{ \left( \left\{
 \rAssumption \right\} \right) } \left( X \right)$. 
Hence it remains to show that 
$ \provables {\left( \left\{ \rAssumption \right\}  \right) } \left( X \right) \C X$.
Assume $\psi \in \provables {\left( \left\{ \rAssumption \right\}  \right) } \left( X \right)$.
Then consider, by \ref{RefDefProvable2}, $n \in \N$ and a finite $\Gamma \C X$ such that 
$
\pair{\Gamma}{\psi} \in \derivables{n+1}{\left\{ \rAssumption \right\}} \left( \emp \right) = 
\rAssumption \left( \derivables{n} {\left\{ \rAssumption \right\}} \left( \emp \right) \right)
$.
%
%
This gives $ \Gamma = \left\{ \psi \right\} $ by definition of $\rAssumption$. 
Hence thesis.
\end{proof}

\begin{Prop}
\label{RefThmAssLikeTransitive}
If $D_1$ is \asslikeS{} and $D_2 \emul{\emp} D_1$, then $D_2$ is \asslikeS{}.
\end{Prop}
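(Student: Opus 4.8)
The plan is to take an arbitrary $D_2$-consistent cover $X$ and verify directly the defining identity of \ref{RefDefAssLikeStrong}, namely $\provables{D_2}\left( X \right) = X \cap \wffs{S}$. The whole argument pivots on a single bridge: the inclusion $\provables{D_1}\left( X \right) \C \provables{D_2}\left( X \right)$, which is exactly \ref{RefThmEmulationAndProvability} read with the choice $Y = \emp$ (so that $X \cup Y = X$). I would use this bridge twice — once to pull the consistency hypothesis back from $D_2$ to $D_1$, and once to push the provability conclusion forward from $D_1$ to $D_2$.

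First I would note that being a cover is a purely $S$-theoretic condition (\ref{RefDefConsistencies}), independent of any ruleset, so $X$ is a cover as seen from $D_1$ with nothing to check. To see that $X$ is moreover $D_1$-consistent, I rely on the elementary fact that $S$-consistency is inherited by subsets: if $Y' \C Y$ then $\im{\notf_S^{-1}}{Y'} \C \im{\notf_S^{-1}}{Y}$, whence $Y' \cap \im{\notf_S^{-1}}{Y'} \C Y \cap \im{\notf_S^{-1}}{Y}$. Applying this to $\provables{D_1}\left( X \right) \C \provables{D_2}\left( X \right)$, where the larger set is $S$-consistent because $X$ is $D_2$-consistent, shows $\provables{D_1}\left( X \right)$ is $S$-consistent, i.e. $X$ is $D_1$-consistent. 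Thus $X$ is a $D_1$-consistent cover, and the strong assumptiveness of $D_1$ gives $\provables{D_1}\left( X \right) = X \cap \wffs{S}$. Feeding this into the bridge already yields one inclusion, $X \cap \wffs{S} \C \provables{D_2}\left( X \right)$.

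For the reverse inclusion I would argue by contradiction. Take $\psi \in \provables{D_2}\left( X \right) \C \wffs{S}$ and suppose $\psi \notin X$. Since $X$ is a cover, $\xnot\psi \in X$, and as $\xnot\psi \in \wffs{S}$ we get $\xnot\psi \in X \cap \wffs{S} = \provables{D_1}\left( X \right) \C \provables{D_2}\left( X \right)$. Now both $\psi$ and $\notf_S\left( \psi \right) = \xnot\psi$ lie in $\provables{D_2}\left( X \right)$, so $\psi \in \provables{D_2}\left( X \right) \cap \im{\notf_S^{-1}}{\provables{D_2}\left( X \right)}$, contradicting the $S$-consistency of $\provables{D_2}\left( X \right)$ that the $D_2$-consistency of $X$ guarantees. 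Hence $\psi \in X$, giving $\provables{D_2}\left( X \right) \C X \cap \wffs{S}$ and, together with the previous half, the claimed equality.

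The step I expect to demand the most care is this reverse inclusion, because it is the only place where ``$\psi \notin X$'' must be converted into a genuine syntactic inconsistency of $\provables{D_2}\left( X \right)$; this conversion works only because the cover property injects $\xnot\psi$ into $X$ and the already-established identity $\provables{D_1}\left( X \right) = X \cap \wffs{S}$, transported through the emulation bridge, promotes that membership to provability in $D_2$. The remaining ingredients — monotonicity of the preimage $\im{\notf_S^{-1}}{\cdot}$ and the ruleset-independence of the cover condition — are routine.
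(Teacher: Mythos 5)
Your proof is correct and follows essentially the same route as the paper's: both rest on the single bridge $\provables{D_1}\left( X \right) \C \provables{D_2}\left( X \right)$, use it once to get $X \cap \wffs{S} = \provables{D_1}\left( X \right) \C \provables{D_2}\left( X \right)$ and once (together with the cover property and the $S$-consistency of $\provables{D_2}\left( X \right)$) for the reverse inclusion, your contradiction being just the contrapositive of the paper's implication chain. The only difference is that you make explicit two points the paper leaves tacit, namely that $X$ is $D_1$-consistent (via monotonicity of $S$-consistency under subsets) and the distinction between $X$ and $X \cap \wffs{S}$.
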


\begin{proof}
Given a $D_2$-consistent cover $X \C \wffs{S}$, we must show that $\provables{D_2} \left( X \right) = X$.
First $X = \provables{D_1} \left( X \right) \C \provables{D_2} \left( X \right)$.
To show the reverse inclusion, $ \provables {D_2} \left( X \right) \C X $, consider $\varphi$ and suppose $\varphi \in \provables{D_2} \left( X \right)$:
\begin{align*}
\varphi \in \provables{D_2} \left( X \right) \ra{} 
\xnot {\varphi} \notin \provables{D_2} \left( X \right) 
\ra{} \xnot \varphi \notin \provables{D_1} \left( X \right) \ra{}
\xnot \varphi \notin X \ra{} \varphi \in X.
\end{align*}
First implication is due to consistency, and last one to $X$ being a cover.
\end{proof}

\begin{Def}
\label{RefDefCutLike}
A ruleset $D$ is \cutLike{} if
$ \Inc{D}{X \cup \left\{ \varphi \right\}}$ 
implies 
$X \proves{D} \xnot {\varphi}$ 
for every $X$, $\varphi$.
\end{Def}

\subsection{Witness-subjoining construction for countable languages}

\begin{Def}
\label{RefDefAddW}
Given $X$, $D$ and two mappings 
\begin{align*}
l: \N \ni n &\mapsto v_n \in \im{\ari^{-1}} { \left\{ 0 \right\} }
\\
f: \N \ni n &\mapsto \varphi_n \in F_S,
\end{align*}
define recursively
\begin{align*}
X_0 & := X
\\
a_n & := \im {\ari^{-1}} { \left\{ 0 \right\}} \sdiff 
\symbof {X_n \cup \left\{ \varphi_n \right\} }
\\
X_{n+1} &:=
\begin{cases}
X_n \cup
\left\{
\symbsubst {v_n} 
{
l \left(
\min \im{l^{-1}} { a_n }
\right)
} {\varphi_n}
\right\}
& 
\begin{aligned}
& \text{ if } 
\Con{D} { X_n \cup \left\{  v_n \varphi_n \right\} },
\\
& X_n \cap \im{\witnessRel{}}{\left\{ v_n \varphi_n \right\}} = \emp \text{ and } a_n \neq \emptyset
\end{aligned}
\\
X_n & \text{ otherwise },
\end{cases}
\end{align*}
and finally
\begin{align*}
\addW{D}{l,f} \left( X \right) := \bigcup_{n \in \N} X_n.
\end{align*}
\end{Def}

\begin{Lm}
\label{RefThmAddWCon}
Assume that
\begin{enumerate}
\item
\label{RefEq21}
$D$ is \cutLike{};
\item
\label{RefEq10}
$D \emul{\emp} \left\{ \rEqRefl \right\}$;
\item
\label{RefEq12}
$\rWitnessA \in D$.
\end{enumerate}
If
$\Con{D}{X}$, then  
$ \Con{D} { \addW{D}{l,f} \left( X \right) }$.
\end{Lm}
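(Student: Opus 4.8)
The plan is to prove $\Con{D}{X_n}$ for every $n$ by induction on $n$, and then to read off $\Con{D}{\addW{D}{l,f}\left( X \right)}$ by a finitary argument. For the latter, recall that the sets $X_n$ increase with $n$ and that, by \ref{RefDefProvable2}, every sequent derivable from $\emp$ has a \emph{finite} antecedent; hence $\provables{D}\left( \bigcup_n X_n \right) = \bigcup_n \provables{D}\left( X_n \right)$, and a pair $\chi,\xnot{\chi}$ witnessing $S$-inconsistency of the left-hand side would already lie in some $\provables{D}\left( X_N \right)$, against $\Con{D}{X_N}$. The base case $X_0 = X$ is the hypothesis, so the whole matter reduces to the inductive step.

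For the step, assume $\Con{D}{X_n}$. If $X_{n+1} = X_n$ there is nothing to prove, so suppose $X_{n+1} = X_n \cup \left\{ \psi^{\ast} \right\}$ with $\psi^{\ast} := \symbsubst{v_n}{v_2}{\varphi_n}$ and $v_2 := l\left( \min \im{l^{-1}}{a_n} \right) \in a_n$. By \ref{RefDefAddW} this case occurs only when $\Con{D}{X_n \cup \left\{ v_n \varphi_n \right\}}$ holds; call this assertion (A). Since $v_2 \in a_n = \im{\ari^{-1}}{\left\{ 0 \right\}} \sdiff \symbof{X_n \cup \left\{ \varphi_n \right\}}$, the literal $v_2$ occurs neither in $X_n$ nor in $\varphi_n$; in particular $\pair{v_n \varphi_n}{\psi^{\ast}} \in \witnessRel{S}$ (see \ref{RefDefWitnessed}). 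I will argue by contraposition: assuming $\Inc{D}{X_n \cup \left\{ \psi^{\ast} \right\}}$, I will produce $\Inc{D}{X_n \cup \left\{ v_n \varphi_n \right\}}$, contradicting (A).

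The bridge I would set up first is the equivalence, for any $Y$, between $\Inc{D}{Y}$ and $Y \proves{D} \xnot{\equiv v v}$, where $v$ is a fixed literal (one exists, the language having infinitely many). The direction $\Leftarrow$ uses only $D \emul{\emp} \left\{ \rEqRefl \right\}$: exactly as in \ref{RefThmTermeqRefl}, $\pair{\emp}{\equiv v v} \in \Derivables{D}\left( \emp \right)$, so $\equiv v v$ and $\xnot{\equiv v v}$ both lie in $\provables{D}\left( Y \right)$, making it $S$-inconsistent. The direction $\Rightarrow$ uses that $D$ is \cutLike{}: if $\provables{D}\left( Y \right)$ is $S$-inconsistent then so is $\provables{D}\left( Y \cup \left\{ \equiv v v \right\} \right)$ (by monotonicity of $\provables{D}$, itself immediate from \ref{RefDefProvable2}, and the fact that supersets of $S$-inconsistent sets are $S$-inconsistent), whence \ref{RefDefCutLike} gives $Y \proves{D} \xnot{\equiv v v}$. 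Applying $\Rightarrow$ to $Y = X_n \cup \left\{ \psi^{\ast} \right\}$, I fix a derivable $\pair{\Gamma_0}{\xnot{\equiv v v}} \in \Derivables{D}\left( \emp \right)$ with $\Gamma_0 \C X_n \cup \left\{ \psi^{\ast} \right\}$. If $\psi^{\ast} \notin \Gamma_0$ then $\Gamma_0 \C X_n$, so $\Inc{D}{X_n}$ and hence $\Inc{D}{X_n \cup \left\{ v_n \varphi_n \right\}}$ by monotonicity, contradicting (A). Otherwise $\psi^{\ast} \in \Gamma_0$, and I apply the rule $\rWitnessA \in D$ with $v_1 := v_n$, $\psi := \varphi_n$, the same $v_2$, and $\Gamma' := \Gamma_0 \sdiff \left\{ \psi^{\ast} \right\}$: its side-condition $v_2 \notin \symbof{\Gamma' \cup \left\{ \varphi_n \right\}}$ is exactly what $v_2 \in a_n$ together with $\Gamma' \C X_n$ delivers. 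The rule then carries $\pair{\Gamma_0}{\xnot{\equiv v v}}$ to $\pair{\Gamma' \cup \left\{ v_n \varphi_n \right\}}{\xnot{\equiv v v}}$; writing $\pair{\Gamma_0}{\xnot{\equiv v v}} \in \derivables{k}{D}\left( \emp \right)$ for some $k$, this output lies in $\derivables{k+1}{D}\left( \emp \right) \C \Derivables{D}\left( \emp \right)$, with antecedent contained in $X_n \cup \left\{ v_n \varphi_n \right\}$. Thus $X_n \cup \left\{ v_n \varphi_n \right\} \proves{D} \xnot{\equiv v v}$, and the $\Leftarrow$ direction yields $\Inc{D}{X_n \cup \left\{ v_n \varphi_n \right\}}$, the desired contradiction; so $\Con{D}{X_{n+1}}$.

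The hard part will be the precise invocation of $\rWitnessA$: matching its template $\symbsubst{v_1}{v_2}{\psi}$ to $\psi^{\ast}$ and $v_1 \psi$ to $v_n \varphi_n$, and above all verifying its freshness side-condition, which is precisely what forces the construction in \ref{RefDefAddW} to draw $v_2$ from $a_n$, the literals absent from $X_n$ and $\varphi_n$. The companion difficulty is orchestrating the $\xnot{\equiv v v}$ bridge so that \cutLike{}-ness feeds the nontrivial ($\Rightarrow$) direction and reflexivity the easy ($\Leftarrow$) one; once both are available, the passage from the witness $\psi^{\ast}$ back to the existential $v_n \varphi_n$ is a single rule application. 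The closing union step is then routine via the finite-antecedent characterization of provability.
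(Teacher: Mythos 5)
Your proof is correct and follows essentially the same route as the paper's: the paper phrases it as taking the minimal inconsistent index $m=k+1$ where you run the induction forward, but the core is identical — use cut-likeness (after adjoining $\eq{\vv}$) to extract a finite derivable sequent $\pair{\Gamma}{\xnot\veq}$ from the assumed inconsistency of $X_n\cup\{\symbsubst{v_n}{v'_n}{\varphi_n}\}$, observe that $\Gamma$ must contain the substituted formula lest $X_n$ itself be inconsistent, and apply $\rWitnessA$ (with freshness of $v'_n$ guaranteed by $v'_n\in a_n$) to contradict $\Con{D}{X_n\cup\{v_n\varphi_n\}}$. Your explicit finitary-union step and the $\Inc{D}{Y}\Leftrightarrow Y\proves{D}\xnot\veq$ bridge are just spelled-out versions of what the paper leaves implicit.
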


\begin{proof}
Suppose $\Inc{D}{\addW{D}{l,f} \left( X \right) }$. Then, referring to the objects introduced in \ref{RefDefAddW}, we can take the minimum $m$ of the non-empty subset of $\N$:
\begin{align*}
\left\{ n \in \N \st \Inc{D}{X_n} \right\}.
\end{align*}
If $m=0$, then we are done. 
Otherwise, consider $k \in \N \st m=k+1$.
Having set
\begin{align*}
v_k := & l \left( k \right)
\\
\varphi_k := & f \left( k \right)
\\
v'_k := &
l \left(   \min \ \  \im{l^{-1}} { a_k } \right),
\end{align*}
from definition \ref{RefDefAddW} and that of minimum we must draw
\begin{gather}
\label{RefEq11}
\Con{D} { X_k \cup \left\{ v_k \varphi_k \right\}}
\\
a_k \neq \emptyset
\\
\Inc{D}
{ 
X_k \cup 
\left\{ \symbsubst {v_k} {v'_k}{\varphi_k} \right\} 
 }.
\end{gather}
By the last fact, we also have 
$ \Inc{D}  { X_k \cup 
\left\{ \symbsubst {v_k}{v'_k}{\varphi_k} \right\} 
\cup 
\left\{ \eq {\vv} \right\} 
} $, which gives 
$ X_k \cup \left\{ 
\symbsubst {v_k}{v'_k}{\varphi_k}
\right\} \proves{D} \xnot \veq $ 
by hypothesis \eqref{RefEq21}. 
Hence consider a finite set of formulas 
$ \Gamma \C X_k \cup \left\{ \symbsubst{v_k}{v'_k}{\varphi_k} \right\}$
such that $ \pair {\Gamma}{\xnot \veq} \in \derivables{m+1}{D} \left( \emp \right)$ for some $m \in \N$. 
This in particular implies $\Gamma \proves{D} \xnot \veq$; since it is also true that $ \Gamma \proves{D} \veq$ by hypothesis \eqref{RefEq10}, it must be $\Gamma \not \C X_k$, because $\Con{D}{X_k}$ by \eqref{RefEq11}. 
Then
\begin{align*}
\pair{\Gamma' \cup \left\{ v_k \varphi_k\right\}}{\xnot \veq} 
\in \rWitnessA \left( \left\{ \pair{\Gamma}{\veq} \right\} \right) \C 
\rWitnessA \left( \derivables{m+1}{D} \left( \emp \right) \right)
\\
\overset{\text{\tiny{\ref{RefDefOneStep}}}}{\C} \derivables{1}{D} \left( \derivables{m+1}{D} \left( \emp \right) \right)
= \derivables{m+2}{D} \left( \emp \right),
\end{align*}
where first inclusion is given by monotonicity of $\rWitnessA$, and we set 
$ \Gamma' := \Gamma \sdiff \left\{ \symbsubst{v_k}{v'_k}{\varphi_k} \right\} \C X_k$.
This contradicts \eqref{RefEq11}.
\end{proof}

\begin{Not}
If $S$ is a countable language, one can always find 
$l \in \mapsFromTo{\N}
{\left( \im{\ari_S ^ {-1}}{\left\{ 0 \right\}} \right)}$, 
$f \in \mapsFromTo{\N}{ \left( \wffs{S} \right)}$ 
such that
\begin{align*}
\N \ni n \mapsto l \left( n \right) f\left( n \right)
\end{align*}
is onto $\dom \witnessRel{S}$.
This surjectivity property aside, we will not be interested in how $l$ and $f$ actually work, and we will thus write $\addW{D}{}$ instead of $ \addW{D}{l,f}$ when dealing with a ruleset $D$ of a countable language, implying $l$ and $f$ satisfy it.
\end{Not}

\begin{Lm}
\label{RefThmAddWFurnished}
Let $D$ be a ruleset of a countable language $S$. Assume that the sets $X$, $Y$ satisfy:
\begin{enumerate}
\item
$\Con{D}{Y}$;
\item
$\addW{D}{} \left( X \right) \C Y$;
\item
\label{RefEq13}
$\im{ \ari_S^{-1} }{ \left\{ 0 \right\}} \sdiff \symbof{X}$ 
is not finite.
\end{enumerate}
Then $Y$ is $S$-\witnessed{}.
\end{Lm}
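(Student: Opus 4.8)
My plan is to unfold the definition of $S$-witnessedness and reduce the goal to a single membership claim. By \ref{RefDefWitnessed} I must show $Y \cap \dom \witnessRel{S} \C \im{\witnessRel{S}^{-1}}{Y}$; concretely, for every existential formula $\psi \in Y$ (that is, $\psi \in \dom \witnessRel{S}$, so $\psi = \left\{ \pair{0}{v_1} \right\} \conc \varphi$ for some literal $v_1$ and formula $\varphi$) I must exhibit a witness of $\psi$ that lies in $Y$. I would use the surjectivity of $n \mapsto l\left( n \right) f\left( n \right)$ onto $\dom \witnessRel{S}$ to fix an index $n$ with $\psi = v_n \varphi_n$ (writing $v_n := l\left( n \right)$, $\varphi_n := f\left( n \right)$), and then inspect step $n$ of the recursion in \ref{RefDefAddW}, whose branching clause tests exactly the three conditions on $X_n$ and $v_n \varphi_n = \psi$.

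The heart of the argument is a case analysis on those three conditions. First I would show that the clause $a_n \neq \emp$ holds automatically here: since $X_n$ arises from $X$ by adjoining at most $n$ formulas, $\symbof{X_n}$ exceeds $\symbof{X}$ by only finitely many literals, and $\rng \varphi_n$ is finite, so $a_n = \im{\ari^{-1}}{\left\{ 0 \right\}} \sdiff \symbof{X_n \cup \left\{ \varphi_n \right\}}$ is obtained from the set in hypothesis~(3) by deleting a finite set of literals and is therefore still infinite, in particular non-empty. Next, the consistency clause $\Con{D}{X_n \cup \left\{ v_n \varphi_n \right\}}$ cannot fail: one has $X_n \C \addW{D}{}\left( X \right) \C Y$ and $v_n \varphi_n = \psi \in Y$, whence $X_n \cup \left\{ v_n \varphi_n \right\} \C Y$; since provability is monotone in its premise set (Remark~\ref{RefDefProvable2}), a syntactic inconsistency of $\provables{D}\left( X_n \cup \left\{ v_n \varphi_n \right\} \right)$ would propagate to $\provables{D}\left( Y \right)$, contradicting hypothesis~(1), $\Con{D}{Y}$. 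Hence at step $n$ either the remaining clause $X_n \cap \im{\witnessRel{}}{\left\{ v_n \varphi_n \right\}} = \emp$ fails, or all three clauses hold.

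It then remains to split on this last dichotomy. If that clause fails, some $\chi \in X_n \cap \im{\witnessRel{}}{\left\{ \psi \right\}}$ is already a witness of $\psi$ sitting inside $X_n \C Y$, so $\psi \in \im{\witnessRel{S}^{-1}}{Y}$. If instead all three clauses hold, the recursion appends $\symbsubst{v_n}{v'_n}{\varphi_n}$ with $v'_n \in a_n$; as $a_n$ avoids $\symbof{X_n \cup \left\{ \varphi_n \right\}}$, in particular $v'_n \notin \rng \varphi_n$, so by the definition of $\witnessRel{S}$ the formula $\symbsubst{v_n}{v'_n}{\varphi_n}$ is a witness of $\psi = v_n \varphi_n$, and it lies in $X_{n+1} \C \addW{D}{}\left( X \right) \C Y$; again $\psi \in \im{\witnessRel{S}^{-1}}{Y}$. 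Since $\psi$ was an arbitrary element of $Y \cap \dom \witnessRel{S}$, this gives the claim. The step I expect to be the main obstacle is precisely the coordinated use of all three hypotheses to exclude the ``otherwise'' branch of the recursion at stage $n$: hypothesis~(3) keeps a fresh literal available so that $a_n \neq \emp$, while hypothesis~(1) together with monotonicity of provability rules out an inconsistency at that stage; the only remaining branch, where a witness already exists, is harmless.
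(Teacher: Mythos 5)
Your proposal is correct and follows essentially the same route as the paper's proof: locate the stage $n$ via surjectivity of $n \mapsto l\left(n\right)f\left(n\right)$, use hypothesis~(3) to keep $a_n \neq \emp$, use $X_n \cup \left\{ v_n \varphi_n \right\} \C Y$ together with $\Con{D}{Y}$ to force the branching clause of \ref{RefDefAddW}, and conclude that a witness lies in $X_{n+1} \C Y$ or was already in $X_n$. Your extra checks (that $a_n$ stays infinite because $X_n$ adds only finitely many finite strings, and that $v'_n \notin \rng \varphi_n$ so the adjoined formula really is a witness in the sense of \ref{RefDefWitnessed}) are details the paper leaves implicit, not a different argument.
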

Note that no particular request is placed on $D$.

\begin{proof}
$\addW{D}{} = \addW{D}{l, f}$ for some pair of maps $l, f$.
Thanks to hypothesis \eqref{RefEq13} we have, referring to \ref{RefDefAddW}: 
\begin{align}
\label{Ref010}
a_m \neq \emptyset && \forall m \in N.
\end{align}
Now assume $v \varphi \in Y$. 
By surjectivity, there is $n \in \N$ such that
$ v \varphi = l \left( n \right) f\left( n \right)$.
Set 
\begin{align*}
v_n :=& l \left( n \right) 
\\
\varphi_n :=& f\left( n \right) 
\\
v'_n :=& l \left( \min \im{l^{-1}}{a_n} \right).
\end{align*}
We have 
$X_n \C \addW{D}{} \left( X \right) \C Y$ and 
$ \left\{ v_n \varphi_n \right\} \C Y$, whence $\Con{D}{ X_n \cup \left\{ v_n \varphi_n \right\} }$, which, 
together with~\eqref{Ref010}, implies either
\begin{align*}
& X_n \cup
\left\{ \symbsubst{v_n}{v'_n}{ \varphi_n } \right\}
= X_{n+1} \C \addW{D}{} \left( X \right) \C Y
\\
\text{or} &
\\
& X_{n+1} = X_n \text{ and } X_n \cap \im {\witnessRel{}} {\left\{ v_n \varphi_n \right\}} \neq \emp
\end{align*}
by definition~\ref{RefDefAddW}. 
Given the arbitrariness of $v \varphi$, this yields thesis as demanded by \ref{RefDefWitnessed}.
\end{proof}

\subsection{Consistent maximization for countable languages}

\begin{Def}
\label{RefDefAddF}
Given a mapping $ f: \N \ni n \mapsto \varphi_n \in F_S $, recursively define
\begin{align*}
X_0 &:= X
\\
X_{n+1} &:=
\begin{cases}
X_n \cup \left\{ \xnot {\varphi_n} \right\}
& 
\text{ if } X_n \proves{D} \xnot { \varphi_n }
\\
X_n \cup \left\{ \varphi_n \right\} 
&
\text{ otherwise},
\end{cases}
\end{align*}
and set 
\begin{align*}
\addF{D}{f} \left( X \right) := \bigcup_{n \in \N} X_n.
\end{align*}
\end{Def}

\begin{Lm}[Lindenbaum's lemma]
\label{RefThmLindenbaum}
If $D$ is a \cutLike{} ruleset of $S$ and $f \in \mapsFromTo{\N}{\left( \wffs{S} \right)}$, then
$\Con{D}{X}$ implies $\Con{D}{\addF{D}{f} \left( X \right)}$ for any $X$.
\end{Lm}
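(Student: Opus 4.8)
The plan is to prove by induction on $n$ that every finite stage $X_n$ from Definition~\ref{RefDefAddF} is $D$-consistent, and then to transfer consistency to the union $\addF{D}{f}\left( X \right) = \bigcup_{n \in \N} X_n$ by exploiting the fact that provability depends only on a finite subset of the hypotheses. No monotonicity of $D$ is needed, only that sequent antecedents are finite by construction and that $D$ is \cutLike{}.

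First I would set up the induction. The base case $X_0 = X$ is $D$-consistent by hypothesis. For the inductive step, assume $\Con{D}{X_n}$ and split according to the two branches defining $X_{n+1}$. In the branch where $X_n \not\proves{D} \xnot{\varphi_n}$, one has $X_{n+1} = X_n \cup \left\{ \varphi_n \right\}$, and $\Con{D}{X_{n+1}}$ is exactly the contrapositive of the \cutLike{} hypothesis (Definition~\ref{RefDefCutLike}) applied to $X_n$ and $\varphi_n$: were $X_n \cup \left\{ \varphi_n \right\}$ inconsistent, we would get $X_n \proves{D} \xnot{\varphi_n}$, contradicting the branch condition.

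The delicate branch is $X_n \proves{D} \xnot{\varphi_n}$, where $X_{n+1} = X_n \cup \left\{ \xnot{\varphi_n} \right\}$. Here I would argue by contradiction: suppose $\Inc{D}{X_n \cup \left\{ \xnot{\varphi_n} \right\}}$. Applying \cutLike{} with the formula $\xnot{\varphi_n}$ yields $X_n \proves{D} \xnot{\xnot{\varphi_n}}$. But the branch condition gives $X_n \proves{D} \xnot{\varphi_n}$ as well, so both $\xnot{\varphi_n}$ and its negation $\xnot{\xnot{\varphi_n}}$ lie in $\provables{D}\left( X_n \right)$, contradicting the $S$-consistency of $\provables{D}\left( X_n \right)$, i.e.\ $\Con{D}{X_n}$. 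Hence $\Con{D}{X_{n+1}}$ in this branch too, completing the induction.

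Finally I would lift consistency to the union. Since $X_{n+1} \supseteq X_n$ for every $n$, the family $\left\{ X_n \right\}_{n \in \N}$ forms an increasing chain. Suppose $\addF{D}{f}\left( X \right)$ were $D$-inconsistent; then $\provables{D}\left( \bigcup_{n} X_n \right)$ contains some $\psi$ together with $\xnot{\psi}$. By the finitary characterization of provability (Remark~\ref{RefDefProvable2}), the antecedents witnessing $\bigcup_{n} X_n \proves{D} \psi$ and $\bigcup_{n} X_n \proves{D} \xnot{\psi}$ are finite subsets $\Gamma_1, \Gamma_2$ of the union, finiteness holding automatically because they are antecedents of sequents in $\seqs{S}$. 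As the chain is increasing, the finite set $\Gamma_1 \cup \Gamma_2$ is contained in a single stage $X_N$, whence $X_N \proves{D} \psi$ and $X_N \proves{D} \xnot{\psi}$, making $X_N$ $D$-inconsistent and contradicting the induction. Therefore $\addF{D}{f}\left( X \right)$ is $D$-consistent. The main obstacle is precisely the branch $X_n \proves{D} \xnot{\varphi_n}$ of the inductive step: one must recognise that adjoining the already-provable formula $\xnot{\varphi_n}$ cannot destroy consistency, and the only available leverage is \cutLike{}, which has to be invoked to manufacture the double negation $\xnot{\xnot{\varphi_n}}$ and so contradict $\Con{D}{X_n}$; the union step, by contrast, is routine once one notes that antecedents of sequents are finite.
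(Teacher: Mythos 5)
Your proof is correct and follows essentially the same route as the paper's: the paper packages your induction as a minimal-counterexample argument (take the least $m+1$ with $X_{m+1}$ inconsistent and rule out both branches of Definition~\ref{RefDefAddF}), but the two-branch case analysis and the use of the \cutLike{} property to manufacture $\xnot{\xnot{\varphi_m}}$ against $X_m \proves{D} \xnot{\varphi_m}$ are identical to yours. If anything you are slightly more careful than the paper, which silently assumes that inconsistency of the union forces inconsistency of some finite stage, whereas you justify this explicitly via the finiteness of sequent antecedents and Remark~\ref{RefDefProvable2}.
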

\begin{proof}
Assume $\Inc{D} { \addF{D}{f} \left( X \right) }$; then $\min \left\{ 
n \in \N \st \ \Inc{D} { X_n  }
\right\} \in \Z^+$ (it cannot be zero because $X=X_0$ is consistent by hypothesis), so it equals $m+1$ for some $m \in \N$.
Set $\varphi_m := f\left( m \right)$.

Now, it cannot be 
$X_{m+1} = X_m \cup \left\{ \varphi_m \right\}$, 
for in this case we would get 
$X_m \nvdash_D \xnot {\varphi_m }$ 
by definition~\ref{RefDefAddF}, and, as a consequence, 
$\Con{D} {  X_m \cup \left\{ \varphi_m \right\} }$ 
by \ref{RefDefCutLike}, while $X_{m+1}$ is inconsistent.
Hence the upper branch of definition \ref{RefDefAddF} must be the one in charge, that is 
\begin{align}
\label{Ref015}
X_{m+1} = X_m \cup \left\{ \xnot {\varphi_m} \right\},
\end{align}
and consequently
\begin{equation}
\label{Ref016}
X_m \vdash_D \xnot {\varphi_m}.
\end{equation}
On the other hand, from \eqref{Ref015} and \ref{RefDefCutLike} it descends that
\begin{equation*}
X_m \vdash_D \xnot {\xnot {\varphi_m}},
\end{equation*}
yielding, together with \eqref{Ref016}, that $X_m$ is inconsistent according to definition~\ref{RefDefConsistencies}, thus contradicting minimality of $m+1$.

\end{proof}

\begin{Not}
If $S$ is a countable language, one can always find 
$f \in \mapsFromTo{\N}{ \left( \wffs{S} \right)}$ 
being onto $\wffs{S}$.
This surjectivity property aside, we will not be interested in how $f$ actually works, and we will thus write $\addF{D}{}$ instead of $ \addF{D}{f}$ when dealing with a ruleset $D$ of a countable language, implying $f$ satisfies it.
\end{Not}

\begin{Prop}
\label{RefThmAddFCover}
Let $D$ be a ruleset of a countable language $S$.
$\addF{D}{} \left( X \right)$ is a cover of $S$.
\end{Prop}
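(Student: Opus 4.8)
The proposition states that $\addF{D}{}(X)$ is a cover of $S$, where $\addF{D}{}$ is the consistent maximization construction from Definition \ref{RefDefAddF} for a countable language. By Definition \ref{RefDefConsistencies}, being an $S$-cover means
\begin{align*}
\addF{D}{}(X) \cup \im{\notf_S^{-1}}{\addF{D}{}(X)} \supseteq \wffs{S}.
\end{align*}
Unwinding the notation, $\im{\notf_S^{-1}}{Y}$ is the set of formulas $\varphi$ whose negation $\notf_S(\varphi) = \xnot{\varphi}$ lies in $Y$. So the goal is to show that every formula $\varphi \in \wffs{S}$ satisfies either $\varphi \in \addF{D}{}(X)$ or $\xnot{\varphi} \in \addF{D}{}(X)$.

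**The plan of attack.**

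The plan is to exploit the surjectivity of the enumerating map $f$, guaranteed by the preceding Notation: since $S$ is countable, we may assume $f \in \mapsFromTo{\N}{\wffs{S}}$ is onto $\wffs{S}$. First I would fix an arbitrary formula $\varphi \in \wffs{S}$ and, using surjectivity, pick $n \in \N$ with $f(n) = \varphi_n = \varphi$. The heart of the argument is then a direct case split on the recursive clause defining $X_{n+1}$ in Definition \ref{RefDefAddF}: in the first case $X_{n+1} = X_n \cup \{\xnot{\varphi_n}\}$, so $\xnot{\varphi} \in X_{n+1}$; in the otherwise case $X_{n+1} = X_n \cup \{\varphi_n\}$, so $\varphi \in X_{n+1}$. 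Either way, $\varphi \in X_{n+1}$ or $\xnot{\varphi} \in X_{n+1}$.

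**Concluding step.**

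The final step is to promote membership in $X_{n+1}$ to membership in the union $\addF{D}{}(X) = \bigcup_{k \in \N} X_k$. Since $X_{n+1} \subseteq \addF{D}{}(X)$ by definition of the union, in either case we obtain $\varphi \in \addF{D}{}(X)$ or $\xnot{\varphi} \in \addF{D}{}(X)$, which is exactly the statement $\varphi \in \addF{D}{}(X) \cup \im{\notf_S^{-1}}{\addF{D}{}(X)}$. Since $\varphi$ was arbitrary, this gives $\wffs{S} \subseteq \addF{D}{}(X) \cup \im{\notf_S^{-1}}{\addF{D}{}(X)}$, the defining inclusion for a cover.

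**Main obstacle.**

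I expect this proof to be genuinely short, since no consistency hypothesis is needed (contrast Lindenbaum's lemma \ref{RefThmLindenbaum}, which does the harder work of preserving consistency). The only point requiring care is translating the abstract membership statement $\xnot{\varphi} \in \addF{D}{}(X)$ into the form $\varphi \in \im{\notf_S^{-1}}{\addF{D}{}(X)}$ demanded by the definition of cover: one must verify that $\notf_S(\varphi) = \xnot{\varphi}$ so that $\varphi$ indeed lies in the preimage $\im{\notf_S^{-1}}{\addF{D}{}(X)}$. This is immediate from Definition \ref{RefDefNot}, which sets $\notf_S(\varphi) = \nor{\varphi}{\varphi} = \xnot{\varphi}$, so there is no real obstacle — the surjectivity of $f$ and the two-branch definition of the recursion carry the entire argument.
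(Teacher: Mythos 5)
Your proof is correct and follows essentially the same route as the paper's: fix $\varphi$, use surjectivity of $f$ to find $n$ with $f(n)=\varphi$, observe that whichever branch of Definition \ref{RefDefAddF} applies puts either $\varphi$ or $\xnot{\varphi}$ into $X_{n+1}\subseteq\addF{D}{}(X)$, and conclude via the definition of cover. The only cosmetic difference is that the paper phrases the dichotomy as $X_n \vdash_D \xnot{\varphi_n}$ versus $X_n \nvdash_D \xnot{\varphi_n}$ before reading off the branch, whereas you split directly on the two clauses of the recursion — these are the same case distinction.
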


\begin{proof}
$\addF{D}{} = \addF{D}{f}$ for some function $f$ onto $\wffs{S}$. 
Consider $\varphi \in F_S$, and, by surjectivity, a natural number 
$n$ such that $\varphi = \varphi_n := f \left( n \right)$.
\\
Either $X_n \nvdash_D \xnot {\varphi_n}$ or
$X_n \vdash_D \xnot {\varphi_n}$, 
where $X_n$ is as from \ref{RefDefAddF}.
Therefore, by~\ref{RefDefAddF}, 
either $X_{n+1} = X_n \cup \left\{ \varphi_n \right\}$ or 
$X_{n+1} = X_n \cup \left\{ \xnot { \varphi_n } \right\}$, and $X_{n+1} \C \addF{D}{} \left( X \right)$.
Thus at least one between
\begin{equation*}
X_n \cup \left\{ \varphi_n \right\}
\end{equation*} 
and
\begin{equation*}
X_n \cup \left\{ \xnot {\varphi_n} \right\}
\end{equation*}
is a subset of 
$\addF{D}{} \left( X \right)$, giving that at least one between
$\xnot {\varphi_n}$ and $\varphi_n$ belongs to $\addF{D}{} \left( X \right)$.
This, by the arbitrariness of $\varphi$ and by definition~\ref{RefDefConsistencies}, ends the proof.
\end{proof}

\begin{Rem}
Together, \ref{RefThmLindenbaum} and \ref{RefThmAddFCover} yield that a $D$-consistent set $X$ can be completed to the $D$-consistent cover $\addF{D}{} \left( X \right)$.
This, until one adds the request of $D$ being \asslikeW{} (see \ref{RefDefAssLikeWeak}), does not generally imply that it can be completed to a maximally consistent set, which is the thesis of the standard formulation (see, e.g., \cite{MR1314201}, section III.2 and \cite{chellas1980modal}, 2.19) of Lindenbaum's lemma.
\end{Rem}

\section{Putting it all together}
\label{RefSectAllTogether}
\begin{Lm}
\label{RefThmEnlarge}
Let $D$ be a ruleset of a countable language $S$, and $X$ be a set; assume they  comply with the following requirements:
\begin{enumerate}
\item
\label{RefEq14}
$ \rWitnessA \in D$;
\item
\label{RefEq15}
$D$ is \cutLike{};
\item
\label{RefEq16}
$D \emul{\emp} \left\{ \rEqRefl \right\}$;
\item
\label{RefEq17}
$ \im{\ari_S^{-1}}{\left\{ 0 \right\}} \sdiff \symbof{X}$ is not finite;
\item
\label{RefEq18}
$\Con{D}{X}$.
\end{enumerate}
Then $\addF{D}{} \left( \addW{D} {} \left( X \right) \right)$ is a \witnessed{}, $D$-consistent $S$-\cover{}.
\end{Lm}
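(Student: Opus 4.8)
The plan is to assemble the statement from the four preceding results about the two constructions $\addW{D}{}$ and $\addF{D}{}$, checking their hypotheses one at a time; the three asserted properties (consistency, cover, witnessed) then follow almost mechanically, the only genuine care being the order in which the enlargements are composed.

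First I would abbreviate $W := \addW{D}{}\left( X \right)$ and $Y := \addF{D}{}\left( W \right)$, so that the target set is $Y$. By hypotheses \eqref{RefEq14}, \eqref{RefEq15}, \eqref{RefEq16} and \eqref{RefEq18} --- that is, $\rWitnessA \in D$, $D$ \cutLike{}, $D \emul{\emp} \left\{ \rEqRefl \right\}$ and $\Con{D}{X}$ --- Lemma \ref{RefThmAddWCon} applies and yields $\Con{D}{W}$. Next, since $D$ is \cutLike{} (hypothesis \eqref{RefEq15}) and $\Con{D}{W}$ holds, Lindenbaum's lemma \ref{RefThmLindenbaum} gives $\Con{D}{Y}$; this already settles that $Y$ is $D$-consistent. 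That $Y$ is an $S$-\cover{} is immediate from Proposition \ref{RefThmAddFCover}, which guarantees the cover property for $\addF{D}{}$ applied to any set.

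It remains to show that $Y$ is \witnessed{}, and here the composition order is what must be handled: I would invoke Lemma \ref{RefThmAddWFurnished} with the original $X$ and the present $Y$. Its three hypotheses are checked directly: $\Con{D}{Y}$ was just proved; $\addW{D}{}\left( X \right) = W \C Y$ because $\addF{D}{}$ only enlarges its argument (from the recursion in \ref{RefDefAddF}, $X_0 = W$ and $X_{n+1} \supseteq X_n$, whence $Y = \bigcup_{n} X_n \supseteq W$); and $\im{\ari_S^{-1}}{\left\{ 0 \right\}} \sdiff \symbof{X}$ is not finite by hypothesis \eqref{RefEq17}. Lemma \ref{RefThmAddWFurnished} then certifies that $Y$ is $S$-\witnessed{}, concluding the proof.

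The conceptual obstacle --- already flagged in the motivating remarks opening this section --- is that maximizing $W$ to a cover via $\addF{D}{}$ could in principle cancel the witnessing property installed by $\addW{D}{}$. The decisive point is that \ref{RefThmAddWFurnished} was stated not for $\addW{D}{}\left( X \right)$ alone but for \emph{any} consistent superset $Y$ of it: since $\addF{D}{}$ enlarges while Lindenbaum's lemma preserves consistency, $Y$ still qualifies, and no re-running of the witness construction is needed. Thus the only real work is bookkeeping the hypotheses of the four cited results, and no hard computation is involved.
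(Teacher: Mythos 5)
Your proof is correct and follows essentially the same route as the paper's: both establish $D$-consistency of $\addW{D}{}\left( X \right)$ via \ref{RefThmAddWCon}, push it through Lindenbaum's lemma \ref{RefThmLindenbaum}, get the cover property from \ref{RefThmAddFCover}, and obtain the witnessed property from \ref{RefThmAddWFurnished}. Your explicit check that $\addW{D}{}\left( X \right) \C \addF{D}{}\left( \addW{D}{}\left( X \right) \right)$ is a detail the paper leaves implicit, but otherwise the two arguments coincide.
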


\begin{proof}
Set
\begin{align*}
Y:= \addW{D}{} \left( X  \right) && Z := \addF{D}{} \left(  Y   \right) .
\end{align*}
$Z$ is a cover by \ref{RefThmAddFCover}. 
By \eqref{RefEq14}, \eqref{RefEq15}, \eqref{RefEq16}, \eqref{RefEq18} and \ref{RefThmAddWCon}, $Y$ is $D$-consistent. 
Consequently $Z$ is $D$-consistent as well by \ref{RefThmLindenbaum}, \eqref{RefEq15}.
This fact, fed together with \eqref{RefEq17} into \ref{RefThmAddWFurnished}, grants that $Z$ is $S$-witnessed, ending the proof.
\end{proof}

\begin{Lm}
\label{RefThmSat1}
Let $D$ be a ruleset of the language $S$, and $X$ be a set such that
\begin{enumerate}
\item
\label{RefEq27}
$S$ is countable;
\item
$ \Con{D}{X}$;
\item
\label{RefEq19}
$ \im{\ari^{-1}}{\left\{ 0 \right\}} \sdiff \symbof{X}$ is not finite;
\item
$\rWitnessA \in D$;
\item
$D$ is \cutLike{};
\item
$D \emul{\emp} \left\{ \rAssumption, \rEqRefl, \rEqSymm, \rEqTrans, \rFunc, \rRel, \rEx, 
\rNor
\right\}$.
\end{enumerate}
Then 
$ \henk{D}
{\addF{D}{} \left( \addW{D} {} \left( X \right) \right)} 
\models{} X $.
\end{Lm}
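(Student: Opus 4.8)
The plan is to apply Henkin's theorem \ref{RefThmHenkin2} to the enlarged set $Z := \addF{D}{} \left( \addW{D}{} \left( X \right) \right)$, after using the enlargement lemma \ref{RefThmEnlarge} together with the assumptive-like machinery to check its four hypotheses. The whole argument is a bookkeeping assembly of earlier results; the one conceptual move is bridging the gap between what \ref{RefThmEnlarge} produces and what \ref{RefThmHenkin2} consumes.

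First I would extract from hypothesis (6) the two sub-emulations $D \emul{\emp} \left\{ \rEqRefl \right\}$ and $D \emul{\emp} \left\{ \rAssumption \right\}$. Since $\left\{ \rEqRefl \right\}$ and $\left\{ \rAssumption \right\}$ are \monotone{} rulesets included in the set emulated in (6), \ref{RefThmInclusionImpliesEmulation} shows that this set emulates each of them, and transitivity of $\emul{\emp}$ (\ref{RefThmEmulTransitive}) combines these facts with (6). Feeding $D \emul{\emp} \left\{ \rEqRefl \right\}$ together with hypotheses (1)--(5) into \ref{RefThmEnlarge} then yields that $Z$ is a \witnessed{}, $D$-consistent $S$-\cover{}.

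Next I would turn this $D$-consistent cover into the \closed{$D$} \mincov{} demanded by \ref{RefThmHenkin2}. As $\left\{ \rAssumption \right\}$ is \asslikeS{} by \ref{RefThmAssLikeExample} and $D \emul{\emp} \left\{ \rAssumption \right\}$, \ref{RefThmAssLikeTransitive} makes $D$ \asslikeS{}, hence \asslikeW{}; so by \ref{RefDefAssLikeWeak} the $D$-consistent cover $Z$ is a \closed{$D$} \mincov{}, giving simultaneously that $Z$ is a \mincov{} and that $\provables{D} \left( Z \right) \C Z$. At this point $Z$ is \witnessed{}, is a \mincov{}, satisfies $\provables{D} \left( Z \right) \C Z$, and $D$ emulates the set in (6), which is verbatim the emulation premise of \ref{RefThmHenkin2}. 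The theorem therefore delivers $\restrict{\eval{\henk{D}{Z}}}{\wffs{}} = \indicator_Z^{\wffs{}}$.

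It then remains to deduce $\henk{D}{Z} \models{} X$. Both constructions \ref{RefDefAddW} and \ref{RefDefAddF} set $X_0 = X$ and pass to the union over $n$, so they are inflationary; hence $X \C \addW{D}{} \left( X \right) \C Z$. Consequently any formula $\varphi \in X$ lies in $Z$, so $\eval{\henk{D}{Z}} \left( \varphi \right) = \indicator_Z^{\wffs{}} \left( \varphi \right) = 1$, which is exactly $\im{\restrict{\eval{\henk{D}{Z}}}{\wffs{S}}}{X} \C \left\{ 1 \right\}$, i.e. $\henk{D}{Z} \models{} X$ in the sense of \ref{RefDefSatisfaction}. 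I expect the genuine difficulty to be the third step: recognizing that the passage from the plain $D$-consistent cover produced by \ref{RefThmEnlarge} to the \closed{$D$} \mincov{} required by \ref{RefThmHenkin2} is precisely what being \asslikeW{} buys, and that $D$ inherits this property from $\left\{ \rAssumption \right\}$ through the emulation in (6).
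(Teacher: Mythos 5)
Your proposal is correct and follows essentially the same route as the paper's own proof: extract $D \emul{\emp} \left\{ \rEqRefl \right\}$ and $D \emul{\emp} \left\{ \rAssumption \right\}$ from hypothesis (6) via \ref{RefThmInclusionImpliesEmulation} and \ref{RefThmEmulTransitive}, invoke \ref{RefThmEnlarge} to obtain a \witnessed{} $D$-consistent cover, upgrade it to a \closed{$D$} \mincov{} through \ref{RefThmAssLikeExample}, \ref{RefThmAssLikeTransitive} and \ref{RefDefAssLikeWeak}, and finish with \ref{RefThmHenkin2} together with $X \C \addF{D}{} \left( \addW{D}{} \left( X \right) \right)$. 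Nothing is missing.
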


\begin{proof}
Set $Y:= \addF{D}{} \left( \addW{D} {} \left( X \right) \right) \supseteq X $. 
By \ref{RefThmInclusionImpliesEmulation} and \ref{RefThmEmulTransitive}, $D \emul{\emp} \left\{ \rEqRefl \right\}$, so \ref{RefThmEnlarge} can be invoked: $Y$ is a witnessed, $D$-consistent $S$ cover.
Analogously, $D \emul{\emp} \left\{ \rAssumption \right\}$, so that $D$ is \asslikeS{} by \ref{RefThmAssLikeExample} and \ref{RefThmAssLikeTransitive}.
By \ref{RefDefAssLikeWeak}, then, $Y$ is also a \closed{$D$} \mincov{}.
Hence, $\varphi \in Y \Leftrightarrow \eval{\henk{D}{Y}} \left( \varphi \right) = 1$ by \ref{RefThmHenkin2}.
In particular, $\henk{D}{Y} \models{} X$.
\end{proof}

\begin{Prop}
\label{RefThmCutLike1}
If $\left\{ \rThin, \rCut \right\} \C D$ and 
$D$ is \monotone{}, then $D$ is \cutLike{}.
\end{Prop}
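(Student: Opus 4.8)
The plan is to unwind the $D$-inconsistency of $X \cup \left\{ \varphi \right\}$ into two explicit derivations, realign them at a common stage of the iteration defining $\Derivables{D}$ using monotonicity, and then close with a single application of $\rThin$ followed by a single application of $\rCut$.

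First I would unfold the hypothesis. By Definition \ref{RefDefConsistencies}, $\Inc{D}{X \cup \left\{ \varphi \right\}}$ says that the set of formulas provable from $X \cup \left\{ \varphi \right\}$ is $S$-inconsistent, so there is a formula $\chi$ with both $X \cup \left\{ \varphi \right\} \proves{D} \chi$ and $X \cup \left\{ \varphi \right\} \proves{D} \xnot{\chi}$. Applying Remark \ref{RefDefProvable2} to each, I obtain finite antecedents $\Gamma_1, \Gamma_2 \C X \cup \left\{ \varphi \right\}$ and indices $n_1, n_2 \in \N$ with $\pair{\Gamma_1}{\chi} \in \derivables{n_1}{D} \left( \emp \right)$ and $\pair{\Gamma_2}{\xnot{\chi}} \in \derivables{n_2}{D} \left( \emp \right)$.

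Next I set $\Gamma := \left( \Gamma_1 \cup \Gamma_2 \right) \sdiff \left\{ \varphi \right\}$; this is a finite set with $\Gamma \C X$ and $\Gamma_1, \Gamma_2 \C \Gamma \cup \left\{ \varphi \right\}$. The pivotal move is to place both sequents into one and the same input set, so that the rules can act on them jointly: since $D$ is monotone, Proposition \ref{RefThmMonotonicityIterationEmpty} yields the increasing chain $\derivables{n}{D} \left( \emp \right) \C \derivables{n+1}{D} \left( \emp \right)$, whence, putting $N := \max \left\{ n_1, n_2 \right\}$, both $\pair{\Gamma_1}{\chi}$ and $\pair{\Gamma_2}{\xnot{\chi}}$ belong to $\derivables{N}{D} \left( \emp \right)$. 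Because $\rThin \in D$ and $\Gamma_1, \Gamma_2 \C \Gamma \cup \left\{ \varphi \right\}$, the definition of $\rThin$ gives $\pair{\Gamma \cup \left\{ \varphi \right\}}{\chi}$ and $\pair{\Gamma \cup \left\{ \varphi \right\}}{\xnot{\chi}}$ in $\rThin \left( \derivables{N}{D} \left( \emp \right) \right) \C \onestep{D} \left( \derivables{N}{D} \left( \emp \right) \right) = \derivables{N+1}{D} \left( \emp \right)$, using Definition \ref{RefDefOneStep}. These are exactly the two premises required by $\rCut$ with $\psi_1 := \varphi$ and $\psi_2 := \chi$, so its output $\pair{\Gamma}{\xnot{\varphi}}$ lies in $\rCut \left( \derivables{N+1}{D} \left( \emp \right) \right) \C \derivables{N+2}{D} \left( \emp \right) \C \Derivables{D} \left( \emp \right)$. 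Since $\Gamma \C X$, Remark \ref{RefDefProvable2} delivers $X \proves{D} \xnot{\varphi}$, which is the conclusion demanded by Definition \ref{RefDefCutLike}.

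I expect the only real obstacle to be the realignment described above: the two derivations handed over by the hypothesis may live at different iteration depths, while $\rCut$ can combine its two premises only when they belong to a single input set $\Sigma$. Monotonicity of $D$ --- through the nesting of the iterates of $\onestep{D}$ on $\emp$ guaranteed by Proposition \ref{RefThmMonotonicityIterationEmpty} --- is precisely what lets me push both premises into a common stage, and this is the sole place where the monotonicity hypothesis is used.
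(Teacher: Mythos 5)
Your proof is correct and follows essentially the same route as the paper's: extract the two finite-antecedent derivations witnessing inconsistency, use monotonicity (via Proposition~\ref{RefThmMonotonicityIterationEmpty}) to place both sequents in a common iterate of $\onestep{D}$ on $\emp$, then apply $\rThin$ and $\rCut$ in succession to land $\pair{\Gamma}{\xnot{\varphi}}$ with $\Gamma \C X$. The only cosmetic difference is your choice of $\max\left\{ n_1, n_2 \right\}$ as the common stage where the paper uses $m+n$; both are justified by the same monotonicity argument.
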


\begin{proof}
Consider a set $X$ and a wff $\varphi$ such that $\Inc{D}{X \cup \left\{ \varphi \right\}}$. 
We have to show that $X \proves{D} \xxnot \varphi$.
By assumption, there are $\Gamma_1, \Gamma_2 \C X \cup \left\{ \varphi \right\}$ finite, and $\psi$ such that
$ \pair{\Gamma_1}{\psi} \in \derivables{m}{D} \left( \emp \right)$ and
$ \pair{\Gamma_2}{\xxnot \psi} \in \derivables{n}{D} \left( \emp \right)$ for some $m, n \in \N$.
Since $D$ is \monotone{}, by \ref{RefThmMonotonicityIterationEmpty}, we have 
$ \left\{ \pair{\Gamma_1}{\psi}, \pair{\Gamma_2}{\xxnot \psi} \right\} \C \derivables{m+n}{D} \left( \emp \right)$.
So
\begin{align*}
\pair{\Gamma}{\xxnot \varphi} \in \rCut \left( \left\{ 
\pair{\Gamma \cup \left\{ \varphi \right\}}{\psi}, 
\pair{\Gamma \cup \left\{ \varphi \right\}}{\xxnot \psi}
\right\} \right) 
\\
\C \rCut \left( 
\rThin \left( 
\left\{ \pair{\Gamma_1}{\psi}, \pair{\Gamma_2}{\xxnot \psi} \right\}
\right)
\right)
\C 
\rCut \left( 
\rThin \left( 
\derivables{m+n}{D} \left( \emp \right)
\right)
\right)
\\
\C
\rCut \left( 
\onestep{D} \left( 
\derivables{m+n}{D} \left( \emp \right)
\right)
\right)
\C
\onestep{D} \left( 
\onestep{D} \left( 
\derivables{m+n}{D} \left( \emp \right)
\right)
\right) = 
\derivables{2+m+n}{D} \left( \emp \right),
\end{align*}
where we set $\Gamma := \Gamma_1 \cup \Gamma_2 \sdiff \left\{ \varphi \right\}$.
Hence $ X \sdiff \left\{ \varphi \right\} \proves{D} \xxnot \varphi$.
\end{proof}

\begin{Cor}
\label{RefThmSat2}
Given a countable language $S$, and $X$ such that 
$ \im{\ari_S^{-1}}{\left\{ 0 \right\}} \sdiff \symbof{X}$ is not finite, suppose $X$ is $D_0$-consistent, where 
\begin{align*}
D_0 :=  \left\{ 
\rAssumption, \rEqRefl, \rEqSymm, \rEqTrans, \rFunc, \rRel, \rNor, \rEx, \rWitnessA, \rCut, \rThin \right\}.
\end{align*}
Then
\begin{align*}
\henk{D_0}{ \addF{D_0}{} \left( \addW {D_0}{} \left( X \right) \right)} \models{} X.
\end{align*}
\end{Cor}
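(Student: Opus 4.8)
The plan is to read this off Lemma \ref{RefThmSat1} applied with $D = D_0$: the entire task reduces to verifying that $D_0$ meets the six hypotheses of that lemma, after which its conclusion is exactly the thesis. Four of those hypotheses are immediate: countability of $S$, the consistency $\Con{D_0}{X}$, and the cofiniteness of $\im{\ari^{-1}}{\left\{ 0 \right\}} \sdiff \symbof{X}$ are assumed verbatim in the statement, while $\rWitnessA \in D_0$ holds by inspection of the list defining $D_0$. Only two properties remain to be checked: that $D_0$ is \cutLike, and that $D_0 \emul{\emp} \left\{ \rAssumption, \rEqRefl, \rEqSymm, \rEqTrans, \rFunc, \rRel, \rEx, \rNor \right\}$.

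First I would establish that $D_0$ is \monotone, since both remaining verifications rest on this. Every rule appearing in $D_0$ is among those introduced in \ref{RefDefRules} (or is $\rEx$), each of which is \monotone; as $\onestep{D_0}$ sends $\Sigma$ to $\bigcup_{R \in D_0} R(\Sigma)$ (see \ref{RefDefOneStep}), it is a pointwise union of \monotone maps and hence \monotone itself, whence $D_0$ is \monotone by \ref{RefDefMonotone2}.

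With monotonicity in hand, the two outstanding hypotheses follow from results already proved. Because $\left\{ \rThin, \rCut \right\} \C D_0$ and $D_0$ is \monotone, Proposition \ref{RefThmCutLike1} gives that $D_0$ is \cutLike. And because $\left\{ \rAssumption, \rEqRefl, \rEqSymm, \rEqTrans, \rFunc, \rRel, \rEx, \rNor \right\} \C D_0$ with $D_0$ \monotone, Corollary \ref{RefThmInclusionImpliesEmulation} yields $D_0 \emul{} \left\{ \rAssumption, \rEqRefl, \rEqSymm, \rEqTrans, \rFunc, \rRel, \rEx, \rNor \right\}$, and in particular emulation from $\emp$. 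All six hypotheses of Lemma \ref{RefThmSat1} now being satisfied, that lemma furnishes $\henk{D_0}{\addF{D_0}{} \left( \addW{D_0}{} \left( X \right) \right)} \models{} X$, as required.

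I do not expect any genuinely hard step: the corollary is essentially a repackaging of Lemma \ref{RefThmSat1} with the concrete ruleset $D_0$ substituted in. The only point meriting care is the monotonicity of $D_0$, on which Proposition \ref{RefThmCutLike1} and Corollary \ref{RefThmInclusionImpliesEmulation} both silently depend; this in turn rests only on the monotonicity of each constituent rule together with the elementary fact that a union of \monotone maps is \monotone.
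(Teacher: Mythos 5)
Your proof is correct and follows essentially the same route as the paper's: the paper likewise derives the \cutLike{} property from \ref{RefThmCutLike1} and the emulation hypothesis from \ref{RefThmInclusionImpliesEmulation}, both via the monotonicity of $D_0$, and then invokes \ref{RefThmSat1}. The only difference is that you spell out why $D_0$ is \monotone{}, whereas the paper takes this for granted per its standing remark that all concrete rules introduced are \monotone{}.
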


\begin{proof}
From the fact that $D_0$ is \monotone{} we can draw two conclusions:
$D_0 \emul{\emp} \left\{ \rAssumption, \rEqRefl, \rEqSymm, \rEqTrans, \rFunc, \rRel, 
\rNor, \rEx 
\right\}$, by \ref{RefThmInclusionImpliesEmulation}, and
$D_0$ is \cutLike{} by \ref{RefThmCutLike1}, so that
\ref{RefThmSat1} can be invoked.
\end{proof}

We now want to get rid of requirement \eqref{RefEq19} in the statement of \ref{RefThmSat1}.
This will be accomplished by following the standard path of adjoining to (the symbol set of) the language $S$ a countably infinite family $N$ of fresh literals, enlarging it to a second language $S_N$; then \ref{RefThmSat1} is applied to $S_N$, and carried on to $S$, being the latter a restriction of the former.
To do this, we have to show the natural fact that satisfaction relation, \ref{RefDefSatisfaction}, is preserved through such enlargements and restrictions:

\begin{Lm}[Coincidence lemma]
\label{RefThmCoincidence}
Let $S_1$, $S_2$ be languages.
Let $ i_1$ $i_2 $ be interpretations, of $S_1$ and $S_2$ respectively, over the same universe $U$.
Assume that
\begin{enumerate}
\item
$ \equiv_{S_1} = \equiv_{S_2}$;
\item
$\nor_{S_1} = \nor_{S_2} $;
\item
$ \restrict{\left( \ari_{S_1}  \right) }{dom \left( \ari_{S_1}  \right)} = \restrict{ \left( \ari_{S_2} \right) } {dom \left( \ari_{S_1}  \right)}$;
\item
$ \restrict{i_1} {dom \left(  \ari_{S_1} \right) } = \restrict{i_2} {dom \left( \ari_{S_1}  \right) }$.
\end{enumerate}
Then 
$\wffs{S_1} \C \wffs{S_2}$  and 
$\restrict{\eval{i_1}}{\wffs{S_1}} = \restrict{\eval{i_2}}{\wffs{S_2}}$.
\end{Lm}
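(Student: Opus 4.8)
The plan is to establish the two conclusions separately, both by induction on depth, exploiting that hypotheses (1)--(3) make the syntactic constructions of $S_1$ literally sub-constructions of those of $S_2$, while hypothesis (4) makes the two evaluations agree head-symbol by head-symbol on the objects that actually occur.

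First I would prove $\wffs{S_1} \C \wffs{S_2}$. Hypothesis (3) says that $\ari_{S_2}$ extends $\ari_{S_1}$, so $\im{\ari_{S_1}^{-1}}{\left\{ 0 \right\}} \C \im{\ari_{S_2}^{-1}}{\left\{ 0 \right\}}$ and likewise $\im{\ari_{S_1}^{-1}}{\Z^+} \C \im{\ari_{S_2}^{-1}}{\Z^+}$ and $\im{\ari_{S_1}^{-1}}{\Z^-} \C \im{\ari_{S_2}^{-1}}{\Z^-}$; together with $\equiv_{S_1} = \equiv_{S_2}$ and $\nor_{S_1} = \nor_{S_2}$ this lets me prove $\terms{S_1,n} \C \terms{S_2,n}$ by induction on $n$, comparing the two clauses of \ref{RefDefTerms} verbatim, whence $\terms{S_1} \C \terms{S_2}$; a second induction on $n$ through the clauses of \ref{RefDefWffs} then gives $\wffs{S_1,n} \C \wffs{S_2,n}$, and so $\wffs{S_1} \C \wffs{S_2}$.

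For the coincidence of the evaluations the clean formulation, which I would prove by complete induction on $n$, is quantified over interpretation pairs: for every interpretation $j_1$ of $S_1$ and $j_2$ of $S_2$ sharing a universe and agreeing on $\dom \ari_{S_1}$, one has $\restrict{\eval{j_1}}{\terms{S_1}} = \restrict{\eval{j_2}}{\terms{S_1}}$ and $\restrict{\eval{j_1}}{\wffs{S_1,n}} = \restrict{\eval{j_2}}{\wffs{S_1,n}}$. The universal quantification is exactly what makes the induction close, because the only clause recursing through a modified interpretation is the literal-headed clause of \ref{RefDefEvalCompound}, and $\reassign{v}{u}{j_1}, \reassign{v}{u}{j_2}$ is again such an agreeing pair: it differs from $j_1, j_2$ only at the literal $v \in \dom \ari_{S_1}$, where both receive the identical new value. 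The bookkeeping then runs as follows: a term of depth $0$ evaluates to $\left( j(t(0)) \right)(\emptyset)$ by \ref{RefDefEvalAtomic}, and $t(0) \in \dom \ari_{S_1}$, so agreement forces equality; the recursive term clause and the atomic-formula clauses of \ref{RefDefEvalAtomic} reduce to the head symbol (again in $\dom \ari_{S_1}$) applied to the evaluations of the subterms, which lie in $\terms{S_1}$ and are dispatched by the term part, the case $\psi_0(0) = \equiv$ being immediate from $\equiv_{S_1} = \equiv_{S_2}$; finally the compound clause of \ref{RefDefEvalCompound} splits into the $\nor$ case (using $\nor_{S_1} = \nor_{S_2}$ and the inductive hypothesis on the two subformulas, of strictly smaller depth) and the literal-headed case (using the inductive hypothesis on the pair $\reassign{v}{u}{j_1}, \reassign{v}{u}{j_2}$ applied to the single subformula $\substrings{\psi}(0)$ of strictly smaller depth). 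Specializing to $j_1 = i_1$, $j_2 = i_2$ and unioning over $n$ yields the asserted agreement of $\eval{i_1}$ and $\eval{i_2}$ on $\wffs{S_1}$.

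The main obstacle I anticipate is precisely the reassignment clause, which is why the statement must be carried uniformly in the interpretation pair rather than for the fixed pair $i_1, i_2$ alone. A secondary technical nuisance is verifying that the sub-tuple extraction $\substrings{\cdot}$ of \ref{RefDefSubtermsOfTerm}--\ref{RefDefSubformulas} is language-independent on the common objects, i.e.\ that an $S_1$-term or $S_1$-formula decomposes into the same subterms or subformulas whether computed inside $S_1$ or inside $S_2$; this follows from the injectivity of the relevant restrictions of $\fconc$ recalled at the start of the present section, but it should be checked so that the recursive clauses of \ref{RefDefEvalAtomic} and \ref{RefDefEvalCompound} genuinely refer to the same sub-tuples for both languages.
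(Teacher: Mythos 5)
Your proposal is correct: the double induction on depth, the strengthening of the evaluation claim to all pairs of interpretations agreeing on $\dom \ari_{S_1}$ (forced by the reassignment in the literal-headed clause of \ref{RefDefEvalCompound}), and the check that subterm extraction is language-independent via injectivity of the restrictions of $\fconc$ are exactly the right ingredients. Note, however, that the paper gives no proof of \ref{RefThmCoincidence} in the text at all --- it explicitly defers to the \M{} source (\texttt{FOMODEL3.MIZ}) on the grounds that the formal proof is disproportionately verbose --- so there is nothing to compare against beyond confirming that your argument is the standard one the paper alludes to (cf.\ the cited textbook treatment) and that it is sound.
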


Proof of \ref{RefThmCoincidence} turns out to be tedious, giving rise to a `de Bruijn surge{}': its proof in \M{} seem disproportionally verbose with respect to both its informal counterparts and the simplicity of the intuitive idea conveyed, so that its de Bruijn factor (see \ref{RefNum}) sharply increases: that same proof takes less than one page in (\cite{0387908951}, III.5.1). 
Whether this fact depends inherently on the result or the chosen formalization system, or even on the coder not devising a better proof seems very hard to assess.
The reader is thus referred to \M{} sources for that proof (\verb|FOMODEL3.MIZ:12|).

\begin{Thm}[Satisfiability theorem]
\label{RefThmSat3}
Suppose that
\begin{enumerate}
\item
\label{Ref027}
$S$ is a countable language;
\item
\label{Ref028}
$X \C F_S$;
\item
\label{RefEq20}
$\rWitnessA \in D$;
\item
\label{Ref047}
$D$ is \cutLike{};
\item
\label{Ref030}
$D \emul{\emp} \left\{ \rAssumption, \rEqRefl, \rEqSymm, \rEqTrans, \rFunc, \rRel, \rEx, \rNor \right\} $;
\item
\label{Ref031}
$\Con{D}{X}$.
\end{enumerate}
Then there is an interpretation of $S$ having a countable universe and satisfying $X$.
\end{Thm}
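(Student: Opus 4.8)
The plan is to deduce the statement from Lemma \ref{RefThmSat1}, whose hypotheses are exactly the present ones together with the extra requirement \eqref{RefEq19} that $\im{\ari_S^{-1}}{\left\{ 0 \right\}} \sdiff \symbof{X}$ be infinite. Hence the whole task is to dispose of \eqref{RefEq19}, which I would do by enlarging the language. Fix a countably infinite set $N$ disjoint from $\dom \ari_S \cup \left\{ \nor_S \right\}$ and let $S_N$ be the language with arity $\ari_S \cup \left( N \cartprod \left\{ 0 \right\} \right)$ and the same $\equiv$ and $\nor$ as $S$. The conditions of \ref{RefDefLanguage} are immediate (the equality symbol keeps arity $-2$, $\nor$ stays out of the domain, the literals remain infinite), and $S_N$ is again countable. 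Since the recursive clauses for formulas only adjoin symbols, $\wffs{S} \C \wffs{S_N}$; and as no formula of $X \C \wffs{S}$ uses a literal of $N$, we get $N \C \im{\ari_{S_N}^{-1}}{\left\{ 0 \right\}} \sdiff \symbof{X}$, so \eqref{RefEq19} now holds in $S_N$.

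Reading the schemata of \ref{RefDefRules} over $S_N$ supplies a ruleset $D'$ of $S_N$ playing the role of $D$, with $\rWitnessA \in D'$, with $D'$ cut-like, and with $D' \emul{\emp} \left\{ \rAssumption, \rEqRefl, \rEqSymm, \rEqTrans, \rFunc, \rRel, \rEx, \rNor \right\}$, each transferred from the corresponding property of $D$ by the schematic character of the rules. The only substantial point is $\Con{D'}{X}$ in $S_N$. I would prove it by contradiction. Fix a literal $v_0$ of $S$ and set $\alpha := \eq{v_0} \in \wffs{S}$; then $X \proves{D'} \alpha$ because $D' \emul{\emp} \left\{ \rEqRefl \right\}$. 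If $X$ were $D'$-inconsistent, then $\provables{D'} \left( X \right)$ would contain some $\psi$ together with $\xnot{\psi}$; since provability is monotone in the premises, $X \cup \left\{ \alpha \right\}$ is then inconsistent too, so cut-likeness of $D'$ (\ref{RefDefCutLike}) yields $X \proves{D'} \xnot{\alpha}$. Thus both $\alpha$ and $\xnot{\alpha}$, which lie in $\wffs{S}$, are provable from $X$ in $S_N$.

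The crux, and the step I expect to be the main obstacle, is the conservativity that finishes this: an $S$-formula provable from $X$ in $\left( S_N, D' \right)$ is already provable from $X$ in $\left( S, D \right)$. The idea is that a derivation of $X \proves{D'} \chi$ involves only finitely many literals of $N$; replacing each of them by $v_0$ throughout (a simple substitution in the sense of \ref{RefDefSimpleSubst}) turns the derivation into a legal $D$-derivation in $S$, because the concrete rules are stable under renaming one literal by another. As neither $X$ nor $\chi \in \wffs{S}$ mentions a literal of $N$, the premises stay within $X$ and the conclusion is untouched, giving $X \proves{D} \chi$. Applying this to $\alpha$ and $\xnot{\alpha}$ produces $\Inc{D}{X}$, contradicting $\Con{D}{X}$; hence $\Con{D'}{X}$ holds.

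With all hypotheses of Lemma \ref{RefThmSat1} verified for $\left( S_N, D', X \right)$, I obtain an interpretation $i'$ of $S_N$ with countable universe $U$ such that $i' \models{} X$. Finally I restrict: $i := \restrict{i'}{\dom \ari_S}$ is an interpretation of $S$ over the same $U$ agreeing with $i'$ on $\dom \ari_S$, so the Coincidence lemma \ref{RefThmCoincidence} gives $\restrict{\eval{i}}{\wffs{S}} = \restrict{\eval{i'}}{\wffs{S}}$. Since $X \C \wffs{S}$, this forces $i \models{} X$, and $U$ being countable, $i$ is the required interpretation.
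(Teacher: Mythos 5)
Your overall architecture (adjoin a countably infinite set $N$ of fresh literals, establish consistency over the enlarged language, apply \ref{RefThmSat1} there, and pull the model back to $S$ via the coincidence lemma \ref{RefThmCoincidence}) is the same as the paper's. The gap is in the one step you yourself flag as the crux: the syntactic conservativity claim that a $D'$-derivation over $S_N$ of an $S$-formula from $X$ can be turned into a $D$-derivation over $S$ by replacing every literal of $N$ with a single fixed literal $v_0$ of $S$. This renaming is \emph{not} innocuous for $\rWitnessA$, whose very definition carries the eigenvariable side condition $v_2 \notin \symbof{\Gamma' \cup \left\{ \psi \right\}}$. The fresh literals of $N$ are precisely the ones a derivation would use as such witnesses; collapsing several of them onto one $v_0$, or onto any literal that already occurs in the formulas involved, destroys that condition, so the renamed figure is no longer an instance of $\rWitnessA$. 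A repair would need an \emph{injective} renaming into $S$-literals avoiding all symbols of the (finite) derivation, together with an induction showing that $\derivables{n}{D}$ is stable under such renamings --- a substantial lemma the paper never proves and deliberately avoids. There is a second, smaller gap: even granting the renaming, the resulting derivation lives in the concrete $S$-ruleset (including $\rThin$, $\rCut$, $\rWitnessA$), whereas your target is provability in the \emph{arbitrary} $D$ of the hypotheses, which is only assumed to contain $\rWitnessA$, be \cutLike{}, and emulate the eight listed rules from $\emp$; nothing gives you $X \proves{D} \chi$ from there.

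The paper sidesteps all of this with a semantic argument for $\Con{D_N}{X}$: since any inconsistency is witnessed by a finite $Y \C X$, and a finite $Y$ leaves infinitely many $S$-literals unused, \ref{RefThmSat1} already applies to $\left( S, D, Y \right)$ and produces a model of $Y$; extending that model arbitrarily to an interpretation of $S_N$ over the same universe and invoking \ref{RefThmCoincidence} gives an $S_N$-model of $Y$, and soundness of the concrete ruleset $D_N$ then forces $\Con{D_N}{Y}$. You should either adopt that route or supply the renaming lemma in full, with explicit treatment of the freshness condition in $\rWitnessA$.
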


\begin{proof}
Consider a countably infinite set $N$ missing both $S$ and $\symbof {X}$, and the language $S_N$ extending $S$ and obtained by setting 
\begin{align*}
\equiv_{S_N} &:= \equiv_S
\\
\nor_{S_N} &:= \nor_S
\\
\ari_{S_N} &:= N \cartprod \left\{ 0 \right\} \cup \ari_S. 
\end{align*}
By construction, $S_N$ is countable (because $S$ and $N$ are) and 
$
N \C \ari_{S_N} ^{-1} \left( \left\{ 0 \right\} \right) \sdiff \symbof {X}.
$
Now set
\begin{multline*}
D_N :=\left\{ 
{\rAssumption}_{, S_N},
{\rThin}_{, S_N},
{\rEqRefl}_{, S_N}, 
{\rEqSymm}_{, S_N}, 
{\rEqTrans}_{, S_N}, 
{\rFunc}_{, S_N},
\vphantom{
{\rRel}_{, S_N}, 
{\rNor}_{, S_N},
{\rEx}_{, S_N}, 
{\rWitnessA}_{, S_N}
{\rCut}_{, S_N}
}
\right.
\\
\left.
\vphantom{
{\rAssumption}_{, S_N},
{\rThin}_{, S_N},
{\rEqRefl}_{, S_N}, 
{\rEqSymm}_{, S_N}, 
{\rEqTrans}_{, S_N}, 
{\rFunc}_{, S_N},
}
{\rRel}_{, S_N}, 
{\rNor}_{, S_N},
{\rEx}_{, S_N}, 
{\rWitnessA}_{, S_N}
{\rCut}_{, S_N}
\right\}.
\end{multline*}
Suppose we manage to show
\begin{align}
\label{Ref025}
\Con{D_N} { X }.
\end{align}

Then we can deploy \ref{RefThmSat2}, and infer that
\begin{align}
\label{Ref026}
H_{D_N, \addF{D_N}{} \left( \addW {D_N} {} \left( X  \right) \right) } \vDash X.
\end{align}
The very final step towards thesis is to realize
that 
$ H_{D_N, \addF{D_N}{} \left( \addW {D_N} {} \left( X  \right) \right) } $ 
can be restricted to an interpretation $i$ of $S$, and that this latter interpretation returns the same truth value as 
$H_{D_N, \addF{D_N} {} \left( \addW {D_N} {} \left( X  \right)  \right) } $ on every formula of $X$ thanks to \ref{RefThmCoincidence}, so that 
$i \models{} X$ by \eqref{Ref026}.

\begin{description}
\item[Subproof for claim \eqref{Ref025}]
It will suffice to show \eqref{Ref025} holds for a generic \emph{finite} $Y \C X$
:
\begin{align}
\Con{D_N}{ Y }.
\end{align}
Thus, let $Y \C X$, $Y$ being finite. 
Now, $\Con {D} {Y}$ 
(use hypothesis \eqref{Ref031}%
) and 
$\ari_{S}^{-1} \left( \left\{ 0 \right\} \right) \sdiff \symbof Y$ 
is not finite, so 
$H_{D, \addF{D}{} \addW {D} {} Y} \vDash Y$ by \ref{RefThmSat1} and hypotheses~\eqref{RefEq20}, \eqref{Ref047} and~\eqref{Ref030}.
Consider an interpretation $i_{N,Y}$ of $S_N$ obtained by extending 
$H_{D, \addF {D}{} \addW {D} {} Y}$ to $S_N$ arbitrarily: 
we can do so keeping the universe of $i_{N,Y}$ the same as that of $H_{D, \addF {D} {} \addW {D} {} Y}$, 
so that, given $\varphi \in Y$, 
one has (again by \ref{RefThmCoincidence}) $\eval{i_{N,Y}} \left( \varphi \right) = \eval{H_{D, \addF {D} {} \addW {D} {} Y}} \left( \varphi \right)$; hence 
$i_{N,Y} \models{S_N} Y$. 
This in the end implies $\Con{D_N} Y$, as $D_N$ is \sound{}.
\end{description}
\end{proof}

\begin{Cor}
\label{RefThmSat4}
Let $D$ be a ruleset of a countable language $S$, and $X \C \wffs{S}$. Suppose
\begin{enumerate}
\item
$ \left\{ \rWitnessA, \rCut, \rThin \right\} \C D$;
\item
$D$ is \monotone{};
\item
$D \emul{\emp} \left\{ \rAssumption, \rEqRefl, \rEqSymm, \rEqTrans, \rFunc, \rRel, \rEx, \rNor \right\} $;
\item
$ \Con{D}{X}.$
\end{enumerate}
Then there is an interpretation of $S$ having a countable universe and satisfying $X$.
\end{Cor}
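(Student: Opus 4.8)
The plan is to derive this corollary directly from the Satisfiability theorem \ref{RefThmSat3}, since the two statements share the same conclusion; the task reduces to checking that the present hypotheses entail those of \ref{RefThmSat3}. Most of the correspondence is immediate: the assumption that $S$ is countable and that $X \C \wffs{S}$ transfer verbatim, the containment $\left\{ \rWitnessA, \rCut, \rThin \right\} \C D$ yields in particular $\rWitnessA \in D$, and both the emulation requirement $D \emul{\emp} \left\{ \rAssumption, \rEqRefl, \rEqSymm, \rEqTrans, \rFunc, \rRel, \rEx, \rNor \right\}$ and the consistency requirement $\Con{D}{X}$ are assumed here in exactly the form demanded by \ref{RefThmSat3}.

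The single hypothesis of \ref{RefThmSat3} not assumed outright is that $D$ be \cutLike{}. This is precisely where Proposition \ref{RefThmCutLike1} enters: since by assumption $\left\{ \rThin, \rCut \right\} \C D$ and $D$ is \monotone{}, that proposition delivers cut-likeness of $D$ at once. This is the only non-clerical step of the argument, and even it amounts to no more than a citation.

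With all the hypotheses of \ref{RefThmSat3} thus verified, invoking that theorem produces an interpretation of $S$ with countable universe satisfying $X$, which is the desired conclusion. I expect no genuine obstacle here: the corollary merely trades the abstract, behavioural condition \cutLike{} for the concrete, syntactically checkable package of containing $\rThin$ and $\rCut$ together with \monotone{}ness, a substitution made sound by \ref{RefThmCutLike1}.
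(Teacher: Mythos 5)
Your proposal is correct and is exactly the argument the paper intends: the corollary is stated without an explicit proof precisely because it follows immediately from Theorem \ref{RefThmSat3} once Proposition \ref{RefThmCutLike1} supplies the \cutLike{} property from $\left\{ \rThin, \rCut \right\} \C D$ and monotonicity (the same reduction the paper performs explicitly in the proof of \ref{RefThmSat2}). All remaining hypotheses transfer verbatim, as you note.
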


\begin{Cor}[Countable downward \L{}-Skolem theorem]
\label{RefThmSkolem}
Assume $X \C \wffs{S}$ is countable, and suppose there is an interpretation $i$ of $S$ such that $ i \models{} X$. Then there is an interpretation $i'$ of $S$ having a countable universe and satisfying $X$ as well.
\end{Cor}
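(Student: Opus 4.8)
The plan is to run corollary~\ref{RefThmSat4} in reverse: it converts $D$-consistency of $X$ into a model of $X$ with countable universe, so the entire task reduces to using the given model $i$ to certify that $X$ is consistent, and to arranging the countable-language hypothesis that corollary requires.

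Concretely, I would take the ruleset $D_0 := \{\rAssumption, \rEqRefl, \rEqSymm, \rEqTrans, \rFunc, \rRel, \rNor, \rEx, \rWitnessA, \rCut, \rThin\}$ of~\ref{RefThmSat2}. It contains $\{\rWitnessA, \rCut, \rThin\}$, it is \monotone{} (being a union of \monotone{} rules), and, containing $\{\rAssumption, \rEqRefl, \rEqSymm, \rEqTrans, \rFunc, \rRel, \rEx, \rNor\}$, it emulates that set by~\ref{RefThmInclusionImpliesEmulation}; so $D_0$ satisfies hypotheses (1)--(3) of~\ref{RefThmSat4} and only $\Con{D_0}{X}$ is left. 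This is exactly where $i \models{} X$ is spent: $D_0$ is \sound{} (\ref{RefDefSound}), so if $X$ were $D_0$-inconsistent we could, by~\ref{RefDefConsistencies}, choose $\varphi$ with $X \proves{D_0} \varphi$ and $X \proves{D_0} \xnot{\varphi}$, forcing $\eval{i}(\varphi)=1$ and $\eval{i}(\xnot{\varphi})=1$; but $\eval{i}(\xnot{\varphi})=\eval{i}(\nor{\varphi}{\varphi})=1$ forces $\eval{i}(\varphi)=0$ by~\ref{RefDefEvalCompound}, a contradiction. Hence $\Con{D_0}{X}$, and \ref{RefThmSat4} delivers a countable-universe interpretation satisfying $X$.

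The one point needing care is that~\ref{RefThmSat4} presupposes a \emph{countable} language whereas only $X$ is assumed countable. I would remove this gap by descending to a sublanguage: let $S_0$ have symbol set consisting of $\equiv$, the countably many symbols occurring in the tuples of $X$, and a countably infinite reservoir of literals of $S$ (available since $\ari_S^{-1}(\{0\})$ is infinite), with $\equiv_{S_0}=\equiv_S$ and $\nor_{S_0}=\nor_S$. Then $S_0$ is a legitimate countable language with $X \C \wffs{S_0}$, and by the coincidence lemma~\ref{RefThmCoincidence} the restriction $i_0$ of $i$ to $S_0$ (same universe) agrees with $i$ on $\wffs{S_0}$, so $i_0 \models{} X$. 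Carrying out the previous paragraph inside $S_0$ produces an interpretation $i''$ of $S_0$ with countable universe satisfying $X$; extending $i''$ to $S$ with the same universe by assigning arbitrary values of the correct type to the remaining symbols and invoking~\ref{RefThmCoincidence} once more yields the desired $i' \models{} X$. The main obstacle is therefore not the consistency step, which is immediate from soundness, but this restriction-and-extension bookkeeping, all underwritten by~\ref{RefThmCoincidence}; if instead the corollary is tacitly read as confined to countable $S$, the last paragraph evaporates and the result is a one-line consequence of~\ref{RefThmSat4}.
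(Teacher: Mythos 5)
Your proposal is correct and follows essentially the same route as the paper: restrict to a countable sublanguage containing $\symbof{X}$ together with infinitely many spare literals, transfer the model via the coincidence lemma~\ref{RefThmCoincidence}, use soundness of the ruleset $D_0$ to obtain $\Con{D_0}{X}$, invoke~\ref{RefThmSat4}, and extend the resulting countable-universe interpretation back to $S$ by~\ref{RefThmCoincidence}. The only difference is one of detail — you spell out the soundness-implies-consistency step and the hypothesis check for~\ref{RefThmSat4}, which the paper leaves implicit; and note that the restriction-and-extension bookkeeping is indeed required, since the paper explicitly remarks that $S$ is not assumed countable here.
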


\begin{proof}
Let $N$ be a countably infinite subset of the symbol set of $S$ such that $ \symbof{X} \cup \left\{ \equiv_S, \nor_S \right\}  \C N $.
Restrict $\ari_{S}$ and $i$ to $N$, obtaining respectively a countable language $S'$ and an interpretation $i'$ of the latter over the same universe of $i$.

For any $\varphi \in X$, one has that $\varphi$ is also a formula of $S'$, and that $\eval{i} \left( \varphi \right) = \eval{i'} \left( \varphi \right)$ by construction and coincidence lemma, \ref{RefThmCoincidence}, so that $i' \models{S'} X$, and hence $\Con{D} {X}$, where we set
\begin{align*}
D := \left\{ \rAssumption, \rEqRefl, \rEqSymm, \rEqTrans, \rFunc, \rRel, \rEx, \rNor, \rWitnessA, \rCut, \rThin  \right\},
\end{align*}
thanks to soundness.
This allows to consider an interpretation $j'$ of $S'$ having a countable universe and satisfying $X$ by \ref{RefThmSat4}.
This latter interpretation can be arbitrarily enlarged to one of $S$ with the same universe, preserving the satisfiability of $X$ through it (again thanks to coincidence lemma), and thus yielding thesis.
\end{proof}

\begin{Rem}
We note that the language $S$ in \ref{RefThmSkolem} is not required to be countable.
\end{Rem}

\begin{Def}[Entailment]
\label{RefDefEntailment}
Given sets $X$, $Y$, we say that $X$ \emph{entails}\index{entailment} $Y$ with respect to the language $S$ if any interpretation $i$ of $S$ satisfying $X$ also satisfies $Y$. 

In this case we write $X \entails{S} Y$ or just $ X \entails{} Y$. 
We also will usually write $X \entails{S} \varphi$ (or $X \entails{} \varphi$) in lieu of $ X \entails{S} \left\{  \varphi \right\}$.
\end{Def}

\begin{Rem}
The symbol $\entails{}$ results thus overloaded by definitions of satisfaction (\ref{RefDefSatisfaction}) and entailment (\ref{RefDefEntailment}).
The type of the argument on its left will usually resolve which use is being made. 
\end{Rem}

\begin{Cor}[of \ref{RefThmSat3}]
\label{RefThmCompleteness1}
Let $X$ be a subset of the set of formulas $\wffs{S}$ of a countable language $S$, and $D$ be  a \cutLike{} ruleset of $S$ such that
\begin{enumerate}
\item
$ \rWitnessA \in D$
\item
$D \emul{\emp} \left\{ \rAssumption, \rEqRefl, \rEqSymm, \rEqTrans, \rFunc, \rRel, \rEx, 
\rNor \right\}.$
\end{enumerate}
Then $X \models{} \xnot \varphi$ implies 
$ X \proves{D} \xnot \varphi $ for any 
$\varphi \in \wffs{S}$.
\end{Cor}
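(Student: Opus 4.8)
The plan is to argue by contraposition and to reduce the statement to the Satisfiability theorem \ref{RefThmSat3}. Assuming that $\xnot\varphi$ is \emph{not} provable from $X$ in $D$, I would construct an interpretation that satisfies $X$ but not $\xnot\varphi$; the contrapositive of the asserted implication follows at once.

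The first and central step is to notice that the \cutLike{} hypothesis on $D$ (\ref{RefDefCutLike}), read contrapositively, says exactly that the failure of $X \proves{D} \xnot\varphi$ forces $\Con{D}{X \cup \left\{ \varphi \right\}}$. Indeed \ref{RefDefCutLike} states $\Inc{D}{X \cup \left\{ \varphi \right\}} \Rightarrow X \proves{D} \xnot\varphi$, so if the conclusion fails then $X \cup \left\{ \varphi \right\}$ cannot be $D$-inconsistent, i.e.\ it is $D$-consistent.

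Having secured $\Con{D}{X \cup \left\{ \varphi \right\}}$, I would apply \ref{RefThmSat3} to the set $X \cup \left\{ \varphi \right\}$. Its hypotheses transfer directly: $S$ is countable, $X \cup \left\{ \varphi \right\} \C \wffs{S}$ (since $X \C \wffs{S}$ and $\varphi \in \wffs{S}$), $\rWitnessA \in D$, $D$ is \cutLike{}, and $D \emul{\emp} \left\{ \rAssumption, \rEqRefl, \rEqSymm, \rEqTrans, \rFunc, \rRel, \rEx, \rNor \right\}$ are all among the assumptions of this corollary, while the missing consistency premise is precisely the $\Con{D}{X \cup \left\{ \varphi \right\}}$ just obtained. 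Hence \ref{RefThmSat3} yields an interpretation $i$ of $S$ with $i \models{} X \cup \left\{ \varphi \right\}$, so in particular $i \models{} X$ and $\eval{i} \left( \varphi \right) = 1$.

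The final step is the elementary observation that satisfying $\varphi$ is the same as refuting $\xnot\varphi$: since $\xnot\varphi = \nor{\varphi}{\varphi}$, \ref{RefDefEvalCompound} gives $\eval{i} \left( \xnot\varphi \right) = 1$ if and only if $\eval{i} \left( \varphi \right) = 0$, so here $\eval{i} \left( \xnot\varphi \right) = 0$. Thus $i$ models $X$ without satisfying $\xnot\varphi$, which by \ref{RefDefEntailment} means that $X \models{} \xnot\varphi$ fails, completing the contrapositive. I do not expect a serious obstacle: \ref{RefThmSat3} carries all the weight, and the only point requiring attention is the clean matching of the negated provability with the \cutLike{} condition and with the consistency premise of the satisfiability theorem.
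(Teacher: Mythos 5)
Your proposal is correct and is essentially the paper's own argument: the paper also negates $X \proves{D} \xnot\varphi$, uses the \cutLike{} property to obtain $\Con{D}{X \cup \left\{ \varphi \right\}}$, invokes \ref{RefThmSat3} to produce an interpretation $i$ with $i \models{} X$ and $\eval{i}\left( \varphi \right) = 1$, and concludes via \ref{RefDefEvalCompound} that $\eval{i}\left( \xnot\varphi \right) = 0$. The only difference is presentational (you phrase it as contraposition, the paper as a proof by contradiction), which changes nothing of substance.
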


\begin{proof}
By contradiction. Suppose that $ X \proves{D} \xnot \varphi $ is false. 
Then, $D$ being \cutLike, $\Con{D} {X \cup \left\{ \varphi \right\}}$.
Hence, allowed by \ref{RefThmSat3}, let us consider an interpretation $i$ of $S$ such that 
\begin{gather}
\label{Ref034}
\eval{i} \left( \varphi \right) = 1 
\\
\label{Ref035}
i \models{} X.
\end{gather}
Given the hypothesis, $ X \entails{} \nnot \varphi $, so that by definition of entailment and \eqref{Ref035}, $ \eval{i} \left( \nnot \varphi \right) =1$.
Now, by \ref{RefDefEvalCompound}, 
$\eval{i} \left( \varphi \right) = 0$, contradicting \eqref{Ref034}.
\end{proof}

\begin{Thm}[\G{}'s completeness theorem]
\label{RefThmCompleteness2}
\begin{align*}
X \models{} \varphi && \text{ implies } && X \proves{D_1} \varphi,
\end{align*}
where we set
\begin{align*}
D_1 := &  \left\{ \rAssumption, \rEqRefl, \rEqSymm, \rEqTrans, \rFunc, \rRel, \rEx, \rNor, \rWitnessA, \rCut, \rThin, \rIntuitionisticNightmare \right\}.
\end{align*}
\end{Thm}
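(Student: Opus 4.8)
The plan is to reduce everything to Corollary \ref{RefThmCompleteness1}, which already delivers completeness for \emph{negated} formulas, and then to strip the leading double negation by a single application of $\rIntuitionisticNightmare$, the one rule of $D_1$ absent from the ruleset of \ref{RefThmCompleteness1}.

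First I would record the purely semantic fact underlying the whole argument: for every interpretation $i$ one has $\eval{i}\left( \varphi \right) = \eval{i}\left( \xnot{\xnot \varphi} \right)$. Indeed, by \ref{RefDefEvalCompound} the value $\eval{i}\left( \xnot \psi \right) = 1$ exactly when $\eval{i}\left( \psi \right) = 0$, so applying this twice yields $\eval{i}\left( \xnot{\xnot \varphi} \right) = 1$ iff $\eval{i}\left( \varphi \right) = 1$. Consequently the hypothesis $X \models{} \varphi$ is equivalent to $X \models{} \xnot{\xnot \varphi}$, since each asserts that every model of $X$ assigns value $1$ to the respective formula.

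Next I would check that $D_1$ meets the hypotheses of \ref{RefThmCompleteness1}. We have $\rWitnessA \in D_1$ by inspection; since $\left\{ \rThin, \rCut \right\} \C D_1$ and $D_1$ is \monotone{} (all rules of \ref{RefDefRules} are), Proposition \ref{RefThmCutLike1} shows $D_1$ is \cutLike{}; and since $D_1 \supseteq \left\{ \rAssumption, \rEqRefl, \rEqSymm, \rEqTrans, \rFunc, \rRel, \rEx, \rNor \right\}$ with $D_1$ \monotone{}, Corollary \ref{RefThmInclusionImpliesEmulation} supplies the emulation $D_1 \emul{\emp} \left\{ \rAssumption, \rEqRefl, \rEqSymm, \rEqTrans, \rFunc, \rRel, \rEx, \rNor \right\}$. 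Applying \ref{RefThmCompleteness1} with $\xnot \varphi$ in the role of its ``$\varphi$'', the equivalence above converts $X \models{} \varphi$ into $X \proves{D_1} \xnot{\xnot \varphi}$.

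The final step is the double-negation strip. By \ref{RefDefProvable2} there are $n \in \N$ and a finite $\Gamma \C X$ with $\pair{\Gamma}{\xnot{\xnot \varphi}} \in \derivables{n+1}{D_1}\left( \emp \right)$. Since $\rIntuitionisticNightmare \in D_1$ is \monotone{} and, by its definition in \ref{RefDefRules}, $\pair{\Gamma}{\varphi} \in \rIntuitionisticNightmare\left( \left\{ \pair{\Gamma}{\xnot{\xnot \varphi}} \right\} \right)$, we obtain
\begin{align*}
\pair{\Gamma}{\varphi} \in \rIntuitionisticNightmare\left( \derivables{n+1}{D_1}\left( \emp \right) \right) \C \onestep{D_1}\left( \derivables{n+1}{D_1}\left( \emp \right) \right) = \derivables{n+2}{D_1}\left( \emp \right),
\end{align*}
where the inclusion uses \ref{RefDefOneStep}. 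Hence $\pair{\Gamma}{\varphi} \in \Derivables{D_1}\left( \emp \right)$ with $\Gamma \C X$, which is precisely $X \proves{D_1} \varphi$ by \ref{RefDefProvable2}. The argument is short because all the substantive work sits inside \ref{RefThmCompleteness1}; the only new ingredient is that $\rIntuitionisticNightmare$ upgrades completeness from negated to arbitrary succedents, and the main (minor) obstacle is the bookkeeping needed to confirm that $D_1$ still satisfies the \cutLike{} and emulation hypotheses required to invoke that corollary.
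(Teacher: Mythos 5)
Your proposal is correct and follows essentially the same route as the paper: reduce to Corollary \ref{RefThmCompleteness1} applied to $\xnot\varphi$ (using that $X \models{} \varphi$ and $X \models{} \xnot{\xnot\varphi}$ coincide by \ref{RefDefEvalCompound}), then strip the double negation by one application of $\rIntuitionisticNightmare$ inside a $\onestep{D_1}$ step. The only cosmetic difference is that the paper invokes \ref{RefThmCompleteness1} for the ruleset $D_1 \sdiff \left\{ \rIntuitionisticNightmare \right\}$ and then passes to $D_1$ by monotonicity, whereas you verify the hypotheses for $D_1$ directly; both are fine, and your explicit check of the \cutLike{} and emulation hypotheses via \ref{RefThmCutLike1} and \ref{RefThmInclusionImpliesEmulation} is a welcome bit of bookkeeping the paper leaves implicit.
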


\begin{proof}
Assume $ X \models{} \varphi$. 
Then $X \models{} \xnot {\xnot \varphi}$ by \ref{RefDefEvalCompound}.
This implies the existence of $ \Gamma \C X $ such that $ \pair {\Gamma} {\xnot {\xnot \varphi}} \in \Derivables{D_1 \sdiff \left\{ \rIntuitionisticNightmare \right\}} \left( \emp \right)$ by \ref{RefThmCompleteness1}. 
Hence there is $k \in \Z^+$ such that 
$ \pair {\Gamma} {\xnot {\xnot \varphi}} \in \derivables{k}{D_1} \left( \emp \right)$, so that
\begin{align*}
\pair {\Gamma } {\varphi} \in \rIntuitionisticNightmare \left( \left\{ \pair {\Gamma} {\xnot {\xnot \varphi}} \right\} \right)
\C \rIntuitionisticNightmare \left( \derivables{k}{D_1} \left( \emp \right) \right)
\\
\C \onestep{D_1} \left( \derivables{k}{D_1} \left( \emp \right) \right) 
= \derivables{k+1}{D_1} \left( \emp \right).
\end{align*}
\end{proof}

\section{Alternative rules}

The attributes `\asslikeW{}', `\asslikeS{}', and `\cutLike{}' have been introduced to detach, to some extent, the main results proven 
from the particular choice of derivation rules.
Indeed the rulesets occurring in hypotheses of main theorems we saw are often required to be applicable such attributes, rather than to include some specific rules.
This means that if one of those results is valid for a given ruleset, it remains valid if we substitute in that ruleset some rules satisfying a given attribute with others, as long as the new rules still make the ruleset satisfy the corresponding attribute.
As an example, consider the following pair of new rules.
\begin{Def}
\label{RefDefCutLikeFragment}
Given a literal $\vv$, define
\begin{align*}
& \rVThinFork :
\seqs{S} \supseteq
\Sigma \mapsto  \left\{ \left( \Gamma, \varphi \right): \exists \Gamma_1, \Gamma_2, \psi_0, \psi \st \quad
\pair{\Gamma_1} {\psi_0}, \pair {\Gamma_2} {\xnot {\psi_0}} \in \Sigma \text{ and }
\vphantom{
\varphi = \vcontr {\xxnot {\psi_0}} \text{ and } \Gamma = \Gamma_1 \cup \Gamma_2 \cup \left\{ \psi \right\}
}
\right.
\\
&
\vphantom{
\left( \Gamma, \varphi \right): \exists \Gamma_1, \Gamma_2, \psi_0, \psi \st \quad
\pair{\Gamma_1} {\psi_0}, \pair {\Gamma_2} {\xnot {\psi_0}} \in \Sigma \text{ and }
}
\left.
\varphi = \vcontr {\xxnot {\psi_0}} \text{ and } \Gamma = \Gamma_1 \cup \Gamma_2 \cup \left\{ \psi \right\}
\right\}
\C \seqs{S}
\\
& \rVCut : 
\seqs{S} \supseteq
\Sigma \mapsto  
\\
& \left\{ \pair {\Gamma} {\varphi}: \exists \psi, \psi_0 \st \pair{\Gamma \cup \left\{ \psi \right\}} {\vcontr{\xxnot {\psi_0}}} \in \Sigma \text{ and } \varphi = \xxnot {\psi} \text{ and } \Gamma \sdiff \left\{ \psi \right\} = \Gamma  
\right\}
\end{align*}
\end{Def}

\begin{Not}
We also give the diagram representation (introduced in section \ref{RefSectDiagrams}) for rules defined in \ref{RefDefCutLikeFragment}: 
\begin{align*}
\rVThinFork: & 
\begin{aligned}
\begin{aligned}
\Gamma_1 && \vdash && \psi
\end{aligned}
&&
&&
\begin{aligned}
\Gamma_2 && \vdash && \xxnot{\psi}
\end{aligned}
\\
\hline
&& 
\begin{aligned}
\Gamma_1 && \Gamma_2 && \varphi && \vdash && \vcontr{\xxnot {\psi}}
\end{aligned}
&&
\end{aligned}
\\
\rVCut: &
\begin{aligned}
\Gamma && \varphi && \vdash && \vcontr{\xxnot{\psi}}
\\
\hline
\Gamma && && \vdash && \xxnot{\varphi}
\end{aligned}
\end{align*}
\end{Not}

The following result, mirroring \ref{RefThmCutLike1}, permits to replace $\left\{  \rThin, \rCut   \right\}$ with 
$ \left\{ \rVThinFork, \rVCut \right\}$ in the statement of \ref{RefThmCompleteness2}.

\begin{Prop}
A \monotone{} ruleset $D \emul{} \left\{ \rVThinFork, \rVCut \right\}$ is \cutLike{}. 
\end{Prop}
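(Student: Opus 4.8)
The plan is to follow the template of \ref{RefThmCutLike1}, replacing the single applications of $\rThin$ and $\rCut$ there by a two-step $\left\{ \rVThinFork, \rVCut \right\}$-derivation, and then to transport that derivation into $D$ by means of the emulation hypothesis (recall that $\xnot{\chi}$ denotes $\notf_S\left( \chi \right) = \xxnot{\chi}$). First I would unpack the assumption: fix $X$, $\varphi$ with $\Inc{D}{X \cup \left\{ \varphi \right\}}$, the goal being $X \proves{D} \xnot{\varphi}$. By \ref{RefDefConsistencies}, the inconsistency of $\provables{D}\left( X \cup \left\{ \varphi \right\} \right)$ produces a formula $\psi_0$ with $X \cup \left\{ \varphi \right\} \proves{D} \psi_0$ and $X \cup \left\{ \varphi \right\} \proves{D} \xnot{\psi_0}$. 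By \ref{RefDefProvable2} there are finite $\Gamma_1, \Gamma_2 \C X \cup \left\{ \varphi \right\}$ and $m, n \in \Z^+$ with $\pair{\Gamma_1}{\psi_0} \in \derivables{m}{D}\left( \emp \right)$ and $\pair{\Gamma_2}{\xnot{\psi_0}} \in \derivables{n}{D}\left( \emp \right)$; by \ref{RefThmMonotonicityIterationEmpty} both sit in $\derivables{p}{D}\left( \emp \right)$ for $p := \max\left( m, n \right)$. I set $\Sigma_0 := \left\{ \pair{\Gamma_1}{\psi_0}, \pair{\Gamma_2}{\xnot{\psi_0}} \right\} \C \derivables{p}{D}\left( \emp \right)$.

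Next I would run the auxiliary derivation over $\Sigma_0$. Applying $\rVThinFork$ with pivot $\psi_0$ and with the free antecedent formula taken to be $\varphi$ gives $\pair{\Gamma_1 \cup \Gamma_2 \cup \left\{ \varphi \right\}}{\vcontr{\xnot{\psi_0}}} \in \rVThinFork\left( \Sigma_0 \right) \C \derivables{1}{\left\{ \rVThinFork, \rVCut \right\}}\left( \Sigma_0 \right)$. Putting $\Gamma := \left( \Gamma_1 \cup \Gamma_2 \right) \sdiff \left\{ \varphi \right\}$, one has $\Gamma \cup \left\{ \varphi \right\} = \Gamma_1 \cup \Gamma_2 \cup \left\{ \varphi \right\}$ and $\varphi \notin \Gamma$, which are exactly the data required to apply $\rVCut$ cutting $\psi := \varphi$; this yields $\pair{\Gamma}{\xnot{\varphi}}$. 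Since the witnessing premise of this $\rVCut$-step already belongs to $\derivables{1}{\left\{ \rVThinFork, \rVCut \right\}}\left( \Sigma_0 \right)$, we obtain $\pair{\Gamma}{\xnot{\varphi}} \in \derivables{2}{\left\{ \rVThinFork, \rVCut \right\}}\left( \Sigma_0 \right)$.

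The emulation hypothesis then enters. Since $D \emul{} \left\{ \rVThinFork, \rVCut \right\}$ holds from every starting set, in particular $D \emul{\Sigma_0} \left\{ \rVThinFork, \rVCut \right\}$, so by \ref{RefDefEmul} we get $\pair{\Gamma}{\xnot{\varphi}} \in \derivables{q}{D}\left( \Sigma_0 \right)$ for some $q \in \Z^+$. As $\Sigma_0 \C \derivables{p}{D}\left( \emp \right)$ and $D$ is \monotone{}, \ref{RefThmMonotonicityIteration} (taken with $D_1 = D_2 = D$) gives $\derivables{q}{D}\left( \Sigma_0 \right) \C \derivables{q}{D}\left( \derivables{p}{D}\left( \emp \right) \right) = \derivables{q+p}{D}\left( \emp \right)$, the last equality being the additivity of the iteration count of $\onestep{D}$. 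Hence $\pair{\Gamma}{\xnot{\varphi}} \in \derivables{q+p}{D}\left( \emp \right)$ with $\Gamma \C X$ finite, and \ref{RefDefProvable2} delivers $X \proves{D} \xnot{\varphi}$, which is the \cutLike{} property demanded by \ref{RefDefCutLike}.

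The step I expect to be the main obstacle is the transport in the third paragraph: emulation is a statement about derivations issuing from an arbitrary set, so it must be invoked precisely at the auxiliary set $\Sigma_0$ rather than at $\emp$, and then $\Sigma_0$ has to be re-absorbed into the empty-start hierarchy using monotonicity of $D$ together with additivity of iteration counts. The remaining points are routine combinatorics: matching the $\rVThinFork$-conclusion antecedent to the $\rVCut$-premise antecedent (which dictates the choice $\Gamma = \left( \Gamma_1 \cup \Gamma_2 \right) \sdiff \left\{ \varphi \right\}$ and the side condition $\varphi \notin \Gamma$), and verifying $\Gamma \C X$ so that the concluding sequent witnesses $X \proves{D} \xnot{\varphi}$ and not merely something over $X \cup \left\{ \varphi \right\}$.
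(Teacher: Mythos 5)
Your proof is correct and follows essentially the same route as the paper's: extract the two contradictory sequents $\pair{\Gamma_1}{\psi_0}$, $\pair{\Gamma_2}{\xnot{\psi_0}}$, level them at depth $p$ by monotonicity, apply $\rVThinFork$ with extra antecedent formula $\varphi$ and then $\rVCut$, and transport into $D$ via emulation. The only (immaterial) difference is bookkeeping: the paper invokes $D \emul{} \left\{ \rVThinFork, \rVCut \right\}$ separately after each of the two rule applications, whereas you compose the two-step derivation inside $\left\{ \rVThinFork, \rVCut \right\}$ first and invoke emulation once at $\Sigma_0$, re-absorbing via \ref{RefThmMonotonicityIteration}.
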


\begin{proof}
Assumed $\Inc{D} {X \cup \left\{ \varphi \right\}} $, we must show $ X \proves{D} \xxnot \varphi$.
There are $\Gamma_1 , \Gamma_2 \C X \cup \left\{ \varphi \right\}$, $m, n \in \Z^+$, $\psi$ such that
$ \pair {\Gamma_1} {\psi}  \in \derivables{m}{D} \left( \emp \right)$, 
$\pair{\Gamma_2} {\xxnot \psi} \in \derivables{n}{D} \left( \emp \right)$.
By the fact that $D$ is \monotone{}, we have 
$ \pair {\Gamma_1} {\psi}, \pair{\Gamma_2} {\xxnot \psi} \in \derivables{p}{D} \left( \emp \right)$, where $p := \max \left\{ m, n \right\}$. 
Now
\begin{align*}
\pair {\Gamma_1 \cup \Gamma_2 \cup \left\{ \varphi \right\}} {\vcontr{\xxnot \psi}} \in 
\rVThinFork \left( \left\{ 
\pair {\Gamma_1} {\psi}, 
\pair {\Gamma_2} {\xxnot {\psi}}
\right\} \right) 
\C \rVThinFork \left( \derivables{p}{D} \right) 
\\
\C \Derivables{D} \left( \derivables{p}{D} \left( \emp \right) \right) 
\ra{} \exists q \in \N \st \quad 
\pair {\Gamma_1 \cup \Gamma_2 \cup \left\{ \varphi \right\}} {\vcontr{\xxnot \psi}} \in 
\derivables{q}{D} \left( \derivables{p}{D} \left( \emp \right) \right),
\end{align*}
so that
\begin{align*}
\pair {\Gamma_1 \cup \Gamma_2 \cup \left\{ \varphi \right\} \sdiff \left\{ \varphi \right\}} {\xxnot \varphi} \in
\rVCut \left( \left\{
\pair{\Gamma_1 \cup \Gamma_2 \cup \left\{ \varphi \right\}} { \vcontr {\xxnot \psi} }
\right\} \right)
\\
\C 
\rVCut \left( \derivables{p+q}{D} \left( \emp \right) \right) 
\C \Derivables{D} \left( \derivables{p+q}{D} \left( \emp \right) \right)
\\
\ra{} 
\exists l \in \N \st \quad
\pair {\Gamma_1 \cup \Gamma_2 \cup \left\{ \varphi \right\} \sdiff \left\{ \varphi \right\}} {\xxnot \varphi} \in \derivables{l}{D} \left( \derivables{p+q}{D} \left( \emp \right) \right).
\end{align*}
\end{proof}

\chapter{The formalization}
\label{RefSectFormalization}
This chapter illustrates the actual \M{} implementation of the set-theoretical treatment of first-order languages built in chapter \ref{RefSectFormulation}; 
it includes material from \cite{caminati2010basic} and \cite{CaminatiJar2011}.
Introductory sections \ref{RefSectSoftware} and \ref{RefSectMizarOverview} give background on proof checkers and on the particular proof checker chosen in our case, respectively.

\section{Software for proving}
\label{RefSectSoftware}
Rigor and creativity are both essential qualities of mathematics.
Logic supplies precise notions of rigor, and tools to attain it: for example, Zermelo-Fraenkel set theory with the axiom of choice (ZFC) is commonly accepted as a first-order axiom system in which most parts of current mathematics could be rendered; 
however, such renditions (commonly referred to as formalizations) are usually reputed to be tedious if not impracticable, and anyway a hindrance for the creative process, equally essential for mathematics.
Thus, instead of actually formalize mathematics, the classical compromise is to supply a sketch of formalization in a variably rigorous pseudo-code, the purpose of which is to get accepted (and thus possibly trusted, relied on and employed, in the end) as a result of what is ultimately a social process: the one of persuading other people of its correctness (\cite{asperti2009social}).


The success of Hilbert's program in thrusting towards formalization of mathematics and the advent of digital computers set the scene for a change. 
Virtually every scientific realm presents examples of endeavors which were unthinkable before the advent of computers: given the evident affinity between formalization and mechanization, one can arguably maintain that formalization of mathematics might well become such an endeavor (\cite{boyer1994qed}, \cite{wiedijk2007qed}).%
\footnote{Recent years have provided a further strong reason, probably not foreseeable at the time in which \cite{boyer1994qed} was written, to be optimistic about the feasibility of this endeavor: the several blatant and huge successes brought by the commons-based peer production model (\cite{benkler2006wealth}), like, most notably, the GNU/Linux operating system and the Wikipedia project.%
}
And indeed, since de~Bruijn's Automath (\cite{de1970mathematical}), the software implementations of proof checkers proliferated.%
\footnote{\url{http://www.cs.ru.nl/~freek/digimath/}}





In the vast landscape of software born to carry out the old idea of mechanizing proofs, a first distinction can be drawn between \emph{proof checkers} (like \M{}, Metamath, Twelf, Automath) and \emph{automated theorem provers} (like E, ACL2, SPASS, Vampire).
The latter \emph{find} proofs, rather than merely certifying them. 
One of the first known concrete computer programs developed for proving, namely \emph{Logic Theorist}\index{Logic Theorist}, \cite{newell1956logic}, was a representative of this category.

\emph{Proof assistants}, or \emph{interactive theorem provers} (Coq, Isar, Matita, PhoX, to name a few), stand between the two ends, requiring some user intervention, the amount and form of which varies greatly among different systems, to guide the proof, yet saving him to spell out a full proof.

There is a further family of recent projects (\cite{cramer2010naproche}%
\footnote{At the time of writing, Naproche seems the only one in this family having made tangible progress, to the point of offering a web interface:
\wwwNaproche{}, with \LaTeX{} support.
}%
, \cite{humayoun46mathnat}, \cite{schodltowards}) taking an alternative, `linguistic{}' approach: the very rough idea is to supply a `controlled natural language{}' coupled with some automated prover which validate the formal language extracted from the higher-level natural language.
This would relief mathematicians from both the burdens of proving the trivial details and of facing a language less friendly than the common mathematical language, with the controlled natural language acting as an interface with both the automated prover and the formal language backends.
Less ambitiously, ProofCheck (see \cite{nevelnproofcheck}) embeds a low-level proof checker directly into the \TeX{} and \LaTeX{} languages via additional \TeX{} macros.

The largest digital libraries of already formalized mathematics are those written with the proof checkers \M{}
, HOL Light, Coq and Isabelle.
\M{} 
is the most mathematically-oriented one, adopting a grammar resembling common mathematical language, a declarative style, and being based on set theory.

\section{An overview of \M{}}
\label{RefSectMizarOverview}
The \M{} project (\wwwMizar) delivers a few provisions:
\begin{enumerate}
\item
\label{RefItemLanguage}
\M{} \emph{language} permits to write formulas in first-order set theory which read close to common mathematical language.
For example, the formula 
\begin{align*}
 X \neq \emptyset \Longrightarrow \exists x ( x \in X )
\end{align*}
is written
\begin{equation*}
\verb|X <> {} implies ex x st x in X;| 
\end{equation*}
In addition to the few reserved words pertaining to the first-order alphabet of set theory,  
the language specifies grammar and reserved words to invoke the verifier (see point \ref{RefItemVerifier}) and to exploit  advanced features of the system.

\item
\label{RefItemVerifier}
\M{} \emph{verifier} (PC \M{}) is a software certifying whether one such formula can be deduced 
(according to some formal system for classical logic, see sections 2.2.1~and 3.5 of \cite{grabowski2010mizar}) from  other given formulas, specified
via the keyword \verb|by| of the \M{} language:
\begin{verbatim}
A1: x in X;
A2: for y being set holds y in X\/Y iff (y in X or y in Y);
x in X\/Y by A1, A2;
\end{verbatim}

\item
\label{RefItemLibrary}
The \M{} Mathematical Library (\MML{}) builds on the  components \eqref{RefItemLanguage} and \eqref{RefItemVerifier} above to provide a mass of \M{} language formulas certified, by \M{} verifier, to be derivable from a handful of set-theoretical axioms affine to ZFC axioms. 
The set theory resulting from these axioms, Tarski-Grothendieck (TG), is an extension of ZFC, and more on it can be found in \cite{rudnicki1999equivalents}.
\end{enumerate}

\MML{} is made up of \M{} source files called \emph{articles}, and its latest version is always browsable at \wwwMml{}.
In the following, we  will be using typewriter font for referencing articles and results inside \MML{}: for example, \verb|XBOOLE_1:4| denotes the fourth theorem appearing in the \MML{} article xboole\_1.miz, which is thus viewable at \wwwXboole{}.
We will also adopt typewriter font for \M{} code, as already done in point \eqref{RefItemLanguage} of the numbered list above.


\subsection{Types and definitions}
\label{RefSectMizarLayers}
The primitive workflow consisting of writing set-theoretical formulas, linking them together via the \verb|by| keyword, and invoking the verifier on them, as depicted in section \ref{RefSectMizarOverview}, would theoretically suffice to accomplish a great deal of first-order formalization tasks.
In practice, one cannot actually get very far without higher-level abstractions to structure the code. 
Among others, \M{} supplies (soft) \emph{types} and \emph{definitions}:
\begin{description}
\item[Types]
A term can be assigned a type (via the reserved word \verb|let|); 
as a consequence, the type of a term can be the subject of a first-order atomic formula. 
The special first-order relation symbol \verb|is| has exactly this use:
\begin{verbatim}
let x be Function;
x is Function;
\end{verbatim}
The formulas based on the special relation symbol \verb|is|, as the one above, present the distinctive property of needing no justification: they are a way to query \M{} type system.
This means that the last line of code in the example above is accepted by the verifier without the need of a \verb|by| statement (see item \eqref{RefItemVerifier} on page \pageref{RefItemVerifier}).
The basic type \verb|set| is applicable to any term.
\item[Functors]
New function symbols (called \emph{functors} in \M{} jargon) can be added to the first order language via the reserved word \verb|func|. 
This can be done in two ways: 
\begin{enumerate}
\item
\label{RefItemEquals}
either in a macro-like fashion:
\begin{verbatim}
definition
  let x, y be set;
  func [x,y] equals { { x,y }, { x } };
  ...
\end{verbatim}
In this case, the keyword \verb|equals| is used.

\item
or by stating some formula the new object must satisfy, subject to the proof that exactly one term exists for which this happens:
\begin{verbatim}
definition
  let X, Y be set;
  func X /\ Y -> set means
  	for x being set holds x in it iff x in X & x in Y;
  existence
  proof
    ...
  end;
  uniqueness
  proof
    ...
  end;
end;
\end{verbatim}
In this case, the keyword \verb|means| is used, and the entity to be defined is denoted by the keyword \verb|it| in the definiens, as seen above.
\end{enumerate}
\end{description}
The two functionalities just introduced can work together, meaning that the definition of a functor can accept as arguments a finite list of \emph{typed} arguments; and, viceversa, the term obtained by the application of the defined functor can be associated a type (keyword \verb|->|):

\begin{verbatim}
definition
  let R be Relation;
  func R~ -> Relation means
    [x,y] in it iff [y,x] in R;
  ...
\end{verbatim}
After this association, the verifier will know the type returned by any application of that functor.
This suggests that the very presence of types can be a first, seminal step to some form of \A{}: some methods we shall see in section \ref{RefSectAutomations} rely on the capability of the system to know the type of each term straightaway, and all of them somehow revolve around the type system. 

Sometimes, the abstraction of types hides the fact that two functors behave the same way at the underlying set-theoretical level, even if they operate on, or yield, different types; in this case one can make the verifier aware that the results coincide, using the keyword \verb|identify|.
For example:

\begin{verbatim}
registration
  let x,y be real number, a,b be complex number;
  identify x+y with a+b when x = a, y = b;
  compatibility
  proof
    ...
  end;
end;  
\end{verbatim}
This correspondence can be achieved because the type system implemented in the verifier is a soft one (\cite{wiedijk2007mizar}): terms are actually untyped sets, and one can always forget about their type, which is offered for a matter of convenience.

\subsection{Attributes and registrations}
\label{RefSectAttributes}
\emph{Functorial registrations} are a further form of \M{} automation, and one of the most powerful and least restricted. 
To see how it works, we need to introduce attributes.
\subsubsection{Attributes}
Attributes are a flexible and natural way to define types; they are used to qualify and restrict a given type (called \emph{radix} type) by just prefixing it with the attribute name (or with its name preceded by the keyword \verb|non|, to negate it). 
For example, article \verb|XBOOLE_0| defines the attribute \verb|empty|, applicable to any term, so that one can write: 

\begin{verbatim}
{} is empty set;
\end{verbatim}

It is important to note that this juxtaposition is a subtype of the radix, and therefore can be treated like it under many aspects; at the same time, being itself a type, attributes can in turn be applied to it.
To put it differently, attributes can be clustered:

\begin{verbatim}
{} is empty finite set;
\end{verbatim}
This flexibility is a first reason to prefer them to the standard way of defining types seen in section \ref{RefSectMizarLayers}.

\subsubsection{Functorial registrations}
\label{RefSectFunctorialRegistrations}
Functorial registrations automatically attach an attribute to all terms presenting a given syntactic form or pattern, once one proves (keyword \verb|coherence| in the snippet below) that terms of that form can be assigned the given attribute.
For example:%
\footnote{\texttt{bool X} is the power set of \texttt{X}. See appendix \ref{RefSectNotations}}.
\begin{verbatim}
registration
  let X be set;
  cluster (bool X) \ X -> non empty for set;
  coherence
  proof
    ...
  end;  
end;
\end{verbatim}
Note that the term in the example above contains two nested functors; there are no limitations on the syntactical complexity of a term being applied a functorial registration.
This kind of registration will have a fundamental role in doing sequent calculus in \M{} (section \ref{RefRuleClusters}) and in implementing custom \M{} \A{}s (section \ref{RefSectAutomations}).

\subsubsection{\Adj{} registrations}
\label{RefSectAdjRegistrations}
\Adj{} registrations works in a way similar to functorial registrations: the \adj{} on the right of the keyword \verb|->| gets automatically attached to a term (which must have the type appearing on the right of the keyword \verb|for|) based on the condition expressed by the matter on the left of that special symbol. 
What is different is how this condition works: instead of checking that a term has a given shape to apply the automation, now it is applied when a term of a given type possesses a given attribute. 
So this registration has the form
\begin{equation}
\verb|cluster | attribute1 \verb! -> ! attribute2 \verb| for | type.
\end{equation}
Once such a registration is enforced, for any term of type $type$ one has that if the checker knows this term enjoys $attribute1$, the checker also knows this term enjoys $attribute2$.
Note that, contrary to functorial registrations, the left hand side of \verb|->| can be empty, which means that the checker will attach an attribute to any term of a given type, regardless of the term being applicable a further attribute.
Of course, upon registering, one has to prove the corresponding first order formula
\begin{center}
\verb|for X being| $type$ \verb|st X is| $attribute1$ \verb|holds X is| $attribute2$.
\end{center}
Such proofs has to be enclosed in a \verb|coherence| block immediately following the registration statement.
For example
\begin{verbatim}
registration
  cluster empty -> one-to-one for Function-like (Relation-like set);
  coherence
  proof
    ...
  end;
end;
\end{verbatim}

\subsection{Predicates}
\label{RefSectMizarPredicates}
In many formulations of first-order languages, as in the one seen in chapter \ref{RefSectFormulation}, one has operation symbols (also said function symbols) each operating on terms and yielding a term; correspondingly there are predicate symbols (also said relation symbols) each operating on terms and yielding truth values.
In the same manner, besides functors, which yield terms, \M{} offers predicates, which yield truth values.
Alongside of the basic predicates \verb|in| (the primitive binary relation of ZFC and TG set theories) and \verb|is| (introduced in section \ref{RefSectMizarLayers}), one of the most pervasive relations in set theory is that of inclusion, which we take as an instance to show how \M{} predicates work:

\begin{verbatim}
definition 
 let X,Y be set;
 pred X c= Y means
  for x being set st x in X holds x in Y;
end;
\end{verbatim}

Note that  \M{} does not provide for predicates forms of \A{}s as powerful as those seen in section \ref{RefSectFunctorialRegistrations} for \adj{}s.
For example, the following can be automated
\begin{verbatim}
let X, Y be set; X /\ Y \ X is empty;
\end{verbatim}
while the predicate-based equivalent formula
\begin{verbatim}
let X, Y be set; X /\ Y c= X;
\end{verbatim}
cannot. 
We will detail on such topics in chapter \ref{RefSectTechnical}.


\section{First-order logic in \MML{}}
\label{RefSectLogicInMml}
Inside Mizar Mathematical Library there are at least three strains hosting articles of content suitable for the treatment of first-order logic:

\begin{enumerate}
\item
\label{RefZf}
A series of articles supplying a language apt to describe set theory according to Zermelo-Fraenkel axioms, started with \cite{ZF_LANG}.
\item
\label{RefQc}
A series of articles supplying a general language for first-order logic, started with \cite{QC_LANG1}.
\item
\label{RefUniAlg}
A series of articles supplying terminology and results about universal algebras, started with \cite{UNIALG_1}.
\end{enumerate}

Most of the classical results of first order logic have, during the years, found their way in strain \eqref{RefQc}: building on those articles a fairly equipped gear of formalizations has been created.\\
There are treatments about the most elementary syntactical properties (those of variables and free variables in a formula (\verb|QC_LANG3|), of subformulas (\verb|QC_LANG2|, \verb|QC_LANG4|), of substitution (\verb|CQC_LANG,SUBSTUT1,SUBSTUT2|), of similarity between formulas (\verb|CQC_SIM1|)), which in turn allow for less and less elementary results, regarding: propositional calculus (\verb|PROCAL_1|, \verb|LUKASI_1|), interpretation and satisfiability (\verb|VALUAT_1|), Gentzen-style sequent calculus (\verb|CALCUL_1|, \verb|CALCUL_2|), up to a basic version of G\"odel's completeness theorem (\verb|HENMODEL,GOEDELCP|).

Unfortunately, the coding of the first order language adopted from the very beginning in \cite{QC_LANG1} is somewhat rigid: roughly sketching the situation, strings of first-order language are represented as tuples of couples of natural numbers, with special symbols (quantifiers, connectives, truth symbol) represented by couples in which the first component is a reserved (small) natural.

This inherently prevents treating uncountable languages, which, alas, would be quite the point for developing even the most fundamental results of model theory, starting with \L{}-Skolem and compactness theorems.

What is more, the completeness theorem currently present in \MML{} has some limitations that look hardly removable in the established framework.
For example, it is restricted to equality-lacking languages, while it would be of interest to talk about languages with equality: 
\M{} first-order language itself is furnished with equality, and the option of possibly applying results worked out to \M{} itself is desirable.  

The following is an account of how a fully developed codebase for model theory in \M{} has been laid down, given the considerations above.
They imposed reformulating things from scratch with a hopefully more flexible approach.

This codebase culminates, as a testbed for itself, with formalizations of the fundamental G\"odel's completeness and \L{}-Skolem theorems, restricted to the case of a generic countable language, and has been submitted to \MML{} Library Committee for peer-reviewing; after triple refereeing, it got accepted in \MML{} in January 2011, with the corresponding five articles 
(\cite{fomodel0}, \cite{fomodel1}, \cite{fomodel2}, \cite{fomodel3}, \cite{fomodel4}) 
published on `Formalized Mathematics' in 2011.
A `dynamic{}' (i.e. constantly updated) version of it is accessible at the author's homepage%
\footnote{\wwwMbc{}}
.
More precisely, among the many flavors of \L{}-Skolem theorem, the one checked is
the `downward{}' flavor, like the one stated in \ref{RefThmSkolem}.
Its \M{} statement sounds like:
\begin{verbatim}
for 
  U2 being non empty set, S being Language, 
  X being countable Subset of AllFormulasOf S, 
  I2 being Element of U2-InterpretersOf S st X is I2-satisfied 
ex U1 being countable non empty set, 
   I1 being Element of U1-InterpretersOf S st 
X is I1-satisfied;
\end{verbatim}

Let us report the \M{} statement of satisfiability theorem (compare \ref{RefThmSat4}), too:
\begin{verbatim}
for C being countable Language st 
  X is (C-rules)-consistent & X c= AllFormulasOf C 
    ex U being non empty countable set, 
    I being Element of U-InterpretersOf C st 
      X is I-satisfied;
\end{verbatim}

Finally, the completeness theorem (see \ref{RefThmCompleteness2}) runs thus:
\begin{verbatim}
for C being countable Language, 
phi wff string of C, X being set st 
  X c= AllFormulasOf C & phi is X-implied 
holds 
  phi is X-provable;
\end{verbatim}
\label{RefThmMizarCompleteness}
Note that this last restriction to countable languages is a mere matter of convenience: the whole work was set up to treat an arbitrary language up to \H{}'s theorem (see \ref{RefThmHenkin2}); on the other hand, reducing to the least-cardinality case was desirable in order to have the job done more quickly (under the urge of demonstrating its usability), without having to handle complications related to the axiom of choice and the likes. 

Those theorems are here regarded as significant goals because of their fundamental role in mathematical logic. In particular, the family of \L{}-Skolem theorems have a fruitful interplay with the cardinality of the language, 
which the ability to deal with, as said, was a starting, motivating point for the present work.
Moreover, this latter kind of results seem to be underrepresented in the global repository of mechanically checked mathematics:
the only work sharing the aims of the present which the author is aware of is \cite{harrison1998formalizing}; both the checker and the proof techniques used there are entirely different than what we are going to deploy here, however.
Additionally, that work is subject to the issue, hinted in the introduction, of being stated in a language far from the standard mathematical one.
Finally, this is the only known presentation of several fundamental theorems for model theory and proof theory formalized together and in a coherent, unitary framework.

\section{Organization of the codebase}
\label{RefSectDefense}
With a total of about $700 k$ bytes and $19 k$ lines of \M{} code, this turned out to be a fairly complex project, so care has been constantly taken to orderly arrange the various results according to their scope into five separate \M{} articles, each depending on the previous ones and hosting affine themes:
\begin{itemize}
\item
\verb|FOMODEL0.MIZ| is the receptacle of all results of broader scope stemmed during the various formalizations, with results and registrations about objects already in \MML{} and quite few dependencies. 
\item
\verb|FOMODEL1.MIZ| introduces the type \verb|Language|, the classification of symbols according to their arity and of terms according to their depth, and the functor to extract subterms from a term or an atomic formula. The bulk of syntax (section \ref{RefSectSyntax}) is done here and in next article.
\item
\verb|FOMODEL2.MIZ| (corresponding roughly to sections \ref{RefSectSyntax} and  \ref{RefSectSemantics}) deals with syntax of non atomic formulas and all the semantics by giving the following constructions: 
the definition of an interpretation $I$ relative to a non empty set $U$ (universe), the constructions saying how to evaluate a term in $U$, how to evaluate an atomic formula in $\{0, 1\}$, what can be regarded as a generic wff formula, how to evaluate it in $\{0, 1\}$ according to $I$, and how to evaluate its depth.
In addition, the functor to obtain another interpretation in the same universe $U$ from $I$ by changing the evaluation of a single literal symbol of the language (reassignment), and the definitions of satisfaction and of entailment are given.
\item
\verb|FOMODEL3.MIZ| (mainly mirroring sections \ref{RefSectSemantics} and \ref{RefSectQuotients} ) supplies a toolkit of constructions to work with languages and interpretations, and results relating them:
the free interpretation of a language, having as a universe the set of terms of the language itself, is defined; 
the quotient of an interpretation with respect to an equivalence relation is built, and shown to remain an interpretation when the relation respects it.
Both the concepts of quotient and of respecting relation are defined in broadest terms, with respect to objects as general as possible.
This is arguably the most `technical{}' article in the tier.
\item
\verb|FOMODEL4.MIZ| (reflecting material from sections \ref{RefSectEqRel}, \ref{RefSectCompatibility},  \ref{RefSectHenkinModel}, \ref{RefSectEnlarge} and \ref{RefSectAllTogether}) introduces the proof-theoretical notions and binds all together.
As a first more general task, it defines what a sequent and a rule are, and what means for a rule to be correct.
Then, using these definitions, it builds the particular set of derivation rules we chose in \ref{RefDefRules}.
Among many other results, satisfiability theorem is proven. 
Finally, restricting to countable languages, completeness and downward \L{}-Skolem are proved.
\end{itemize}
Having sketched the themes dealt with in each article, now the idea is that each formalized result should be placed in the lowest article in which the entities to enunciate it are available, so to give a precise criterion for the arraying of \M{} code among the five articles.

\label{RefModularization}
About one sixth of the code dwells in \verb|FOMODEL0.MIZ|, thus applying to already-defined \M{} entities; the results located there tend to be shorter and more numerous than the lemmas showing up in subsequent articles.
This is a clue of a general separation and modularization design policy pursued across the whole work, aiming at 
\begin{itemize}
\item
stating results in terms of the most general possible \M{} entities;
\item
breaking statements into smaller lemmas, especially if the latter as a result get applicable to a broader class of objects or if the smaller lemmas can be put together in more than a way to get significant theorems. The same applies to definitions.
\end{itemize}

As an example, take the construction of the already discussed Henkin model. 
In \cite{0387908951}, it is introduced just before the proof of the satisfiability theorem, and so, given the rather instrumental nature of its role, its definition is quite condensed.
Here, on the other hand, it has been split into the pair of definitions of free interpretation, \ref{RefDefFreeInt}, and of quotient interpretation, \ref{RefDefInterpretationQuotient}, with a twofold benefit.
First, the former object gets reused to define the term substitution in \ref{RefDefTermSubst}, and hence one of the deduction rules in \ref{RefDefRuleEx}.
On the other hand, the latter applies not only to the former, but to any interpretation.
What's more, the quotient functor is defined more generally as quotient of a relation by a pair of equivalence relations.
Relations are more general than equivalence relations, which are in turn more general than functions, which finally are more general than interpretations, if one call an entity more general than another when the latter is defined in terms of the former.

Accordingly, the various results needed for the Henkin interpretation break into smaller and more general statements, sometimes of interest themselves, or occurring more than once in building further theorems, or maybe just hopefully useful to a possible coder in the future: having stated them in less restrictive terms increases the probability that this will be the case.

This process of separation and modularization may provide a further benefit: in breaking a statement into smaller steps, a fine-grained analysis of which assumptions are needed for each step is encouraged.
This blatantly occurs in chopping down satisfiability theorem: in section~\ref{RefSectHenkin} each step specifies which derivation rules are needed for it to hold (see also section~\ref{RefSectRuleSep}).
Indeed, keeping track of which result traces back to which rules did provide the main guidance in forming our ruleset.
In the sequel, other, more specific occurrences of this attitude will be given: see especially section~\ref{RefConsiderations}.

\label{RefFreeVars}
Here, another facet of this policy is examined: closely related to the just discussed tendency to predicate about as less specialized entities as possible is the choice of encoding formulas in simple strings of symbols.%
\footnote{In the context of \M{} formalizations, we will use the synonyms `string' and `finite sequence' (\texttt{FinSequence}) for the notion of `tuple{}' defined in \ref{RefDefTuple}.}
As for a generic language, this concrete syntax can be opposed by
some representation-agnostic device describing the abstract syntax, in the same spirit of de~Bruijn indexes (\cite{MR0321704}) or parse trees (\cite{MR2319486}, pages 34-36) approaches, which directly model the semantics and thus inherently dispense one from undergoing the twofold labor of first  specifying the syntax rules for well-formedness and then give a way to attach a meaning to each formula.
This is surely a strong plus for them.

\label{RefPlainText}
We maintain that using `plain text{}', as done here, presents advantages, too.
A first advantage is readability: as strings require little assumed knowledge to be understood and have simple notations, the results worked out here are themselves very readable. 
Indeed plain text, concrete syntax is arguably one of the best representations of any data to be read by a human, in most diverse contexts ranging from didactic expositions of formal languages to software design (classical Unix philosophy advocates it as an universal interface, \cite{salus1994quarter}, p.52).
This is of importance especially for a project like \M{} which, besides verifying, also aims at building a library of mathematical knowledge straightforwardly accessible to humans.
\\
Secondly, in the same vein of what has just been discussed, all the results worked out here are likely to produce sub-lemmas of interest to more \M{} coders than if we assume we chose parse trees: indeed, there is a series of \M{} articles supplying the machinery of parse trees in the context of formal languages (\verb|DTCONSTR.MIZ|), and in this assumption, many of the general results in \verb|FOMODEL0.MIZ| would have been in a form available only to the users of that machinery.
This is a two-way phenomenon, of course: the author, using plain sequences instead of parse trees, has been able to take advantage of the massive amount of pre-existing results about the mode \verb|FinSequence|.
As an example of a `by-product{}' of the present formalization which could be of more general interest, and which has been brought out because of the choice of using strings instead of more abstract representations, we pick a result regarding monoids and prefixes (see \eqref{RefUnamb1} in section \ref{RefSectSubTerms}); it is one of the numerous results got by treating sub-terms.

As a last argument supporting our choice, we remark a fundamental quality of our treatment of first order languages notably alleviating one  arguably major drawback typically encountered when using `plain text{}'; that is, the study of \emph{free} occurrences of variables in strings, faced generally when studying the semantics of a previously defined syntax.
In the present framework, one does not even need to \emph{introduce} the concept of free occurrence, because our sequent calculus only demand to watch for simple occurrences of literals inside formulas (rule $\rWitnessA$).
The issues of free occurrences and of substitution are two related hindrances when describing or teaching (see \cite{MR0202575}) a formal language.
They are related because when doing, or formalizing, substitution, attention is to be paid to prevent the capture of free variables: see~\cite{0387908951},~ III.8 for a standard exposition and for the typical complications arising.
\\
In our case, we managed to devise a sequent calculus not needing this concept, and, on the other hand, substitution is resolved using a  novel formalization approach, to the best of author's knowledge, that is, reusing the functors \verb|-freeInterpreter|, \verb|-TermEval| and \verb|ReassignIn|, which sets the scene for the complete disposal of the former notion.
\\
It should be noted that the issue of free occurrences can be arguably regarded as a hindrance, with several papers either devoted to mitigate (or even eliminate) the problem:

\begin{quote}
The relatively complex character of these two [the second being that of term substitution] notions is a source of certain inconveniences of both practical and theoretical nature~\ldots{}~we shall show in this paper that~\ldots~we can simplify the formalization in such a way that the use  of the notions discussed proves to be considerably reduced or even entirely eliminated~\ldots
\\\mbox{}\hfill~\cite{MR0202575},
\end{quote}
or merely devoted to treat the problem; to limit ourselves to \MML{}:
\verb|QC_LANG3|, \verb|QC_LANG2|, \verb|QC_LANG4|, \verb|CQC_LANG|, \verb|SUBSTUT1|, \verb|SUBSTUT2|, \verb|CQC_SIM1|.

The argument above does not imply, of course, that introducing the concept of free occurrence of a variable in a formula is not worth the toil; 
it just stands as a grant (certified by machine checking) that it is not needed to provide a complete sequent calculus.

\section{Dealing with subterms}
\label{RefSectSubTerms}
In key points of any treatment of first-order logic, one has to extract the subterms of a term or of an atomic formula (see, e.g., \ref{RefDefEvalAtomic} and \ref{RefDefTermSubst}), hence 
the formalization supplies a functor \verb|SubTerms| doing this.
\\
It is used crucially in the definition of \verb|TermEval| and \verb|TruthEval| functors, see section~\ref{RefSemantics}.
Its coding will not be explicitly shown here for space reasons.

Here, we want to discuss how its construction slightly departs from standard treatments.
The task at hand is plain dull: one usually does it recursively starting from literals and iterating through operational symbols, and there is not much room from alternative approaches.
However, since the language is presently constructed in terms of strings and concatenation, we tried to do the job at the more general level of monoids and associative operations.
We discuss briefly the idea, without displaying \M{} code.

Take a monoid $\left( M, \op \right)$. One can easily extend the operation $\op$ to a function $\Op$ taking any finite number of arguments iteratively, for example setting
\begin{align*}
\Op \left( a,b,c \right) := \left( a \op b \right) \op c,
&&
\Op \left( a,b,c,d \right) := \left( \Op \left( a ,b,c  \right) \right) \op d,
\end{align*}
and so on. 
To do this in \M{} we introduced the functor \verb|MultPlace|, which actually takes any binary operation (associativity is not needed yet).
Consider any $X \subseteq M$, and call it \emph{unambiguous} (similarly to \cite{lothaire2002algebraic}, 1.2.1) if the restriction of $\op$ to $X \times M$ is injective:
\begin{align*}
\op \left( x_1, m_1 \right) = \op \left( x_2, m_2 \right) \Rightarrow x_1=x_2, m_1=m_2
&& x_1,x_2 \in X, m_1, m_2 \in M
\end{align*}
Now associativity comes into play for the result:
\begin{align}
\label{RefUnamb1}
\op \text{ associative and } X \text{ unambiguous } \Rightarrow \Op |_{X^n} \text{ is injective } && \forall n \in \mathbb{N},
\end{align}
that is, unambiguity is sort-of preserved for $n$-tuples. 
Now, taking the case $M=S^*$, where $S$ is a language, and taking as $\op$ the concatenation (which is associative), it is easy to show that $\terms{S,0} 
$ is unambiguous; indeed, any one-letter strings subset of a language is unambiguous with respect to concatenation.
Starting from that, and using \eqref{RefUnamb1}, it is easily shown by induction that any $\terms{S,m}$ is unambiguous, too; and finally:
\begin{Thm}
$\terms{S}$ is unambiguous.
\end{Thm}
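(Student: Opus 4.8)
The plan is to reduce the statement about the full set of terms to the already-established fact that each finite-depth stratum $\terms{S,n}$ is unambiguous, exploiting that the family $\left( \terms{S,n} \right)_{n \in \N}$ is an increasing chain. The guiding observation is that unambiguity is a \emph{finitary} condition: it only ever constrains one pair of left factors at a time, so any potential counterexample involves just two terms, which must already live together in a single bounded-depth stratum.

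First I would recall the two ingredients assembled just above the theorem. The family is increasing, $\terms{S,m} \C \terms{S,n}$ whenever $m \le n$, directly from \ref{RefDefTerms}; and each $\terms{S,n}$ is unambiguous. The latter is proved by induction on $n$: the base case $\terms{S,0}$ is unambiguous because it consists of one-letter strings, and the inductive step invokes \eqref{RefUnamb1} to pass from the unambiguity of $\terms{S,n}$ to the injectivity of $\restrict{\fconc}{\left( \terms{S,n} \right)^k}$ for every $k$, after which a leading-symbol analysis (the heading compounder $o$ fixes the arity $k = \ari(o)$, hence the number of subterms that follow) yields the unambiguity of $\terms{S,n+1}$. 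Both of these are assumed below, since they are stated before the theorem.

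The core of the theorem is then the passage to the union $\terms{S} = \bigcup_{n \in \N} \terms{S,n}$. Let $t_1, t_2 \in \terms{S}$ and $m_1, m_2 \in S^*$ satisfy $t_1 \conc m_1 = t_2 \conc m_2$; I must show $t_1 = t_2$ and $m_1 = m_2$. Put $N := \max \left\{ \depth{t_1}, \depth{t_2} \right\}$. Since $t_i \in \terms{S, \depth{t_i}} \C \terms{S,N}$ by the monotonicity recalled above, both $t_1$ and $t_2$ lie in the single stratum $\terms{S,N}$. Applying the unambiguity of $\terms{S,N}$ to the equality $t_1 \conc m_1 = t_2 \conc m_2$, with left factors $t_1, t_2 \in \terms{S,N}$ and right factors $m_1, m_2 \in S^*$, gives at once $t_1 = t_2$ and $m_1 = m_2$. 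As $t_1, t_2, m_1, m_2$ were arbitrary, $\restrict{\conc}{\terms{S} \cartprod S^*}$ is injective, that is, $\terms{S}$ is unambiguous.

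The only genuinely laborious part of the whole argument is the inductive step establishing the unambiguity of each $\terms{S,n}$ (the leading-symbol bookkeeping together with the use of \eqref{RefUnamb1}); but that is exactly what was asserted before the theorem, so I treat it as given. The remaining obstacle is conceptual rather than computational: a general union of unambiguous sets need not be unambiguous, so one cannot glue strata blindly. What rescues us is precisely that $\left( \terms{S,n} \right)_n$ is totally ordered by inclusion, which guarantees that any two terms share a common stratum and thereby makes the union step immediate.
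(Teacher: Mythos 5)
Your proof is correct and follows the same route as the paper: pick the maximum $N$ of the two depths, place both terms in the common stratum $\terms{S,N}$, and invoke the unambiguity of that stratum. The paper's own proof is exactly this three-line argument, with the stratum-wise unambiguity taken as established beforehand just as you assume it.
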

\begin{proof}
Suppose $t, t' \in \terms{S}$ and $y, y' \in S^*$ are such that $t y = t' y'$. 
Call $m$ the greater among the depths of $t$ and $t'$. 
Since $t, t' \in \terms{S, m}$ and $\terms{S,m}$ is unambiguous, it must be $t=t'$ and $y=y'$.
\end{proof}

This permits defining subterms of a term $t$ as the $n$-tuple of terms
$t_1, \ldots, t_n$ such that:

\begin{align*}
t = \fconc(o, t_1, \ldots, t_n),
\end{align*}
where $o$ is the first operation symbol, of arity $n$, of the string $t$. 
Since we know that $t_1, \ldots, t_n$ all belong to $T_S$, which is unambiguous, we can again apply \eqref{RefUnamb1} to decree their uniqueness, which is the point.
We have discussed the general idea, the exact formulation is contained inside \M{} articles.

\section{Encoding in \M{}}
\label{RefMizarEnc}
In reporting here \M{} formalizations, some minor typographic changes to the original code have been made to accommodate it and make it more readable; 
thus the snippets reported here should not be expected to compile correctly. 
For the real code, please refer to \M{} articles.\\
For a concise reminder of the \M{} notations we will be using, refer to appendix \ref{RefSectNotations}.
An extensive tutorial specific to \M{} is \cite{wiedijk2006writing}, while a systematic, up-to-date user manual is \cite{grabowski2010mizar}.

\subsection{The Language type}
\label{RefLangEnc}
Here the ground mode \verb|Language| we will be talking about all the time is defined; it is the \M{} counterpart of the structure `language' introduced in~\ref{RefDefLanguage}. 
There is good support in \MML{} for finite sequences (articles \verb|FINSEQ_1| through \verb|FINSEQ_8|), so it is natural to identify the strings of  the language we are defining with the finite sequences over its carrier.
The same was done originally in \cite{QC_LANG1}. 
The difference is that there it has been imposed to use exclusively sequences of Kuratowski pairs of natural numbers. 
Moreover, the encoding of special logical symbols is ``hardwired'' into that scheme. 
Then a layer of functors and modes definitions is added to be able to refer to these pairs with more suggestive names instead of using directly the encoding.
\\
However, there is no apparent need to impose preemptively how a first-order language should be encoded into sets, rather it seems more sensible to work only at the level of \M{} types, leaving freedom to choose what actual symbol set to use to the instantiator of the type.
\\
Indeed, we will see that such a rigidity, imposing how to encode even only pieces of the language happens to be troublesome for further development (see page~\pageref{RefTooSimple}).
So let us start by introducing a preparatory type named \verb|Language-like|:

\begin{verbatim}
definition
  struct (ZeroOneStr) Language-like 
  (#carrier->set, ZeroF, OneF->Element of the carrier, 
  adicity->Function of the carrier\{the OneF}, INT#);
end;
\end{verbatim}

In this definition there appears yet another provision of \M{} to cope with types.
\verb|struct| is a ``structured type'', similar in spirit to the ones found in many programming languages (called something like aggregates, records, structures, as appropriate).
It is a concise way to group a finite number of types into one entity which becomes a new type. 
Each entry, or selector, of the new type is denoted by an arbitrary type name. 
In our case, we took a pre-defined (see \verb|STRUCT_0|) structure type, called \verb|ZeroOneStr|, inherited all of its fields and added one more. 
So we end up with a quadruple consisting of an alphabet (the carrier), two distinguished symbols of it, and a arity (adicity) function. 
For brevity, a couple of devices are introduced here: first, \verb|OneF| will serve as our logical connective Nor ($\nor$), and it will turn out convenient not to have the arity defined on it; secondly, we agree that a negative arity will denote a relation symbol, a positive arity an operation symbol, and a zero arity a literal; these two points had been already introduced in section \ref{RefSectExampleLanguage}. 
With this in mind, the following definitions are obvious shorthands:

\label{RefLang1}
\begin{verbatim}
definition
  let S be Language-like;
  func AllSymbolsOf S equals the carrier of S;
  func LettersOf S equals (the adicity of S) " {0};
  func OpSymbolsOf S equals (the adicity of S) " (NAT \ {0});
  func RelSymbolsOf S equals (the adicity of S) " (INT \ NAT);
  func TermSymbolsOf S equals (the adicity of S) " NAT;
  func LowerCompoundersOf S equals 
       (the adicity of S) " (INT \ {0});
  func TheEqSymbOf S equals the ZeroF of S;
  func TheNorSymbOf S equals the OneF of S; 
  func OwnSymbolsOf S equals 
  (the carrier of S)\{the ZeroF of S,the OneF of S};
end;
definition
  let S be Language-like;
  mode Element of S is Element of (AllSymbolsOf S);
  func AtomicFormulaSymbolsOf S equals 
    AllSymbolsOf S\{TheNorSymbOf S};
  func AtomicTermsOf S equals 1-tuples_on (LettersOf S);
end;
\end{verbatim}

This almost suffices to encode any first-order language. We only add a couple of further features we wish to endow our new type with:

\begin{verbatim}
definition
  let S be Language-like;
  attr S is eligible means LettersOf S is infinite & 
  (the adicity of S).(TheEqSymbOf S)=-2; 
end;
\end{verbatim}

These two requests impose to have access to an infinite number of letters (we do not know the length of the terms and formulas we will need to write down), and that the arity of the equality symbol is $-2$, as already discussed in section~\ref{RefSectExampleLanguage}, and as dictated by~\ref{RefDefLanguage}.
This automatically likens equality symbol to any other predicate symbol. 
However, this is true only at this stage of syntax. 
The equality symbol will acquire of course special meaning in evaluation, as discussed in section \ref{RefFree2}.
Finally, \verb|Language| type is:

\begin{verbatim}
definition
  mode Language is eligible (non degenerated Language-like);
end;
\end{verbatim}

\verb|degenerated| is an attribute inherited from the type \verb|ZeroOneStr|, and means that the \verb|ZeroF| and the \verb|OneF| coincide. 
So we are requesting that the equality symbol and the logical connective symbol are distinguishable.
For a more elegant formalization and a purely technical convenience (the deployment of registrations, see section~\ref{RefSectAttributes}), we also translate definitions in \ref{RefLang1} attribute-wise:

\label{RefAttributes}
\begin{verbatim}
definition
  let S be Language-like;
  let s be Element of S;
  attr s is literal means s in LettersOf S;
  attr s is low-compounding means  s in LowerCompoundersOf S;
  attr s is operational means  s in OpSymbolsOf S;
  attr s is relational means s in RelSymbolsOf S;
  attr s is termal means  s in TermSymbolsOf S;
  attr s is own means  s in OwnSymbolsOf S;
  attr s is ofAtomicFormula means s in AtomicFormulaSymbolsOf S;
end;
\end{verbatim}

\subsubsection{Too simple an encoding}
\label{RefTooSimple}
We want to hint at an alternative definition for the  \verb|Language| type, which originally was adopted for its further simplicity, but then deprecated and removed for reasons we will discuss.
It was modeled after the idea that, looking at definition \ref{RefDefLanguage}, there is no reason to separate the concept of a language and its arity, with the latter being able to carry an almost full description of the language itself in ZF.
So, instead of using a higher level, structured type to declare the type \verb|-Language|, initially the code relied on a simpler definition based on the $Function$ type, which is one of the most basic and rich in already-made results inside \MML{}: 

\begin{verbatim}
definition
let f be Function;
attr f is eligible means :DefEli: f"{0} is infinite;
end;
definition
mode lang is eligible INT-valued Function;
end;
definition
let S be lang;
func OwnSymbolsOf S equals dom S;
coherence;
end;
notation
let S be lang;
synonym TheEqSymbOf S for OwnSymbolsOf S;
end;
end;
definition
let S be lang;
func TheNorSymbOf S equals {TheEqSymbOf S};
coherence;
end;
definition
let S be lang;
func AllSymbolsOf S equals 
OwnSymbolsOf S \/ {TheEqSymbOf S} \/ {TheNorSymbOf S};
coherence;
end;
definition
let S be lang;
mode Element of S is Element of AllSymbolsOf S;
end;
\end{verbatim}

This definition presents some nice aspects:
\begin{itemize}
\item
Relying straightforward on \verb|Function| type, the type \verb|lang| presents a terse definition, and, thus and most importantly, carries very little work to show existence of entities: 
it is to be noted that in \M{} one has to prove, in the end, existence of any construct he introduces.

\item
The conditions

\begin{enumerate}
\item
\verb| TheEqSymbOf S <> TheNorSymbOf S| (see request \eqref{RefEq28} of \ref{RefDefLanguage}),
\item
\verb| not TheEqSymbOf S in OwnSymbolsOf S|, and
\item
\verb| not TheNorSymbOf S in OwnSymbolsOf S|
\end{enumerate}

are automatically honored, since Tarski-Grothendieck axioms easily allow to show, respectively:
\begin{enumerate}
\item
\verb| X <> {X}|,
\item
\verb|not X in X|,
\item
\verb|not {X} in X|
\end{enumerate}
for any set \verb|X|.
\end{itemize}

So we have conditions \eqref{RefEq28} and \eqref{RefEq29} of definition \ref{RefDefLanguage} already satisfied, the former automatically and the latter via an explicit, yet posing little difficulties to be existentially proved, attribute \verb|eligible|, thus fulfilling the same tasks of the attribute of the same name in the ultimate \M{} code.
The remaining condition \eqref{RefEq30} in definition \ref{RefDefLanguage} was actually not imposed at all; rather, the arity of the language was successively overlaid with an 
\verb| ar | functor based on it, and which was subsequently used in its place:

\begin{verbatim}
definition
let S be lang, s be Element of S;
attr s is own means :DefOwn: s in OwnSymbolsOf S;
attr s is ofAtomicFormula means s in AtomicFormulaSymbolsOf S;
end;
definition
let S be lang;
let s be ofAtomicFormula Element of S;
func ar s equals 
S.s if s is own
otherwise -2;
coherence;
consistency;
end;
\end{verbatim}

Actually, an utterly similar \verb|ar| functor, 
for the respective \verb|Language| mode,
is still present in current \M{} code and largely preferred to direct invocation of \verb|adicity| function because the former is handier to typewrite and leaves to \M{} the burden of checking its argument having the correct type.
It looks like the original definition of language given above was neater and required less preliminary work, so why has it been replaced by \verb|Language|?
The trouble with this definition becomes apparent when trying to restrict or  extend a language. 
In a handful of key steps along the proof of satisfiability theorem, and of \L{}-Skolem, we needed to apply the following scheme: 
take two languages agreeing on some common symbols (typically because one is the restriction/extension of the other), and apply coincidence lemma on a formula consisting only of some of those symbols to conclude that it is a formula in both languages, 
and that its evaluations in two interpretation of the respective languages coincide.
This kind of reasoning is fundamental in the following points:
\begin{itemize}
\item
In eliminating the demand for $\im {\ari^{-1}} { \left\{ 0 \right\} }$ to be infinite from \ref{RefThmSat1} in proof of \ref{RefThmSat3}. 
In turn, the coincidence lemma occurs twice there, once in the main proof, to pass through \emph{restriction} from an interpretation of $S_N$ to one of $S$, and once in the subproof, to pass through \emph{extension} from an interpretation of $S$ to an interpretation of $S_N$, thus in the opposite verse as before.
\item
In the proof of \ref{RefThmSkolem}, to restrict a generic language to the countable one made by the symbols appearing in a countable set of formulas, suitable to be applied \ref{RefThmSat4}, and in extending it back, to supply the interpretation thus found as the witness for the thesis.
\end{itemize}

Obviously, for the coincidence lemma to work, the special symbols, that is $\equiv$ and $\nor$, of the two languages must coincide (see \ref{RefThmCoincidence}).
This fails to hold in the definition above; indeed, one is \emph{granted} that this will not happen, unless the two languages are the same.
Indeed, explicitly constructing the \M{} representations of $\equiv$ and $\nor$ from a given language is a form of the rigid ``hardwiring{}'' we wanted to depart from, as explained in motivating our work: see the beginning of section \ref{RefLangEnc}.

\subsection{Syntax and semantics}
\label{RefSyntax}
The main objects introduced in this section are the three functors \verb|-termsOfMaxDepth|,  \verb|-formulasOfMaxDepth|, \verb|-TruthEval| and the type \verb|Interpreter|.
They are the counterparts of the entities presented in \ref{RefDefTerms}, \ref{RefDefWffs}, \ref{RefDefEvalCompound} and \ref{RefDefInterpretation}, respectively, and have the fundamental roles of describing the sets of terms and formulas of a given (or smaller) depth, of defining what is an interpretation, and of evaluating a term or a formula in a given interpretation.
For the sake of convenience, let us introduce a dedicated type for the generic S-string:

\begin{verbatim}
definition
  let S be Language;
  mode string of S is Element of ((AllSymbolsOf S)*\{{}});
end;
\end{verbatim}

The present construction will be split in stages: first atomic terms (already introduced in \ref{RefLang1}), then terms inductively, and finally atomic formulas. 
Let us start with an auxiliary function performing the basic construction for polish notation, that is, appending an n-tuple of strings to a leading symbol according to its arity:

\begin{verbatim}
definition
  let S be Language,s be ofAtomicFormula Element of S; 
  let Strings be set;
  func ar(s) -> Element of INT equals (the adicity of S).s;
  func Compound(s,Strings) -> Subset of (AllSymbolsOf S)*\{{}} 
  equals 
    {<*s*> ^ ((S-multiCat).StringTuple) where 
    StringTuple is Element of (AllSymbolsOf S)**:
    rng StringTuple c= Strings & 
    StringTuple is (abs(ar(s)))-long};
end;
\end{verbatim}

Here, \verb|S-multiCat| is a dedicated function which concatenates tuples of strings, and renders the mapping $\fconc{}$ introduced on page \pageref{RefDefMultiCat}. 
Roughly speaking, it is the finite iteration of the functor \verb|^|. 
Now recursive construction of terms is straightforward:

\begin{verbatim}
definition
  let S be Language;
  func S-termsOfMaxDepth -> 
  Function of NAT,bool((AllSymbolsOf S)*\{{}}) 
  means dom it=NAT & it.0 = (AtomicTermsOf S) & for n being Nat 
    holds it.(n+1) = (union {Compound(s,it.n) 
    where s is ofAtomicFormula Element of S:s is operational}
    ) \/ it.n;
  func AllTermsOf S equals union rng (S-termsOfMaxDepth);
end;
\end{verbatim}

Again, let us rephrase above definitions in terms of attributes:

\begin{verbatim}
definition
  let m be Nat, S be Language, w be string of S;
  attr w is m-termal means  w in S-termsOfMaxDepth.m;
  let w be string of S;
  attr w is termal means w in AllTermsOf S;
  attr w is atomic means 
    ex s being relational Element of S, 
    V being abs(ar(s))-long Element of (AllTermsOf S)* st 
      w=<*s*>^(S-multiCat.V);
end;
\end{verbatim}

\subsection[Saving work in doing semantics]{Saving work: completing syntax and doing semantics, concurrently}
\label{RefSemantics}
Definitions in \ref{RefSyntax} are the \M{} version of definitions up to \ref{RefDefTerms}.
Now, instead of proceeding with the syntax of non-atomic formulas, we digress to start concurrently putting forth some building blocks of semantics. 
We will then be able to define both syntax and semantics of non-atomic formulas in one shot, taking advantage of the fact that, in contrast to the building of terms, the compounders to derive higher-level formulas from lower-level ones are fixed and well-known. 
The fact of having reduced them to just two types (that is, one logical connective and one existential quantifier) will ease the job.
This strategy saves a good deal of work for our purpose.
First, we start with defining what is an interpretation of a \verb|Language| \verb|S| in a non empty set \verb|U| (standing for universe). 
The definition is similar to the one given in \cite{0387908951}; only, since we don't make distinction between 0-arity compounders (constants) and variables symbols, the distinction made there between interpretation, structure and assignment vanishes too. 
Besides, we separate the universe from the interpretation (the corresponding type is called \verb|Interpreter|; in informal talking we will use both words), more precisely, we make the latter a type dependent on the former.
Here, too, we proceed gradually:

\label{RefInterpreter1}
\begin{verbatim}
definition
  let S be Language, U be non empty set, 
  s be ofAtomicFormula Element of S;
  mode Interpreter of s, U -> 
  Function of (abs(ar(s)))-tuples_on U, U\/BOOLEAN means 
    it is Function of (abs(ar(s)))-tuples_on U, BOOLEAN 
  if s is relational otherwise 
    it is Function of (abs(ar(s)))-tuples_on U, U;
end;
\end{verbatim}

It is worth noting that in case of a literal ($0$-arity) symbol \verb|s|, the interpreter of s,U reduces to a function from $\emptyset$ into an element of $U$.
So, the assignment of a literal, instead of being directly a constant of $u$ of $U$, is rendered as a function \verb|{{}} --> u|, see \ref{RefRemInterpretationOfLiteral}. 
This is convenient for reducing the cases in subsequent proofs and definitions from three (positive, negative and zero arity) to two (negative and non negative arity). 
Now the definition of an interpreter of the whole alphabet is straightforward:

\begin{verbatim}
definition
  let S be Language, U be non empty set;
  mode Interpreter of S, U -> Function means 
  for s being own Element of S holds 
    it.s is Interpreter of s, U;
end;
definition
  let S be Language, U be non empty set, f be Function;
  attr f is (S,U)-interpreter-like means 
  f is Interpreter of S,U & f is Function-yielding;
:: Function-yielding not fundamental; 
:: added for technical convenience
end;
definition
  let S be Language, U be non empty set;
  func U-InterpretersOf S equals {f where f is 
  Element of Funcs(OwnSymbolsOf S, PFuncs(U*,U\/BOOLEAN)): 
  f is (S,U)-interpreter-like};
end;
\end{verbatim}

Before going on we introduce two further constructs: the first is the standard \M{} functor (\verb|FUNCT_4:def 1|) \verb|+*| which `pastes' two function \verb|f| and \verb|g| into a function \verb|f +* g| defined on the union of their domains, with \verb|g| (the right term) prevailing in case of conflicts: a generalization of it to relations was introduced in \ref{RefDefPaste}.

The second is the functor \verb|ReassignIn| which implements the operator changing the assignment of a single literal in a given interpretation, defined in \ref{RefNotationReassign} 
and examined thoroughly in section  \ref{RefConsiderations}.

Now, building a functor \verb|I-AtomicEval phi| yielding the truth value of the \emph{atomic} formula \verb|phi| in the interpretation \verb|I| is standard practice, and the corresponding code is omitted here.
As anticipated, we rather want to indulge on the interpretation of non atomic formulas. 
Usually, one has to do first a recursive definition of the set of wffs, then another recursive definition to evaluate a wff in a given interpretation.
The idea here is to do both in one single recursive definition.
This technically can be done by having, as an object of the recursive definition, a partial function, here called \verb|F| provisionally for brevity, such that, for any natural \verb|mm|, \verb|F.mm|
\begin{itemize}
\item
It has as a domain exactly the cartesian product of \verb|U-InterpretersOf S| with the set of wff of depth not exceeding \verb|mm|.
\item
On that domain it maps a pair (interpretation, string) into the right truth value. 
\end{itemize}
We are thus working on a higher level, where also the interpreter \verb|I| is a variable which gets evaluated together with a wff to return a truth value; only \verb|L| and \verb|U| are fixed parameters. 
For this reason, we first need a tedious but necessary step to transform \verb|I-AtomicEval phi| from a functor into a \emph{function} of \verb|I| and \verb|phi|, named 
\verb|S-TruthEval U| (its name is regretfully not too descriptive):
\label{RefAtomicEval}
\begin{verbatim}
definition
  let S,U;
  func S-TruthEval(U) -> Function of 
  [: U-InterpretersOf S, AtomicFormulasOf S :], BOOLEAN 
  means for I being Element of U-InterpretersOf S, 
  phi being Element of AtomicFormulasOf S holds 
    it.(I,phi)=I-AtomicEval(phi);
end;
\end{verbatim}

For the same reason, in \M{} code the name of the functor \verb|F| contains only \verb|S| and \verb|U|, and is \verb|(S,U)-TruthEval|; so we can get the expected behaviour for it via the fundamental definition:

\begin{verbatim}
definition
  let S be Language, U be non empty set;
  func (S,U)-TruthEval -> Function of NAT, PFuncs
  ([:U-InterpretersOf S, (AllSymbolsOf S)*\{{}}:], BOOLEAN) 
  means it.0=S-TruthEval(U) & for mm being Element of NAT holds 
  it.(mm+1)=G(it.mm) +* it.mm;
end;
\end{verbatim}

At each step the partial function \verb|(S,U)-TruthEval.mm|, which applied to the generic pair \verb|[:I, phi:]| yields a defined, and correct, truth value if and only if \verb|phi| is of depth not exceeding \verb|mm|, is extended by the operator \verb|G|, which of course must yield a partial function of domain extended to the wffs of depth \verb|mm+1|.
So the task is now the construction of \verb|G|. 
We divide the problem in two simpler parts, taking care respectively of the existential symbol and of the NOR symbol separately, so that \verb|G(it.mm)| in the actual \M{} definition is written as
\begin{verbatim}
ExIterator(it.mm) +* NorIterator(it.mm)
\end{verbatim}
Let us illustrate only the construction of \verb|ExIterator g| alone: the idea behind the other half is the same. Here \verb|g| is a generic, appropriate \verb|PartFunc|.
We said that \verb|ExIterator| has to take care simultaneously that the \verb|PartFunc| it returns has both the right domain and the right output on it, based on \verb|g|.
This does not mean that we cannot further divide the problem into simpler parts: the 
definition of \verb|ExIterator g| will actually specify only the correct domain, delegating the evaluation to yet another functor \verb|-ExFunctor|:
\begin{verbatim}
definition
  let S be Language, U be non empty set; 
  let g be Element of PFuncs
  ([:U-InterpretersOf S, (AllSymbolsOf S)*\{{}}:], BOOLEAN); 
  func ExIterator(g) -> PartFunc of 
  [:U-InterpretersOf S, (AllSymbolsOf S)*\{{}}:],BOOLEAN means
    (for x being Element of U-InterpretersOf S, 
    y being Element of (AllSymbolsOf S)*\{{}} holds  
    ([x,y] in dom it iff (
    ex v being literal Element of S, w being string of S st 
    [x,w] in dom g & y=<*v*>^w
     ))) &
  (for x being Element of U-InterpretersOf S, 
  y being Element of (AllSymbolsOf S)*\{{}} st [x,y] in dom it 
    holds it.(x,y)=g-ExFunctor(x,y));
end;
\end{verbatim}
We have indented the part of definition which actually does something (i.e. the specification of the domain, as we were just saying); it does that something quite trivially, too.
Also trivial is the action of the functor \verb|-ExFunctor(x,y)| to which we delegated the semantical part:
\begin{verbatim}
definition
  let S be Language, U be non empty set, f be PartFunc of 
  [:U-InterpretersOf S, (AllSymbolsOf S)*\{{}}:], BOOLEAN; 
  let I be Element of U-InterpretersOf S; 
  let phi be Element of(AllSymbolsOf S)*\{{}};
  func f-ExFunctor(I,phi) -> Element of BOOLEAN equals 
  TRUE if ex u being Element of U, v being literal Element of S 
    st (phi.1=v & f.((v,u) ReassignIn I, phi/^1)=TRUE) 
  otherwise FALSE;
end;
\end{verbatim}
Just notice that this functor is expected to be accurate only when yielding \verb|TRUE|, since otherwise it could yield \verb|FALSE| when actually it is supposed to be undefined.
This is not a problem anymore, since the previous definition already took care of that matter.

Now the significant part of the work is done: all the syntactical and semantical knowledge is thus stored in \verb|(S,U)-TruthEval|, we just may want to rearrange it in a more accessible way, a task with which we end this section.
First, we can go back to the lower level and get a function of just the string we want to evaluate:

\label{RefTruthEval2}
\begin{verbatim}
definition
  let S be Language, U be non empty set, m be Nat;
  let I be Element of U-InterpretersOf S;
  func (I,m)-TruthEval -> 
  Element of PFuncs((AllSymbolsOf S)*\{{}},BOOLEAN) 
  equals (curry ((S,U)-TruthEval.m)).I;
end;
\end{verbatim}

Information about both syntax and semantics is now carried by \verb|(I,m)-TruthEval| in respectively its domain and its return value, so:

\label{RefDepth}
\begin{verbatim}
definition
  let S be Language, m be Nat, w be string of S;
  func S-formulasOfMaxDepth m -> 
  Subset of ((AllSymbolsOf S)*\{{}}) means 
  for U being non empty set,
  I being Element of U-InterpretersOf S holds 
    it=dom (I,m)-TruthEval;
  attr w is m-wff means  w in S-formulasOfMaxDepth m;
  attr w is wff means ex m st w is m-wff;
  func AllFormulasOf S equals 
  {x where x is string of S: ex m st x is m-wff};
end;
\end{verbatim}
\begin{verbatim}
definition
  let S be Language, U be non empty set; 
  let I be Element of U-InterpretersOf S, w be wff string of S;
  func I-TruthEval w -> Element of BOOLEAN means 
  for m being Nat st w is m-wff holds it=((I,m)-TruthEval).w;
end;
\end{verbatim}
Here only the independence of \verb|dom (I,m)-TruthEval| on \verb|I| and \verb|U| needs to be shown to finally be able to evaluate the truth value of a wff formula, which is omitted here.
Let us end this part with stating the remaining semantical definitions implied in the statement of \L{}-Skolem and completeness theorems, both traditionally indicated by the double turnstile $\vDash$; the satisfaction relation (cmp. \ref{RefDefSatisfaction}):
\begin{verbatim}
definition
  let U be non empty set, S be Language; 
  let I be Element of U-InterpretersOf S; let X be set;
  attr X is I-satisfied means 
  for phi being wff string of S st phi in X holds 
    I-TruthEval phi=1;
end;
\end{verbatim}
and the logical implication (entailment, cmp. \ref{RefDefEntailment}):
\begin{verbatim}
definition
  let X be set, S be Language, phi be wff string of S;
  attr phi is X-implied means 
  for U being non empty set, 
  I being Element of U-InterpretersOf S st 
  X is I-satisfied holds I-TruthEval phi=1;
end;
\end{verbatim}

\subsection{Free interpretation}
\label{RefFree1}
The free interpreter of a given operational symbol $s$ of arity $n$ of a Language $S$ is the operation on the set of $n$-tuples of terms of $S$ obtained by concatenating the tuple and appending it to the symbol $s$. 
Obviously the result is again an element of the set of all terms of $S$,
which now acts as a universe and makes this operation an interpreter as of \ref{RefInterpreter1}.

If we add to the picture an arbitrary set $X$ of formulas of $S$ we can talk also of the free interpreter of a relational symbols $r$ of $S$, of arity $-n \in \mathbb{Z}^-$. 
In this case an $n$-tuple of terms  is evaluated TRUE if and only if the atomic formula obtained by concatenating and appending to $r$ (the same job done in previous case) belongs to $X$. 

\label{RefFree2}
\begin{verbatim}
definition
  let X be set, S be Language; 
  let s be ofAtomicFormula Element of S;
  func X-freeInterpreter(s) -> Interpreter of s,(AllTermsOf S) 
    equals s-compound |(abs(ar(s))-tuples_on(AllTermsOf S)) 
  if not s is relational otherwise 
    chi(X,AtomicFormulasOf S) * 
    (s-compound | (abs(ar(s))-tuples_on (AllTermsOf S)));
end;
\end{verbatim}

It is worth noting that this definition is also applicable to the equality symbol. 
This does not matter since, for \emph{any} interpreter, the evaluation of any $\equiv$ atomic formula is overridden at the level of the definition of \verb|-TruthEval| to give the correct value.
This is indeed what is meant when talking about a language with equality.
The functor \verb|-compound| appearing above is introduced to aid the typing and has a trivial definition (see \ref{RefSyntax} for \verb|-multiCat|):

\begin{verbatim}
definition
  let S be Language, s be Element of S;
  func s-compound -> Function of ((AllSymbolsOf S)*\{{}})*, 
  (AllSymbolsOf S)*\{{}} means for V being Element of 
  ((AllSymbolsOf S)*\{{}})* holds it.V = <*s*>^(S-multiCat.V);
end;
\end{verbatim}

\begin{sloppypar}
And finally here is the free interpretation over all the symbols of S, 
with \verb|AllTermsOf S| as universe.
\end{sloppypar}

\begin{verbatim}
definition
  let S be Language, X be set;
  func (S,X)-freeInterpreter -> 
  Element of (AllTermsOf S)-InterpretersOf S means 
  dom it=OwnSymbolsOf S & for s being own Element of S holds
    it.s=X-freeInterpreter(s);
end;
\end{verbatim}

\label{RefSeqCal}


\subsection{Justification of ruleset choice}
\label{RefSectRuleSep}
The complete ruleset appearing in the statement of \ref{RefThmCompleteness2} has formed as a result of the process of \M{}ing completeness theorem.
This means that, as the proof of the latter is staged into a string of roughly escalating results, each rule has been gradually introduced when the previously introduced ones no longer sufficed to proceed.
This way, a tight bound between each intermediate result and the corresponding needed subset of rules have been established, and consequently a hierarchy among rules have been established; for example:
\begin{enumerate}
\item
\label{Ref051}
rules $\rEqRefl, \rEqSymm, \rEqTrans$ are needed for 
\termeq{D}{X} to be an equivalence relation (see~\ref{RefThmTermeqEqrel}),
\item
\label{Ref052}
$\rEqSymm, \rEqRefl, \rFunc, \rRel $ are needed for it to be compatible with $\freeInt{X}$ (see~\ref{RefThmTermeqHenkinCompatible2}), so that
\item
\label{Ref053}
rules $\rEqRefl, \rEqSymm, \rEqTrans, \rFunc, \rRel $
are needed to merely \emph{define} the Henkin interpretation,
\item
\label{Ref054}
rules $\rAssumption, \rEqRefl, \rEqSymm, \rEqTrans, \rFunc, \rRel $ are needed for this interpretation to be a model of the \emph{atomic} formulas of $X$ (\ref{RefThmHenkin1}), 
\item
\label{Ref055}
rule $\rEx$ permits extension of result as from point \eqref{Ref054} to existential formulas like $v \varphi$, while
\item
\label{Ref056}
rule $\rNor$ permits to extend point \eqref{Ref054} to non-existential, non-atomic formulas like $\nor \varphi_1 \varphi_2$.
\item
\label{Ref057}
Since the extension as from points \eqref{Ref055} and \eqref{Ref056} pertain to a witnessed and expanded theory, we use only rules 
$\rThin, \rCut, \rWitnessA, \rEqRefl$
to complete a theory with witnesses, and
\item
we use only rules $\rThin, \rCut $ 
to expand a theory into a closed one, so that
\item
the ruleset appearing in satisfiability theorem's statement, \ref{RefThmSat2}, are exactly the one needed to prove it.
\item
Rule $\rIntuitionisticNightmare$ has to be added to the remaining only to prove non-negative formulas entailed by a consistent theory (\ref{RefThmCompleteness2}).
\end{enumerate}
Rules can thus be precisely tiered according to their functional role during the various proofs.

Moreover, each single subjunction of a new rule in such stepped enlargement of the ruleset was made trying to comply with secondary criteria
such as simplicity and minimality: 
axioms (that is, rules with no input sequents) have been preferred over rules having one, and, even more, over rules having two premisses; 
rules involving atomic formulas have been preferred over rules involving non-atomic formulas.

\label{RefFeedback}
Some rules (in particular $\rFunc$ and $\rRel$), besides complying with the above ideas, are also more formalization-friendly than the ones initially conceived (see \cite{caminati2009yet}), so that how to formalize back-influenced what to formalize, a phenomenon occurred several times along the realization of the whole project. 
Instead of the one-way dynamics (from human to machine) one could expect when starting digging into formalization, this turned into  a sort of feedback leading the human to rethink and rephrase along the way what he is formalizing. 
Every time this happened, the final outcome was always tidier and neater than the initial idea; some reflections on this facet of formalization are in section~\ref{RefFormalizationAndKnowledge}.

Admittedly, $\rFunc$ and $\rRel$ are a bit clumsy to write down, but their proof-theoretical weakness turned out to be quite helpful in easing formalization.\\
Anyway, writing derivation rules in the manner above is like drawing diagrams, in that their goal is to communicate to another human how the rule works; 
what matters is the formalizability, and maybe the computability (which is likely to be good if the former is), so we should not worry about the appearance of those two rules.


Given the guiding ideas according to which we formed our ruleset, and for the reasons exposed in section~\ref{RefPlainText}, it is therefore natural to wonder  whether we can dispense from these notions, or if we can provide simplified versions of them.
We could not help using the notion of term substitution in $\rEx$; 
however, the form of $\rWitnessA$ presents two notable simplifications:

\begin{itemize}
\item
Only the trivial literal-with-literal form of substitution (simple substitution, \ref{RefDefSimpleSubst}) appears.
\item
There is no request on the freeness of the occurrence of the substituted letter.
\end{itemize}

\subsection{Sequents and rules}
\label{RefSectDefSep}
We first define what sequents are in just a plain way:

\begin{verbatim}
definition
  let S be Language; func S-sequents equals 
  {[antecedent,succedent] where 
  antecedent is Subset of AllFormulasOf S, 
  succedent is wff string of S: antecedent is finite};
end;
\end{verbatim}

Only observe that \verb|antecedent| is an (unsorted) finite set, not a $n$-tuple or a bag.

Since the common way of representing sequent derivation rules, as already noticed, has more the nature of a diagram rather than that of a precise formulation, encoding them has presented a number of fundamental design choices.
When starting from scratch, as in this case, one should put an effort in laying down a structure with enough flexibility and generality to last in time and possibly be reused for other purposes.

The first decision regarded modularization: the framework specifying what a rule is and its general properties has been separated from the description itself of the single rule \emph{and} from the definition of derivability. 
\MML{} presents at least two further formalizations of a proof system: 
see definitions of \verb|is_a_correct_step_wrt| inside \verb|CQC_THE1| and of \verb|is_a_correct_step| inside \verb|CALCUL_1|.
Both adopt a monolithic, less articulated approach, simply hardcoding inside the definition itself the possible cases admitted by each single calculus rule via \M{} \verb|if| statements.
A proof is deemed correct if each step of it is correct according to the above cluster of cases.
This is arguably another instance of rigidity in a basic definition, like the one we complained about in justifying the introduction of a new encoding of language (see section \ref{RefLangEnc}).

Here are some benefits brought by our modular approach:

\begin{itemize}
\item
Definitions are terse and readable, compared with other approaches 
like those of \verb|CALCUL_1| and \verb|CQC_THE1|, see below.
\item
The effect of allowing or forbidding the use of a rule can be studied. Indeed, here for each result proved the single rules needed are resolved.
\item
Possible expansion upon this schemes would be feasible; e.g. for applying logic flavors other than classical one.
\end{itemize}

So we first define a framework in which to deal with rules by specifying an abstract \verb|Rule| type as done in \ref{RefDefRule}:
\label{RefRuleMizDef}
\begin{verbatim}
definition
  let S be Language;
  mode Rule of S is 
    Element of Funcs (bool (S-sequents), bool (S-sequents));
  mode RuleSet of S is 
    Subset of Funcs (bool (S-sequents), bool (S-sequents));
end;    
\end{verbatim}

One should think of a \verb|Rule| as the function mapping a set $X$ of sequents into 
the set of all sequents obtainable by applying the rule to all the sequents in $X$
.

Having to do generally with deductions using several rules in succession, we introduce the functor \verb|OneStep| to specify all the sequents derivable from some starting sequents using only one rule of a given \verb|RuleSet D|, as in \ref{RefDefOneStep}.

\begin{verbatim}
definition
  let D be RuleSet of S;
  func OneStep(D) -> Rule of S means
  dom it = bool (S-sequents) & 
  for Seqs being set st Seqs in dom it holds 
    it.Seqs = union ((union D) .: {Seqs});
end;
\end{verbatim}

With that, we have started specifying how to pass from rules to derivations, and the next definition will complete the job. 
Sequent calculus separates the concepts of formal derivability and of provability, so we have two distinct, corresponding attributes as well;
the first (to be compared with \ref{RefDefDerivability}) is applied to a sequent and certifies it to be derivable from an initial set of sequents, while the second (see \ref{RefDefProvable}) applies to a formula and witnesses it is the tail of a sequent derivable from no assumptions and whose premises are given: 

\label{RefDefDerivableMizar}
\begin{verbatim}
definition
  let S be Language, D be RuleSet of S, Seqs1, Seqs2 be set;
  attr Seqs2 is (Seqs1,D)-derivable means 
    Seqs2 c= union (((OneStep D) [*]) .: {Seqs1});
  let X,phi be set;
  attr phi is (X,D)-provable means  
    ex seqt being set st 
      (seqt`1 c= X & seqt`2 = phi & {seqt} is ({},D)-derivable)
end;    
\end{verbatim}

Note how the passage from \verb|OneStep| to derivability leverages some most general constructs as \verb|union|, \verb|[*]| and \verb|.:| (cfr appendix \eqref{RefSectNotations} for their standard notation equivalents). 
This would have not been possible without having detached the notion of rule from that of provability.
Had not we done that, we probably would have ended up to setting some dedicated construction to describe a derivation, including in it an in-line (and verbose) condition of correctness, as it happens in \verb|CQC_THE1| (see definitions of \verb|Proof_Step_Kinds| and \verb|is_a_correct_step_wrt|) 
and in \verb|CALCUL_1| (see the definition of \verb|is_a_correct_step|).
This latter kind of formalizations is not likely to bring any formalization useful outside of its scope and seems much harder to work with.
It seems arguable, however, that the original choice of rigidly encoding the language (see \ref{RefLangEnc}) encourages rigidity as in the constructs just cited.
On the other hand, as stressed in other circumstances, our approach leads to possibly useful by-products of general interest regarding the general objects occurring in definitions: see section \ref{RefConsiderations}.

Now we want to actually code the rules given in section \ref{RefDefRules} in this framework.
The difficulties in encoding a general definition of derivation rule arise from how they are customarily represented; that is, in a diagrammatic form leveraging on the excellent pattern-matching capabilities of the human reader.
These diagrams operatively represent the mechanics of a rule by representing how formulas, or parts of formulas, get altered when passing from the input to the output of a rule. 
Usually the manipulations thus represented are limited to string concatenations and substitutions, and are possibly `decorated{}' with side-conditions (typically regarding the demand of some literal not occurring free inside some formulas occurring in sequents).
In other proof checkers (e.g. Isabelle and HOL variants in general, see section 1.2 of \cite{MR2547330}) there is stronger support for computations and automation, which is just what we would need here (as done in \cite{MR2540934} with Isabelle).

In \M{}, however, there is just set theory: we have therefore to express a rule in this language; one does not have a provision to compute a function, one can just describe a function by encoding its graph in set theory.
Similarly, we cannot compute a rule as its diagram suggest; instead, we must set-theoretically describe what sequents it can associate to a given set of sequents.
This is why the type \verb|Rule| has been defined as from \M{} code above.
With such an approach, doing even most elementary derivations becomes extremely tiresome: every single rule application must be validated by formally checking it satisfies the corresponding \M{} predicate (see section \ref{RefRulePred}).
With no other provision to do sequent calculus, any subsequent \M{} formalization would probably have been much tougher.
Luckily, we will find out a scheme to overlay raw rule definitions with a much more friendly calculus based on \M{}'s functorial registrations: see section \ref{RefRuleClusters}.
On the other hand, even without this overlay, this merely descriptive method presents at least one advantage over the computational method:

\begin{quotation}
The disadvantage is that there is no explicit encoding of a derivation. The
derivation is kept implicitly by the proof-assistant and we cannot manipulate
its structure. \cite{MR2540934}
\end{quotation}
We, on the contrary, have full control on a derivation: indeed each derivation will be hand-crafted into single rule application steps.

\subsection{How to define a single specific rule}
\label{RefRulePred}
A slight nuisance we have to face preliminarily is given by the fact that the symbol set of \M{} is pure ASCII, which forced to translate the names of the rules introduced in \ref{RefDefRules} and elsewhere into plain text, as from the following table
\begin{center}
\begin{tabular}{|c|c|}\hline
\verb|Rule0|
   &
      $\rAssumption$
\\ \hline
      \verb|Rule1|
   &
      $ \rThin $
\\ \hline
      \verb|Rule2|
   &
      $ \rEqRefl $
\\ \hline
      \verb|Rule3a|
   &
      $ \rEqTrans $
\\ \hline
      \verb|Rule3b|
   &
      $ \rEqSymm $
\\ \hline
      \verb|Rule3d|
   &
      $ \rFunc $
\\ \hline
      \verb|Rule3e|
   &
      $ \rRel $
\\ \hline
      \verb|Rule4|
   &
      $ \rEx $
\\ \hline
      \verb|Rule5|
   & 
      $ \rWitnessA $
\\ \hline
      \verb|RuleNor|
   &
      $ \rNor $
\\ \hline
      \verb|Rule8|
   &
      $ \rCut $
\\ \hline
      \verb|Rule9|
   &
      $ \rIntuitionisticNightmare $
\\ \hline
\end{tabular}
\end{center}
We try to separate the jobs of typing from that of actually specifying how a rule works, by proceeding in stages. 

First we specify the core of the rules as \M{} predicates (which were introduced in section \ref{RefSectMizarPredicates}); compare this with their definition \ref{RefDefRules} and with their customary representation of page~\pageref{RefDefRuleDiagrams}:

\begin{verbatim}
definition
let Seqts be set; let S be Language; let seqt be S-null set;

pred seqt Rule0 Seqts means seqt`2 in seqt`1;

pred seqt Rule1 Seqts means ex y being set st y in Seqts &
y`1 c= seqt`1 & seqt`2 = y`2;

pred seqt Rule2 Seqts means seqt`1 is empty & 
ex t being termal string of S st 
  seqt`2 = <* TheEqSymbOf S *> ^ t ^ t;

pred seqt Rule3a Seqts means
ex t1,t2,t3 being termal string of S, x being set st
(seqt=[{<*TheEqSymbOf S*>^t1^t2,<*TheEqSymbOf S*>^t2^t3}, 
<*TheEqSymbOf S*>^t1^t3]);

pred seqt Rule3b Seqts means
ex t1,t2 being termal string of S st 
  seqt`1 = {<*TheEqSymbOf S*>^t1^t2} &
  seqt`2 = <*TheEqSymbOf S*>^t2^t1;

pred seqt Rule3d Seqts means
ex s being low-compounding Element of S, 
T,U being (abs(ar(s)))-element Element of (AllTermsOf S)* st 
  (s is operational & seqt`1=
  {<*TheEqSymbOf S*>^(TT.j)^(UU.j) where 
  j is Element of Seg abs(ar(s)),  
  TT,UU is Function of Seg abs(ar(s)), (AllSymbolsOf S)*\{{}} 
  : TT=T & UU=U} 
  & seqt`2=<*TheEqSymbOf S*>^(s-compound(T))^(s-compound(U)));

pred seqt Rule3e Seqts means 
ex s being relational Element of S, 
T,U being (abs(ar(s)))-element Element of (AllTermsOf S)* st 
  (seqt`1={s-compound(T)} \/
  {<*TheEqSymbOf S*>^(TT.j)^(UU.j) where 
  j is Element of Seg abs(ar(s)),  
  TT,UU is Function of Seg abs(ar(s)), (AllSymbolsOf S)*\{{}} 
  : TT=T & UU=U} 
  & seqt`2=s-compound(U));

pred seqt Rule4 Seqts means  
ex l being literal Element of S, 
phi being wff string of S, 
t being termal string of S st 
  seqt`1={(l,t) SubstIn phi} & seqt`2=<*l*>^phi;

pred seqt Rule5 Seqts means ex v1,v2 being 
(literal Element of S), x being set, p being FinSequence st 
seqt`1=x \/ {<*v1*>^p} & v2 is (x\/{p}\/{seqt`2})-absent &
[x\/{(v1 SubstWith v2).p},seqt`2] in Seqts;

pred seqt RuleNor Seqts means
ex phi1, phi2, phi3, phi4 being wff string of S st seqt=
[{<*TheNorSymbOf S*>^phi1^phi2, <*TheNorSymbOf S*>^phi3^phi4},
				<*TheNorSymbOf S*>^phi2^phi3];

pred seqt Rule8 Seqts means
ex y1,y2 being set, phi,phi1 being wff string of S st 
y1 in Seqts & y2 in Seqts & y1`1=y2`1 & y1`2=phi1 & 
y2`2 = <* TheNorSymbOf S *> ^ phi1 ^ phi1 & 
seqt`1\/{phi}=y1`1 & seqt`2=<*TheNorSymbOf S*>^phi^phi;

pred seqt Rule9 Seqts means  
ex y being set, phi being wff string of S st 
y in Seqts & seqt`2=phi & y`1=seqt`1 & y`2=xnot (xnot phi);
end;  
\end{verbatim}

In the definiens of last rule we took advantage, for a matter of convenience, of the \M{} analog of the map seen in \ref{RefDefNot}:
\begin{verbatim}
definition
let S be Language, w be string of S;
func xnot w -> string of S equals <*TheNorSymbOf S*>^w^w;
end;
\end{verbatim}




We want at this stage to reduce at a minimum the role of types, to concentrate on the mechanics of the rule, so we declare the starting sequents, represented by \verb|Seqts|, as an untyped variable (a set); at the same time, to do the  correct typing later, we need to preserve a link to the type of the specific language \verb|S| we are referring to, so we introduce a fake attribute \verb|-null|, and save it in the variable \verb|seqt|, which represents the derived sequent (the ``denominator{}'') of the rule.

Now we pass from the predicate \verb|RuleX| to a rule as specified by \verb|Rule| type; let us take \verb|Rule0| for example:
\begin{verbatim}
definition
  let S be Language, 
  R be Relation of bool (S-sequents), S-sequents;
  func FuncRule(R) -> Rule of S means 
  for inseqs being set st inseqs in bool (S-sequents) holds 
    it.inseqs=
    {x where x is Element of S-sequents:[inseqs,x] in R};
end;
registration
  let S be Language;
  cluster -> S-null Element of S-sequents;
end;
definition
  let S be Language;
  func P0(S) -> Relation of bool (S-sequents), S-sequents 
  means for Seqts being Element of bool (S-sequents), 
  seqt being Element of (S-sequents) holds 
    [ Seqts, seqt ] in it iff seqt Rule0 Seqts;
end;
\end{verbatim}

\begin{verbatim}
definition
let S be Language;
func R0(S) -> Rule of S equals FuncRule(P0(S));
end;
\end{verbatim}

When having to code many rules this scheme is convenient because one needs only to define a \M{} predicate without much worrying about typing; 
afterwards, the rule is easily, and standardly, converted into a \verb|Relation| and finally applied \verb|FuncRule|.
The last couple of definitions have to be manually repeated verbatim inside \M{} code, only changing \verb|P0(S)| to \verb|P1(S)| and \verb|R0(S)| to \verb|R1(S)| (and so on for each rule\ldots), 
because \M{} lacks second-order definitions.
The code contains the proofs of soundness and monotonicity for all the rules above.
We warn the reader that in it, the attribute \verb|isotone| is used, since the keyword \verb|monotone| was already in use.


\subsection[Derivation rules as \M{} registrations]{Exploiting \M{}'s functorial registrations to restore a sequent calculus}
\label{RefRuleClusters}
As discussed earlier, there is only one other proof checker in which a sequent calculus has been encoded, to the best of author's knowledge:
Isabelle (or variants, 
\cite{dawson2010generic}, \cite{MR2540934}, \cite{chapman2010mechanising}), 
probably due to some nice facilities provided, as inductive definitions and structured proofs (\cite{nipkow2003structured}).
\M{} has fewer provisions to actually calculate things apart from small integer  arithmetics; 
thus, the idea is to exploit its  functorial registrations (see section \ref{RefSectFunctorialRegistrations}), which actually do some pattern matching on a term of the first order language of \M{}: 
we can try to employ this capability to recognize whether a sequent is derivable from another using a given rule.
Once finished, we will have adapted \M{}'s powerful registrations to gain back some resemblance to a calculus, lost with the purely descriptive \emph{definition} of derivation rules in the set theory of \M{} (given in section \ref{RefSectDefSep}) as opposed to their computational \emph{application} possible in Isabelle.

Preliminarily, however, we need to make more precise the definition of \verb|-derivable| attribute: 
in that definition, derivability is assessed first taking all sequents derivable from an initial set of sequents using one rule of $D$, and exactly once (\verb|OneStep D|).
The sequents derivable from a fixed initial set of sequents are those obtainable by iterating the scheme above a finite number of times, that is its transitive closure (\verb|[*]|).
Now we want to be able to resolve that finite number of times, by defining, in parallel with \ref{RefDefDerivability}:

\begin{verbatim}
definition
let S be Language, D be RuleSet of S, m be Nat;
func (m,D)-derivables -> Rule of S equals iter(OneStep D,m);
end;
\end{verbatim}

and

\label{RefDefDerivableNMizar}
\begin{verbatim}
definition
let m be Nat, S be Language, D be RuleSet of S; 
let Seqts,seqt be set;
attr seqt is (m,Seqts,D)-derivable means 
seqt in (m,D)-derivables.Seqts;
end;
\end{verbatim}

This at first looked straightforward, since it seemed sufficient to replace the transitive closure operator with the iteration operator: 
we have constantly advocated the use of as general objects as possible also as good practice in such situations.
Indeed, it turned out to be sufficient, the only shame being that no ready-made result connecting those two operators existed in \MML{} strong enough to be useful in this case.
As we insistently maintained, however, there is a good side also in this worst case, that is: some additional work had to be done, but there is good chance somebody else will use it in the future.
The general result we obtained is reported in section \ref{RefIterResult}.
Here, it permits:

\begin{verbatim}
Lm18: union (((OneStep D)[*]).:{X}) = union 
{(mm,D)-derivables.X where mm is Element of NAT: 
not contradiction};
\end{verbatim}

and finally, the redefinition:

\begin{verbatim}
definition
let S be Language, D be RuleSet of S; let X,x be set;
redefine attr x is (X,D)-provable means 
ex H being set, m st H c= X & [H,x] is (m,{},D)-derivable;
\end{verbatim}
\label{RefDefMizarProvable}

The redefinition above allows to exhibit derivations (and hence proofs) in single steps, and allow finally to render most of our derivation rules as functorial registrations (which were introduced in section \ref{RefSectFunctorialRegistrations}).

\begin{verbatim}

definition
let x be set; let S be Language;
attr x is S-premises-like means 
x c= AllFormulasOf S & x is finite;
end;

registration
let S be Language; let H1, H2 be S-premises-like set;
let l, l1 be literal Element of S;
let phi, phi1, phi2 be wff string of S;
let t, t1, t2 be termal string of S;
cluster [Phi \/ {phi}, phi] -> (1,{},{R0(S)})-derivable set;

cluster [H1\/H2, phi] -> (1,{[H1,phi]},{R1(S)})-derivable set;

cluster {[{},<*TheEqSymbOf S*>^t^t]} -> {R2(S)}-derivable set;

cluster
[{<*TheEqSymbOf S*>^t^t1, 
<*TheEqSymbOf S*>^t1^t2}, <*TheEqSymbOf S*>^t^t2] 
-> (1,{},{R3a(S)})-derivable set;

cluster [{(l,t) SubstIn phi}, <*l*>^phi] -> 
  (1,{},{R4(S)})-derivable set;

let l2 be (H\/{phi1}\/{phi2})-absent literal Element of S;
cluster [(H\/{<*l1*>^phi1}) null l2, phi2] -> 
(1,{[H\/{(l1,l2)-SymbolSubstIn phi1},phi2]},{R5(S)})-derivable 
set;

cluster [{<*TheNorSymbOf s*>^phi1^phi1, <*TheNorSymbOfs*>^phi2^phi2},
<*TheNorSymbOf s*>^phi1^phi2] -> 
(1,{},{RNor(S)})-derivable set;

cluster 
[{<*TheNorSymbOf S*>^phi1^phi2}, <*TheNorSymbOf S*>^phi2^phi1] 
-> (1,{},{RNor(S)})-derivable set;

cluster [H null (phi1^phi2),xnot phi] -> (1,
{[H\/{phi},phi1],[H\/{phi},<*TheNorSymbOf S*>^phi1^phi2]}, 
{R8(S)})-derivable set;

cluster [H, phi] null 1 -> 
(1,{[H, xnot (xnot phi)]},{RD(S)})-derivable set;

end;
\end{verbatim}

Please see section \ref{RefSectAutomations} for remarks on the \verb|null| functor, which ignores the operands on its right and serves merely syntactical, technical purposes connected with some \M{} idiosyncrasies.

Combining the one-step derivations above
, one can perform standard multi-step derivations; 
additionally, if some particular multi-step derivation is found to occur recurrently, one can of course register it in turn into a composite, macro-like derivation (often called derived rule); for example, the following registration might be handy: 

\begin{verbatim}
registration
let S be Language, t be termal string of S;
let phi be wff string of S;
cluster [{phi}, <*TheEqSymbOf S*>^t^t] -> 
(2, {}, {R1(S),R2(S)})-derivable set;
end;
\end{verbatim}
Once he has a decent set of clustered rules, 
one can perform a derivation in a very natural manner, close to a standard derivation of sequent calculus, especially combining them together
, which is essential in calculations, permitting to transitively concatenate derivations, and moreover keeping precise track of their \emph{depth}: the latter results stowed in the first argument of the \verb|-derivable| attribute at the end of the derivation chain.

Here is a sample taken from \verb|FOMODEL4| and rendering a simplest chained derivation:

\begin{verbatim}
[H1\/H2, phi] is (n+1,{[H1, phi]},{R1(S)})-derivable &
[(H1\/H2)\/(H1\/H2),phi] is 
(1,{[H1\/H2,phi]},{R1(S)})-derivable; then
[H1\/H2,phi] is 
(n+1+1,{[H1,phi]},{R1(S)}\/{R1(S)})-derivable by Lm28;
\end{verbatim}
The lastly derived sequent's attribute always stores the depth of the respective derivation, in this case \verb|n+2|.
Notice that invariably, when combining at least two rules to perform multi-step derivations or to obtain a derived rule, one needs monotonicity (see definition \ref{RefDefMonotone2}), which 
accounts for the invoking of \verb|Lm28| above.

Clearly, our original predicate-based definitions of rules, given in section \ref{RefRulePred}, are much more  obnoxious to deal with than this device exploiting \M{} clusters, and serve only to validate the latter, being doomed to disuse after that. 

\subsection{Definitions for readability}
Tinkering with rulesets, as we did by weighing the exact needed rules in statements of results from chapter \ref{RefSectFormulation}, is not a common practice.
Usually, the ruleset is fixed in advance, with everything thereafter meant relative to that unique ruleset.
As a reward, statement of theorems result terser.
We of course can regain back that same advantage by introducing shorthand \M{} definitions, which make possible to state completeness theorem in the concise form seen on page~\pageref{RefThmMizarCompleteness}.

\begin{verbatim}
definition
let S be Language;
func S-rules -> RuleSet of S equals 
{R0(S), R1(S), R2(S), R3a(S), R3b(S), R3d(S), R3e(S), R4(S)} \/ 
{R5(S), RNor(S), R8(S)};
coherence;
end;
\end{verbatim}
\begin{verbatim}
definition
let X be set, S be Language, phi be wff string of S;
attr phi is X-provable means 
phi is (X,{R9(S)}\/S-rules)-provable;
end;
\end{verbatim}

These can be regarded as placeholders, introduced to make theorem statements more mainstream, so that a casual reader will better grasp an idea of what a theorem deals with upon reading it.
This is important for \MML{}, which aims to supply a library of mathematics being human-readable, besides being machine-verified{}.

As a side-note, we observe that the keyword \verb|-provable| now results overloaded to denote two distinct attributes (compare definition above with that on page \pageref{RefDefMizarProvable}).
\M{} has no problem with that, being able to resolve which use is being made by looking at the number of the arguments accompanying the identifier (the \emph{format}\index{\M{} identifier format}); in case this is not sufficient, it looks at both the number of arguments and at their type (the \emph{pattern}\index{\M{} identifier pattern}).

\chapter[Technical aspects of the formalization]{The formalization from a technical point of view}
\label{RefSectTechnical}
This chapter provides techniques and practical considerations, pertaining the practice of writing \M{} code and formalizations in general, accrued while working with the system.
It features material from \cite{CaminatiJar2011}.

\section{Custom \A{}s in \M{}}
\label{RefSectAutomations}
When writing a \M{} formalization, a significant amount of the user's time usually goes into browsing the Mizar Mathematical Library (\MML) for those results that he needs and that are already proved.
Here a few techniques to reduce this time are illustrated.
Let us begin by pointing out two shortcomings related to the \M{} verifier, which was introduced in section \ref{RefSectMizarOverview}:

\begin{enumerate}
\item
\label{RefItemLowLevel}
At a low level, a \M{} user has no practical way to specify the logic
the \M verifier 
applies to approve an inference: 
no full programmability is provided, besides tweaking the source code, to plug in alternative proof systems.
\item
\label{RefItemHighLevel}
At a higher level, there is no general provision to instruct the verifier to `know{}' a generic custom-defined formula already proved, in order to 
avoid to  list explicitly some, or all, of the labels following the keyword \verb|by| when the writer perceives the inference as obvious, natural, or recurring so often to deserve some kind of automation.
\\
For example, one might want to program the verifier to `know{}' the trivial set-theoretical inclusion
\begin{align}
\label{RefEqSampleAutomation}
X \cap Y \C X,
\end{align}
so as not to have to `\verb|by|' the corresponding \MML theorem in reasonings involving it.
\end{enumerate}
We will not discuss the reasons and implications of these design choices: considerations on such topics can be found in \cite{urban2006mizarmode}.
Rather, we will focus on how certain \M features can be exploited to mitigate issue \ref{RefItemHighLevel},
which is relevant to a user from a purely practical point of view: 
it is frequently the case that the user knows the steps to lay down a proof, or the statements of the needed theorems  (especially when trivial or natural) and then must go and dig into the vastities  of the \MML to justify each of them. 
While this can turn out to be a highly instructive experience, it also leads to distraction and to longer formalization times, and urged the creation of a range of tools to aid the user in facing this task (\cite{rudnickiescape}, \cite{bancerek2004integrated}, \cite{urban2006momm}, \cite{cairns2007integrating}). 
Here, a different, possibly complementary, approach is proposed aiming instead at reducing the occasions when he faces such a task.

Ideally, to a generic inference submitted to the verifier, one or more finite sets can be associated, each made of premisses strictly needed for the inference to be accepted (the references one \emph{must} list following the keyword \verb|by|).

We adopt the term \label{RefDefAutomation} \emph{\A} to loosely indicate any device or mechanism enabling to reduce such a set, even if possibly only for some kinds of inferences.

First of all, it must be said that indeed \M{} does supply some \A{}s natively. 
However, they present several constraints: they are not strong enough to instruct the verifier to blindly accept \emph{any} already proved formula.
To be more precise, the \A{}s called \emph{requirements}, imported using the eponymous  keyword, are powerful enough to do exactly this, which is what we fancied of in item \eqref{RefItemHighLevel} of the above list.
The point is that requirements are out of reach of most users, because they are hard-coded in verifier's sources by developers (\cite{naumowicz2004improving}, \cite{naumowiczevaluating}).
The remaining \M{} provisions (see section \ref{RefSectMizarOverview}) to introduce \A{}s are less general, and mostly embedded in its type system; 
however, they are the building blocks of the methods we will see.  
%
%

\subsection{Type clustering to avoid redefinitions}

Let us return to the example \A in \eqref{RefEqSampleAutomation}: we would like to teach the verifier that 
\begin{align*}
X \cap Y \C X.
\end{align*}
A first naive way to do that would be to redefine the output type of the functor \verb|/\|. This can be done for whatever functor via the keyword \verb|redefine|, subject of course to the appropriate proof.
This process of `type recasting{}', however, is destructive: only the last (re)definition is retained by the verifier. 
And indeed, \MML already provides 
(in article
\verb|SUBSET_1|) yet another redefinition of \verb|/\|:

\begin{verbatim}
definition
  let E, X be set; let A be Subset of E;
  redefine func A /\ X -> Subset of E;
  coherence
  proof
    ...
  end;
end;
\end{verbatim}
which we do not want to lose.
The idea then is to combine the ability of \M{} to recognize \emph{one} type for a given term with the identification scheme seen at the end of section \ref{RefSectMizarLayers}, to `funnel{}' several recognized types into a single term as a result.
Following an example taken, as others in the sequel, from \cite{fomodel0} we introduce a dummy functor symbol, a `shadow{}' of the main functor symbol \verb|/\|, let us call it \verb|typed/\|:

\begin{verbatim}
definition
  let X,Y be set;
  func X typed/\ Y -> Subset of X equals X /\ Y;
  coherence;
end;
\end{verbatim}
Now, if we make \M{} identify (see section \ref{RefSectMizarLayers}) \verb|X typed/\ Y| with \verb|X /\ Y|:

\begin{verbatim}
registration
  let X,Y be set;
  identify X /\ Y with X typed/\ Y;
  compatibility;
  identify X typed/\ Y with X /\ Y;  
  compatibility;
end;
\end{verbatim}
then the two distinct typing we wanted do simultaneously co-exist:

\begin{verbatim}
now
  let Z be set; let X, Y be Subset of Z;
  X/\Y is Subset of Z; :: thanks to redefinition in article SUBSET_1
  X/\Y is Subset of X; :: thanks to registration above
end;
\end{verbatim}
The verifier accepts both the  formulas above without justification. 
What happens is clear: the term \verb|X/\Y| occurring in last formula is identified with \verb|X typed/\ Y|, which has the right type, convincing the verifier.
A couple of musings:
\begin{itemize}
\item
Generally, when employing the \verb|identify| registration, we always do it in both verses, as above. This is to be on the safe side, as \verb|identify| works in a not completely symmetrical manner (\cite{grabowski2010mizar}, section 2.7).
As observed in practice, the second identification in such cases always comes for free; that is, once the \verb|compatibility| condition for the first one is secured, the second \verb|compatibility| statement is validated without proof, even without starting a new \verb|registration|~$\ldots$~\verb|end;| block. 
Hence, not requiring much additional time, it is useful to do double identification each time. 
In subsequent examples we sometimes will omit transcribing the second identification, though.
\item
There is already an \A granting \verb|X /\ Y = Y /\ X| without justification (this is achieved via so-called \emph{properties}, more on which can be found in \cite{grabowski2010mizar}, section 2.5). 
Thence, one could expect he has obtained for free also the \A{} \verb|X /\ Y is Subset of Y|, via the ideal chain: 
\begin{verbatim}
X /\ Y = Y /\ X = Y typed/\ X.
\end{verbatim}
This will not work straightaway, however. There are two possibilities:
\begin{enumerate}
\item
\label{RefItemSymmetry}
Introduce a further identification between \verb|X typed/\ Y| and \verb|Y typed/\ X|.
\item
Introduce a further functor \verb|/\typed| working symmetrically with respect to \verb|typed/\|:
\begin{verbatim}
definition
  let X,Y be set;
  func X /\typed Y -> Subset of Y equals X/\Y;
  coherence;
end;
\end{verbatim}
and then proceed with the suitable registrations.
\end{enumerate}
Both approaches solve the problem providing the automation
\begin{verbatim}
X /\ Y is Subset of Y;
\end{verbatim}
As a passing note, method \eqref{RefItemSymmetry} above suggests that identifications
may replace 
properties in some circumstances: \M{} can be made aware of the commutativity of a given functor either via properties (as done in \MML for \verb|/\|) or by identifying a functor application with the application obtained by swapping its arguments.
It would be interesting to know to what extent these two approaches are equivalent.
One simple remark is that the latter has wider applicability: upon establishing commutativity property when defining \verb|typed/\|, one gets the error:\\
\verb|The result type is not invariant under swapping the arguments|,\\
while an identification does the job.
\end{itemize}

\subsection{Type clustering with dummy arguments: combining type clustering with notations}
\label{RefSectTypeClustering}
We would like to repeat the scheme above for the (trivial) set-theoretical property
\begin{align*}
Y \C X \Rightarrow X \cap Y = Y.
\end{align*}
Here, however, we face a limitation of the \verb|identify| construct we have not mentioned yet: there are formal restrictions on the functors being identified.
In particular, they must have the same number of arguments, so we cannot just write:
\begin{verbatim}
registration
  let X be set, Y be Subset of X;
  identify X /\ Y with Y;
\end{verbatim}
We just introduce a functor \verb|null| whose only (for the time being) utility is formally to take a second argument for the mere sake of balancing things:
\begin{verbatim}
definition
  let X,Y be set;
  func X null Y equals X;
  coherence;
end;

registration
  let X be set; let Y be Subset of X;
  identify X /\ Y with Y null X;
  compatibility by XBOOLE_1:28;
  identify Y null X with X /\ Y;
  compatibility;
end;
\end{verbatim}

The final effect is not as neat as that of section \ref{RefSectTypeClustering}, in that we cannot submit the verifier simply
\begin{verbatim}
let X be set, Y be Subset of X;
X/\Y = Y;
\end{verbatim}
This is because the verifier of course cannot guess that writing \verb|Y| we mean \verb|Y null X|: although the argument \verb|X| is semantically thrown away by \verb|null|, its presence supplies information.
Indeed, \M{} can understand things the other way round:

\begin{verbatim}
let X be set, Y be Subset of X;
X /\ Y = Y null X; then
X /\ Y = Y;
\end{verbatim}
This works.%
\footnote{\texttt{then} can replace \texttt{by} when referring to the immediately preceding formula.}
Again, we have some remarks:
\begin{itemize}
\item
The last inference works because the definition of \verb|null| is done via \verb|equals| rather than via \verb|means| (see item \eqref{RefItemEquals} on page \pageref{RefItemEquals}): the corresponding definition being a macro permits to take advantage of \M{}'s \emph{equals expansion}, 
see section 2.3.4 of \cite{grabowski2010mizar}.
Note that, in order to take advantage of equals expansion for a given functor outside the file in which it is defined, that file must be imported via the \verb|definitions| directive.
\item
As we said before, the aim of \A{}s is to reduce the time devoted to searching \MML, rather than to save keypresses. 
So this scheme is still arguably worth being applied: no \verb|by| is needed.
\end{itemize}
The following sort of a dual of the previous registration:

\begin{verbatim}
registration
  let X be set; let Y be Subset of X;
  identify X \/ Y with X null Y;
  compatibility by XBOOLE_1:12;
  identify X null Y with X \/ Y;
  compatibility;
end;
\end{verbatim}
permits

\begin{verbatim}
let X; let Y be Subset of X;
X \/ Y = X null Y; then X \/ Y = X;
\end{verbatim}

\subsection{Combining dummy arguments and type clustering}

The dummy argument of the functor \verb|null| can be more than a placeholder to satisfy \verb|identify|'s requirements. 
It can be used to control the desired type of a term. 
For example, we could redefine \verb|X null Y| to be a \verb|Subset of X\/Y|, and then be able to automate properties like:

\begin{verbatim}
let X, Y be set;
X null Y is Subset of X \/ Y; then X is Subset of X \/ Y;
\end{verbatim}
However, one can do better: recall that type redefinitions are destructive, while we might want in the future \verb|null| not to have that type.
It is natural then to resort to type clustering, just seen in section \ref{RefSectTypeClustering}; for example:

\begin{verbatim}
definition
  let X, Y be set;
  func X \typed/ Y -> Subset of X \/ Y equals X;
  coherence by XBOOLE_1:7;
end;
\end{verbatim}

\begin{verbatim}
registration
  let X, Y be set;
  identify X \typed/ Y with X null Y;
  compatibility;
  identify X null Y with X \typed/ Y;
  compatibility;
end;
\end{verbatim}
and the wanted automation is in charge.

\subsection{Reference redirection via functorial registrations}
\label{RefSectReferenceRedir}
Since functorial registration, seen in section \ref{RefSectFunctorialRegistrations}, are so powerful, the idea is to reduce the most used first-order relation symbols to attributes in order to save lookups into \MML{}.
\subsubsection{Translating set-theoretical equality, \texttt{=}, via attribute \texttt{empty}}
Let us start with the \M{} equality symbol, \verb|=|. 
It can be rendered via the functor \verb|\+\|%
\footnote{\texttt{\textbackslash+\textbackslash} is the set-theoretical symmetric difference, commonly denoted as $\Delta$: $X \Delta Y = X \sdiff Y \cup (Y \sdiff X)$. See also appendix \ref{RefSectNotations}.}
and the attribute \verb|empty| via the result (FOMODEL0:29):

\begin{verbatim}
for X, Y being set holds X \+\ Y is empty iff X=Y;
\end{verbatim}
This means that for every theorem in \MML{} whose statement has the form

\begin{equation}
\label{RefEqEqualForm}
\verb|B1: term1 = term 2;|
\end{equation}
%
%
one can produce a translation like

\begin{equation}
\label{RefEqAttributeForm}
\verb|term1 \+\ term2 is empty by B1, FOMODEL0:29;|
\end{equation}
This latter version has the advantage of being applicable as a functorial registration, which allows to use it without justification in subsequent proofs. 
Even if one needs the original version of the theorem, one can get it by referring back to \verb|FOMODEL0:29|.
This gives the possibility of remembering just one reference (\verb|FOMODEL0:29|) in place of several references, one for each needed theorem: of course, the more theorems are translated in registrable form \eqref{RefEqAttributeForm}, the more convenient this scheme gets.
As an example, \verb|XBOOLE_1:4| states associativity of \verb|\/|. 
We then register the following:
\begin{verbatim}
registration
  let X, Y, Z be set;
  cluster ((X \/ Y) \/ Z) \+\ ( X \/ (Y \/ Z) ) -> empty for set;
  coherence by XBOOLE_1:4, FOMODEL0:29;
end;
\end{verbatim}
Now, when we need this theorem we write:
\begin{verbatim}
let X,Y,Z be set; ((X\/Y)\/Z) \+\ (X\/(Y\/Z)) is empty; then 
(X\/Y)\/Z = X\/(Y\/Z) by FOMODEL0:29; 
\end{verbatim}
\verb|XBOOLE_1| contains many such elementary results, frequently employed and having form \eqref{RefEqEqualForm}, so it is arguably convenient to turn them into registrations. 
After doing that, each time the user invokes one of them, he will only need to remember at most \verb|FOMODEL0:29|.
Here is a list of some registrations of this kind introduced and deployed in \M{} articles \verb|FOMODEL0-4| (to save space, environments and type declarations are mostly omitted):
\begin{flushleft}
\verb|cluster ([x,y]`1) \+\ x -> empty for set;|\\
\verb|cluster ([x,y]`2) \+\ y -> empty for set;|\\
\verb|cluster (id {x}) \+\ {[x,x]} -> empty for set;|\\
\verb|cluster (x.-->y) \+\ {[x,y]} -> empty for set;|\\
\verb|cluster (id {x}) \+\ (x.-->x) -> empty for set;|\\
\verb|cluster <*x*> \+\ {[1,x]} -> empty for set;|\\
\verb|let p be FinSequence; cluster (<*x*>^p).1 \+\ x -> empty for set;|\\
\verb!let m be Nat;!\\ 
\verb!cluster m-tuples_on X \+\ Funcs(Seg m,X) -> empty for set;!\\
\verb|let f,g be Function;|\\ 
\verb|cluster (f+*g) \+\ (f \ [:dom g, rng f:] \/ g) -> empty for set;|\\
\label{RefPasteCluster}%
\verb!cluster (f+*g) \+\ f|(dom f \ dom g) \/ g -> empty for set;!\\
\verb!cluster (f+*g) \+\ ((f|(dom f) \ (f|(dom g))) \/ g) -> empty for set;!\\
\end{flushleft}

\subsubsection{Translating set-theoretical inclusion, \texttt{c=}, via attribute \texttt{empty}}
A similar translation can be done for the inclusion symbol \verb|c=| into the functor \verb|\| and the attribute \verb|empty| via \verb|XBOOLE_1:37|:
\begin{verbatim}
X \ Y = {} iff X c= Y;
\end{verbatim}
Here are some examples of registrations for this case:
\begin{flushleft}
\verb!cluster {x}\{x,y} -> empty for set;!\\
\verb!cluster NAT\INT -> empty for set;!\\
\verb!let X be set; let F be Subset of bool X;!\\ 
\verb!cluster union F \ X -> empty for set; !\\
\verb!let X,Y be set; let x be Subset of X, y be Subset of Y;!\\ 
\verb!cluster x\Y \ (X\y) -> empty for set;!\\
\verb!let m be Nat; cluster (m-tuples_on X) \ (X*) -> empty for set;!\\
\end{flushleft}

\subsubsection{Translating set-theoretical membership, \texttt{in}, via attribute \texttt{empty}}
The same goes with the rendering of relation symbol \verb|in| via functors \verb|{ }|, \verb|\| and again attribute \verb|empty|, thanks to:
\begin{verbatim}
for x, X being set holds x in X iff {x} \ X is empty;
\end{verbatim}
Also for this scheme we give some examples of registrations:
\begin{flushleft}
\verb!let U be non empty set, u be Element of U;!\\ 
\verb!cluster {(id U).u} \ U -> empty set;!\\
\verb!let m,n be Nat; let p be (m+1+n)-long Element of U*;!\\
\verb!cluster {p.(m+1)} \ U -> empty set;!\\
\end{flushleft}

\subsubsection{Translating basic arithmetics into attributes}
The same idea can be adapted to a broad scope of contexts.
Here, it was exploited when needing some very basic arithmetical identities, like:

\begin{flushleft}
\verb!let z be zero (integer number);!\\ 
\verb!cluster abs(z) -> zero (integer number);!\\
\verb!let z1 be non zero (complex number);!\\
\verb!cluster abs(z1) -> positive (real number);!\\
\verb!let x,y be real number;!\\
\verb!cluster max(x,y)-x -> non negative (real number);!\\
\end{flushleft}

As another application, request \ref{RefEq30} in definition \ref{RefDefLanguage} was translated as follows for easier reference:

\begin{flushleft}
\verb!let S be Language; cluster ar(TheEqSymbOf S) + 2 -> zero number;!\\
\verb!cluster abs(ar(TheEqSymbOf S)) - 2 -> zero number;!\\
\end{flushleft}

Similarly, other trivial arithmetical facts were rendered thus:
\begin{flushleft}
\verb!let v be literal Element of S; cluster ar(v) -> zero number;!\\
\verb!let m0 be zero number; let t be m0-termal string of S;!\\
\verb!cluster Depth t -> zero number;!\\
\verb!let phi0 be m0-wff string of S;!\\
\verb!cluster Depth phi0 -> zero number;!\\
\verb!let m be Nat; let phi be m-wff string of S;!\\
\verb!cluster m - (Depth phi) -> non negative (real number);!\\
\verb!let phi1 be non 0wff (wff string of S);!\\
\verb!cluster Depth phi1 -> non zero Nat;!\\
\end{flushleft}

We omit any further detail; some more examples are in articles \verb|FOMODEL0-4|.

\subsection{Definiens clustering: combining identification and equals expansion}

Consider the last three registrations of section \ref{RefPasteCluster} involving the functor \verb|+*|: recalling the idea of that section, they express three set-theoretical equalities which, as all other equalities of this form, can be used remembering just one \MML reference, \verb|FOMODEL0:29|, once registered.
There is also a way to avoid even the need to refer to this single theorem, and make \M{} accept the corresponding equalities:
\begin{flushleft}
\verb!f \ [:dom g, rng f:] \/ g) = (f +* g);!\\
\verb!f|(dom f \ dom g) \/ g = (f +* g);!\\
\verb!((f|(dom f) \ (f|(dom g))) \/ g) = (f +* g);!
\end{flushleft}
straightaway.
Note that \MML{}'s original definition of \verb|+*| is done via \verb|means|, so equals expansion cannot be used. 
One could redefine \verb|+*| with one of the equalities above, but this would exclude the other two from \A.
Instead, it is possible to keep the original definition and proceed as follows:
\begin{verbatim}
definition
  let P,Q be Relation;
  func P +*1 Q equals P \ [:dom Q, rng P:] \/ Q;
  coherence;
  func P +*2 Q equals P|(dom P \ dom Q) \/ Q;
  coherence;
  func P +*3 Q equals ((P|(dom P) \ (P|(dom Q))) \/ Q);
  coherence;
end;
\end{verbatim}
Note that the shadow functors \verb|+*1|, \verb|+*2|, \verb|+*3| all accept more general arguments than its forefront functor \verb|+*|: every \verb|Function| is a \verb|Relation|, but the opposite does not hold. 
For this reason we first proceed with the mutual identification of the functors defined above:

\begin{verbatim}
registration
  let P, Q be Relation;
  identify P +*1 Q with P +*2 Q;
  compatibility
    proof
    ...
    end; 
  identify P +*2 Q with P +*3 Q;
  compatibility by RELAT_1:109;
end;  
\end{verbatim}
Having done so, \M{} now accepts equalities like:

\begin{verbatim}
let P, Q be Relation; P +*3 Q = P \[:dom Q, rng P:] \/ Q;
\end{verbatim}
This means, in particular, that identifications work transitively: we have identified \verb|+*1| with \verb|+*2| and \verb|+*2| with \verb|+*3|, but not \verb|+*1| with \verb|+*3|.
Finally, we can bind all these identifications with the forefront functor \verb|+*|, and then forget about the others:
\begin{verbatim}
registration
  let f, g be Function;
  identify f +*1 g with f+*g;
  compatibility
    proof
    ...
    end;
  identify f+*g with f +*1 g;
  compatibility;
end;
\end{verbatim}
Now the following works without justifications:

\begin{verbatim}
let f, g be Function;
f+*g = f\[:dom g, rng f:] \/ g;
f+*g = f|(dom f \ dom g) \/ g;
\end{verbatim}
We have thus `clustered' several definientia into the single functor \verb|+*|.

\section{Considerations on some formalization design issues}
\label{RefConsiderations}
Awareness that thoroughly calibrating types when spelling out definitions is a key factor for a well-structured proof grew steadily during the work. 
If one goes too strong, by being too fussy in specifying what type of arguments a functor takes, and at some point faces the need, for example, to apply the same functor to two arguments which differ little, but do not have the same type, in this case he is forced to do double work;
moreover, sometimes a job can be made lighter by adapting an existing type to an affine situation, and base on ready-made formalizations, instead of creating a brand new world of types and having to re-invent the wheel.
On the other hand, being too light with typing one loses the advantages of a tidy formalization given by \M{}. 
As an example, compare the definitions of atomic wff in \cite{QC_LANG1} and in the present work:

\begin{tabular}{p{6cm}|p{6cm}}
\begin{verbatim}
definition
  let F be Element of QC-WFF;
  attr F is atomic means
\end{verbatim}
\vdots
&
\begin{verbatim}
definition
let S be Language; 
let phi be string of S;
attr phi is 0wff means 
\end{verbatim}
\vdots
\end{tabular}

The definition on the right applies to any string, and not to anything less only because inside the body of the definition there are functors requiring a string (a \verb|FinSequence|) as arguments; 
on the other hand the left definition restricts the objects to which atomic attribute can be applied. 
This is likely to complicate forthcoming treatments.
One could object that the first solution has the strength of ensuring that `atomic{}' implies `wff{}'.
But this can be attained also in the second case by clustering (see section \ref{RefSectMizarOverview}), which is indeed done in the formalization:

\begin{verbatim}
registration
let S be Language;
cluster 0-wff -> atomic string of S;
cluster atomic -> 0-wff string of S;
let m be Nat;
cluster m-wff -> wff string of S;
let n be Nat;
cluster (m+0*n)-wff -> (m+n)-wff (string of S);
end;
\end{verbatim}

The heavy adoption of attributes and clusters is a trait of the present formalization%
\footnote{FOMODEL0 is the single registration-richest article in the whole \MML{}, as checked at
\urlMmlquery{} on~\printdate{31.3.2011}}%
.
Their use has a few advantages: first, a technical one, for they permit to automatically and implicitly reach conclusions which otherwise should be made explicit with a \verb|by| statement; this also brings an advantage in terms of terseness and legibility; finally, they make type-trimming easier, allowing rich typing with relative ease. 
 
In the present case, this is especially true for the classification of the various types of alphabet symbols: literal, compounder, relational, etc\ldots (see \ref{RefAttributes}), and for the classification of well-formed tuples, as in the example above.

\label{RefDefinitions}
A further character of this formalization is the effort to find definitions based on \verb|equals| and \verb|is|, avoiding those based on \verb|means| when possible. 
It seems that the former encourage the reusing of pre-existing objects (functors, modes or attributes), at the price of doing the preparatory work of translating the definition to be expressed in terms of those other objects. 
Definitions thus obtained are arguably more neat and readable, although sometimes less immediate. 
For sure ``\verb|equals|'' definitions have a technical advantage resembling that of attributes: they are grasped automatically by \M{} if included in the \verb|definitions| directive, again making life easier and code terser. 
See \cite{kornilowicz2009define}, section 3.
Good examples of this method could be the definitions of the functors 
\verb|===| (not reviewed here, needed in construction of \verb|-TruthEval|), 
\verb|X-freeInterpreter| (see \ref{RefFree2}), \verb|(I,m)-TruthEval| (see \ref{RefTruthEval2}), and \verb|ReassignIn| (see sections \ref{RefSemantics} and \ref{RefSectRuleSep}).

The last example is interesting because it also honors the ideas introduced in section \ref{RefSectDefense}: indeed, besides having a clean, \verb|equals|-based definition, it is first introduced for arguments of more general types than we need for our particular case:

\begin{verbatim}
definition
let x,y be set, f be Function;
func (x,y) ReassignIn f -> Function equals
f +* (x .--> ({} .--> y));
end;
\end{verbatim}

Recalling the action of \verb|+*| functor and how we encoded the interpretation of a literal symbol (section \ref{RefSemantics}), its way of working should be clear. 
We are leaning of course on a definition (\verb|+*|) given elsewhere, but this permits to use more general tools, avoid restating things, reduce the length of the definition, and, above all, reuse possible results already proven about \verb|+*|. 
Even if these results were not already available in \MML{}, proving them for a more general, pre-defined object is always better than providing a specialized result framed in a narrower context: somebody else could take advantage of them for developing possibly different areas of \MML{}.
Again, as in the first example of this section, we adapt this general definition to our needs by showing this functor returns the expected type when applied to the types we will feed it, using the powerful tool of functorial clustering (section  \ref{RefSectMizarOverview}):
\begin{verbatim}
registration
  let S be Language,U be non empty set,
  I be (S,U)-interpreter-like Function;
  let x be literal Element of S, u be Element of U;
  cluster (x,u) ReassignIn I -> (S,U)-interpreter-like;
end;
\end{verbatim}

Indeed, as noted in section \ref{RefSectDefense}, some developments needed in the present work produced results regarding only pre-existing, more general objects: as examples, one could consider the introduction of the \verb|-unambiguous| attribute for generic binary operations, and the related results for the generic monoids, sketched in section \ref{RefSectSubTerms}.
Here, two more examples, taken again from \verb|FOMODEL0| and which were missing from \MML{}, are exhibited in view of their concise and general statement; they both derived from investigations on how to formalize sequent calculus. 

The first regards the transitive closure \verb|R[*]| of a relation \verb|R| and states that it is both transitive and reflexive:

\begin{verbatim}
registration
  let R be Relation;
  cluster R[*] -> transitive Relation;
  cluster R[*] -> reflexive Relation;
end;
\end{verbatim}

The second binds together the transitive closure and the iteration of a function:
\label{RefIterResult}
\begin{verbatim}
for f being Function st rng f c= dom f holds f[*] = union 
  {iter(f,mm) where mm is Element of NAT: not contradiction};
\end{verbatim}

\section[About duplications in \MML{}]{About the specialization of existing results}
\label{RefDuplicates}
In proving \ref{RefThmLindenbaum}, we implicitly employed the following intuitive fact:
\begin{align*}
\left.
\begin{aligned}  
Y \text{ finite } 
\\ 
\forall n \in \N \ X_n \C X_{n+1}
\\
Y \C \bigcup_{n \in \N} X_n
\end{aligned}
\right\}
&& \Rightarrow 
&& \exists 
\overline{n} \in \N \ \st Y \C X_{\overline{n}}   
\end{align*}
Initially, we relied on \verb|HENMODEL:3|, which in turn employs the ad-hoc results  \verb|HENMODEL:1| and \verb|HENMODEL:2|, for a total of more than 250 lines of dedicated \M{} code.
Actually, such specific propositions could have not been written at all, for they are predated by the more general result \verb|COHSP_1:13|:
\begin{verbatim}
for X being non empty set, Y being set st 
X is c=directed & Y c= union X & Y is finite 
ex Z being set st Z in X & Y c= Z;
\end{verbatim}
where \verb|c=directed| substantially means somehow closed with respect to finite union, as from definition \verb|COHSP_1:def 3|:
\begin{verbatim}
definition
  let X be set;
  attr X is c=directed means
  for Y being finite Subset of X ex a being set st 
  union Y c= a & a in X;
end;
\end{verbatim}
Now consider the theorem \verb|COHSP_1:6| coupled with \verb|COHSP_1:13| reported above:

\begin{verbatim}
for X being non empty set st 
(for a,b being set st a in X & b in X
ex c being set st a \/ b c= c & c in X) holds X is c=directed;
\end{verbatim}
Clearly these two results generalize \verb|HENMODEL:3|, which runs like:
\begin{verbatim}
for f being Function of NAT,C, X being finite set st 
(for n,m st m in dom f & n in dom f & n < m holds 
f.n c= f.m) & X c= union rng f 
ex k st X c= f.k,
\end{verbatim}
and whose authors could have saved a fair amount of work by leveraging \verb|COHSP_1:13| and \verb|COHSP_1:6|.
Other instances of duplicated work inside \MML{} were noticed during the work, with this being probably the most blatant.
What is more, the excessive specialization of duplicate results in \verb|HENMODEL| makes their statement inelegant, e.g.,~obfuscating the simple meaning expressed by \verb|COHSP_1:13| with unnecessary objects like \verb|f|, \verb|m|, \verb|n|, \verb|k| appearing in \verb|HENMODEL:3|.
Duplication is a serious issue, because it bloats \MML{}, creates confusion in it, dissipates people's work, while often, like in this case, reusing existing code as much as possible results in more elegant and general formalizations (if the pre-existing code is already elegant and general enough). 
A major cause of this issue is the problematic browsing and mastering of such an extensive corpus like \MML{}.
Various attempts at delivering tools to assist \M{} authors in browsing it have been made (\cite{urban2006momm}, \cite{bancerek2004integrated} and \cite{bancerek2003information}).
Let us note that, in turn, \verb|COHSP_1:13| itself is susceptible of what, in the writer's opinion, are improvements: indeed, in \verb|FOMODEL0|, that same result, indeed stated in a slightly more general form

\begin{verbatim}
for Y being set st Y is c=directed holds 
for X being finite Subset of union Y 
ex y being set st y in Y & X c= y;
\end{verbatim}

is proved by slicing it into six small and general propositions, for an amount of $66$ lines of \M{} code versus the $68$ lines of the original proof.
Obviously the only purpose of this computation is to show that the two proofs are comparably long, what actually matters is the bunch of auxiliary results obtained `for free{}':

\begin{verbatim}
Th60: for X, Y being set st union X c= Y holds X c= bool Y;
\end{verbatim}

\begin{verbatim}
Th61: for X being set holds 
A is_finer_than B & X is_finer_than Y implies 
  A\/X is_finer_than B\/Y;
\end{verbatim}

\begin{verbatim}
Th62: for A, B being set st A is_finer_than B holds 
  A\/B is_finer_than B;
\end{verbatim}

\begin{verbatim}
Th63: for A, B being set st 
B is c=directed & A is_finer_than B holds 
  A\/B is c=directed;
\end{verbatim}

\begin{verbatim}
Th64: for X, Y being set holds 
  INTERSECTION(X,Y) is_finer_than X,
\end{verbatim}
also reverberating on other, even more general, \M{} articles. 
Indeed, \verb|INTERSECTION| and \verb|is_finer_than| are introduced in \verb|SETFAM_1|:

\begin{verbatim}
definition
  let SFX,SFY be set;
  pred SFX is_finer_than SFY means
  for X being set st X in SFX ex Y being set st 
    Y in SFY & X c= Y;
end;
\end{verbatim}

\begin{verbatim}
definition
  let SFX,SFY be set;
  func INTERSECTION (SFX,SFY) means
  for Z being set holds 
  (Z in it iff 
    ex X,Y being set st X in SFX & Y in SFY & Z = X /\ Y);
  existence;
  uniqueness;
end;
\end{verbatim}

This kind of trimming is here regarded as important for \MML{}, for reasons previously discussed in similar cases in which the proof of a given fact led to a string of by-products of independent interest.

\section{Numerically characterizing the formalization}
\label{RefNum}

We want to estimate formalization cost and de~Bruijn factor  (\cite{wiedijkbruijn, asperti2010some, naumowicz2006example}).
\\
There are huge spaces of discretionality, which will be discussed below, in both calculations, so we will make some arbitrary choices, hoping they will result sensible and acceptable.

Two figures are to be estimated in order to trigger calculations: the amount of man hours devoted to formalization and a number measuring the size of a non-formal, human-targeted mathematical text carrying information grossly equivalent to the one formalized.

\subsection{Estimating formalizing time}
\label{RefTime}
A significant amount of work regarded preliminary reformulation (\cite{caminati2009yet}) rather than \M{} formalization, as seen in chapter \ref{RefSectFormulation}.
This portion of work was carried on largely before \M{} formalization even started, however its results were revised `dynamically{}' during the formalization as a result of the `feedback' cited in section \ref{RefFeedback}, and as confirmed by the differences noticeable between \M{} code and \cite{caminati2009yet}.
Thus, formalization time assessment will be affected by some excess due to this auxiliary work subtracting time to effective coding, and to the fact that the workflow was rather irregular and interleaved with idle periods due to extraneous activities; this last issue is probably common to most formalization time estimations. 

With the foregoing cautionary remarks, evolution of the codebase is as follow, using \M{} public repository on author's homepage as a development history record.\\ 
The first \M{} file ever written by the author dates back to~\printdate{24.01.2010}, and, since then, formalization and \M{} learning efforts went on concurrently; the first codebase including G\"odel's completeness theorem was successfully checked on~\printdate{12.10.2010}. 
 
\L{}-Skolem theorem was first successfully compiled on~\printdate{05.11.2010}.
As a conclusion, formalizing time can be estimated in $284$ days.

\subsection[Establishing an equivalent source text]{Establishing a non-formal, equivalent mathematical source text}
\label{RefSpace}
For the reasons exposed in section~\ref{RefTime}, choosing a denominator to compute de~Bruijn factor is not so straightforward in this case. The nearest treatment would obviously be~\cite{caminati2009yet}, which, however, merely highlights the points in the proof which are novel and less trivial, and silently assumes a lot of prerequisites. 
Instead, the low starting point of this formalization demands we choose a more thorough treatment as a fairer reference, with an exposition starting from scratch (alphabets, strings, etc...) as this formalization does, and not omitting the tedious and `trivial{}' details.
Since~\cite{0387908951}, being an undergraduate text book, arguably satisfies these requirements and was the original source of inspiration, it seems a good candidate.
Specifically, we OCRed\footnote{%
Optical character recognition, usually abbreviated to OCR, is the mechanical or electronic translation of scanned images of handwritten, typewritten or printed text into machine-encoded text.
}
its scans and selected the excerpt going from section II.1 (`Alphabets', page 10) through section VI.1 (`The \L{}-Skolem Theorem', ending on page 89), taking the resulting ASCII text as our non-formal source text.
It is available on author's home page for reference.
We have not removed the dispensable bits occurring in this source (exercises, historical notes, examples); first, they can be considered quantitatively negligible for our purposes, especially if one consider how arbitrary the whole matter is; secondarily, if one regards de~Bruijn factor as a fundamental ratio between how much information is needed for a machine to accept statements and how much information is needed for a human to accept the same statements, rather than a totally empirical indicator to practically compare formalization verbosities, he could consider those bits as effectively useful for that human reader to accept (assimilate, he would say) those statements.

\subsection{Results}
The formalization cost is then calculated to be 
\begin{align*}
\frac{\frac{284}{7}}{89-10+1}=0.5 \text{ weeks per page}
\end{align*}
The de~Bruijn factor is shown below:
\begin{center}
\begin{tabular}{ccccc}
      
   &
      informal (bytes)
   &
      formal (bytes)
   &
   de~Bruijn factor
   &
\\ 
\cline{2-5}
      uncompressed
   &
   132495
   &
   710144
   &
   5.4
   &
   apparent
\\ 
      gzipped
   &
      46839 
   &
      153399
   &
      3.3
   &
      intrinsic
      \\
      \cline{2-5}
\end{tabular} 
\end{center}

\section{Formalization can bring insight}
\label{RefFormalizationAndKnowledge}
Various reasons supporting the endeavour of formalizing the body of known mathematics have been given in several expositions.
After doing such an extensive formalization, we would like to explicitly state an often overlooked, though merely potential, one: formalizing a proof can and should increase the amount of information the proof itself brings with it, with respect to the same proof in its `paper{}' version one has when starting mechanizing it.
\\
To elaborate on such a vague assertion, let us give specific cases, annotated with references to the present formalization: 
\begin{itemize}
\item
One is strongly encouraged to variously simplify things to make them digestible by a machine. This is likely to lead to a finer discern about what notions are really needed for a result to hold  or event to be stated.
For example, we note that the notion of consistency was not needed until \H{}'s theorem, \ref{RefThmHenkin2}.
\item
One is strongly encouraged to modularize and reuse. This can possibly bring to previously unknown, or at least not clearly stated, or maybe just obvious but useful in cutting down redundancies, relations between results. This is of particular relevance in case of community-developed, self-referencing repositories such as the \MML{}. See the discussion on page \pageref{RefModularization}. 
\item
Combining the two points above, one could, for example, obtain more, smaller propositions with less/weaker hypotheses, with the possible side effect of an escalation of their total number; as an example take what done in section~\ref{RefDuplicates}.
\item
As for other kinds of computation, a machine can help the human keeping track of a large amount of data, as could be a large number of hypotheses among which a minimal set is to be isolated to make a theorem hold; maybe this set of hypotheses has grown after some application of previous point. 
In our case, we had to filter out what derivation rules were needed corresponding to various lemmas, see section~\ref{RefSectRuleSep}.
\end{itemize}

Of course, the `final user{}' of a theorem is often little interested in this kind of internals; on the other hand, if a theorem is regarded as a particle of \rem{knowledge} information, this collateral, supplementary information \rem{knowledge} pursued in refining it can be deemed some value; 
which indeed happens when dealing with foundational issues, as in, e.g., reverse mathematics.

\appendix

\chapter{Proof of the Substitution Lemma}
\label{RefSectSubstLemma}
\begin{Prop}
\label{RefThmSubstLemmaTerm}
Given an interpretation $i$, a literal $v$ and a term $t$ of the language $S$, 
and given a set $X$, it holds:
\begin{align}
\label{RefEq44}
\restrict {\eval{i} \funccomp \eval{\reassign{v}{t}{\freeInt{X}}}} {{\terms{S}}_{, n}} 
=
\restrict {\eval {\reassign {v}{\eval{i} \left( t \right)}{i}}} {{\terms{S}}_{, n}}
\end{align}
for every $n \in \N$.
\end{Prop}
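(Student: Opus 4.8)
The plan is to prove \eqref{RefEq44} by complete induction on $n$, exploiting the fact that reassigning the literal $v$ in $\freeInt{X}$ turns the free interpretation into a purely syntactic substitution of $t$ for $v$, which then commutes with evaluation under $i$. Throughout I abbreviate $j := \reassign{v}{t}{\freeInt{X}}$ and $k := \reassign{v}{\eval{i}\left( t \right)}{i}$; both have their expected universes, so all the compositions make sense. The structural observation underpinning everything is that $v$ is a literal (arity $0$), whereas every compounder heading a term of positive depth has nonzero arity; hence $j$ and $\freeInt{X}$ agree on such heading symbols, and likewise $k$ and $i$ agree on them.

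First I would settle the base case $n=0$. Take $t_0 \in {\terms{S}}_{,0}$ and set $v' := t_0\left( 0 \right)$. By \ref{RefDefEvalAtomic} and \ref{RefNotationReassign}, if $v'=v$ then $\eval{j}\left( t_0 \right)=t$ and $\eval{k}\left( t_0 \right)=\eval{i}\left( t \right)$, so both sides of \eqref{RefEq44} evaluate to $\eval{i}\left( t \right)$; if instead $v'\neq v$, then $j$ and $k$ act like $\freeInt{X}$ and $i$ respectively on $v'$, so $\eval{j}\left( t_0 \right)=t_0$ by \ref{RefThmFreeIntTermEval} while $\eval{k}\left( t_0 \right)=\eval{i}\left( t_0 \right)$, whence both sides equal $\eval{i}\left( t_0 \right)$.

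For the inductive step I would assume \eqref{RefEq44} for every $m \le n$ and take $t' \in {\terms{S}}_{,n+1}$, which I may suppose has positive depth, so that $s := t'\left( 0 \right)$ is operational with $\ari\left( s \right) \in \Z^+$ and the entries of $\subterms{t'}$ all have depth $\le n$. Since $s \neq v$, I compute $\eval{j}\left( t' \right)=\left( \freeInt{X}\left( s \right) \right)\left( \eval{j} \funccomp \subterms{t'} \right)$, which by \ref{RefDefFreeInt} is exactly the compound term $s \fconc \left( \eval{j} \funccomp \subterms{t'} \right)$. The crucial point is that applying \ref{RefDefSubtermsOfTerm} to this freshly built term recovers its argument tuple, i.e. $\subterms{\eval{j}\left( t' \right)} = \eval{j} \funccomp \subterms{t'}$; this is where unambiguity of $\terms{S}$ (section \ref{RefSectSubTerms}) does the work. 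Evaluating with $i$ via \ref{RefDefEvalAtomic} then gives $\eval{i}\left( \eval{j}\left( t' \right) \right)=\left( i\left( s \right) \right)\left( \eval{i}\funccomp \eval{j} \funccomp \subterms{t'} \right)$. Applying the inductive hypothesis to each entry of $\subterms{t'}$ replaces $\eval{i}\funccomp\eval{j}$ by $\eval{k}$ on those subterms, yielding $\left( i\left( s \right) \right)\left( \eval{k}\funccomp \subterms{t'} \right)$; since $k\left( s \right)=i\left( s \right)$, this is precisely $\eval{k}\left( t' \right)$, as required.

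The main obstacle I anticipate is the bookkeeping around the identity $\subterms{s \fconc \left( \tau \right)}=\tau$: it is intuitively obvious but in this framework rests on the injectivity of the relevant restrictions of $\fconc$ and of $\curry{\conc}{\cdot}$ that underlie \ref{RefDefSubtermsOfTerm}, together with $s$ being operational (so the correct arity slot is fixed). The other place needing care is keeping straight which heading symbols are literals versus compounders, and hence where $j$ and $k$ do or do not coincide with $\freeInt{X}$ and $i$; this becomes routine once the case split $v'=v$ versus $v'\neq v$ is made explicit in the base case and the arity argument is invoked in the inductive step.
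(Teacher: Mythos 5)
Your proof is correct and follows essentially the same route as the paper's: induction on $n$, the base case split on whether $t_0(0)$ equals $v$, and the inductive step hinging on the facts that the heading symbol $s$ of a compound term is not the literal $v$ (so the reassigned interpretations agree with $\freeInt{X}$ and $i$ on it) and that $\subterms{s \fconc(\tau)} = \tau$, which lets the inductive hypothesis be applied entrywise to $\subterms{t'}$. The only cosmetic difference is that you transform the left-hand side into the right-hand side directly, whereas the paper expands both sides and compares; the content is the same.
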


\begin{proof}
Let $U \neq \emp$ be the universe of $i$, and set $u := \eval{i} \left( t \right) \in U$,
$ I:= \reassign{v}{t}{\freeInt{X}}$.
The proof is by induction on $n$.
First, consider $t_0 \in {\terms{S}}_{, 0}$, and show that 
$ \eval i \left(  \eval I \left( t_0 \right)  \right) =
\eval{ \reassign v u i} \left( t_0 \right)
$ as follows.
Set 
$v_0 := t_0 \left( 0 \right) \in \im {\ari ^ {-1}} {\left\{ 0 \right\}} $ 
and proceed by cases.
\begin{description}
\item
[Case $v_0 = v$]
Then
\begin{align*}
\eval i \left( \eval I \left( t_0 \right) \right)
\overset{\text{\tiny{\ref{RefDefEvalAtomic}}}}{=}
\eval i \left( \left( I \left( v \right) \right) \left( 0 \right) \right)
\overset{\text{\tiny{\ref{RefNotationReassign}}}}{=}
\eval i \left( \left\{ \pair {0} {t} \right\} \left( 0 \right) \right)
=
u
\\
\overset{\text{\tiny{\ref{RefNotationReassign}}}}{=}
\left( \reassign v u i \left( \left\{ \pair{0}{v} \right\} \right)  \right) \left( 0 \right)
\overset{\text{\tiny{\ref{RefDefEvalAtomic}}}}{=}
\eval {\reassign v u i} \left( \left\{ \pair{0}{v} \right\} \right)
=
\eval {\reassign v u i} \left( t_0 \right).
\end{align*}
\item
[Case $v_0 \neq v$]
\begin{align*}
\eval i \left( \eval I \left( t_0 \right) \right)
\overset{\text{\tiny{\ref{RefDefEvalAtomic}}}}{=}
\eval i \left( \left( I \left( v_0 \right) \right) \left( 0 \right) \right)
\overset{\text{\tiny{\ref{RefNotationReassign}}}}{=}
\eval i \left(\left(  \freeInt{X} \left( v_0 \right)  \right)  \left( 0 \right)\right)
\overset{\text{\tiny{\ref{RefDefFreeInt}}}}{=}
\eval i \left(  t_0 \right)
\\
\overset{\text{\tiny{\ref{RefDefEvalAtomic}}}}{=}
\left( i \left( v_0 \right) \right) \left( 0 \right)
\overset{\text{\tiny{\ref{RefNotationReassign}}}}{=}
\left(  \left( \reassign v u i  \right) \left( v_0 \right) \right) \left( 0 \right)
\overset{\text{\tiny{\ref{RefDefEvalAtomic}}}}{=}
\eval {\reassign v u i } \left( t_0 \right).
\end{align*}
\end{description}

Now suppose \eqref{RefEq44} is verified for every $n \leq m$.
Consider $t' \in {\terms{S}}_{, m+1}$.
It will suffice to show
\begin{align}
\label{RefEq42}
\eval{i} \left( \eval{I} \left( t' \right) \right) =
\eval {\reassign {v}{u}{i}} \left( t' \right).
\end{align}
Set $s := t' \left( 0 \right)$.

Left hand side of \eqref{RefEq42} can be rewritten thus by \ref{RefDefEvalAtomic}:
\begin{align*}
\eval{i} \left( 
\left( I \left( s \right)  \right)
\left( \eval I \funccomp \subterms{t'} \right)
\right)
=
\eval{i} \left(
\left(
\freeInt{X} \left( s \right)
\right)
\left( \eval I \funccomp \subterms{t'} \right)
\right)
\overset{\text{\tiny{\ref{RefDefFreeInt}}}}{=}
\eval{i} \left(
\left\{ \pair{0} {s} \right\}
\conc
\left( \fconc \left( \eval I \funccomp \subterms{t'} \right) \right)
\right),
\end{align*}
where the first step is justified by $v \neq s$.
After setting $t'' := \left\{ \pair{0} {s} \right\}
\conc
\left( \fconc \left( \eval I \funccomp \subterms{t'} \right) \right) \in \terms{S}$,
we notice that $\subterms{t''} = \eval I \funccomp \subterms{t'}$ by definition 
\ref{RefDefSubtermsOfTerm}, so that left side of \eqref{RefEq42} becomes, recalling \ref{RefDefEvalAtomic},
\begin{align}
\label{RefEq43}
\left( i \left( s \right)  \right) \left( \eval{i} \funccomp \subterms{t''} \right)
=
\left( i \left( s \right)  \right) \left( \eval{i} \funccomp \left( 
\eval I \funccomp \subterms{t'}
\right) \right).
\end{align}

We now perform calculations on right hand of \eqref{RefEq42} as well:
\begin{align*}
\eval {\reassign {v}{u}{i}} \left( t' \right) =
\left( \reassign v u i \left( s \right)  \right)
\left(
\eval {\reassign v u i} \funccomp \subterms{t'} 
\right)
\\
\overset{\text{\tiny{ \eqref{RefEq44} }}}{=}
\eval {\reassign {v}{u}{i}} \left( t' \right) =
\left( \reassign v u i \left( s \right)  \right)
\left(
\eval i \funccomp \eval I \funccomp \subterms{t'}
\right) 
=
\left( i \left( s \right)  \right)
\left(
\eval i \funccomp \eval I \funccomp \subterms{t'}
\right),
\end{align*}
with last equality justified again by $v \neq s$.
Comparing this with \eqref{RefEq43} yields the thesis.
\end{proof}

\begin{Prop}
\label{RefThmSubstLemma0}
Given an interpretation $i$, a literal $v$ and a term $t$ of the language $S$
\begin{enumerate}
\item
For any formula $\psi$, $\depth{\subst{v}{t}{\psi}} = 0$ if and only if  $\depth {\psi}=0$.
\\
\item
$
\restrict{\eval{i} \funccomp \substf {v}{t}} { {\wffs{S}}_{, 0} } 
=
\restrict{ \eval{ \reassign{v}{\eval{i} \left( t \right)}{i} }}
{ {\wffs{S}}_{, 0} }
$.
\end{enumerate}
\end{Prop}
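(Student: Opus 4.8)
The plan is to treat the two parts separately, with Part~2 resting entirely on the already-established term-level lemma \ref{RefThmSubstLemmaTerm}. The guiding observation for Part~1 is that a formula has depth zero precisely when its leading symbol is relational, and that the term substitution of \ref{RefDefTermSubst} preserves the \emph{kind} of the leading symbol. I would verify this by inspecting the three branches of \ref{RefDefTermSubst}: on an atomic $\varphi_0$ with relational head $r := \varphi_0(0)$, the output $\left(\restrict{\varphi_0}{\{0\}}\right) \conc \fconc\!\left(\eval{\reassign{v}{t}{\freeInt{\emptyset}}} \funccomp \subterms{\varphi_0}\right)$ again begins with $r$ and is followed by a tuple of terms (the reassigned free interpretation keeps universe $\terms{S}$, see \ref{RefDefFreeInt} and \ref{RefNotationReassign}), hence is atomic; whereas the $\nor$-branch yields a formula headed by $\nor$ and the existential branch a formula headed by a literal $v'$, both non-atomic. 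Reading this trichotomy in both directions gives $\depth{\subst{v}{t}{\psi}} = 0 \iff \depth{\psi} = 0$, with no induction needed.

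For Part~2, fix an atomic $\varphi_0$ and set $r := \varphi_0(0)$, $u := \eval{i}(t)$, and $I := \reassign{v}{t}{\freeInt{\emptyset}}$, so that \ref{RefDefTermSubst} gives $\substf{v}{t}(\varphi_0) = \{\pair{0}{r}\} \conc \fconc(\eval{I} \funccomp \subterms{\varphi_0})$. First I would record that, by the unambiguity of $\terms{S}$ from section \ref{RefSectSubTerms} together with \ref{RefDefSubtermsOfAtomic}, extracting subterms undoes this construction, so that $\subterms{\substf{v}{t}(\varphi_0)} = \eval{I} \funccomp \subterms{\varphi_0}$. I then split on whether $r = \equiv$.

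In the case $r \neq \equiv$, \ref{RefDefEvalAtomic} yields $\eval{i}(\substf{v}{t}(\varphi_0)) = (i(r))\!\left(\eval{i} \funccomp \eval{I} \funccomp \subterms{\varphi_0}\right)$; applying \ref{RefThmSubstLemmaTerm} (with $X = \emptyset$) to each component $\subterms{\varphi_0}(j) \in \terms{S}$ rewrites $\eval{i} \funccomp \eval{I}$ as $\eval{\reassign{v}{u}{i}}$ on terms, and since $r$ is relational it differs from the literal $v$, so $(\reassign{v}{u}{i})(r) = i(r)$; reassembling via \ref{RefDefEvalAtomic} gives exactly $\eval{\reassign{v}{u}{i}}(\varphi_0)$. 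In the case $r = \equiv$, \ref{RefDefEvalAtomic} turns the value $1$ on each side into the equality of the two subterm evaluations, $\eval{i}(\eval{I}(\subterms{\varphi_0}(k)))$ versus $\eval{\reassign{v}{u}{i}}(\subterms{\varphi_0}(k))$ for $k \in \{0,1\}$, which coincide again by \ref{RefThmSubstLemmaTerm}, so the two truth values agree.

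The only real obstacle is the bookkeeping identity $\subterms{\substf{v}{t}(\varphi_0)} = \eval{I} \funccomp \subterms{\varphi_0}$: one must lean on the unambiguity machinery to be certain that reading off the subterms of the freshly built atomic formula strips the head $r$ and recovers precisely the substituted tuple. Once this is secured, everything else is a mechanical application of \ref{RefThmSubstLemmaTerm} combined with the fact that reassigning the literal $v$ cannot disturb the relational head symbol $r$.
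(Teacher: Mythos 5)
Your proof is correct and follows essentially the same route as the paper's: unfold the substituted atomic formula via \ref{RefDefTermSubst} and \ref{RefDefEvalAtomic}, apply the term-level lemma \ref{RefThmSubstLemmaTerm} (with $X=\emptyset$), and use $v \neq r$ to leave the interpretation of the head symbol unchanged. The paper merely declares Part~1 immediate from \ref{RefDefTermSubst} and defers the case $r = \equiv$ to the \M{} source, whereas you spell both out; the substance is identical.
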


\begin{proof}
First thesis descends immediately from \ref{RefDefTermSubst}.
Consider $\psi_0 \in \wffs{S}$, $\depth {\psi_0} = 0$.
We have to show
$
\eval{i} \funccomp \substf {v}{t} \left( \psi_0 \right)
=
\eval{ \reassign{v}{\eval{i} \left( t \right)}{i} } \left( \psi_0 \right)
$.
Set $r := \psi_0 \left( 0 \right)$ and go by cases.
\begin{description}
\item[$r \neq \equiv$]\hfill\\
Then 
\begin{align*}
\left( \eval{i} \funccomp \substf {v}{t}  \right) \left( \psi_0 \right) = 
\eval{i} \left( \subst {v} {t} {\psi_0} \right)
\overset{\text{\tiny{\ref {RefDefTermSubst}, \ref{RefDefEvalAtomic}}}}{=}
\left( i \left( r \right)  \right) \left( 
\eval{i} \funccomp \left( \eval {\reassign {v}{t}{\freeInt{\emp}} } \right)
\funccomp \substrings {\psi_0} \right)
\\
\overset{\text{\tiny{\ref{RefThmSubstLemmaTerm}}}}{=}
\left( i \left( r \right) \right)
\left( 
\eval{\reassign{v}{\eval{i} \left( t \right)}{i}} \funccomp \subterms{\psi_0}
\right)
=
\left( \reassign {v}{\eval{i} \left( t \right)}{i} \left( r \right) \right)
\left( 
\eval{\reassign{v}{\eval{i} \left( t \right)}{i}} \funccomp \subterms{\psi_0}
\right)
\overset{\text{\tiny{\ref{RefDefEvalAtomic}}}}{=}
\eval{\reassign{v}{\eval{i} \left( t \right)}{i}} \left( \psi_0 \right),
\end{align*}
where the second last step took into account that $v \neq r$ (this is because $\ari \left( v \right) = 0$ while $ \ari {r} < 0$).
\item
[$r = \equiv$]
This case is similar to the one above.
It can be retrieved inside \verb|FOMODEL3:8|.
\end{description}
\end{proof}

\begin{Prop}
\label{RefThmDepthSubst}
$ \depth{\subst{v}{t}{\psi}} = \depth{ \psi }$.
\end{Prop}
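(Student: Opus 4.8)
The plan is to argue by complete induction on $n := \depth{\psi}$. The base case $n = 0$ is already settled, since it is exactly the first assertion of \ref{RefThmSubstLemma0}: that result gives $\depth{\subst{v}{t}{\psi}} = 0$ precisely when $\depth{\psi} = 0$, which in particular covers the direction needed here. So I would assume the claim for every formula of depth at most $m$ and take $\psi$ with $\depth{\psi} = m + 1$. As $m+1 > 0$, the formula $\psi$ is non-atomic, hence lies in $\wffs{S,m+1} \sdiff \wffs{S,m}$, and the argument splits according to whether its leading symbol $\psi(0)$ is the connective $\nor$ or a literal; these are exactly the two branches of the recursive clause of \ref{RefDefTermSubst}.

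In the connective case $\psi(0) = \nor$, write $\varphi_1 := \substrings{\psi}(0)$ and $\varphi_2 := \substrings{\psi}(1)$ for the two immediate subformulas, both of depth at most $m$. The recursive clause gives $\substf{v}{t}(\psi) = (\restrict{\psi}{\{0\}}) \conc \fconc(\substf{v}{t} \funccomp \substrings{\psi})$, i.e.\ the NOR-compound of $\subst{v}{t}{\varphi_1}$ and $\subst{v}{t}{\varphi_2}$. Invoking the structural fact that the depth of a NOR-compound is one plus the maximum of the depths of its two constituents, together with the inductive hypothesis $\depth{\subst{v}{t}{\varphi_i}} = \depth{\varphi_i}$ for $i = 1,2$, I would conclude $\depth{\subst{v}{t}{\psi}} = 1 + \max(\depth{\varphi_1}, \depth{\varphi_2}) = \depth{\psi}$.

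In the literal case, $\psi(0)$ is a literal and $\psi$ has the single subformula $\varphi := \substrings{\psi}(0)$, of depth exactly $m$. Here the recursive clause yields $\substf{v}{t}(\psi) = \{\pair{0}{v'}\} \conc \substf{v}{t}(\symbsubst{\psi(0)}{v'}{\varphi})$ for a suitable fresh literal $v'$. The auxiliary fact needed is that simple substitution of one literal for another preserves depth, so that $\depth{\symbsubst{\psi(0)}{v'}{\varphi}} = \depth{\varphi} = m$; the inductive hypothesis applied to $\symbsubst{\psi(0)}{v'}{\varphi}$ then gives $\depth{\substf{v}{t}(\symbsubst{\psi(0)}{v'}{\varphi})} = m$, and prepending the single literal $v'$ raises the depth by exactly one, so $\depth{\subst{v}{t}{\psi}} = m + 1 = \depth{\psi}$.

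The hard part will be the depth-invariance of simple literal-for-literal substitution used in the second case. Since $\symbsubst{\psi(0)}{v'}{\varphi}$ merely relabels each occurrence of $\psi(0)$ by $v'$ in the string $\varphi$ — composing $\varphi$ with a map fixing every symbol except $\psi(0)$ — and both symbols have arity $0$, the arity pattern of the string is unchanged position by position; hence membership in each $\wffs{S,k}$ is preserved, forcing the two depths to coincide. I would establish this by a short auxiliary induction on depth. The only other point requiring care is the structural ``depth recursion'' invoked in both cases, namely that the depth of a compound formula is exactly one more than (the maximum of) the depth(s) of its immediate subformula(s); this rests on the unambiguity of concatenation on $\wffs{S}$ and on $\nor \notin \dom{\ari}$ (property \ref{RefEq28} of \ref{RefDefLanguage}), which together rule out any lower-depth parse of a compound. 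Finally, the dependence of $\substf{v}{t}$ on the choice of $v'$ is immaterial here, since depth is insensitive to which fresh literal is selected.
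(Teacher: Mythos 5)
Your proposal is correct and is exactly the argument the paper has in mind: its own proof of \ref{RefThmDepthSubst} is the one-line remark that the result follows by ``an easy induction exploiting \ref{RefThmSubstLemma0} and \ref{RefDefTermSubst}'', which is precisely the induction you carry out (base case from \ref{RefThmSubstLemma0}, inductive step split along the two branches of the recursive clause of \ref{RefDefTermSubst}). You simply make explicit the two auxiliary facts the paper leaves implicit — depth-invariance of simple literal-for-literal substitution and the structural depth recursion for compounds — and your justifications for both are sound.
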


\begin{proof}
It is an easy induction exploiting \ref{RefThmSubstLemma0} and \ref{RefDefTermSubst}.
\end{proof}

\begin{Lm}
Given $n \in \N$, a set $U \neq \emp$, a language $S$, a literal $v$ and a term $t$ of $S$:
\begin{align}
\label{RefEq40}
\text{for every interpretation } i \text{ of } S \text{ having } U \text{ as universe, it holds }
\notag
&& 
\\
\restrict{\eval{i} \funccomp \substf {v}{t}} { {\wffs{S}}_{, n} } 
=&
\restrict{ \eval{ \reassign{v}{\eval{i} \left( t \right)}{i} }}
{ {\wffs{S}}_{, n} }.
\end{align}
\end{Lm}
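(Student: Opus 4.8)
The plan is to prove \eqref{RefEq40} by induction on $n$, deliberately keeping the universal quantifier over interpretations $i$ \emph{inside} the induction, since the inductive step will invoke the statement for interpretations other than $i$. Throughout write $u := \eval{i}\left( t \right)$. The base case $n = 0$ is exactly the second claim of \ref{RefThmSubstLemma0}. For the inductive step, assume \eqref{RefEq40} holds at level $n$ for every interpretation over $U$, and let $\psi \in \wffs{S,n+1}$. Since $\wffs{S,n} \subseteq \wffs{S,n+1}$, the inductive hypothesis already settles the case $\depth{\psi} \leq n$, so I may assume $\depth{\psi} = n+1$; in particular $\psi$ is non-atomic and $\psi\left( 0 \right)$ is either $\nor$ or a literal, matching the two branches of \ref{RefDefTermSubst}.

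If $\psi\left( 0 \right) = \nor$, write $\varphi_1 := \substrings{\psi}\left( 0 \right)$ and $\varphi_2 := \substrings{\psi}\left( 1 \right)$, both of depth at most $n$. By \ref{RefDefTermSubst} the substitution distributes, $\substf{v}{t}\left( \psi \right) = \nor \conc \substf{v}{t}\left( \varphi_1 \right) \conc \substf{v}{t}\left( \varphi_2 \right)$, and since $\substf{v}{t}$ preserves depth (\ref{RefThmDepthSubst}) the two subformulas of $\substf{v}{t}\left( \psi \right)$ are precisely $\substf{v}{t}\left( \varphi_1 \right)$ and $\substf{v}{t}\left( \varphi_2 \right)$. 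Evaluating with \ref{RefDefEvalCompound},
\begin{align*}
\eval{i}\left( \substf{v}{t}\left( \psi \right) \right) = 1
\Leftrightarrow \eval{i}\left( \substf{v}{t}\left( \varphi_1 \right) \right) = 0 = \eval{i}\left( \substf{v}{t}\left( \varphi_2 \right) \right),
\end{align*}
and applying the inductive hypothesis to $\varphi_1$ and $\varphi_2$ turns the right-hand side into $\eval{\reassign{v}{u}{i}}\left( \varphi_1 \right) = 0 = \eval{\reassign{v}{u}{i}}\left( \varphi_2 \right)$, which by \ref{RefDefEvalCompound} is equivalent to $\eval{\reassign{v}{u}{i}}\left( \psi \right) = 1$. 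This is the desired equality on this branch.

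If $\psi\left( 0 \right) =: v_1$ is a literal, write $\phi := \substrings{\psi}\left( 0 \right)$, so $\psi = v_1 \conc \phi$ with $\phi \in \wffs{S,n}$, and let $v'$ be the fresh literal of \ref{RefDefTermSubst}, chosen with $v' \notin \left\{ v \right\} \cup \symbof{\left\{ t, \phi \right\}}$; thus $\substf{v}{t}\left( \psi \right) = v' \conc \substf{v}{t}\left( \symbsubst{v_1}{v'}{\phi} \right)$. Both $\psi$ and $\substf{v}{t}\left( \psi \right)$ are existentially headed, so by \ref{RefDefEvalCompound} it suffices to show, for each $u_0 \in U$,
\begin{align*}
\eval{\reassign{v'}{u_0}{i}}\left( \substf{v}{t}\left( \symbsubst{v_1}{v'}{\phi} \right) \right) = \eval{\reassign{v_1}{u_0}{\reassign{v}{u}{i}}}\left( \phi \right).
\end{align*}
Since simple substitution preserves depth, $\depth{\symbsubst{v_1}{v'}{\phi}} = \depth{\phi} \leq n$, so I may apply the inductive hypothesis with the interpretation $\reassign{v'}{u_0}{i}$; as $v'$ does not occur in $t$ its evaluation of $t$ still equals $u$, whence the left-hand side becomes $\eval{\reassign{v}{u}{\reassign{v'}{u_0}{i}}}\left( \symbsubst{v_1}{v'}{\phi} \right)$. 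Because $v \neq v'$ the two reassignments commute, giving $\eval{\reassign{v'}{u_0}{\reassign{v}{u}{i}}}\left( \symbsubst{v_1}{v'}{\phi} \right)$; finally, since $v' \notin \rng \phi$, \ref{RefThmSubstLemma1} rewrites this as $\eval{\reassign{v_1}{u_0}{\reassign{v}{u}{i}}}\left( \phi \right)$, as required.

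The NOR branch is routine; the real work is the existential one, where the fresh literal $v'$ must be threaded correctly. I expect the main obstacle to be justifying the three interpretation-manipulations in order — invariance of $\eval{\cdot}\left( t \right)$ under reassigning a symbol absent from $t$, commutativity of reassignments at the distinct literals $v$ and $v'$, and the capture-avoiding rewrite via \ref{RefThmSubstLemma1} — and checking that the freshness conditions imposed on $v'$ in \ref{RefDefTermSubst} are exactly the ones these steps demand, namely $v' \neq v$ and $v' \notin \rng \phi \cup \rng t$. The depth-preservation fact \ref{RefThmDepthSubst}, together with the corresponding triviality for simple substitution, is what lets the inductive hypothesis reach $\symbsubst{v_1}{v'}{\phi}$ and the two NOR-subformulas.
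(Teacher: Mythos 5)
Your proof is correct and follows essentially the same route as the paper's: induction on $n$ with the base case delegated to \ref{RefThmSubstLemma0}, a case split on $\nor$ versus a literal head, and the literal case handled by the identical chain of steps — inductive hypothesis (enabled by \ref{RefThmDepthSubst} together with depth-preservation of simple substitution), invariance of $\eval{i}\left( t \right)$ under reassigning the fresh $v'$, commutation of reassignments at the distinct literals $v$ and $v'$, and the capture-avoiding rewrite \ref{RefThmSubstLemma1}. The only cosmetic difference is that you prove the pointwise equality of the two inner evaluations for every $u_0 \in U$, settling both directions of the existential equivalence at once, where the paper proves one implication via a witness and declares the converse ``very similar.''
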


\begin{proof}
Set $f := \substf{v}{t}$ (see definition \ref{RefDefTermSubst}).
By induction on $n$.
The base case $n = 0$ is given by \ref{RefThmSubstLemma0}.
Assume \eqref{RefEq40} holds for any $n \leq m$, then consider $\psi \in {\wffs{S}}_{, m+1}$ and an interpretation $i$ of $S$ having universe $U$.
It suffices to show 
$ \eval {i} \left( f \left( \psi \right) \right) = 
\eval {\reassign {v}{\eval{i} \left( t \right)} {i}} \left( \psi \right)$.
Set $s:= \psi \left( 0 \right)$.
We can assume $\depth{\psi} > 0$, and proceed by cases.
\\
\\
\textbf{ Case 1):} $ s \neq \nor $.
\\
Then $ s = v_1 \in \im {\ari^{-1}} {\left\{ 0 \right\}}$, and $\psi = \left\{ \pair{0} {v_1} \right\} \conc \varphi$ for some 
$\varphi \in { \wffs{S}}_{, m}$.
By \ref{RefDefTermSubst},
$
f \left( \psi \right) = \left\{ \pair{0}{v_2} \right\} \conc f \left( \symbsubst {v_1} {v_2}
{\varphi} \right) 
$, with
\begin{align}
\label{RefEq41}
v_2 \notin \left\{ v \right\} \cup \symbof {t, \varphi}.
\end{align}
Assume $ \eval {i} \left( f \left( \psi \right) \right) = 1$. 
Then, by \ref{RefDefEvalCompound}, consider $u_2 \in U$ such that 
\begin{align*}
1 = \eval {\reassign{v_2}{u_2}{i}} \left( f \left( \symbsubst{v_1}{v_2} {\varphi} \right) \right) 
\overset{\text{\tiny{\ref{RefThmDepthSubst}}}}{=}
\eval{\reassign{v}{\eval{i_2} \left( t \right)}{ \reassign{v_2}{u_2}{i} }} 
\left( \symbsubst{v_1}{v_2}{\varphi} \right)
\\
\overset{\text{\tiny{\eqref{RefEq41}}}}{=}
\eval{\reassign{v_2}{u_2}{ \reassign{v}{\eval{i_2} \left( t \right)}{i} }}
\left( \symbsubst{v_1}{v_2}{\varphi} \right)
\overset{\text{\tiny{
\ref{RefThmSubstLemma1}, \eqref{RefEq41}
}}}{=}
\eval{\reassign{v_1}{u_2}{ \reassign{v}{\eval{i_2} \left( t \right)}{i} }}
\left( \varphi \right),
\end{align*}
where we set $i_2 := \reassign{v_2}{u_2}{i}$, and \ref{RefThmDepthSubst} is invoked to trigger induction.
Hence, by \ref{RefDefEvalCompound}
\begin{align*}
1 = \eval {\reassign{v}{\eval{i_2} \left( t \right)}{i}} \left( \left\{ \pair{0}{v_1} \right\}
\conc \varphi \right)
=
\eval {\reassign{v}{\eval{i} \left( t \right)}{i}} \left( \psi \right),
\end{align*}
where last step is due to $v_2 \notin \rng t$.
The proof of 
$ 
\eval {\reassign{v}{\eval{i} \left( t \right)}{i}} \left( \psi \right) = 1
\ra{}
\eval {i} \left( f \left( \psi \right) \right) = 1
$
is very similar.
\\
\\
\textbf{Case 2):} $ s = \nor $.
\\
Then consider $\psi_1, \psi_2 \in { \wffs{S}}_{, m}$ such that 
$\psi = \left\{ \pair{0}{\nor} \right\} \conc \psi_1 \conc \psi_2$.
\begin{align*}
\eval{i} \left( f (\psi) \right)
\overset{\text{\tiny{\ref{RefDefTermSubst}}}}{=}
\eval{i} \left( \pair{0}{\nor} \conc f\left( \psi_1 \right) \conc f\left( \psi_2 \right) \right)
\overset{\text{\tiny{\ref{RefDefEvalCompound}}}}{=}
N \left( \pair 
{\eval {i} \left( f \left( \psi_1 \right) \right)} 
{\eval {i} \left( f \left( \psi_2 \right) \right)} 
\right)
\\
\overset{\text{\tiny{\ref{RefThmDepthSubst}}}}{=}
N \left( 
\pair
{ \eval{ \reassign {v}{ \eval{i} \left( t \right) }{i} } \left( \psi_1 \right) }
{ \eval{ \reassign {v}{ \eval{i} \left( t \right) }{i} } \left( \psi_2 \right) }
\right)
\overset{\text{\tiny{\ref{RefDefEvalCompound}}}}{=}
\eval{ \reassign {v}{ \eval{i} \left( t \right) }{i}} \left( 
\left\{ \pair {0} {\nor} \right\}
\conc \psi_1 \conc \psi_2
\right).
\end{align*}
Again, \ref{RefThmDepthSubst} is needed to deploy induction, and $N$ is a shorthand for the map 
$ \indicator_{\left\{ \pair {0} {0} \right\}}^{2 \cartprod 2}$.
\end{proof}

\chapter{\M{} functors used in the text}
\label{RefSectNotations}
\begin{tabularx}{\linewidth}{ 
>{\hsize=.8\hsize}X>{\hsize=1.4\hsize}X>{\hsize=.8\hsize}X
}
\hline
\verb|f"X| & preimage of the set $X$ through $f$ & $\im{f^{-1}}{X}$
\\
\verb|X/\Y| & set-theoretical intersection & $X \cap Y$
\\
\verb|X\/Y| & set-theoretical union &  $X \cup Y$
\\
\verb|X\Y|  & set-theoretical difference & $X \sdiff Y$
\\
\verb|X\+\Y|& symmetric difference 
& $ \left( A \sdiff B \right) \cup \left( B \sdiff A  \right)$
\\
\verb|[x,y]| & Kuratowski ordered pair & $\left( x,y \right)$
\\
\verb|[:X,Y:]| & cartesian product of  sets  & $X \times Y$
\\
\verb|NAT, INT| & natural numbers and integers & $\mathbb{N, Z}$
\\
\verb|X*| & tuples on $X$ & $X^*$
\\
\verb|n-tuples_on X| & tuples of $n$ letters in $X$ & $X^n$
\\
\verb|Seg n| & &
$\left\{ 1, \ldots, n \right\} $
\\
\verb|<*s*>| & the tuple made of the 
char $s$ & $ \left\{ \pair {0} {s} \right\}$
\\
\verb|p^q| & concatenation of tuples $p$ and $q$ & $p \conc q$
\\
\verb|dom R, rng R| & domain, range of  relation $R$ & 
\\
\verb|p/^n| & 
the tuple $p$ with 
the first $n$ chars removed
& 
\\
\verb|bool X| & the power set of X & $2^X$
\\
\verb|f.x| & the value of the function $f$ in $x$ & $f \left( x \right)$
\\
\verb|id X| & the identity function on $X$ & $ \bigcup_{x \in X} \left\{ x \right\} \cartprod \left\{ x \right\}$
\\
\verb|f +* g| & the pasting of functions $f,g$ & $f \paste {g}$
\\
\verb|curry| & currying & 
$ x \mapsto \lambda x . f\left( x,y \right)$
\\
\verb|f * g| & functional composition & $f \circ g$
\\
\verb|f.:X| & image of the set $X$ through $f$ & $ \im{f}{X} $
\\
\verb|[x,y]`1| \quad \verb|[x,y]`2| & projectors for Kuratowski pairs & 
\begin{varwidth}{.8\hsize}
\begin{align*}
\pair{x}{y} \mapsto x 
\\
\pair{x}{y} \mapsto y
\end{align*}
\end{varwidth}
\\
\verb|Funcs(X,Y)| & the set of functions from $X$ to $Y$ & $\mapsFromTo{X}{Y}$
\\
\verb|PFuncs(X,Y)| & the set of partial functions from $X$ to $Y$ &
\begin{varwidth}{.7\hsize}
\begin{align*}
\bigcup_{x \subseteq X} Y^x
\end{align*}
\end{varwidth}
\\
\verb|iter(f,n)| &  
$n$-th iteration of a function $f$
& $\iter{f}{n}$
\\
\verb|R[*]| &  
transitive closure of $R$
&
\\
\verb|X --> y| & the $y$-constant function on $X$ & 
$ X \to \left\{ y \right\}$
\\
\verb|x .--> y | & function between two singletons & $ \left\{ \pair{x}{y} \right\}$ 
\\
\verb|chi(Y,X)| & characteristic function of $Y \subseteq X$ &
$ \indicator_Y^X$
\end{tabularx}

\backmatter

\end{document}